\numberwithin{equation}{section}
\newtheorem{theorem}{Theorem}[section]
\newtheorem{lemma}[theorem]{Lemma}
\newtheorem{proposition}[theorem]{Proposition}
\newtheorem{corollary}[theorem]{Corollary}
\theoremstyle{definition}
\newtheorem{definition}[theorem]{Definition} 
\newtheorem{procedure}[theorem]{Procedure} 
\newtheorem{remark}[theorem]{Remark}
\newtheorem{example}[theorem]{Example}
\newcommand{\cL}{\mathcal{L}}
\begin{document}

\title[The dual of an evaluation code]
{The dual of an evaluation code} 
\author{Hiram H. L\'opez}
\address[Hiram H. L\'opez]{Department of Mathematics and Statistics\\ Cleveland State University\\
Cleveland, OH USA}
\email{h.lopezvaldez@csuohio.edu}

\thanks{
The first author was partially supported by an AMS--Simons Travel Grant.
The third author was supported by SNI, Mexico.}

\author{Ivan Soprunov}
\address[Ivan Soprunov]{Department of Mathematics and Statistics\\ Cleveland State University\\
Cleveland, OH USA}
\email{i.soprunov@csuohio.edu}

\author[R. H. Villarreal]{Rafael H. Villarreal}
\address{
Departamento de
Matem\'aticas\\
Centro de Investigaci\'on y de Estudios
Avanzados del
IPN\\
Apartado Postal
14--740 \\
07000 Mexico City, Mexico
}
\email{vila@math.cinvestav.mx}

\keywords{Evaluation codes, toric codes, minimum distance, affine
torus, degree, dual codes, Reed--Muller codes, finite field, standard
monomials, indicator functions.}
\subjclass[2010]{Primary 13P25; Secondary 14G50, 94B27, 11T71.} 
\begin{abstract} 
The aim of this work is to study the dual and the algebraic dual of an evaluation 
code using standard monomials and indicator functions. We show that the dual of an
evaluation code is the evaluation code of the algebraic dual.
We develop an algorithm for computing a 
basis for the algebraic dual. Let $C_1$ and
$C_2$ be linear codes spanned by standard monomials. We give a combinatorial condition
for the monomial equivalence of $C_1$ and the dual $C_2^\perp$. Moreover, we
give an explicit description of a generator matrix of $C_2^\perp$ in terms of that of $C_1$ and
coefficients of indicator functions. For Reed--Muller-type 
codes we give a duality criterion in terms of the v-number and the Hilbert function of a
vanishing ideal. As an application, we provide an explicit duality for Reed--Muller-type 
codes corresponding to Gorenstein ideals.
In addition, when the evaluation code is monomial and the set of evaluation points is a 
degenerate affine space, we classify when
the dual is a monomial code. 
\end{abstract}

\maketitle 

\section{Introduction}\label{intro-section}
Let $S=K[t_1,\ldots,t_s]=\bigoplus_{d=0}^\infty S_d$ be a polynomial ring over a finite
field $K=\mathbb{F}_q$ with the standard grading and let
$X=\{P_1,\ldots,P_m\}$, $|X|\geq 2$, be a set of distinct points in the affine space
$\mathbb{A}^s:=K^s$. The \textit{evaluation map}, denoted ${\rm ev}$,
is the $K$-linear map given by  
$${\rm ev}\colon S\rightarrow K^{m},\quad
f\mapsto\left(f(P_1),\ldots,f(P_m)\right).
$$ 
\quad The kernel of ${\rm ev}$, denoted $I=I(X)$, is the 
\textit{vanishing ideal} of $X$ consisting of the polynomials of $S$
that vanish at all points of $X$. This map induces an isomorphism of
$K$-linear spaces between $S/I$ and $K^m$. If $f\in
S$, we denote the set of zeros of $f$ in $X$ by $V_{X}(f)$. Let $\mathcal{L}$ be a
linear subspace of $S$ of finite dimension. The image of $\mathcal{L}$
under the evaluation map, denoted $\mathcal{L}_X$, is called an
\textit{evaluation code} on $X$ \cite{toric-codes,stichtenoth,tsfasman}.  

The basic \textit{parameters} of the linear 
code $\mathcal{L}_X$ that we consider are:
\begin{itemize}
\item[(a)] \textit{length}: $m=|X|$,
\item[(b)] \textit{dimension}: $k=\dim_K(\mathcal{L}_X)$, and 
\item[(c)] \textit{minimum distance}: 
$\delta(\mathcal{L}_X)=\min\{|X\setminus V_{X}(f)|\ \colon 
f\in\mathcal{L}\setminus I\}$.   
\end{itemize}
\quad The dual of $\mathcal{L}_X$, denoted $(\mathcal{L}_X)^\perp$, is the
set of all $\alpha\in K^m$ such that $\langle
\alpha,\beta\rangle=0$ for all $\beta\in\mathcal{L}_X$, where
$\langle\ ,\, \rangle$ is the ordinary inner product in $K^m$. 
The dual of $\mathcal{L}_X$ is an $[m,m-k]$ linear code
\cite[Theorem~1.2.1]{Huffman-Pless}. 
The aim of 
this paper is to study $(\mathcal{L}_X)^\perp$ by fixing a graded monomial
order on $S$ and using the information encoded in the quotient ring
$S/I$ and in the linear space $\mathcal{L}$. 

Let $\prec$ be a graded monomial order on $S$, that is, monomials are first compared
by their total degrees \cite[p.~54]{CLO}. The monomials of $S$ are denoted
$t^c:=t_1^{c_1}\cdots t_s^{c_s}$, $c=(c_1,\dots,c_s)$ in $\mathbb{N}^s$, where
$\mathbb{N}=\{0,1,\ldots\}$. We denote the initial monomial of a
non-zero polynomial $f\in S$ by ${\rm in}_\prec(f)$ and the initial
ideal of $I$ by ${\rm in}_\prec(I)$. A subset $\mathcal{G}=\{g_1,\ldots, g_n\}$ of $I$ is called a 
{\it Gr\"obner basis\/} of $I$ if ${\rm
in}_\prec(I)=({\rm in}_\prec(g_1),\ldots,{\rm in}_\prec(g_n))$. A monomial $t^a$ is called a 
\textit{standard monomial} of $S/I$, with respect 
to $\prec$, if $t^a\notin{\rm in}_\prec(I)$. 
The \textit{footprint} of $S/I$ or {\it Gr\"obner \'escalier\/} of $I$, denoted
$\Delta_\prec(I)$, is the finite set of all standard monomials of $S/I$. The
footprint has been used in connection with many kinds of codes and
their basic parameters 
\cite{carvalho,geil-2008,geil-hoholdt,geil-pellikaan,rth-footprint,Pellikaan,toric-codes}. 

If $\mathcal{A}\subset S$, the $K$-linear subspace 
of $S$ spanned by $\mathcal{A}$ is denoted by $K\mathcal{A}$.  
The linear code $\mathcal{L}_X$ is called a \textit{standard
evaluation code} on $X$ relative to $\prec$ if 
$\mathcal{L}$ is a linear subspace 
of $K\Delta_\prec(I)$.
A polynomial $f$ is called
a \textit{standard polynomial} of $S/I$ if $f\neq 0$ and $f$ is in
$K\Delta_\prec(I)$. 
As the field $K$ and the footprint $\Delta_\prec(I)$ are finite,
there are only a finite number of standard 
polynomials. Any evaluation code $\mathcal{L}_X$ on $X$ can be regarded as a
standard evaluation code on $X$ after a suitable transformation of
a generating set for $\mathcal{L}$ \cite{toric-codes}
(Proposition~\ref{transforming-new}). Furthermore, 
given $\mathcal{L}\subset S$ and a monomial order $\prec$, there exists a unique linear subspace
$\widetilde{\mathcal{L}}$ of $K\Delta_\prec(I)$ such 
that $\widetilde{\mathcal{L}}_X=\mathcal{L}_X$
(Corollary~\ref{unique-standard}, Example~\ref{8points-in-A3}). 
We call $\widetilde{\mathcal{L}}$ 
the \textit{standard function space} of $\mathcal{L}_X$. In principle, the basic 
parameters of $\mathcal{L}_X$ can be computed once we determine finite generating
sets for $\widetilde{\mathcal{L}}$ and $I$
\cite{toric-codes}.

Following \cite[p.~16]{Bras-Amoros-O'Sullivan}, let $\varphi$ be the $K$-linear map given by 
$$
\varphi\colon S\rightarrow K,\quad f\mapsto f(P_1)+\cdots+f(P_m).
$$
\quad The kernel of $\varphi$ is a linear 
subspace of $S$ and $S/{\rm ker}(\varphi)\simeq K$. The 
linear subspace of $S$ of all $g\in S$ such that $g\mathcal{L}\subset{\rm
ker}(\varphi)$ is denoted by $({\rm ker}(\varphi)\colon \mathcal{L})$.
The \textit{algebraic dual} of $\mathcal{L}_X$ relative to $\prec$,
denoted $\mathcal{L}^\perp$, 
is the $K$-linear subspace of $S$ given by  
$$\mathcal{L}^\perp:=({\rm ker}(\varphi)\colon
\mathcal{L})\textstyle\bigcap K\Delta_\prec(I),$$
we will also call $\mathcal{L}^\perp$ the \textit{dual} of 
$\mathcal{L}$. The dual $\mathcal{L}^\perp$ is isomorphic to
$(\mathcal{L}^\perp)_X$ (Lemma~\ref{apr26-20}).

Families of linear codes that are closed under taking duals include
generalized 
toric codes \cite[Proposition~3.5]{Bras-Amoros-O'Sullivan},
\cite[Theorem~6]{Ruano}, monomial evaluation codes over the affine
space $\mathbb{A}^s$ that are divisor closed \cite[Proposition~2.4,
Remark~2.5]{Bras-Amoros-O'Sullivan},  
$q$-ary Reed--Muller codes
\cite[Theorem~2.2.1]{delsarte-goethals-macwilliams},
\cite[Remark~4.7]{Pellikaan}, projective Reed--Muller-type
codes over complete intersections \cite[Theorem~2]{sarabia7}, and 
algebraic geometry codes \cite[Theorem~2.2.10]{stichtenoth}. In
these cases duality formulas for the
respective dual codes are given. 

The next result gives a formula for the dual of
$\mathcal{L}_X$ 
in terms of its algebraic dual.

\noindent \textbf{Theorem~\ref{formula-dual}.}\textit{
$(\mathcal{L}_X)^\perp$ is the standard evaluation code
$(\mathcal{L}^\perp)_X$ on $X$ relative 
to $\prec$.
}

A subspace $L$ of $S$ is called
a \textit{monomial space} of $S$ if $L=K\{t^{a_1},\ldots,t^{a_k}\}$ for some
$t^{a_1},\ldots,t^{a_k}$. We say that $\mathcal{L}_X$ is a 
\textit{monomial code} if $\mathcal{L}$ is a monomial space of $S$, and we
say that $\mathcal{L}_X$ is a 
\textit{standard monomial code} if the standard function
space $\widetilde{\mathcal{L}}$ of $\mathcal{L}_X$ is a monomial space
of $S$ (cf. \cite[Definition~1.1]{Lopez-Matthews-Soprunov}). 
If $I$ is a binomial ideal, that is, $I$ is generated by elements of
the form  $t^a-t^b$, and $\mathcal{L}$ is a monomial space of $S$, then
$\mathcal{L}_X$ is a standard monomial code 
(Proposition~\ref{binomial-standard}). 
As an application of Theorem~\ref{formula-dual} we obtain an 
effective criterion for verifying whether or not 
the dual of an evaluation code is a monomial code
(Proposition~\ref{monomial-criterion}, Procedure~\ref{8points-in-A3-procedure}).  

The formula of Theorem~\ref{formula-dual} can be used to compute a
generating set for the algebraic dual of $\mathcal{L}_X$. We show an 
effective algorithm, based on Gaussian elimination, to compute a $K$-basis of any linear subspace of
$S$ of finite dimension (Theorem~\ref{dim-algo}). 
This algorithm can be used to compute a $K$-basis for the
algebraic dual $\mathcal{L}^\perp$ of $\mathcal{L}_X$
 and also for the standard function space $\widetilde{\mathcal{L}}$ of
 $\mathcal{L}_X$ (Examples~\ref{8points-in-A3}, \ref{7points-in-A3}--\ref{5points-in-A2},
 Procedure~\ref{8points-in-A3-procedure}).

In Section~\ref{v-number-section} we introduce
and study the v-number of $I$ \cite{min-dis-generalized}, and the
indicator functions of $X$ 
that are used in coding
theory \cite{min-dis-generalized,sorensen}, Cayley--Bacharach 
schemes \cite{geramita-cayley-bacharach}, and interpolation problems
\cite{cocoa-book}. 
As is seen later in the
introduction these notions are used as devices to study the duality
of standard monomial codes, as well as the asymptotic behavior of 
the minimum distance and the duality of Reed--Muller-type codes. 

Let ${\rm
Ass}(I)=\{\mathfrak{p}_1,\ldots,\mathfrak{p}_m\}$ be the 
set of \textit{associated primes} of 
$I$, that is, $\mathfrak{p}_i$ is the vanishing 
ideal $I_{P_i}$ of $P_i$ and $I=\bigcap_{i=1}^m\mathfrak{p}_i$ is the primary
decomposition of $I$ (Lemma~\ref{primdec-ixx}). The v-\textit{number}
of the ideal $I$ at $\mathfrak{p}_i$, denoted ${\rm v}_{\mathfrak{p}_i}(I)$, is
given by
$$
{\rm v}_{\mathfrak{p}_i}(I):=\mbox{min}\{d\geq 0 \mid
\exists\, 0\neq f\in S,\ \deg(f)=d,\mbox{ with }(I\colon
f)=\mathfrak{p}_i\},
$$
where $(I\colon f):=\{g\in S\, \vert\, gf\subset I \}$ is a
\textit{colon ideal}, and the {\rm v}-\textit{number} of the ideal $I$, denoted
${\rm v}(I)$, is given by ${\rm v}(I):={\rm min}\{{\rm
v}_{\mathfrak{p}_i}(I)\}_{i=1}^m$.  
The notion of v-number is related to indicator
functions as we now
explain.  A polynomial $f$ in $S$ is called an
\textit{indicator function} for $P_i$ if $f(P_i)\neq 0$ and $f(P_j)=0$ if
$j\neq i$ \cite{sorensen}. Indicator functions can be
computed using \cite[Corollary~6.3.11]{cocoa-book}. The v-number of
$I$ at $\mathfrak{p}_i$ is the least degree of 
an indicator function for $P_i$ (Lemma~\ref{if-new}). 

For an ideal $M\neq 0$ of $S/I$, we define 
$\alpha(M)$ to be the minimum degree of the non-zero elements of $M$.
To compute the v-number using {\it Macaulay\/}$2$ \cite{mac2} 
(Example~\ref{Hiram-example}), we give the following description
for the v-number of $I$ at $\mathfrak{p}_i$
(Proposition~\ref{lem:vnumber}):
$${\rm
v}_{\mathfrak{p}_i}(I)=\alpha\!\left((I\colon\mathfrak{p}_i)/{I}\right)\text{ for
all $i$}.
$$
This computation, along with other coding theory tools, is implemented in \cite{package}.

For each point $P_i$ there exists a unique indicator function $f_i$ for $P_i$ in
$K\Delta_\prec(I)$ satisfying $f_i(P_i)=1$, furthermore the degree of $f_i$ is ${\rm
v}_{\mathfrak{p}_i}(I)$, and $F=\{f_1,\ldots,f_m\}$ is a $K$-basis for 
$K\Delta_\prec(I)$ (Proposition~\ref{indicator-function-prop}(a)).
 We call $f_i$ the $i$-th \textit{standard indicator function} for
 $P_i$ and call $F$ the set of \textit{standard indicator
functions} for $X$. As a byproduct we obtain an algebraic method to compute the set
$F$ (Remark~\ref{nov23-20}, Example~\ref{Hiram-example},
Procedure~\ref{8points-in-A3-procedure}). We give the following
formula
\begin{align*}
&{\rm ker}(\varphi)=K\{f_i-f_m\}_{i=1}^{m-1}+I,
\end{align*}
for the kernel of the map $\varphi$ that was used earlier to define
$\mathcal{L}^\perp$ (Proposition~\ref{indicator-function-prop}). 

Given a subset 
$\Gamma\subset\Delta_\prec(I)$, let $\cL(\Gamma)$ be the $K$-span of the set
of all $t^a\in\Gamma$. Then $\cL(\Gamma)_X$ 
is called the {\it standard monomial code} of $\Gamma$.
Consider two standard monomial codes $\cL(\Gamma_1)_X$ and $\cL(\Gamma_2)_X$ for
some $\Gamma_1,\Gamma_2\subset\Delta_\prec(I)$. 
We give a combinatorial condition for the monomial equivalence of
$\cL(\Gamma_1)_X$ and ${\cL(\Gamma_2)_X}^\perp$. For convenience we
recall the definition of this notion. 
We say that two linear codes $C_1,C_2$ in $K^m$
are \textit{monomially equivalent} if there is $\beta=(\beta_1,\ldots,\beta_m)$
in $K^m$ such that $\beta_i\neq 0$ for all $i$ and 
$$C_2 = \beta\cdot C_1=\{\beta\cdot c \mid c\in C_1\},$$
where $\beta\cdot c$ is the vector given by
$(\beta_1c_1,\ldots,\beta_mc_m)$ for 
$c=(c_1,\ldots,c_m)\in C_1$. 

To state the main result of
Section~\ref{S:duality-standard-monomial-codes} we will need the
following definition.  We say a standard monomial 
$t^e\in\Delta_\prec(I)$ is {\it essential} if it appears in each
standard indicator function of $X$.

\noindent \textbf{Theorem~\ref{T:combinatorial-condition}.}\textit{ 
Let $t^e$ be 
essential. Then for any $\Gamma_1,\Gamma_2\subset\Delta_\prec(I)$
satisfying  
\begin{enumerate}
\item $|\Gamma_1|+|\Gamma_2|=|X|$,
\item $t^e$ does not appear in the reduction of $u_1u_2$ modulo $I$ for every
$u_1\in\Gamma_1$ and $u_2\in\Gamma_2$,
\end{enumerate}
we have 
$\beta\cdot\cL(\Gamma_1)_X={\cL(\Gamma_2)_X}^\perp,$
for some $\beta=(\beta_1,\dots, \beta_m)\in K^m$ such that
$\beta_i\neq 0$ for all $i$. Moreover,
$\beta_i$ is the coefficient of $t^e$ in the $i$-th standard indicator 
function~$f_i$, for all $i$. 
}

Given an integer
$d\geq 0$, we let $S_{\leq 
d}=\bigoplus_{i=0}^dS_i$ be the $K$-linear subspace of $S$ of all 
polynomials of degree at most $d$ and let $I_{\leq d}=I\bigcap S_{\leq d}$.  
We set $S_{\leq -1}=\{0\}$, by convention.
The function
$$
H_I^a(d):=\dim_K(S_{\leq d}/I_{\leq d}),\ \ \ d=-1,0,1,2,\ldots
$$
is  called the \textit{affine Hilbert function} of $S/I$. In particular,
$H_I^a(-1)=0$. The \textit{regularity index} of
$H_I^a$, denoted $r_0={\rm reg}(H_I^a)$, is the 
least integer $\ell\geq 0$ such that $H_I^a(d)=|X|$ for $d\geq\ell$
(Proposition~\ref{behavior-hilbert-function}). Note that $r_0\geq 1$
because $|X|\geq 2$. 

If $\mathcal{L}$ is equal to $S_{\leq d}$, then the resulting evaluation
code $\mathcal{L}_X$ is called a \textit{Reed-Muller-type code} of degree $d$ on $X$
\cite{duursma-renteria-tapia,GRT} and is denoted by $C_X(d)$. 

The 
minimum distance of $C_X(d)$ is simply 
denoted by $\delta_X(d)$. 
The v-number of $I$ is related to the asymptotic behavior of
$\delta_X(d)$ for $d\gg 0$. By
Proposition~\ref{behavior-hilbert-function}, there $n$ in $\mathbb{N}$ such that 
$$
|X|=\delta_X(0)>\delta_X(1)>\cdots>\delta_X(n-1)>\delta_X(n)=\delta_X(d)=1\
\mbox{ for }\ d\geq n.
$$
\quad The number $n$, denoted ${\rm reg}(\delta_X)$, is called the 
\textit{regularity index} of $\delta_X$. By the 
Singleton bound \cite[p.~71]{Huffman-Pless}, one has the inequality ${\rm
reg}(\delta_X)\leq{\rm reg}(H_I^a)$. Using indicator functions we
prove that ${\rm v}(I)$ is equal to ${\rm reg}(\delta_X)$ 
(Proposition~\ref{tuesday-afternoon}), and consequently using
Proposition~\ref{lem:vnumber} we can 
compute ${\rm reg}(\delta_X)$ with {\it Macaulay\/}$2$ \cite{mac2}
(Example~\ref{8points-in-A3}). 

In Section~\ref{duality-criterion-rm} we give a duality criterion for
the monomial equivalence of the linear codes $C_X(d)$ and $C_X(r_0-d-1)^\perp$ 
for $-1\leq d\leq r_0$, where 
$r_0={\rm reg}(H_I^a)$. 
As
$\dim_K(C_X(d))=H_I^a(d)$, a necessary condition for this equivalence
is the equality 
$$H_I^a(d)+H_I^a(r_0-d-1)=|X|\ \mbox{ for }\ -1\leq d\leq r_0.$$
\quad In 
Section~\ref{prelim-section} we
characterize this equality in terms of the symmetry of
$h$-vectors and the symmetry of the function 
$\psi(d)=|\Delta_\prec(I)\bigcap S_d|$ (Proposition~\ref{duality-hilbert-function}). 

We come to one of our main results. 

\noindent \textbf{Theorem~\ref{duality-criterion}.} (Duality
criterion)\textit{  Let $r_0={\rm reg}(H_I^a)$. The following are equivalent.
\begin{enumerate} 
\item[(a)] $C_X(d)$ is monomially equivalent to $C_X(r_0-d-1)^\perp$  for 
$-1\leq d\leq r_0$.  
\item[(b)] $H_I^a(d)+H_I^a(r_0-d-1)=|X|$ for $-1\leq d\leq r_0$ 
and $r_0={\rm v}_{\mathfrak{p}}(I)$ for $\mathfrak{p}\in{\rm Ass}(I)$.
\item[(c)] There is 
$g\in K\Delta_\prec(I)$ such that $g(P_i)\neq 0$ for all $i$ and
\begin{equation*}
C_X(r_0-d-1)^\perp=(g(P_1),\ldots,g(P_m))\cdot C_X(d)\ \text{ for
\ $-1\leq d\leq r_0$}.
\end{equation*}
\end{enumerate}
} 
The standard polynomial $g$ of
part (c) is unique up to 
multiplication by a scalar from $K^*$, where $K^*:=K\setminus\{0\}$. If $F=\{f_1,\ldots,f_m\}$ is the
unique set of standard indicator functions for $X$, then $g$ is equal to 
$\sum_{i=1}^m{\rm lc}(f_i)f_i$ (see Example~\ref{Ivan-Hiram} for an illustration). 
The value of $g$ at $P_i$ is ${\rm lc}(f_i)$, the leading coefficient of $f_i$. We will use this
criterion to show duality for some
interesting families and recover some known results. 
The condition $r_0={\rm v}_\mathfrak{p}(I)$ for
$\mathfrak{p}\in{\rm Ass}(I)$ that appears in the duality criterion
defines a Cayley--Bacharach scheme (CB-scheme) in the projective case
when $K$ is an infinite field 
\cite[Definition~2.7]{geramita-cayley-bacharach}, and is related to Hilbert functions. 

Gorenstein and complete intersection ideals---and some of their
properties---are introduced in
Section~\ref{prelim-section}. If $I$ is a complete intersection
generated by a Gr\"obner basis with $s$ elements, then
the ideal $I$ is Gorenstein (Corollary~\ref{duality-hilbert-gorenstein}(c)). The
converse is not true (Example~\ref{Ivan-Hiram}). 
If $I$ is Gorenstein, then $H_I^a(d)+H_I^a(r_0-d-1)=|X|$ for $-1\leq
d\leq r_0$ (Corollary~\ref{duality-hilbert-gorenstein}(a)). 
This result, together with the next theorem, shows 
that the 
combinatorial condition of
Theorem~\ref{T:combinatorial-condition} and the conditions of
Theorem~\ref{duality-criterion}(b) are satisfied 
when $I$ is
a Gorenstein ideal. 

\noindent \textbf{Theorem~\ref{gorenstein-vnumber}.}\textit{ 
Let $F=\{f_1,\ldots,f_m\}$ be the set of standard indicator
functions for $X$. If $I$ is Gorenstein, then  ${\rm
reg}(H_I^a)={\rm v}_{\mathfrak{p}}(I)$ for $\mathfrak{p}\in{\rm
Ass}(I)$ and $\mathrm{in}_\prec(f_i)=\mathrm{in}_\prec(f_m)$ for all $i$.
}

The following result includes the family of Reed--Muller-type codes over complete
intersections and in particular---since vanishing ideals of Cartesian
sets are complete intersections generated by a Gr\"obner basis with
$s$ elements \cite[Lemma~2.3]{cartesian-codes}---we recover the duality theorems for  
affine Cartesian codes given in \cite[Theorem~5.7]{GHWCartesian} and 
\cite[Theorem~2.3]{Lopez-Manganiello-Matthews}.

\noindent \textbf{Corollary~\ref{gorenstein-codes}.}\textit{ 
Let $r_0$ be the regularity index of $H_I^a$. If $I$ is a Gorenstein
ideal, then there is 
$g\in K\Delta_\prec(I)$ such that $g(P_i)\neq 0$ for all $i$ and
\begin{equation*}
(g(P_1),\ldots,g(P_m))\cdot C_X(r_0-d-1)=C_X(d)^\perp\ \text{ for
\ $-1\leq d\leq r_0$}.
\end{equation*}
}
\quad As an application, we produce self-dual Reed--Muller-type 
codes when $I$ is Gorenstein, ${\rm char}(K)=2$, and $r_0$ is odd
(Corollary~\ref{self-dual-gorenstein-codes}). 

\quad In Section~\ref{dual-monomial-section} we give an explicit
description for the
algebraic dual of $\mathcal{L}_T$ when $\mathcal{L}$ is a monomial 
space of $S$ and $T=\{P_1,\ldots,P_m\}$ is the set of points in a degenerate
torus (Proposition~\ref{dual-toric-degenerate}, Example~\ref{example1}). In this case the
vanishing ideal of $T$ is a complete
intersection binomial ideal and, by
Proposition~\ref{binomial-standard}, $\mathcal{L}_T$ is a standard
monomial code. Let $T=(K^*)^s$ be a torus in $\mathbb{A}^s$ and let
$\mathcal{L}_T$ be a \textit{generalized toric code} on $T$ 
\cite{Little,Ruano,Soprunov}, that is, $\mathcal{L}$ is a monomial
space of $S$. Bras-Amor\'os and O'Sullivan \cite[Proposition~3.5]{Bras-Amoros-O'Sullivan} 
and independently Ruano \cite[Theorem~6]{Ruano} compute the dual of
$\mathcal{L}_T$ and show that the dual of $\mathcal{L}_T$ 
is a generalized toric code. As an application we
recover these results (Corollary~\ref{dual-toric-coro}).

The rest of this paper is devoted to study the dual of monomial codes
on a degenerate affine space. 
Let $K=\mathbb{F}_q$ be a finite field of characteristic $p$, let $A_1,\ldots,A_s$ be  
subgroups of the multiplicative group $K^*$ of 
$K$, let $B_i$ be the set $A_i\bigcup\{0\}$ for $i=1,\ldots,s$, 
let  
$$\mathcal{X}:=B_1\times\cdots\times B_s,$$
and let $\mathcal{L}_\mathcal{X}$
be a monomial code on $\mathcal{X}$. The set $\mathcal{X}$ is called 
a \textit{degenerate affine space}. In this case the vanishing ideal
$I=I(\mathcal{X})$ is a complete intersection binomial ideal. By 
Proposition~\ref{binomial-standard}, we may assume
that $\mathcal{L}$ is the standard function space
$\widetilde{\mathcal{L}}$ of $\mathcal{L}_\mathcal{X}$
and that
$\mathcal{A}=\{t^{a_1},\ldots,t^{a_k}\}\subset\Delta_\prec(I)$ is a
$K$-basis for $\mathcal{L}$ with $a_i=(a_{i,1},\ldots,a_{i,s})$ for $i=1,\ldots,k$. 
Following
\cite[p.~16]{Bras-Amoros-O'Sullivan}, we say that 
$\mathcal{A}$ is \textit{divisor-closed} if $t^a\in\mathcal{A}$ whenever $t^a$ divides
a monomial in $\mathcal{A}$. To classify when the dual of
$\mathcal{L}_\mathcal{X}$ is a standard monomial code, we introduce a
weaker notion than divisor-closed that we call \textit{weakly
divisor-closed} 
(Definition~\ref{weakly-divisor-closed-def}). The order of the
multiplicative monoid $B_i$ is denoted by $e_i$ and  
the order of $A_i$ is denoted by $d_i$ for $i=1,\ldots,s$. For use below we set 
\begin{equation*}
t^{b_i}=t_1^{b_{i,1}}\cdots
t_s^{b_{i,s}}:=\displaystyle\textstyle\prod_{j=1}^st_j^{d_j-a_{i,j}},
\end{equation*}
for $i=1,\ldots,k$ and $\mathcal{B}:=\{t^{b_1},\ldots,t^{b_k}\}$. 
Note that $(\mathcal{L}^\perp)_\mathcal{X}$ is a standard monomial
code if and only if $\mathcal{L}^\perp$ is a monomial space of $S$ because
the standard function space of $(\mathcal{L}^\perp)_\mathcal{X}$ is
$\mathcal{L}^\perp$. 

We come to another of our main results.

\noindent \textbf{Theorem~\ref{dual-affine-degenerate}.}\textit{
Let $K$ be a field of characteristic $p$ and $\mathcal{X}$ a degenerate affine space as above.
Assume that $\gcd(p,e_i)=p$, where $e_i=|B_i|$,
for $i=1,\ldots,s$. The following are equivalent.
\begin{enumerate}
\item[(a)] $\mathcal{A}=\{t^{a_1},\ldots,t^{a_k}\}$ is 
weakly divisor-closed. 
\item[(b)] $\mathcal{L}^\perp=K(\Delta_\prec(I)\setminus\mathcal{B})$. 
\item[(c)] $(\mathcal{L}_\mathcal{X})^\perp$ is
a standard monomial code on $\mathcal{X}$. 
\end{enumerate}
}

Let $\mathcal{L}_\mathcal{X}$ be a monomial standard evaluation code on
$\mathcal{X}=K^s$. Then $\mathcal{L}$ is generated by a
subset $\mathcal{A}$ of $\Delta_\prec(I)$. 
Bras-Amor\'os and O'Sullivan \cite[Proposition~2.4, Remark~2.5]{Bras-Amoros-O'Sullivan} 
 compute the dual of $\mathcal{L}_\mathcal{X}$ when $\mathcal{A}$ is divisor
 closed. As an application we recover this result
 (Corollary~\ref{dual-affine-coro}).

\quad In the next result we determine the dual of $K(S_{\leq
d}\bigcap \Delta_\prec(I))$.

\noindent \textbf{Theorem~\ref{algebraic-dual-reed-muller}.}\textit{ 
Let $K$ be a field of characteristic $p$ and $\mathcal{X}$ a degenerate affine space as above.
Assume that $\gcd(p,e_i)=p$, where $e_i=|B_i|$,
for $i=1,\ldots,s$.  If $-1\leq d\leq r_0=\sum_{i=1}^s(e_i-1)$
and $\mathcal{L}=
K(S_{\leq d}\bigcap \Delta_\prec(I))$, then 
$$\mathcal{L}^\perp=K(\Delta_\prec(I)\setminus\{t^{b_1},\ldots,t^{b_k}\})=K(S_{\leq
r_0-d-1}\textstyle\bigcap\Delta_\prec(I)).
$$
}
\quad The codes $C_\mathcal{X}(d)^\perp$ and $C_\mathcal{X}(r_0-d-1)$
are monomially equivalent
because $I$ is a complete intersection
(Corollary~\ref{gorenstein-codes}). 
We show they are equal if ${\rm char}(K)$ divides $e_i$ for all $i$
(Proposition~\ref{dual-affine-degenerate-reed-muller}). 
When $\mathcal{X}=K^s$ the equality 
$C_\mathcal{X}(d)=C_\mathcal{X}(r_0-d-1)^\perp$, $r_0=s(q-1)$, has long been known; see for 
example \cite[Theorem~2.2.1]{delsarte-goethals-macwilliams} 
and \cite[Remark~4.7]{Pellikaan}.

\quad We include one section with examples (Section~\ref{examples-section}) and an appendix with
implementations of the algorithms in \textit{Macaulay}$2$ \cite{mac2}
that we used in the examples to compute bases for algebraic
duals, v-numbers, and standard indicator functions (Appendix~\ref{Appendix}).  

For all unexplained
terminology and additional information we refer the reader to 
\cite{CLO,cocoa-book,Sta1,monalg-rev} (for the theory of Gr\"obner bases and Hilbert
functions), and
\cite{Huffman-Pless,MacWilliams-Sloane,tsfasman} (for the theory of
error-correcting codes and linear codes).

\section{Preliminaries: Hilbert functions and vanishing ideals}\label{prelim-section} 
In this section we introduce Hilbert functions and characterize the
symmetry of the $h$-vector of the homogenization of a vanishing ideal.

Let $S=K[t_1,\ldots,t_s]=\bigoplus_{d=0}^\infty S_d$ be a polynomial ring
over a finite field $K=\mathbb{F}_q$ with the standard grading and let $I$ be an ideal of $S$. 
The Krull dimension of $S/I$ is
denoted by $\dim(S/I)$. We say that $I$ has \textit{dimension} $k$ if
$\dim(S/I)$ is equal to $k$. The \textit{height} of $I$, 
denoted ${\rm ht}(I)$, is $s-\dim(S/I)$. We set $S_{\leq
d}=\bigoplus_{i=0}^dS_i$, $S_{\leq -1}=\{0\}$, and $I_{\leq d}=I\textstyle\bigcap S_{\leq d}$. The function 
$$
H_I^a(d):=\dim_K(S_{\leq d}/I_{\leq d}),\ \ \ d=-1,0,1,2,\ldots,
$$
is  called the \textit{affine Hilbert function} of $S/I$. In
particular, $H_I^a(-1)=0$. For
simplicity we also call $H_I^a$ the affine Hilbert function of $I$.
The \textit{Hilbert function} of a graded ideal $J$ of $S$, denoted
$H_J$, is the function given by $H_J(d):=\dim_K(S_d/J_d)$ for $d\geq -1$, where
$J_d=S_d\textstyle\bigcap J$. 

Let $u=t_{s+1}$ be a new variable. For $f\in S$ of degree $e$ define 
$$
f^h:=u^ef\left({t_1}/{u},\ldots,{t_s}/{u}\right),
$$
that is,  $f^h$ is the homogenization of the polynomial 
$f$ with respect to $u$. The {\it homogenization\/}
of $I$ is the ideal $I^h$ of $S[u]$ given by 
$I^h:=(\{f^h|\, f\in I\})$, where $S[u]$ is given the 
standard grading. One has the following two well-known facts
\begin{equation}\label{nov13-20}
\dim(S[u]/I^h)=\dim(S/I)+1\mbox{ and } H_I^a(d)=H_{I^h}(d)
\mbox{ for }d\geq -1,
\end{equation}
where $H_{I^h}$ is the Hilbert function of the graded ideal $I^h$, 
see for instance
\cite[Lemma~8.5.4]{monalg-rev}. If $k=\dim(S/I)$, by a Hilbert
theorem \cite[p.~58]{Sta1}, there is a unique polynomial
$h^a_I(z)=\sum_{i=0}^{k}a_iz^i$ of degree $k$ in $\mathbb{Q}[z]$ such that
$h^a_I(d)=H_I^a(d)$ for $d\gg 0$. By convention the
degree of the zero polynomial is $-1$.   
The integer $k!\, a_k$, denoted ${\rm deg}(S/I)$, is called the
\textit{degree} of $S/I$.  The degree of $S/I$ is equal to 
$\deg(S[u]/I^{h})$. 
If $k=0$, then $H_I^a(d)=\deg(S/I)=\dim_K(S/I)$ for $d\gg 0$. Note
that the degree of $S/I$ is positive if $I\subsetneq S$ and is $0$
otherwise. 

We say that $I$ is a \textit{complete intersection} if $I$ can be
generated by ${\rm ht}(I)$ elements. The ideal $I$ and the ring $S/I$
are called \textit{Gorenstein} if the localization of $S/I$ at every
maximal ideal is a Gorenstein local ring in the sense of
\cite[Definition~2.8.3]{monalg-rev}. If $I=I(X)$ is the vanishing ideal of
a set of points in $K^s$, using Lemma~\ref{primdec-ixx} below,
it follows that $I$ is Gorenstein. Permitting an abuse of
terminology, we say that $I=I(X)$ is
\textit{Gorenstein} if $S[u]/I^h$ is a Gorenstein graded ring, that is, 
$S[u]/I^h$ is Cohen--Macaulay and the last Betti
number in the minimal graded resolution of $S[u]/I^h$ 
is equal to $1$ \cite[Corollary 5.3.5]{monalg-rev} (Example~\ref{Ivan-Hiram}). 

An element $f\in S$ is called a {\it zero-divisor\/} of $S/I$---as an
$S$-module---if there is
$\overline{0}\neq \overline{a}\in S/I$ such that
$f\overline{a}=\overline{0}$, and $f$ is called {\it regular\/} on
$S/I$ otherwise. Note that $f$ is a zero-divisor of $S/I$ if
and only if $(I\colon f)\neq I$. An associated prime of $I$ is a prime
ideal $\mathfrak{p}$ of $S$ of the form $\mathfrak{p}=(I\colon f)$
for some $f$ in $S$. The radical of $I$ is denoted by ${\rm rad}(I)$.
The ideal $I$ is \textit{radical} if $I={\rm rad}(I)$.

\begin{theorem}{\cite[Lemma~2.1.19,
Corollary~2.1.30]{monalg-rev}}\label{zero-divisors} If $I$ is an
ideal of $S$ and
$I=\mathfrak{q}_1\bigcap\cdots\bigcap\mathfrak{q}_m$ is
an irredundant primary decomposition with ${\rm
rad}(\mathfrak{q}_i)=\mathfrak{p}_i$, then the set of zero-divisors
$\mathcal{Z}_S(S/I)$  of $S/I$ is equal to
$\bigcup_{i=1}^m\mathfrak{p}_i$,
and $\mathfrak{p}_1,\ldots,\mathfrak{p}_m$ are the associated primes of
$I$.
\end{theorem}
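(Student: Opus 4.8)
The plan is to reduce both assertions to the elementary behavior of $\mathfrak{p}$-primary ideals under colon operations over the Noetherian ring $S=K[t_1,\dots,t_s]$. First I would record the three basic facts: if $\mathfrak{q}$ is $\mathfrak{p}$-primary and $f\in S$, then $(\mathfrak{q}\colon f)=S$ when $f\in\mathfrak{q}$; $(\mathfrak{q}\colon f)$ is again $\mathfrak{p}$-primary (in particular $\mathrm{rad}(\mathfrak{q}\colon f)=\mathfrak{p}$) when $f\notin\mathfrak{q}$; and $(\mathfrak{q}\colon f)=\mathfrak{q}$ when $f\notin\mathfrak{p}$. Coupled with the identity $(I\colon f)=\bigcap_{i=1}^m(\mathfrak{q}_i\colon f)$ and the fact that each $\mathfrak{p}_i$ is finitely generated (so $\mathfrak{p}_i^N\subseteq\mathfrak{q}_i$ for $N\gg 0$), these let me compute colon ideals of $I$ explicitly, which is all the rest of the argument needs.

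For the equality $\mathcal{Z}_S(S/I)=\bigcup_{i=1}^m\mathfrak{p}_i$ I would prove both inclusions directly. If $f$ is a zero-divisor of $S/I$, choose $a\in S\setminus I$ with $fa\in I$; picking $i$ with $a\notin\mathfrak{q}_i$, the relation $fa\in\mathfrak{q}_i$ together with primariness of $\mathfrak{q}_i$ forces $f\in\mathfrak{p}_i$. Conversely, if $f\in\mathfrak{p}_i$, use irredundancy of the decomposition to pick $g\in\bigl(\bigcap_{j\neq i}\mathfrak{q}_j\bigr)\setminus\mathfrak{q}_i$; since $f^N\in\mathfrak{q}_i$ for $N\gg 0$ we have $f^N g\in I$, so taking $n\geq 1$ minimal with $f^n g\in I$ and setting $h:=f^{n-1}g$ produces $h\notin I$ with $fh\in I$, that is, $f\in\mathcal{Z}_S(S/I)$.

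For the claim that $\mathfrak{p}_1,\dots,\mathfrak{p}_m$ are precisely the associated primes of $I$, that is, the primes of the form $(I\colon f)$, one inclusion is immediate: if $(I\colon f)$ is prime it equals its radical $\bigcap_{f\notin\mathfrak{q}_i}\mathfrak{p}_i$, and a prime equal to a finite intersection of primes coincides with one of them. The reverse inclusion is the step I expect to require the most care, because we must realize $\mathfrak{p}_i$ itself as a colon ideal rather than merely an ideal with radical $\mathfrak{p}_i$: fix $i$, choose $g\in\bigl(\bigcap_{j\neq i}\mathfrak{q}_j\bigr)\setminus\mathfrak{q}_i$, let $n\geq 1$ be minimal with $\mathfrak{p}_i^n g\subseteq I$, and pick $h\in\mathfrak{p}_i^{n-1}g\setminus I$. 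Then $\mathfrak{p}_i h\subseteq\mathfrak{p}_i^n g\subseteq I$ gives $\mathfrak{p}_i\subseteq(I\colon h)$, while $h\in\mathfrak{q}_j$ for all $j\neq i$ and $h\notin\mathfrak{q}_i$ give $(I\colon h)=(\mathfrak{q}_i\colon h)\subseteq\mathfrak{p}_i$; hence $(I\colon h)=\mathfrak{p}_i$. This exhibits each $\mathfrak{p}_i$ as an associated prime, and combined with the first inclusion it identifies the set of associated primes with $\{\mathfrak{p}_1,\dots,\mathfrak{p}_m\}$ independently of the chosen irredundant decomposition. Since the statement is exactly \cite[Lemma~2.1.19, Corollary~2.1.30]{monalg-rev}, in the paper it is invoked by citation rather than reproved.
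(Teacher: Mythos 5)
Your proof is correct and complete; note that the paper itself gives no argument for this statement, invoking it by citation to \cite[Lemma~2.1.19, Corollary~2.1.30]{monalg-rev}, so there is no in-paper proof to compare against. What you wrote is the standard argument: both inclusions for $\mathcal{Z}_S(S/I)=\bigcup_i\mathfrak{p}_i$ are handled correctly (primariness in one direction, irredundancy plus the minimal power $f^{n-1}g$ in the other), and your realization of each $\mathfrak{p}_i$ as a genuine colon ideal $(I\colon h)$ with $h\in\mathfrak{p}_i^{n-1}g\setminus I$ is exactly the right extra step, matching the paper's definition of an associated prime as a prime of the form $(I\colon f)$.
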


Recall $C_{X}(d)$ denotes the Reed--Muller-type code of degree $d$ on
a set $X$ 
of points in $K^s$ and $\delta(C_{X}(d))$ represents the minimum distance of the code.

\begin{proposition}\cite[Corollary~2.6]{affine-codes}\label{behavior-hilbert-function}
Let $X$ be a subset of $K^s$ and let $I=I(X)$ be its
vanishing ideal. 
Then, $H_I^a$ is increasing until
it reaches the constant value $|X|$, and $\delta(C_{X}(d))$ is
decreasing, as a function of $d$, until 
it reaches the constant value $1$. In particular, $\deg(S/I)=|X|$.
\end{proposition}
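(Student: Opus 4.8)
The plan is to work directly with the evaluation map ${\rm ev}\colon S\to K^m$ and its restrictions to the truncations $S_{\leq d}$, rather than with the homogenization, since over a finite field the usual generic-linear-form reductions for one-dimensional graded rings are unavailable. First I would note that, as $\ker({\rm ev})=I$, the restriction of ${\rm ev}$ to $S_{\leq d}$ has kernel $I_{\leq d}$, so $S_{\leq d}/I_{\leq d}\cong{\rm ev}(S_{\leq d})\subseteq K^m$ and hence $H_I^a(d)\leq|X|$ for all $d$. The inclusions $S_{\leq d}\subset S_{\leq d+1}$ give $I_{\leq d+1}\cap S_{\leq d}=I_{\leq d}$, so the induced maps $S_{\leq d}/I_{\leq d}\to S_{\leq d+1}/I_{\leq d+1}$ are injective and $H_I^a$ is non-decreasing. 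Next I would prove the stabilization step: if $H_I^a(d)=H_I^a(d+1)$ then $S_{\leq d+1}=S_{\leq d}+I_{\leq d+1}$; writing a degree-$(d+1)$ monomial $t^a$ as $t^a=h+g$ with $h\in S_{\leq d}$ and $g\in I$, multiplying by a variable $t_j$ shows $t_jt^a\in S_{\leq d+1}+I_{\leq d+2}$, and since such products span $S_{d+2}$ modulo $S_{\leq d+1}$ we get $H_I^a(d+2)=H_I^a(d+1)$; inductively $H_I^a$ is constant from $d$ onward. Thus $H_I^a$ is strictly increasing until it stabilizes. Finally, since ${\rm ev}$ is surjective onto $K^m$ and $S=\bigcup_d S_{\leq d}$, taking $d$ large enough to contain preimages of a $K$-basis of $K^m$ yields ${\rm ev}(S_{\leq d})=K^m$, so the eventual constant value of $H_I^a$ equals $|X|$.

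For the minimum distance, monotonicity is immediate: $S_{\leq d}\subset S_{\leq d+1}$, so the set over which the minimum defining $\delta(C_X(d+1))$ is taken contains the one for $\delta(C_X(d))$, whence $\delta(C_X(d+1))\leq\delta(C_X(d))$. To get strict decrease whenever $\delta(C_X(d))>1$, I would pick $f\in S_{\leq d}\setminus I$ attaining $|X\setminus V_X(f)|=\delta(C_X(d))\geq 2$, choose distinct $P,P'\in X\setminus V_X(f)$ and a coordinate $j$ with $P_j\neq P'_j$, and set $h:=t_j-P_j$; then $h(P)=0$, $h(P')\neq 0$, so $fh\in S_{\leq d+1}$, $fh\notin I$ because $(fh)(P')\neq 0$, and $X\setminus V_X(fh)\subsetneq X\setminus V_X(f)$ since $P$ lies in the latter but not the former. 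Hence $\delta(C_X(d+1))\leq|X\setminus V_X(fh)|<\delta(C_X(d))$. Since $f\notin I$ always forces $\delta(C_X(d))\geq 1$, and for each $i$ an indicator function $g_i=\prod_{j\neq i}\ell_{ij}$ for $P_i$ — a product of degree-one polynomials $\ell_{ij}$ with $\ell_{ij}(P_i)\neq 0$ and $\ell_{ij}(P_j)=0$ — gives $|X\setminus V_X(g_i)|=1$ and thus $\delta(C_X(d))=1$ for $d\geq\deg(g_i)$, the value $1$ is reached and then held constant by monotonicity.

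For the last assertion, the $K$-algebra isomorphism $S/I\cong K^m$ shows $S/I$ is Artinian, so $\dim(S/I)=0$; as recalled in the excerpt this gives $\deg(S/I)=\dim_K(S/I)=|X|$, which also matches the eventual value of $H_I^a$ computed above.

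The only mildly delicate points are the two strict-monotonicity arguments — in particular checking that the products $t_jh$ and $fh$ stay within the prescribed degree range — but each is a one-line verification once the setup is in place; everything else is routine bookkeeping with ${\rm ev}$.
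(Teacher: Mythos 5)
Your proof is correct. There is nothing in the paper to compare it against line by line: the paper does not prove this proposition, it simply quotes it as \cite[Corollary~2.6]{affine-codes}, so your argument serves as a self-contained, elementary substitute for an external citation. Your route stays entirely on the affine side: monotonicity and stabilization of $H_I^a$ come from the injections $S_{\leq d}/I_{\leq d}\hookrightarrow S_{\leq d+1}/I_{\leq d+1}$ together with the observation that $S_{\leq d+1}=S_{\leq d}+I_{\leq d+1}$ propagates to all higher degrees upon multiplying by variables (and the computation $S_{\leq d+1}\cap I_{\leq d+2}=I_{\leq d+1}$, which you correctly rely on); the terminal value $|X|$ comes from surjectivity of the evaluation map, which your product-of-linear-forms indicator functions $g_i=\prod_{j\neq i}\ell_{ij}$ would in any case supply, so the fact you invoke before constructing them is not circular; and the strict decay of $\delta_X(d)$ comes from multiplying a minimum-weight word by a linear form vanishing at exactly one of its nonzero positions. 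This avoids the machinery the paper otherwise leans on for such statements (homogenization $I^h$, Cohen--Macaulayness of $S[u]/I^h$, regularity of $u$, Hilbert series of one-dimensional graded rings), at the cost of being specific to vanishing ideals of finite point sets --- which is all that is needed here. Two small remarks: the final claim $\deg(S/I)=|X|$ follows just as well from your own computation that $H_I^a(d)=|X|$ for $d\gg 0$ together with the degree convention for $k=0$ recalled in Section~\ref{prelim-section}, so the appeal to $S/I\simeq K^m$ being Artinian is a convenience rather than a necessity; and the standing hypothesis $|X|\geq 2$ plays no role in your argument, which is consistent with the statement as quoted.
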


If $I=I(X)$ and $X\subset K^s$, the least integer $r_0\geq 0$ such that 
$H_I^a(d)=|X|$ (resp. $H_{I^h}(d)=|X|$) for $d\geq r_0$, denoted
${\rm reg}(H_I^a)$ (resp. ${\rm reg}(H_{I^h})$), is called the
\textit{regularity index} of $H_I^a$ (resp. $H_{I^h}$). By
Eq.~\eqref{nov13-20}, $r_0={\rm reg}(H_I^a)={\rm
reg}(H_{I^h})$. It is known that ${\rm reg}(H_{I^h})$ equals the \textit{Castelnuovo--Mumford
regularity} of $S[u]/I^h$ in the sense of
\cite[p.~55]{eisenbud-syzygies}, see for instance \cite[p.~346]{monalg-rev}. For this reason ${\rm reg}(H_{I^h})$
is simply called the \textit{regularity} of $S[u]/I^h$.

\begin{lemma}\cite[Proposition~3.4.5]{monalg-rev}\label{oct3-20} Let
$X$ be a subset of $K^s$. Then, the ideal $I(X)^h$ is the homogeneous
vanishing ideal $I(Y)$ of the set
$Y:=\{[x,1] \mid x\in X\}$ of projective points in $\mathbb{P}^s$.
\end{lemma}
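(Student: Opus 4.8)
The plan is to establish the two inclusions $I(X)^h\subseteq I(Y)$ and $I(Y)\subseteq I(X)^h$ directly, working in $S[u]$ with $u=t_{s+1}$ and using dehomogenization $F\mapsto F^a:=F(t_1,\ldots,t_s,1)$ as the partial inverse of homogenization. Recall that the homogeneous vanishing ideal $I(Y)$ is, by definition, the ideal generated by the homogeneous polynomials of $S[u]$ that vanish at every point of $Y$; in particular it is graded, so to prove either inclusion it suffices to compare homogeneous generators. Note also that a homogeneous $F$ vanishes at $[x,1]\in\mathbb{P}^s$ exactly when $F(x_1,\ldots,x_s,1)=0$, since $F(\lambda x_1,\ldots,\lambda x_s,\lambda)=\lambda^{\deg F}F(x_1,\ldots,x_s,1)$ for $\lambda\in K^*$.

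For $I(X)^h\subseteq I(Y)$, it is enough to check that each generator $f^h$ of $I(X)^h$, with $f\in I(X)$, lies in $I(Y)$. For $x\in X$ we have $f^h(x_1,\ldots,x_s,1)=1^{\deg f}f(x)=f(x)=0$, and $f^h$ is homogeneous, so $f^h$ vanishes at $[x,1]$ for every $x\in X$; hence $f^h\in I(Y)$ and therefore $I(X)^h\subseteq I(Y)$.

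For the reverse inclusion, take a nonzero homogeneous $F\in I(Y)$ of degree $D$ (the case $F=0$ being trivial) and write $F=u^rG$ with $G\in S[u]$ homogeneous, $u\nmid G$, and $r\geq0$. Put $g:=F^a=G^a\in S$. Since $G$ is homogeneous of degree $D-r$ and $u\nmid G$, dehomogenization preserves the degree, so $\deg g=D-r$, and homogenizing back recovers $G$, that is, $g^h=G$. Moreover $g\in I(X)$: for $x\in X$ one has $g(x)=F(x_1,\ldots,x_s,1)=0$ because the homogeneous polynomial $F$ vanishes at $[x,1]\in Y$. Hence $g^h=G\in I(X)^h$ by the definition of $I(X)^h$, and therefore $F=u^rG\in I(X)^h$, proving $I(Y)\subseteq I(X)^h$.

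The argument is essentially routine bookkeeping; the one point that deserves care is the factorization identity $F=u^r(F^a)^h$ for homogeneous $F$ (with $r$ the largest power of $u$ dividing $F$), together with the observation that, because $K$ is finite, one must take $I(Y)$ to be generated by the homogeneous polynomials vanishing on $Y$---not merely the set of polynomials vanishing as functions on chosen representatives---so that reducing to homogeneous generators is legitimate. Combining the two inclusions gives $I(X)^h=I(Y)$.
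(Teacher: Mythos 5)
Your proof is correct and complete: both inclusions are argued soundly, the key identity $F=u^r(F^a)^h$ for homogeneous $F$ with $u^r\Vert F$ is the right tool, and you correctly flag the finite-field point that $I(Y)$ must be defined as the ideal generated by the homogeneous forms vanishing on $Y$. The paper itself offers no proof of this lemma --- it simply cites \cite[Proposition~3.4.5]{monalg-rev} --- and your argument is essentially the standard one found in that reference, so there is nothing further to reconcile.
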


\begin{lemma}{\cite[p.~389]{cocoa-book}}\label{primdec-ixx} 
Let $X$ be a subset of
$K^s$, let $P$ be a point in $X$, $P=(p_1,\ldots,p_s)$, and
let $I_{P}$ be the vanishing ideal 
of $P$. Then $I_P$ is a maximal ideal of $S$ of height $s$, 
\begin{equation*}
I_P=(t_1-p_1,\ldots,t_s-p_s),\ \deg(S/I_P)=1, 
\end{equation*}
and $I(X)=\bigcap_{P\in X}I_{P}$ is the primary
decomposition of $I(X)$.  
\end{lemma}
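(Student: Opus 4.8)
The plan is to verify the four assertions in turn: the explicit generators of $I_P$, its maximality, its height together with the degree of $S/I_P$, and finally the primary decomposition of $I(X)$. Everything reduces to understanding the evaluation map at a single point.

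First I would prove $I_P=(t_1-p_1,\ldots,t_s-p_s)$. The inclusion $\supseteq$ is immediate, since each $t_i-p_i$ vanishes at $P$. For $\subseteq$, I would apply the multivariable division algorithm: dividing an arbitrary $f\in S$ by the sequence $t_1-p_1,\ldots,t_s-p_s$, whose leading terms under any monomial order are the distinct variables $t_1,\ldots,t_s$, produces $f=\sum_{i=1}^s q_i\,(t_i-p_i)+c$ with remainder $c\in K$; evaluating at $P$ gives $c=f(P)$, so $f\in I_P$ forces $c=0$ and hence $f\in(t_1-p_1,\ldots,t_s-p_s)$. Alternatively, the $K$-algebra automorphism $\tau$ of $S$ determined by $\tau(t_i)=t_i+p_i$ satisfies $\tau(f)(0)=f(P)$, so it carries $I_P$ onto the maximal ideal $(t_1,\ldots,t_s)$, and therefore $I_P=\tau^{-1}\bigl((t_1,\ldots,t_s)\bigr)=(t_1-p_1,\ldots,t_s-p_s)$.

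Next, the evaluation map $S\to K$, $f\mapsto f(P)$, is a surjective $K$-algebra homomorphism with kernel $I_P$, so $S/I_P\cong K$ is a field; hence $I_P$ is maximal and $\dim(S/I_P)=0$, which by the definition of height gives ${\rm ht}(I_P)=s-\dim(S/I_P)=s$. Since $\dim(S/I_P)=0$, the value $H^a_{I_P}(d)$ stabilizes at $\dim_K(S/I_P)=\dim_K(K)=1$ for $d\gg 0$, so $\deg(S/I_P)=1$ by the formula recalled earlier in this section. Finally, a polynomial vanishes at every point of $X$ if and only if it lies in $I_P$ for every $P\in X$, which gives $I(X)=\bigcap_{P\in X}I_P$ directly from the definitions (the intersection is finite because $K$, hence $X$, is finite). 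Each $I_P$ is maximal, hence prime, hence primary, so this is a primary decomposition; it is irredundant — and, the $I_P$ being minimal primes, it is the unique irredundant primary decomposition of $I(X)$ — because distinct points give distinct maximal ideals, so for each fixed $P$ one has $\prod_{Q\neq P}I_Q\subseteq\bigcap_{Q\neq P}I_Q$ while $\prod_{Q\neq P}I_Q\not\subseteq I_P$ (otherwise, $I_P$ being prime, some $I_Q\subseteq I_P$, contradicting maximality).

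I do not anticipate a real obstacle. The only steps requiring any care are the identity $I_P=(t_1-p_1,\ldots,t_s-p_s)$, where appealing to the division algorithm or the translation automorphism avoids an ad hoc induction on $s$, and the verification that the decomposition $I(X)=\bigcap_{P\in X}I_P$ is irredundant; the remaining claims follow immediately from $S/I_P\cong K$ together with the basic dimension-theoretic and Hilbert-function facts recalled above.
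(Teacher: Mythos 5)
Your proof is correct. Note, though, that the paper itself offers no argument for this lemma: it is stated with a citation to Kreuzer--Robbiano (\cite[p.~389]{cocoa-book}), so there is no ``paper proof'' to match; what you have written is a self-contained replacement for that citation, and it follows the standard textbook route. Each step checks out: the identity $I_P=(t_1-p_1,\ldots,t_s-p_s)$ via the division algorithm (or the translation automorphism, which is cleaner since it reduces to the origin), maximality from $S/I_P\simeq K$, ${\rm ht}(I_P)=s-\dim(S/I_P)=s$ using the paper's definition of height, $\deg(S/I_P)=\dim_K(S/I_P)=1$ from the zero-dimensional case of the Hilbert-polynomial discussion in Section~\ref{prelim-section}, and the set-theoretic identity $I(X)=\bigcap_{P\in X}I_P$ with irredundancy proved by the prime-avoidance style argument $\prod_{Q\neq P}I_Q\not\subseteq I_P$. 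The only cosmetic remark is that when $|X|=1$ the irredundancy discussion is vacuous, and you could alternatively exhibit irredundancy directly by producing, for each $P$, a product of linear forms vanishing on $X\setminus\{P\}$ but not at $P$ (an indicator function, in the paper's later terminology); but your argument as written is complete and correct.
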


\begin{lemma}\label{u-is-regular} 
Let $X$ be a subset of $K^s$. Then, the variable $u$ is regular on
$S[u]/I(X)^h$.
\end{lemma}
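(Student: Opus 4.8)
The plan is to identify $S[u]/I(X)^h$ with the homogeneous coordinate ring of a finite set of projective points all lying off the hyperplane $\{u=0\}$, and then read off that $u$ avoids every associated prime of $I(X)^h$.

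By Lemma~\ref{oct3-20}, $I(X)^h=I(Y)$ is the homogeneous vanishing ideal of $Y=\{[x,1]\mid x\in X\}\subset\mathbb{P}^s$. First I would record the primary decomposition of $I(Y)$, mimicking Lemma~\ref{primdec-ixx} in the projective setting. For $x=(p_1,\dots,p_s)\in X$ let $\mathfrak{p}_x$ be the homogeneous vanishing ideal of the projective point $[x,1]$. Reducing modulo $(t_1-p_1u,\dots,t_s-p_su)$ replaces each $t_i$ by $p_iu$ and identifies the quotient with $K[u]$, a domain; hence $\mathfrak{p}_x=(t_1-p_1u,\dots,t_s-p_su)$ is a prime of height $s$. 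A homogeneous polynomial of $S[u]$ vanishes on $Y$ exactly when it lies in every $\mathfrak{p}_x$, so $I(X)^h=I(Y)=\bigcap_{x\in X}\mathfrak{p}_x$, and this decomposition is irredundant because the $\mathfrak{p}_x$ are distinct primes of the same height, hence pairwise incomparable.

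Now I would apply Theorem~\ref{zero-divisors}: the set of zero-divisors of $S[u]/I(X)^h$ equals $\bigcup_{x\in X}\mathfrak{p}_x$, and these are precisely its associated primes. Since $u$ maps to the nonzero element $u\in K[u]$ under $S[u]\to S[u]/\mathfrak{p}_x$ — equivalently, $u$ does not vanish at $[x,1]$, whose last coordinate is $1$ — we have $u\notin\mathfrak{p}_x$ for every $x\in X$. Therefore $u$ is not a zero-divisor of $S[u]/I(X)^h$, i.e., $u$ is regular on it, as claimed. (One can also argue directly without the primary decomposition: $I(Y)$ is homogeneous, so it suffices to check that a homogeneous $f$ with $uf\in I(Y)$ lies in $I(Y)$; evaluating the homogeneous polynomial $uf$ at $[x,1]$ gives $f(p_1,\dots,p_s,1)=0$ for all $x\in X$, so $f$ vanishes on $Y$ and $f\in I(Y)$.) The statement has no genuine obstacle; the only step requiring a little care is the identification of $\mathfrak{p}_x$ — equivalently, checking that no associated prime of $I(X)^h$ contains $u$ — which reflects the geometric fact that $Y$ has no points at infinity.
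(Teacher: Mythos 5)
Your proposal is correct and follows essentially the same route as the paper: identify $I(X)^h$ with $\bigcap_{P\in X}I_{[P,1]}$ via Lemma~\ref{oct3-20}, note $I_{[P,1]}=(t_1-p_1u,\dots,t_s-p_su)$, invoke Theorem~\ref{zero-divisors}, and check that $u$ lies in no $I_{[P,1]}$. The only real difference is in that last check --- you observe that $S[u]/I_{[P,1]}\simeq K[u]$ (equivalently, $u$ does not vanish at $[P,1]$), while the paper uses a Gr\"obner basis argument --- and your parenthetical evaluation argument is a fine, even more elementary alternative that bypasses the primary decomposition altogether.
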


\begin{proof} We set $I=I(X)$. From Lemma~\ref{oct3-20}, we get
$I^h=\bigcap_{P\in X}I_{[P,1]}$. If $P=(p_1,\ldots,p_s)$ is a point in
$X$, then $I_{[P,1]}$ is generated by
$\mathcal{G}=\{t_1-p_1u,\ldots,t_s-p_su\}$.
Hence, by Theorem~\ref{zero-divisors}, it suffices to show that $u$ is not in
$I_{[P,1]}$. Pick a graded order with $t_1\succ\cdots\succ t_s\succ
u$. The set $\mathcal{G}$ is a Gr\"obner basis for $I_{[P,1]}$. If
$u$ is in $I_{[P,1]}$, 
then $u\in{\rm in}_\prec(I_{[P,1]})=(t_1,\ldots,t_s)$, a contradiction.
\end{proof}

Let $I\subset S$ be an ideal, let $\prec$ be a monomial order, and let
$\Delta_\prec(I)$ be the set of standard monomials of $S/I$. 
The image of $\Delta_\prec(I)$, under the canonical 
map $S\mapsto S/I$, $x\mapsto \overline{x}$, is a basis of $S/I$ as a
$K$-vector space \cite[Proposition~6.52]{Becker-Weispfenning}.

\begin{lemma}\label{lemma-referee1} 
Let $I\subset S$ be an ideal and let $\prec$ be a graded
monomial order on $S$. Then $H_I^a(d)$ is equal to
$H_{{\rm in}_\prec(I)}^a(d)$ for $d\geq 0$, $H_I^a(d)$ is
$|\Delta_\prec(I)\bigcap S_{\leq d}|$, the number of standard
monomials of $S/I$ of degree at most $d$, and $\dim(S/I)=\dim(S/{\rm
in}_\prec(I))$.  
\end{lemma}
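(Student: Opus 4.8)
The plan is to reduce everything to the well-known fact just recalled, namely that the (images of the) standard monomials $\Delta_\prec(I)$ form a $K$-basis of $S/I$. First I would establish the middle assertion, $H_I^a(d)=|\Delta_\prec(I)\cap S_{\leq d}|$. Since $\prec$ is a \emph{graded} monomial order, $\mathrm{in}_\prec$ does not raise degree, so for $f\in S$ with $\deg(f)\le d$ the remainder on division by a Gr\"obner basis of $I$ again has degree $\le d$ and lies in $K(\Delta_\prec(I)\cap S_{\leq d})$. Thus the composite $K(\Delta_\prec(I)\cap S_{\leq d})\hookrightarrow S_{\leq d}\twoheadrightarrow S_{\leq d}/I_{\leq d}$ is surjective; it is injective because a nontrivial $K$-linear relation among standard monomials modulo $I_{\leq d}\subset I$ would contradict the linear independence of $\Delta_\prec(I)$ in $S/I$. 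Hence $\dim_K(S_{\leq d}/I_{\leq d})$ equals the number of standard monomials of degree at most $d$, which is the claim. (One should also note $I_{\leq d}=I\cap S_{\leq d}$ is exactly the span of the standard-monomial-free part, so the quotient is identified cleanly; this is where gradedness of $\prec$ is essential, and it is the only place any care is needed.)

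Next I would deduce $H_I^a(d)=H^a_{\mathrm{in}_\prec(I)}(d)$ for $d\ge 0$. The point is that $I$ and $\mathrm{in}_\prec(I)$ have \emph{the same} standard monomials: $t^a\notin \mathrm{in}_\prec(I)$ is the definition of a standard monomial of $S/I$, and since $\mathrm{in}_\prec(\mathrm{in}_\prec(I))=\mathrm{in}_\prec(I)$ (a monomial ideal is its own initial ideal), the set $\Delta_\prec(\mathrm{in}_\prec(I))$ coincides with $\Delta_\prec(I)$. Applying the formula of the previous paragraph to both ideals (the argument applies verbatim to $\mathrm{in}_\prec(I)$, which is a genuine ideal of $S$) gives $H_I^a(d)=|\Delta_\prec(I)\cap S_{\leq d}|=|\Delta_\prec(\mathrm{in}_\prec(I))\cap S_{\leq d}|=H^a_{\mathrm{in}_\prec(I)}(d)$ for all $d\ge 0$.

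Finally, for the Krull-dimension statement $\dim(S/I)=\dim(S/\mathrm{in}_\prec(I))$, I would argue that the affine Hilbert functions agreeing for all $d\ge 0$ forces the associated Hilbert polynomials $h^a_I$ and $h^a_{\mathrm{in}_\prec(I)}$ to coincide (two polynomials in $\mathbb{Q}[z]$ agreeing at infinitely many integers are equal), and by the Hilbert theorem recalled in Section~\ref{prelim-section} the degree of this common polynomial is simultaneously $\dim(S/I)$ and $\dim(S/\mathrm{in}_\prec(I))$. I do not expect any real obstacle here: the whole lemma is bookkeeping around the standard-monomial basis, and the single substantive input is that a graded monomial order is degree-compatible, which makes the truncation $S_{\leq d}$ interact correctly with remainders. \QED
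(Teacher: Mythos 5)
Your proof is correct. The paper handles this lemma essentially by citation: it invokes \cite[Chapter~9, Section~3, Propositions~3 and 4]{CLO} for the two key facts that $H^a_{{\rm in}_\prec(I)}(d)$ counts the monomials outside ${\rm in}_\prec(I)$ of degree at most $d$ and that $H_I^a(d)=H^a_{{\rm in}_\prec(I)}(d)$ when $\prec$ is graded, and then reads off the standard-monomial count and the dimension equality. You prove these facts from scratch and in the reverse order: you first establish $H_I^a(d)=|\Delta_\prec(I)\cap S_{\leq d}|$ by the division-algorithm argument (gradedness of $\prec$ keeps remainders in degree $\leq d$, giving surjectivity of $K(\Delta_\prec(I)\cap S_{\leq d})\to S_{\leq d}/I_{\leq d}$, while linear independence of the standard monomials modulo $I$ gives injectivity), and then deduce $H_I^a=H^a_{{\rm in}_\prec(I)}$ by applying this count to both $I$ and ${\rm in}_\prec(I)$, which share the same standard monomials since a monomial ideal is its own initial ideal; the Krull-dimension statement follows in both treatments from the equality of affine Hilbert functions (hence of affine Hilbert polynomials, whose degree is the dimension). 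So the mathematical content is the same, but your write-up is self-contained where the paper is content to cite. One small slip worth fixing: the parenthetical claim that $I_{\leq d}$ ``is exactly the span of the standard-monomial-free part'' is not literally true unless $I$ is a monomial ideal, since $I_{\leq d}$ is not spanned by monomials in general; what your argument actually yields, and all that is needed, is the vector-space decomposition $S_{\leq d}=K(\Delta_\prec(I)\cap S_{\leq d})\oplus I_{\leq d}$.
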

\begin{proof} By \cite[Chapter 9,
Section 3, Propositions 3 and 4]{CLO}, we have that $H_{{\rm
in}_\prec(I)}^a(d)$ is the number of monomials 
of $S$ not in the ideal ${\rm in}_\prec(I)$ of degree $\leq d$, and $H_I^a(d)$ is equal to
$H_{{\rm in}_\prec(I)}^a(d)$ when $\prec$ is graded. Hence, $H_I^a(d)$ is the
number of standard monomials of $S/I$ of degree at most $d$. As $S/I$
and $S/{\rm in}_\prec(I)$ have the same affine Hilbert function, they 
have the same dimension.
\end{proof}

\begin{lemma}\label{sep11-20} Let $X$ be a subset of $K^s$, $I=I(X)$,
$r_0={\rm reg}(H_I^a)$, 
and let $\prec$ be a graded monomial order
on $S$. Then $\Delta_\prec(I)\subset S_{\leq r_0}$,
$\Delta_\prec(I)\not\subset S_{\leq r_0-1}$, and
$|X|=H_I^a(r_0)=|\Delta_\prec(I)|$.
\end{lemma}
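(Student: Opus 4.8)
The plan is to deduce everything from the identity $H_I^a(d)=|\Delta_\prec(I)\bigcap S_{\leq d}|$ established in Lemma~\ref{lemma-referee1}, together with the definition of $r_0={\rm reg}(H_I^a)$ as the least integer with $H_I^a(d)=|X|$ for $d\geq r_0$. First I would recall that $\Delta_\prec(I)$ is a $K$-basis of $S/I$ (stated in the excerpt, following \cite{Becker-Weispfenning}), so $|\Delta_\prec(I)|=\dim_K(S/I)$; since $I=I(X)$ and the evaluation map induces $S/I\simeq K^m$ with $m=|X|$, we get $|\Delta_\prec(I)|=|X|$. Combining this with Proposition~\ref{behavior-hilbert-function}, which gives $H_I^a(r_0)=|X|$, yields $H_I^a(r_0)=|\Delta_\prec(I)|=|X|$, which is one of the three assertions.

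Next, for the inclusion $\Delta_\prec(I)\subset S_{\leq r_0}$: by Lemma~\ref{lemma-referee1}, $|\Delta_\prec(I)\bigcap S_{\leq r_0}|=H_I^a(r_0)=|X|=|\Delta_\prec(I)|$; since $\Delta_\prec(I)\bigcap S_{\leq r_0}$ is a subset of the finite set $\Delta_\prec(I)$ of the same cardinality, the two sets coincide, i.e. every standard monomial has degree at most $r_0$. Finally, for $\Delta_\prec(I)\not\subset S_{\leq r_0-1}$: if this inclusion held, then $|\Delta_\prec(I)\bigcap S_{\leq r_0-1}|=|\Delta_\prec(I)|=|X|$, hence $H_I^a(r_0-1)=|X|$ by Lemma~\ref{lemma-referee1}, contradicting the minimality of $r_0$ in the definition of ${\rm reg}(H_I^a)$ — note $r_0\geq 1$ since $|X|\geq 2$, so $r_0-1\geq 0$ is a legitimate argument of $H_I^a$. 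This gives all three claims.

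I do not anticipate a genuine obstacle here; the proof is essentially a packaging of prior lemmas. The only point requiring a small amount of care is the bookkeeping with degrees: one must invoke that $\prec$ is a \emph{graded} monomial order so that the footprint interacts correctly with the degree filtration (this is exactly what Lemma~\ref{lemma-referee1} uses), and one must remember that $r_0\geq 1$ so that evaluating $H_I^a$ at $r_0-1$ makes sense and the strict decrease argument is valid. Everything else is a counting argument on finite sets.
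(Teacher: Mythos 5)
Your proof is correct, but the key step is argued along a slightly different route than the paper's. The paper never invokes $\dim_K(S/I)=|X|$ up front: it first reduces everything, via Lemma~\ref{lemma-referee1} and the strict increase $H_I^a(r_0-1)<H_I^a(r_0)=|X|$ from Proposition~\ref{behavior-hilbert-function}, to the single inclusion $\Delta_\prec(I)\subset S_{\leq r_0}$, and proves that inclusion by contradiction: a standard monomial of degree $d_0>r_0$ would force $|X|=H_I^a(r_0)<H_I^a(d_0)$, contradicting the stabilization of $H_I^a$; the equality $|\Delta_\prec(I)|=|X|$ then falls out as a consequence. You instead front-load the equality $|\Delta_\prec(I)|=\dim_K(S/I)=|X|$, using that the footprint is a $K$-basis of $S/I$ together with the evaluation isomorphism $S/I\simeq K^{|X|}$ (stated in the introduction, and ultimately a Chinese-remainder fact), and then obtain the inclusion by a pigeonhole comparison of finite cardinalities, with the non-inclusion coming from minimality of $r_0$. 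Both arguments are sound and of comparable length; the paper's buys self-containment within Section~\ref{prelim-section} (only the counting identity and the monotone stabilization of $H_I^a$ are needed, and $|\Delta_\prec(I)|=|X|$ is a byproduct), while yours buys a cleaner one-line inclusion at the cost of importing the surjectivity of the evaluation map as an extra input.
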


\begin{proof} By Proposition~\ref{behavior-hilbert-function},
$H_I^a(r_0-1)<H_I^a(r_0)=|X|$. Hence, by Lemma~\ref{lemma-referee1}, it suffices to show
the inclusion $\Delta_\prec(I)\subset S_{\leq r_0}$. 
We proceed by contradiction assuming that 
$\Delta_\prec(I)\not\subset S_{\leq r_0}$. Pick a monomial 
$t^a$ in $\Delta_\prec(I)$ with $\deg(t^a)=d_0>r_0$. Then, one has
the strict inclusion $\Delta_\prec(I)\bigcap S_{\leq
r_0}\subsetneq\Delta_\prec(I)\bigcap
S_{\leq d_0}$ and, by Lemma~\ref{lemma-referee1}, one has
$|X|=H_I^a(r_0)<H_I^a(d_0)$, a contradiction because 
$H_I^a(d)=|X|$ for $d\geq r_0$ (Proposition~\ref{behavior-hilbert-function}).
\end{proof}

Let $J$ be a graded ideal of $S$ and let $F_J(z):=\sum_{i=0}^\infty
H_J(i)z^i$ be its Hilbert series. We now introduce the notion of
$h$-vector of $S/J$. By the Hilbert--Serre theorem
\cite{Sta1,monalg-rev} there is a (unique) polynomial
$h(z)=\sum_{i=0}^rh_iz^i$, $h_r\neq 0$, with integral 
coefficients such that $h(1)\neq 0$ and    
\begin{displaymath}
F_J(z)=\frac{h(z)}{(1-z)^{k}},
\end{displaymath}
where $k={\rm dim}(S/J)$. The $h$-{\it vector}
of $S/J$ is defined as $h(S/J):=(h_0,\ldots,h_r)$. We say that the 
$h$-vector of $S/J$ is \textit{symmetric} if $h_i=h_{r-i}$ for 
$0\leq i\leq r$. The $h$-vector of a Gorenstein graded algebra is 
symmetric \cite{Sta1}. For almost Gorenstein algebras
and coordinate rings of CB-schemes their $h$-vectors satisfy certain
interesting linear inequalities \cite{geramita-cayley-bacharach,Higashitani}.

\begin{proposition}\label{duality-hilbert-function} Let $I=I(X)$ be
the vanishing ideal of a subset $X$ of $K^s$, let $r_0$
be the regularity index of $H_I^a$, let $\prec$ be a
graded monomial order on $S[u]$ with $t_1\succ\cdots\succ
t_s\succ u$, and let $I^h$ be 
the homogenization of $I$ with respect to $u$. 
The following are equivalent.
\begin{enumerate}
\item[(a)] The $h$-vector of $S[u]/I^h$ is symmetric.
\item[(b)] $H_I^a(d)+H_I^a(r_0-d-1)=|X|$ for $-1\leq d\leq r_0$.
\item[(c)] $H_{{\rm in}_\prec(I)}(d)=H_{{\rm in}_\prec(I)}(r_0-d)$ for
$0\leq d\leq r_0$.
\item[(d)] $|\Delta_\prec(I)\bigcap S_d|=|\Delta_\prec(I)\bigcap S_{r_0-d}|$
for $0\leq d\leq r_0$.
\end{enumerate}
\end{proposition}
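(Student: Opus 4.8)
The plan is to prove the four statements equivalent by establishing the cycle (a) $\Rightarrow$ (b) $\Rightarrow$ (d) $\Leftrightarrow$ (c) $\Rightarrow$ (a), using the translation between the affine Hilbert function of $I$ and the Hilbert function of the homogenized ideal $I^h$ provided by Eq.~\eqref{nov13-20} and Lemma~\ref{u-is-regular}. The first observation to record is that, since $\prec$ is a graded order with $u$ smallest, the variable $u$ is regular on $S[u]/I^h$ by Lemma~\ref{u-is-regular}, so $S[u]/I^h$ has depth at least one and its Hilbert series, written as $h(z)/(1-z)^{k}$ with $k=\dim(S[u]/I^h)=\dim(S/I)+1=1$ (because $X$ is a finite set of points), reduces to $F_{I^h}(z)=h(z)/(1-z)$. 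Thus $h(z)=\sum_{d\ge 0}\bigl(H_{I^h}(d)-H_{I^h}(d-1)\bigr)z^d$, and since $H_{I^h}(d)=H_I^a(d)$ stabilizes at $|X|$ exactly for $d\ge r_0$ (Proposition~\ref{behavior-hilbert-function}), the $h$-vector has degree exactly $r_0$ and $h_d=H_I^a(d)-H_I^a(d-1)$ for $0\le d\le r_0$. In particular $\sum_{d=0}^{r_0}h_d=|X|$.

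Next I would unwind what symmetry of $(h_0,\ldots,h_{r_0})$ means for the partial sums $H_I^a$. Symmetry $h_d=h_{r_0-d}$ for all $d$ is equivalent, by summing the first $d+1$ terms against the last $d+1$ terms, to $H_I^a(d)=\sum_{j=0}^{d}h_j=\sum_{j=r_0-d}^{r_0}h_j=|X|-\sum_{j=0}^{r_0-d-1}h_j=|X|-H_I^a(r_0-d-1)$, which is precisely statement (b) (the boundary cases $d=-1$ and $d=r_0$ hold because $H_I^a(-1)=0$ and $H_I^a(r_0)=|X|$). This gives (a) $\Leftrightarrow$ (b) directly, and in particular (a) $\Rightarrow$ (b). For (b) $\Rightarrow$ (d): by Lemma~\ref{lemma-referee1}, $H_I^a(d)=|\Delta_\prec(I)\cap S_{\le d}|$, so $|\Delta_\prec(I)\cap S_d|=H_I^a(d)-H_I^a(d-1)$; hypothesis (b) then gives $|\Delta_\prec(I)\cap S_d|=\bigl(|X|-H_I^a(r_0-d-1)\bigr)-\bigl(|X|-H_I^a(r_0-d)\bigr)=H_I^a(r_0-d)-H_I^a(r_0-d-1)=|\Delta_\prec(I)\cap S_{r_0-d}|$, which is (d). The equivalence (c) $\Leftrightarrow$ (d) is immediate from $H_{{\rm in}_\prec(I)}(d)=\dim_K(S_d/{\rm in}_\prec(I)_d)=|\Delta_\prec(I)\cap S_d|$.

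Finally, to close the cycle I would prove (d) $\Rightarrow$ (a), or equivalently (c) $\Rightarrow$ (a). Here the key input is that the graded order on $S[u]$ restricts well: since $t_1\succ\cdots\succ t_s\succ u$ and $u$ is regular on $S[u]/I^h$, one has ${\rm in}_\prec(I^h)={\rm in}_\prec(I)\cdot S[u]$ as a standard fact about homogenization with the smallest-variable convention (this is where I would cite or reprove the compatibility of ${\rm in}_\prec$ with homogenization, e.g.\ via \cite{CLO}). Consequently $\Delta_\prec(I^h)=\{t^a u^j : t^a\in\Delta_\prec(I),\ j\ge 0\}$, and counting standard monomials of $S[u]/I^h$ in degree $d$ gives $H_{I^h}(d)=\sum_{i=0}^{d}|\Delta_\prec(I)\cap S_i|=H_I^a(d)$, reconfirming the formula for the $h$-vector $h_d=|\Delta_\prec(I)\cap S_d|$. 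Statement (d) then says exactly $h_d=h_{r_0-d}$, i.e.\ the $h$-vector of $S[u]/I^h$ is symmetric, which is (a).

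The routine bookkeeping — boundary conventions $H_I^a(-1)=0$, $S_{\le -1}=0$, the stabilization at $|X|$, and telescoping of partial sums — is straightforward. The one step that deserves care, and which I expect to be the main obstacle if anything is, is the identity ${\rm in}_\prec(I^h)={\rm in}_\prec(I)S[u]$ (equivalently, that homogenizing with $u$ as the least variable does not create new initial monomials); everything else is formal manipulation of Hilbert functions once that compatibility and the regularity of $u$ from Lemma~\ref{u-is-regular} are in hand. An alternative that avoids this entirely is to route (c)/(d) back to (b) via Lemma~\ref{lemma-referee1} as above and use only the already-proven (a) $\Leftrightarrow$ (b), giving the four-way equivalence without ever mentioning $\Delta_\prec(I^h)$.
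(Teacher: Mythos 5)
Your proof is correct, and it reaches the key identity by a slightly different (and somewhat more elementary) route than the paper. The paper first shows that the $h$-vector of $S[u]/I^h$ coincides with $\bigl(H_{{\rm in}_\prec(I)}(0),\ldots,H_{{\rm in}_\prec(I)}(r_0)\bigr)$ by invoking ${\rm in}_\prec(I^h)={\rm in}_\prec(I)S[u]$ and taking Hilbert series along the exact sequence $0\to (S[u]/{\rm in}_\prec(I^h))[-1]\xrightarrow{\,u\,}S[u]/{\rm in}_\prec(I^h)\to S/{\rm in}_\prec(I)\to 0$, and then runs the same telescoping manipulations you do for (a)$\Rightarrow$(b), (b)$\Rightarrow$(c), (c)$\Rightarrow$(a), (c)$\Leftrightarrow$(d). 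You instead read off $h_d=H_{I^h}(d)-H_{I^h}(d-1)=H_I^a(d)-H_I^a(d-1)$ directly from $(1-z)F_{I^h}(z)=h(z)$ (using only $\dim(S[u]/I^h)=1$ from Eq.~\eqref{nov13-20} and the stabilization of $H_I^a$ at $|X|$ from Proposition~\ref{behavior-hilbert-function}); combined with Lemma~\ref{lemma-referee1} this gives $h_d=|\Delta_\prec(I)\cap S_d|=H_{{\rm in}_\prec(I)}(d)$ without the exact sequence, and your closing ``alternative'' route ((d)$\Rightarrow$(b) by summing, then the already-established (a)$\Leftrightarrow$(b)) avoids the identity ${\rm in}_\prec(I^h)={\rm in}_\prec(I)S[u]$ entirely, which is the one nontrivial external input the paper's proof leans on (and which your primary route for (d)$\Rightarrow$(a) also uses, correctly flagged as needing a citation such as the Gr\"obner-basis-homogenization fact the paper quotes elsewhere). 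Two small remarks: the regularity of $u$ (Lemma~\ref{u-is-regular}) and the depth observation are not actually needed for your argument, since Hilbert--Serre already gives $F_{I^h}(z)=h(z)/(1-z)$ once $\dim=1$; and your claimed equivalence (a)$\Leftrightarrow$(b) is fine, but note the converse direction is by first differences (exactly your (b)$\Rightarrow$(d) computation), not by the partial-sum identity alone.
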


\begin{proof} As $S[u]/I^h$ is Cohen--Macaulay of dimension $1$,
its Hilbert series can be written as
\begin{equation}\label{aug17-20}
F_{I^h}(z)=\frac{h_0+h_1z+\cdots+h_{r_0}z^{r_0}}{1-z},
\end{equation}
where $h(z)=h_0+h_1z+\cdots+h_{r_0}z^{r_0}$ is a polynomial
with positive integer coefficients and the degree and regularity of
$S[u]/I^h$ are $h(1)$ and $r_0$, respectively \cite{Sta1,monalg-rev}.
The ideal $I$ (resp. $I^h$) and its initial
ideal ${\rm in}_\prec(I)$ (resp. ${\rm in}_\prec(I^h)$) have the same
affine Hilbert function (resp. Hilbert function)
(Lemma~\ref{lemma-referee1}). As $u$ is not in the ideal ${\rm
in}_\prec(I^h)$, there is an exact sequence 
$$
0\longrightarrow(S[u]/{\rm
in}_\prec(I^h))[-1]\stackrel{u}{\longrightarrow}S[u]/{\rm
in}_\prec(I^h){\longrightarrow}S[u]/({\rm in}_\prec(I^h),u){\longrightarrow}
0.
$$
\quad Hence, noticing the equalities 
${\rm in}_\prec(I^h)={\rm in}_\prec(I)S[u]$ and 
$S[u]/({\rm in}_\prec(I^h),u)=S/{\rm in}_\prec(I)$ and, by taking
Hilbert series in this exact sequence, we obtain
$$
F_{I^h}(z)=zF_{I^h}(z)+H_{{\rm in}_\prec(I)}(0)+H_{{\rm
in}_\prec(I)}(1)z+\cdots+H_{{\rm in}_\prec(I)}(r_0)z^{r_0}. 
$$
\quad Therefore, by Eq.~\eqref{aug17-20}, the $h$-vectors of 
$S/{\rm in}_\prec(I)$ and $S[u]/I^h$ are equal and  
\begin{equation}\label{aug17-20-1}
h_i=H_{{\rm in}_\prec(I)}(i)\mbox{ for }0\leq i\leq r_0. 
\end{equation}
\quad (a) $\Rightarrow$ (b): Note that when $d=-1$ or $d=r_0$ (b) holds by the definition of $r_0$, so we
may assume that $0\leq d<r_0$. Now assume that $h(S[u]/I^h)=(h_0,\ldots,h_{r_0})$
is symmetric. Hence, by Eq.~\eqref{aug17-20-1}, 
we obtain $H_{{\rm in}_\prec(I)}(i)=H_{{\rm in}_\prec(I)}(r_0-i)$ for
$0\leq i\leq r_0$. The affine Hilbert function of $I$ in degree $d$ is given by 
$H_I^a(d)=H_{{\rm in}_\prec(I)}^a(d)=\sum_{i=0}^dH_{{\rm in}_\prec(I)}(i)$
(Lemma~\ref{lemma-referee1}). Therefore
\begin{eqnarray*}
|X|&=&\deg(S[u]/I^h)=\sum_{i=0}^{r_0}h_i=\sum_{i=0}^{r_0}H_{{\rm
in}_\prec(I)}(i)=\sum_{i=0}^{d}H_{{\rm in}_\prec(I)}(i) +\sum_{i=d+1}^{r_0}H_{{\rm
in}_\prec(I)}(i)\\
&=&H_I^a(d)+\sum_{i=0}^{r_0-d-1}H_{{\rm
in}_\prec(I)}(r_0-i)=H_I^a(d)+\sum_{i=0}^{r_0-d-1}H_{{\rm
in}_\prec(I)}(i)\\
&=&H_I^a(d)+H_I^a(r_0-d-1). 
\end{eqnarray*}
\quad (b) $\Rightarrow$ (c): As $|X|=H_I^a(d)+H_I^a(r_0-d-1)$ and
$|X|=H_I^a(d-1)+H_I^a(r_0-d)$, by adding the following two equalities
\begin{align*}
H_I^a(d)&=\sum_{i=0}^dH_{{\rm in}_\prec(I)}(i)=H_I^a(d-1)+H_{{\rm
in}_\prec(I)}(d),\\
H_I^a(r_0-d-1)&=\sum_{i=0}^{r_0-d-1}H_{{\rm
in}_\prec(I)}(i)=H_I^a(r_0-d)-H_{{\rm
in}_\prec(I)}(r_0-d),
\end{align*}
we obtain the equality $H_{{\rm in}_\prec(I)}(d)=H_{{\rm in}_\prec(I)}(r_0-d)$.

(c) $\Rightarrow$ (a): The symmetry of the $h$-vector of $S[u]/I^h$
follows from Eq.~\eqref{aug17-20-1}.

(c) $\Leftrightarrow$ (d): The number of standard monomials of $I$ of
degree $d$ is $H_{{\rm in}_\prec(I)}(d)$ \cite[p.~433]{CLO}, that is,
$H_{{\rm in}_\prec(I)}(d)$ is equal to $|\Delta_\prec(I)\bigcap S_d|$. 
Hence (c) and (d) are equivalent.
\end{proof}

\begin{corollary}\label{duality-hilbert-gorenstein} 
Let $I=I(X)$ be
the vanishing ideal of a subset $X$ of $K^s$, 
let $\prec$ be a graded monomial order on $S$, and let $r_0$ be the
regularity index of $H_I^a$. The following hold. 
\begin{enumerate}
\item[(a)] If $I$ is Gorenstein, then $H_I^a(d)+H_I^a(r_0-d-1)=\deg(S/I)=|X|$ 
for $-1\leq d\leq r_0$.
\item[(b)] If $I$ is Gorenstein, then there is only one standard
monomial of degree $r_0$. 
\item[(c)] If $I$ is generated by a Gr\"obner
basis $\mathcal{G}=\{g_1,\ldots,g_s\}$ of $s$ elements, then $I$ is Gorenstein.
\end{enumerate}
\end{corollary}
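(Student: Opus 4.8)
The plan is to derive parts (a) and (b) from Proposition~\ref{duality-hilbert-function} together with the classical fact that the $h$-vector of a Gorenstein graded algebra is symmetric, and to prove part (c) by homogenizing the given Gr\"obner basis.

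For part (a), I would argue as follows. By the convention adopted above, ``$I$ is Gorenstein'' means that the standard graded ring $S[u]/I^h$ is Gorenstein, hence in particular Cohen--Macaulay, so its $h$-vector $h(S[u]/I^h)=(h_0,\ldots,h_{r_0})$ is symmetric \cite{Sta1}. This is condition (a) of Proposition~\ref{duality-hilbert-function}, so condition (b) of that proposition holds as well, namely $H_I^a(d)+H_I^a(r_0-d-1)=|X|$ for $-1\le d\le r_0$; since $\deg(S/I)=|X|$ by Proposition~\ref{behavior-hilbert-function}, this gives part (a). For part (b), I would use that part (a) forces condition (b) of Proposition~\ref{duality-hilbert-function}, hence also condition (d): $|\Delta_\prec(I)\cap S_d|=|\Delta_\prec(I)\cap S_{r_0-d}|$ for $0\le d\le r_0$. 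Taking $d=0$ and using that $I=I(X)$ is a proper ideal --- so that $1\notin{\rm in}_\prec(I)$ and $1$ is the only monomial of degree $0$ --- the left-hand side is $1$, whence there is exactly one standard monomial of degree $r_0$ (consistently with Lemma~\ref{sep11-20}, which already guarantees at least one).

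For part (c), I would first observe that, since $X$ is finite and nonempty, $\dim(S/I)=0$, so $\dim(S[u]/I^h)=1$ by Eq.~\eqref{nov13-20}, and hence ${\rm ht}(I^h)=(s+1)-1=s$. The heart of the argument is to upgrade the hypothesis that $\mathcal{G}=\{g_1,\ldots,g_s\}$ is a Gr\"obner basis of $I$ with respect to the graded order $\prec$ to the statement that $\mathcal{G}^h:=\{g_1^h,\ldots,g_s^h\}$ is a Gr\"obner basis --- hence a generating set --- of $I^h$ with respect to a compatible graded monomial order on $S[u]$ in which $u$ is the smallest variable; this is a standard property of homogenization of Gr\"obner bases (see, e.g., \cite[Ch.~8]{CLO}, \cite{monalg-rev}), the point being that for a graded order the leading monomial of each $g_i$ lies in the top-degree component of $g_i$ and is therefore unchanged by homogenization. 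Then $I^h$ is an ideal of height $s$ generated by $s$ elements, i.e., a complete intersection in $S[u]$; a complete intersection is Gorenstein, so $S[u]/I^h$ is Gorenstein, which is exactly the statement that $I$ is Gorenstein.

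The main obstacle I anticipate is precisely this last point in part (c): matching the monomial-order conventions carefully enough that the homogenization of the given Gr\"obner basis is a Gr\"obner basis of $I^h$, which is what delivers the generation of $I^h$ by $s$ elements. Everything else should follow directly from results already in hand --- Proposition~\ref{duality-hilbert-function}, Proposition~\ref{behavior-hilbert-function}, the identity \eqref{nov13-20}, the symmetry of $h$-vectors of Gorenstein algebras, and the classical fact that complete intersections are Gorenstein.
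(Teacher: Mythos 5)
Your proposal is correct and follows essentially the same route as the paper: parts (a) and (b) via the symmetry of the $h$-vector of the Gorenstein algebra $S[u]/I^h$ combined with Proposition~\ref{duality-hilbert-function} (taking $d=0$ in condition (d) for part (b)), and part (c) by homogenizing the Gr\"obner basis to get $I^h=(g_1^h,\ldots,g_s^h)$, an ideal of height $s$ generated by $s$ homogeneous elements, hence a complete intersection and therefore Gorenstein. The homogenization fact you flag as the main obstacle is exactly what the paper cites from the literature, so no new idea is missing.
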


\begin{proof} (a): As $S[u]/I^h$ is a graded Gorenstein algebra of dimension
$1$, its $h$-vector is symmetric \cite[Theorems~4.1 and 4.2]{Sta1}.
Then, by Proposition~\ref{duality-hilbert-function}, the equality
follows.

(b): By part (a) and Proposition~\ref{duality-hilbert-function}(d),
one has $|\Delta_\prec(I)\bigcap S_d|=|\Delta_\prec(I)\bigcap S_{r_0-d}|$
for $0\leq d\leq r_0$. Setting $d=0$ in this equality, 
we get $1=|\Delta_\prec(I)\bigcap S_{r_0}|$.

(c): As $I$ is generated by the Gr\"obner basis $\mathcal{G}$, 
one has $I^h=(g_1^h,\ldots,g_s^h)$ \cite[p.~132]{monalg-rev}. The ideals $I$ and $I^h$ have
height $s$. Then, $I^h$ is a graded ideal of height $s$ generated by
$s$ homogeneous polynomials forming a regular sequence. Hence, by
\cite[Corollary~21.19]{Eisen}, $I^h$ is Gorenstein. 
\end{proof}

\section{The dual of evaluation codes}\label{dual-section}
To avoid repetitions, we continue to employ 
the notations and definitions used in Sections~\ref{intro-section} and
\ref{prelim-section}. In this section we show that the dual of an
evaluation code is the evaluation code of the algebraic dual. We 
give an effective criterion to determine whether or not 
the algebraic dual is monomial and show an algorithm that can be used
to compute a basis for the algebraic dual.

\begin{proposition}\cite{toric-codes}\label{transforming-new} 
Let $\mathcal{L}_X$ be an evaluation code on $X$, let $\prec$ be a
monomial order, let $\mathcal{G}$ be a Gr\"obner basis of
$I=I(X)$, let $\{h_1,\ldots,h_k\}$ be a subset of
$\mathcal{L}\setminus\{0\}$ and
for each $i$, let $r_i$ be the remainder on division of $h_i$ by 
$\mathcal{G}$. If $\mathcal{L}=K\{h_1,\ldots,h_k\}$ and 
$$
\widetilde{\mathcal{L}}:=K\{r_1,\ldots,r_k\},
$$
then $\widetilde{\mathcal{L}}\subset K\Delta_\prec(I)$,
$\widetilde{\mathcal{L}}_X$ is a standard evaluation code on $X$
relative to $\prec$ 
and $\widetilde{\mathcal{L}}_X=\mathcal{L}_X$.
\end{proposition}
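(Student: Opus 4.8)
The plan is to verify the three stated conclusions in turn, all of which rest on the basic fact that for a graded monomial order the division algorithm produces, for each $h_i$, a polynomial $r_i$ that is either zero or lies in $K\Delta_\prec(I)$, and that $h_i - r_i \in I$. First I would recall that by the division algorithm of \cite[Chapter~2, Section~3, Theorem~3]{CLO}, applied with the Gr\"obner basis $\mathcal{G}$, each $h_i$ can be written as $h_i = q_i + r_i$ with $q_i \in I$ and $r_i$ a $K$-linear combination of monomials none of which is divisible by any ${\rm in}_\prec(g)$, $g \in \mathcal{G}$; since $\mathcal{G}$ is a Gr\"obner basis, ${\rm in}_\prec(I)$ is generated by the ${\rm in}_\prec(g)$, so every monomial appearing in $r_i$ is a standard monomial, i.e. $r_i \in K\Delta_\prec(I)$ (or $r_i = 0$). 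This immediately gives $\widetilde{\mathcal{L}} = K\{r_1,\ldots,r_k\} \subset K\Delta_\prec(I)$, which is the first claim.

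Next I would check that $\widetilde{\mathcal{L}}_X = \mathcal{L}_X$. Since $h_i - r_i = q_i \in I = {\rm ker}({\rm ev})$, we have ${\rm ev}(h_i) = {\rm ev}(r_i)$ for each $i$, so the images under ${\rm ev}$ of the spanning sets $\{h_1,\ldots,h_k\}$ and $\{r_1,\ldots,r_k\}$ agree termwise; by $K$-linearity of ${\rm ev}$ the images of the spans coincide, i.e. ${\rm ev}(\mathcal{L}) = {\rm ev}(\widetilde{\mathcal{L}})$, which is exactly $\mathcal{L}_X = \widetilde{\mathcal{L}}_X$. Finally, $\widetilde{\mathcal{L}}$ is by construction a linear subspace of $K\Delta_\prec(I)$, so $\widetilde{\mathcal{L}}_X$ is a standard evaluation code on $X$ relative to $\prec$ by the very definition given in Section~\ref{intro-section}; this disposes of the remaining claim.

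There is essentially no hard part here: the proposition is a bookkeeping consequence of the division algorithm and the linearity of the evaluation map. The only point requiring a moment's care is the claim that the $r_i$ lie in $K\Delta_\prec(I)$ rather than merely being reduced modulo the leading terms of $\mathcal{G}$ — this is where the Gr\"obner basis hypothesis (as opposed to an arbitrary generating set) is used, via ${\rm in}_\prec(I) = ({\rm in}_\prec(g_1),\ldots,{\rm in}_\prec(g_n))$, so that ``not divisible by any ${\rm in}_\prec(g)$'' is the same as ``not in ${\rm in}_\prec(I)$'', i.e. ``standard''. I would state this explicitly and then conclude. One subtlety worth a sentence: some $r_i$ may be zero, in which case it simply drops out of the spanning set for $\widetilde{\mathcal{L}}$, and the argument is unaffected since $\dim_K \widetilde{\mathcal{L}}$ is controlled by $\dim_K \widetilde{\mathcal{L}}_X = \dim_K \mathcal{L}_X$.
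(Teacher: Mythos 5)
Your proof is correct and is essentially the argument the paper relies on (the proposition is quoted from \cite{toric-codes} without a proof here, but the same division-algorithm reasoning is used verbatim in the proofs of Lemma~\ref{dual-standard} and Proposition~\ref{binomial-standard}): divide each $h_i$ by the Gr\"obner basis to get $h_i=q_i+r_i$ with $q_i\in I$ and $r_i\in K\Delta_\prec(I)$, note $h_i-r_i\in I=\ker({\rm ev})$ so the evaluations agree, and conclude by linearity together with the definition of a standard evaluation code. One small remark: your opening sentence invokes a \emph{graded} monomial order, but nothing in your argument (or in the statement) needs gradedness---the division algorithm and all three conclusions hold for an arbitrary monomial order.
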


\begin{corollary}\label{unique-standard}
Let $\mathcal{L}_X$ be an evaluation code on $X$ and let $\prec$ be a
monomial order. Then there exists a unique linear subspace
$\widetilde{\mathcal{L}}$ of $K\Delta_\prec(I)$ such 
that $\widetilde{\mathcal{L}}_X=\mathcal{L}_X$.
\end{corollary}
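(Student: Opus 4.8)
The plan is to deduce both existence and uniqueness directly from Proposition~\ref{transforming-new}. For existence, pick any finite generating set $\{h_1,\dots,h_k\}$ of $\mathcal{L}$ (discarding zero vectors), fix a Gr\"obner basis $\mathcal{G}$ of $I=I(X)$ with respect to $\prec$, and let $r_i$ be the remainder of $h_i$ on division by $\mathcal{G}$. Setting $\widetilde{\mathcal{L}}=K\{r_1,\dots,r_k\}$, Proposition~\ref{transforming-new} immediately gives $\widetilde{\mathcal{L}}\subset K\Delta_\prec(I)$ and $\widetilde{\mathcal{L}}_X=\mathcal{L}_X$, so a linear subspace with the required property exists.

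For uniqueness, suppose $L_1$ and $L_2$ are two linear subspaces of $K\Delta_\prec(I)$ with $(L_1)_X=(L_2)_X=\mathcal{L}_X$. I would show $L_1=L_2$ by showing each is contained in the other; by symmetry it suffices to prove $L_1\subset L_2$. Take $f\in L_1$. Since ${\rm ev}(f)\in(L_1)_X=(L_2)_X$, there is $g\in L_2$ with ${\rm ev}(f)={\rm ev}(g)$, i.e. $f-g\in\ker({\rm ev})=I$. Now $f-g\in K\Delta_\prec(I)$ since both $f$ and $g$ lie in that space; but the key point is that the canonical map $S\to S/I$ restricts to a $K$-linear \emph{isomorphism} $K\Delta_\prec(I)\to S/I$, because the image of $\Delta_\prec(I)$ is a $K$-basis of $S/I$ (the fact cited just before Lemma~\ref{lemma-referee1}, from \cite[Proposition~6.52]{Becker-Weispfenning}). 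Equivalently, $K\Delta_\prec(I)\cap I=\{0\}$: the only standard polynomial lying in $I$ is $0$, since a nonzero element of $I$ has initial monomial in ${\rm in}_\prec(I)$ while every monomial appearing in a standard polynomial is outside ${\rm in}_\prec(I)$. Hence $f-g=0$, so $f=g\in L_2$, proving $L_1\subset L_2$ and therefore $L_1=L_2$.

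The only mild subtlety is making the transition from ${\rm ev}$ to $I$ explicit: one needs $\ker({\rm ev})=I$, which is exactly the definition of the vanishing ideal recalled in the introduction, together with the observation that a difference of two standard polynomials is again a standard polynomial (the monomial supports are subsets of $\Delta_\prec(I)$, which is closed under $K$-linear combinations within $K\Delta_\prec(I)$). There is no real obstacle here; the proof is a direct combination of Proposition~\ref{transforming-new} with the standard fact that $\Delta_\prec(I)$ maps to a basis of $S/I$. In the write-up I would state the existence part in one sentence citing Proposition~\ref{transforming-new}, then give the uniqueness argument via $K\Delta_\prec(I)\cap I=\{0\}$ as above.
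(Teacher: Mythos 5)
Your proposal is correct and follows essentially the same route as the paper: existence via Proposition~\ref{transforming-new}, and uniqueness by taking $f\in\mathcal{L}_1$, matching it with $g\in\mathcal{L}_2$ having the same evaluation, and concluding $f-g=0$ from $f-g\in I\cap K\Delta_\prec(I)=\{0\}$. Your extra remarks justifying $I\cap K\Delta_\prec(I)=\{0\}$ via initial monomials merely make explicit what the paper leaves implicit, so there is nothing to change.
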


\begin{proof} The existence follows from
Proposition~\ref{transforming-new}. Assume that $\mathcal{L}_1$ and
$\mathcal{L}_2$ are two linear subspaces of $K\Delta_\prec(I)$ such 
that $(\mathcal{L}_1)_X=(\mathcal{L}_2)_X$. Let $P_1,\ldots,P_m$ be
the points of $X$. To show
the inclusion $\mathcal{L}_1\subset\mathcal{L}_2$ take
$f\in\mathcal{L}_1$. Then ${\rm ev}(f)=(f(P_1),\ldots,f(P_m))$ is in
$(\mathcal{L}_1)_X$. Thus there is $g\in\mathcal{L}_2$ such that
${\rm ev}(f)={\rm ev}(g)=(g(P_1),\ldots,g(P_m))$. Hence $f-g\in I(X)$
and $f-g=0$ because $f$ and $g$ are in $K\Delta_\prec(I)$. Thus
$f\in\mathcal{L}_2$. The inclusion $\mathcal{L}_2\subset\mathcal{L}_1$
follows from similar reasons. Therefore
$\mathcal{L}_1=\mathcal{L}_2$ and $\widetilde{\mathcal{L}}$ is unique.
\end{proof}

Recall that $\varphi$ is the $K$-linear map given by 
$$
\varphi\colon S\rightarrow K,\quad f\mapsto f(P_1)+\cdots+f(P_m).
$$

\begin{definition} Let $\mathcal{L}_X$ be an evaluation code on $X$
and let $\prec$ be a monomial order on $S$. The unique linear subspace
$\widetilde{\mathcal{L}}$ of $K\Delta_\prec(I)$ such 
that $\widetilde{\mathcal{L}}_X=\mathcal{L}_X$ is called
the \textit{standard function space} of $\mathcal{L}_X$. 
The \textit{dual} of
$\mathcal{L}$, denoted $\mathcal{L}^\perp$, is the $K$-linear space given by  
$\mathcal{L}^\perp:=({\rm ker}(\varphi)\colon
\mathcal{L})\bigcap K\Delta_\prec(I)$. We will also call $\mathcal{L}^\perp$ the \textit{algebraic dual} of 
$\mathcal{L}_X$ relative to $\prec$. 
\end{definition}

\begin{lemma}\label{dual-standard} Let $\mathcal{L}_X$ be an evaluation code on $X$ and 
let $\prec$ be a monomial order on $S$. If $I=I(X)$ and
$\widetilde{\mathcal{L}}$
is the standard function space of $\mathcal{L}_X$, then 
$$
\mathcal{L}^\perp=({\rm ker}(\varphi)\colon
\mathcal{L})\textstyle\bigcap K\Delta_\prec(I)=({\rm ker}(\varphi)\colon
\widetilde{\mathcal{L}})\textstyle\bigcap K\Delta_\prec(I)=\widetilde{\mathcal{L}}^\perp.
$$
\end{lemma}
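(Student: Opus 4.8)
The plan is to reduce all three expressions to a single condition on the vector ${\rm ev}(g)\in K^m$ attached to a polynomial $g\in S$. The first equality, $\mathcal{L}^\perp=({\rm ker}(\varphi)\colon\mathcal{L})\bigcap K\Delta_\prec(I)$, is nothing but the definition of $\mathcal{L}^\perp$. The third equality is the definition of $\widetilde{\mathcal{L}}^\perp$ once one observes that $\widetilde{\mathcal{L}}$ is a linear subspace of $K\Delta_\prec(I)$ with $\widetilde{\mathcal{L}}_X=\mathcal{L}_X$; by Corollary~\ref{unique-standard} the standard function space of the code $\widetilde{\mathcal{L}}_X$ is then $\widetilde{\mathcal{L}}$ itself, so $\widetilde{\mathcal{L}}^\perp=({\rm ker}(\varphi)\colon\widetilde{\mathcal{L}})\bigcap K\Delta_\prec(I)$ without ambiguity. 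Hence the only real content is the middle equality, and for it it suffices to prove the equality of colon ideals $({\rm ker}(\varphi)\colon\mathcal{L})=({\rm ker}(\varphi)\colon\widetilde{\mathcal{L}})$, after which one intersects both sides with $K\Delta_\prec(I)$.

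The key observation I would use is that for any $f,g\in S$ one has $\varphi(gf)=\sum_{i=1}^m g(P_i)f(P_i)=\langle {\rm ev}(g),{\rm ev}(f)\rangle$, the ordinary inner product on $K^m$. Therefore $gf\in{\rm ker}(\varphi)$ if and only if ${\rm ev}(g)$ is orthogonal to ${\rm ev}(f)$. Running this over all $f$ in a given function space, one gets: $g\in({\rm ker}(\varphi)\colon\mathcal{L})$ if and only if ${\rm ev}(g)$ is orthogonal to every element of $\mathcal{L}_X=\{{\rm ev}(f)\mid f\in\mathcal{L}\}$, i.e.\ ${\rm ev}(g)\in(\mathcal{L}_X)^\perp$; and likewise $g\in({\rm ker}(\varphi)\colon\widetilde{\mathcal{L}})$ if and only if ${\rm ev}(g)\in(\widetilde{\mathcal{L}}_X)^\perp$.

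Since $\widetilde{\mathcal{L}}$ is the standard function space of $\mathcal{L}_X$, we have $\widetilde{\mathcal{L}}_X=\mathcal{L}_X$, hence $(\widetilde{\mathcal{L}}_X)^\perp=(\mathcal{L}_X)^\perp$, so the two membership conditions above coincide and $({\rm ker}(\varphi)\colon\mathcal{L})=({\rm ker}(\varphi)\colon\widetilde{\mathcal{L}})$. Intersecting with $K\Delta_\prec(I)$ gives the middle equality, and combining with the two definitional identifications finishes the proof. I do not expect a genuine obstacle here; the only point that needs care is invoking Corollary~\ref{unique-standard} to justify that $\widetilde{\mathcal{L}}$ is its own standard function space, so that the symbol $\widetilde{\mathcal{L}}^\perp$ is well defined and equals the stated colon-ideal intersection.
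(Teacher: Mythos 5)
Your argument is correct, but it takes a different route from the paper. You translate the colon condition into orthogonality of evaluation vectors via the identity $\varphi(gf)=\sum_{i}g(P_i)f(P_i)=\langle{\rm ev}(g),{\rm ev}(f)\rangle$, concluding that $({\rm ker}(\varphi)\colon\mathcal{M})={\rm ev}^{-1}\bigl((\mathcal{M}_X)^\perp\bigr)$ for any subspace $\mathcal{M}$, so that the colon space depends only on the code $\mathcal{M}_X$; since $\widetilde{\mathcal{L}}_X=\mathcal{L}_X$ by the very definition of the standard function space, the middle equality follows, and the outer equalities are definitional. The paper instead works at the polynomial level: it writes $\mathcal{L}=K\{g_1,\ldots,g_k\}$, uses the division algorithm to get $g_i=h_i+r_i$ with $h_i\in I$ and $r_i\in K\Delta_\prec(I)$, invokes Proposition~\ref{transforming-new} to identify $\widetilde{\mathcal{L}}=K\{r_1,\ldots,r_k\}$, and then passes between $fg_i\in\ker(\varphi)$ and $fr_i\in\ker(\varphi)$ using $fh_i\in I\subset\ker(\varphi)$. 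Your version is slightly stronger and more flexible (it shows the full colon spaces coincide, and works for any two subspaces with the same evaluation image, without choosing generators or remainders), and it anticipates the mechanism of Theorem~\ref{formula-dual}, while the paper's proof stays self-contained within the Gr\"obner-basis setup it has just established; note there is no circularity in your approach since you use only the inner-product identity, not Theorem~\ref{formula-dual} itself. Two cosmetic remarks: the appeal to Corollary~\ref{unique-standard} is not really needed, because $\widetilde{\mathcal{L}}^\perp$ is defined directly from the subspace $\widetilde{\mathcal{L}}$ by the colon-and-intersect recipe; and $({\rm ker}(\varphi)\colon\mathcal{L})$ is only a linear subspace of $S$, not an ideal, so ``equality of colon ideals'' should read ``equality of colon spaces.''
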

\begin{proof} There are $g_1,\ldots,g_k$ in
$\mathcal{L}\setminus\{0\}$ such that
$\mathcal{L}=K\{g_1,\ldots,g_k\}$. By the division algorithm \cite[Theorem~3,
p.~63]{CLO}, for each
$i$, we can write $g_i=h_i+r_i$ for some $h_i\in I$ and $r_i\in
K\Delta_\prec(I)$. By Proposition~\ref{transforming-new}, one has 
$\widetilde{\mathcal{L}}=K\{r_1,\ldots,r_k\}$. To show the inclusion
``$\subset$'' take $f\in\mathcal{L}^\perp$, that is,
$f\mathcal{L}\subset\ker(\varphi)$ and $f\in K\Delta_\prec(I)$. Then 
$fr_i\in\ker(\varphi)$ for all $i$, and consequently
$f\in \widetilde{\mathcal{L}}^\perp$. To show the inclusion
``$\supset$'' take $f\in \widetilde{\mathcal{L}}^\perp$, that is, 
$f\widetilde{\mathcal{L}}\subset\ker(\varphi)$ and $f\in K\Delta_\prec(I)$. Then 
$fr_i\in\ker(\varphi)$ for all $i$ and, since $r_i=g_i-h_i$, we get
$fg_i\in\ker(\varphi)$ for all $i$. Thus, $f\in{\mathcal{L}}^\perp$.
\end{proof}

\begin{theorem}\label{formula-dual}
Let $\mathcal{L}_X$ be an evaluation code on $X$ and let $I=I(X)$ be the vanishing ideal
of $X$. If  $\prec$ is a monomial order and 
$\mathcal{L}^\perp=({\rm ker}(\varphi)\colon
\mathcal{L})\bigcap K\Delta_\prec(I)$, then $(\mathcal{L}_X)^\perp$ is the standard evaluation code
$(\mathcal{L}^\perp)_X$ on $X$ relative 
to $\prec$.
\end{theorem}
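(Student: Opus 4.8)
The plan is to deduce the identity $(\mathcal{L}_X)^\perp=(\mathcal{L}^\perp)_X$ from the single observation that the evaluation map carries the bilinear pairing $(g,f)\mapsto\varphi(gf)$ on $S$ to the standard inner product on $K^m$. First I would record the key identity: for all $f,g\in S$,
$$
\varphi(gf)=\sum_{i=1}^m g(P_i)f(P_i)=\langle{\rm ev}(g),{\rm ev}(f)\rangle .
$$
Hence a polynomial $g$ lies in $({\rm ker}(\varphi)\colon\mathcal{L})$ if and only if $\langle{\rm ev}(g),{\rm ev}(f)\rangle=0$ for every $f\in\mathcal{L}$, that is, if and only if ${\rm ev}(g)\in(\mathcal{L}_X)^\perp$. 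In other words $({\rm ker}(\varphi)\colon\mathcal{L})={\rm ev}^{-1}\bigl((\mathcal{L}_X)^\perp\bigr)$.

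Next I would use that, since $\prec$ is a monomial order, the images of the standard monomials in $\Delta_\prec(I)$ form a $K$-basis of $S/I$, while ${\rm ev}$ induces the $K$-linear isomorphism $S/I\simeq K^m$; therefore the restriction $\psi:={\rm ev}|_{K\Delta_\prec(I)}\colon K\Delta_\prec(I)\to K^m$ is a $K$-linear isomorphism (this is also what underlies Lemma~\ref{apr26-20}). Combining this with the previous step, a polynomial $f\in K\Delta_\prec(I)$ lies in $\mathcal{L}^\perp=({\rm ker}(\varphi)\colon\mathcal{L})\bigcap K\Delta_\prec(I)$ precisely when $\psi(f)\in(\mathcal{L}_X)^\perp$; that is, $\mathcal{L}^\perp=\psi^{-1}\bigl((\mathcal{L}_X)^\perp\bigr)$. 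Applying $\psi$ and using that $\psi$ is onto $K^m$ gives
$$
(\mathcal{L}^\perp)_X={\rm ev}(\mathcal{L}^\perp)=\psi\bigl(\psi^{-1}((\mathcal{L}_X)^\perp)\bigr)=(\mathcal{L}_X)^\perp .
$$
Since $\mathcal{L}^\perp\subset K\Delta_\prec(I)$ by definition, $(\mathcal{L}^\perp)_X$ is by definition a standard evaluation code on $X$ relative to $\prec$, which is exactly the assertion.

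I do not expect a genuine obstacle here; the delicate points are purely bookkeeping. One must be careful that the displayed identity $\varphi(gf)=\langle{\rm ev}(g),{\rm ev}(f)\rangle$ is stated for all of $S$ (so that $\mathcal{L}$ need not be standard), and one should note that $\mathcal{L}^\perp$ does not depend on the chosen generating set for $\mathcal{L}$, so that passing to the standard function space $\widetilde{\mathcal{L}}$ as in Lemma~\ref{dual-standard} is harmless. A dimension check is also available as a sanity test: because $\psi$ is an isomorphism, $\dim_K\mathcal{L}^\perp=\dim_K(\mathcal{L}_X)^\perp=m-k$, matching the expected dimension of a dual code.
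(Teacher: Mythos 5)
Your proposal is correct and takes essentially the same route as the paper: its proof also rests on the identity $\langle{\rm ev}(g),{\rm ev}(f)\rangle=\varphi(gf)$ together with the fact that every vector of $K^m$ is the evaluation of a standard polynomial, which the paper gets from surjectivity of ${\rm ev}_{r_0}$ on $S_{\leq r_0}$ plus the division algorithm rather than from the isomorphism $K\Delta_\prec(I)\simeq K^m$ you invoke. Your packaging of the conclusion as $\mathcal{L}^\perp=\psi^{-1}\bigl((\mathcal{L}_X)^\perp\bigr)$ and then $(\mathcal{L}^\perp)_X=(\mathcal{L}_X)^\perp$ merely condenses the paper's two-inclusion argument into one step.
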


\begin{proof} First we show the inclusion
$(\mathcal{L}_X)^\perp\subset (\mathcal{L}^\perp)_X$. Take $\alpha\in
(\mathcal{L}_X)^\perp$. Let $r_0$ be the regularity index of $H_{I}^a$. The
evaluation map 
$${\rm ev}_{r_0}\colon S_{\leq r_0}\rightarrow K^{m},\quad
f\mapsto\left(f(P_1),\ldots,f(P_m)\right),
$$ 
is surjective since $H_I^a(r_0)=\dim_K(S_{\leq r_0}/I_{\leq
r_0})=|X|=m$. Then, $\alpha=(g_1(P_1),\ldots,g_1(P_m))$ for some 
$g_1\in S_{\leq r_0}$. By the division algorithm \cite[Theorem~3,
p.~63]{CLO}, we can write $g_1=g_2+g$, where $g_2\in
I$ and $g\in K\Delta_\prec(I)$. Thus, $\alpha=(g(P_1),\ldots,g(P_m))$.
Using that $\alpha\in (\mathcal{L}_X)^\perp$, we obtain 
$$
\langle\alpha,(f(P_1),\ldots,f(P_m))\rangle=\sum_{i=1}^mg(P_i)f(P_i)=
\sum_{i=1}^m(gf)(P_i)=0
$$
for all $f\in\mathcal{L}$. Thus, $g\in({\rm ker}(\varphi)\colon
\mathcal{L})\bigcap K\Delta_\prec(I)=\mathcal{L}^\perp$. From the
equality 
$$
(\mathcal{L}^\perp)_X=\{(h(P_1),\ldots,h(P_m)) \mid 
h \in \mathcal{L}^\perp\},
$$
we obtain $\alpha\in(\mathcal{L}^\perp)_X$. To show the inclusion 
$(\mathcal{L}_X)^\perp\supset (\mathcal{L}^\perp)_X$ take 
$\alpha\in (\mathcal{L}^\perp)_X$, that is,
$\alpha=(g(P_1),\ldots,g(P_m))$ for some $g\in\mathcal{L}^\perp$.
Then, $gf\in{\rm ker}(\varphi)$ for all $f\in\mathcal{L}$ and 
$$
\langle\alpha,(f(P_1),\ldots,f(P_m))\rangle=0
$$ 
for all $f\in\mathcal{L}$. From the equality
$\mathcal{L}_X=\{(f(P_1),\ldots,f(P_m)) \mid 
f \in \mathcal{L}\}$, we obtain $\alpha\in (\mathcal{L}_X)^\perp$. 
\end{proof}

We will need the following observation.

\begin{lemma}\cite[Lemma~3.1]{toric-codes}\label{apr26-20} 
Let $X$ be a subset of $K^s$ and let $\mathcal{L}_X$ be a standard evaluation code on $X$
relative to a monomial order $\prec$. Then, $\mathcal{L}\bigcap I(X)=(0)$ 
and $\mathcal{L}\simeq\mathcal{L}_X$.
\end{lemma}

\begin{proof} We set $I=I(X)$. Take $f\in\mathcal{L}\bigcap I$ and 
recall that $\mathcal{L}$ is a linear subspace of $K\Delta_\prec(I)$. If $f\neq 0$, 
then ${\rm in}_\prec(f)\in{\rm in}_\prec(I)$, a contradiction since
all monomials of $f$ are standard monomials of $S/I$. Thus, $f=0$. Hence, the
evaluation map gives an isomorphism between $\mathcal{L}$ and
$\mathcal{L}_X$. 
\end{proof}

The next result shows that the
dual of $\mathcal{L}$ behaves well.

\begin{proposition}\label{dual-properties} Let $\mathcal{L}_X$ be a standard evaluation code
on $X$ relative to a monomial order $\prec$ on $S$ and let $I=I(X)$. The following hold.
\begin{enumerate}
\item[\rm(a)] $\dim_K(\mathcal{L})+\dim_K(\mathcal{L}^\perp)=|X|$.
\item[\rm(b)] The conditions $(\mathrm{b}_1)$-$(\mathrm{b}_3)$ are
equivalent
$$
(\mathrm{b}_1)\ \mathcal{L}_X\textstyle\bigcap(\mathcal{L}_X)^\perp=(0),\quad 
(\mathrm{b}_2) \ \mathcal{L}\bigcap\mathcal{L}^\perp=(0),\quad
(\mathrm{b}_3) \ \mathcal{L}+\mathcal{L}^\perp=K\Delta_\prec(I).
$$ 
\item[\rm(c)] $\mathcal{L}_X=(\mathcal{L}_X)^\perp$ if and only if
$\mathcal{L}=\mathcal{L}^\perp$.
\item[\rm(d)] $(\mathcal{L}^\perp)^\perp=\mathcal{L}$.
\end{enumerate}
\end{proposition}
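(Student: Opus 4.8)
The plan is to establish the four parts more or less in the order stated, leaning on Theorem~\ref{formula-dual} and Lemma~\ref{apr26-20} to pass back and forth between the algebraic dual $\mathcal{L}^\perp$ and the linear code $(\mathcal{L}^\perp)_X = (\mathcal{L}_X)^\perp$. For part (a), note that by Lemma~\ref{apr26-20} the evaluation map restricts to an isomorphism $\mathcal{L}\simeq\mathcal{L}_X$, and since $\mathcal{L}^\perp$ is also a subspace of $K\Delta_\prec(I)$ the same lemma gives $\mathcal{L}^\perp\simeq(\mathcal{L}^\perp)_X$. Now $\dim_K(\mathcal{L}_X)+\dim_K((\mathcal{L}_X)^\perp)=|X|$ is the standard fact that the dual of an $[m,k]$ code is an $[m,m-k]$ code (cited in the introduction via \cite[Theorem~1.2.1]{Huffman-Pless}), and by Theorem~\ref{formula-dual} we have $(\mathcal{L}_X)^\perp=(\mathcal{L}^\perp)_X$, so $\dim_K(\mathcal{L})+\dim_K(\mathcal{L}^\perp)=\dim_K(\mathcal{L}_X)+\dim_K((\mathcal{L}_X)^\perp)=|X|$.

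For part (b), the equivalence $(\mathrm{b}_1)\Leftrightarrow(\mathrm{b}_2)$ is immediate from the isomorphisms above together with Theorem~\ref{formula-dual}: the evaluation map carries $\mathcal{L}\bigcap\mathcal{L}^\perp$ isomorphically onto $\mathcal{L}_X\bigcap(\mathcal{L}^\perp)_X=\mathcal{L}_X\bigcap(\mathcal{L}_X)^\perp$, because both $\mathcal{L}$ and $\mathcal{L}^\perp$ sit inside $K\Delta_\prec(I)$ where $\mathrm{ev}$ is injective (any element of the kernel lies in $I(X)\bigcap K\Delta_\prec(I)=(0)$). For $(\mathrm{b}_2)\Leftrightarrow(\mathrm{b}_3)$ I would use a dimension count inside the finite-dimensional space $K\Delta_\prec(I)$, whose dimension is $|\Delta_\prec(I)|=|X|$ by Lemma~\ref{sep11-20}: one has $\dim_K(\mathcal{L}+\mathcal{L}^\perp)=\dim_K(\mathcal{L})+\dim_K(\mathcal{L}^\perp)-\dim_K(\mathcal{L}\bigcap\mathcal{L}^\perp)=|X|-\dim_K(\mathcal{L}\bigcap\mathcal{L}^\perp)$ by part (a), so $\mathcal{L}\bigcap\mathcal{L}^\perp=(0)$ if and only if $\dim_K(\mathcal{L}+\mathcal{L}^\perp)=|X|=\dim_K K\Delta_\prec(I)$, i.e. if and only if $\mathcal{L}+\mathcal{L}^\perp=K\Delta_\prec(I)$.

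Part (c) follows similarly: if $\mathcal{L}=\mathcal{L}^\perp$ then applying $\mathrm{ev}$ and Theorem~\ref{formula-dual} gives $\mathcal{L}_X=(\mathcal{L}^\perp)_X=(\mathcal{L}_X)^\perp$; conversely if $\mathcal{L}_X=(\mathcal{L}_X)^\perp=(\mathcal{L}^\perp)_X$, then since $\mathrm{ev}$ is injective on $K\Delta_\prec(I)$ and both $\mathcal{L}$ and $\mathcal{L}^\perp$ lie there, the equality of images forces $\mathcal{L}=\mathcal{L}^\perp$. For part (d), I expect the main point — and the only place requiring a genuine argument rather than a formal manipulation — is to check that $(\mathcal{L}^\perp)_X$ is itself a standard evaluation code so that the machinery applies to it; this holds because $\mathcal{L}^\perp\subset K\Delta_\prec(I)$ by definition, and its standard function space is $\mathcal{L}^\perp$ itself (by Corollary~\ref{unique-standard}). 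Then by Theorem~\ref{formula-dual} applied twice, $((\mathcal{L}^\perp)_X)^\perp=((\mathcal{L}^\perp)^\perp)_X$, while on the code side $((\mathcal{L}_X)^\perp)^\perp=\mathcal{L}_X$ is the standard biduality of linear codes; combining with Theorem~\ref{formula-dual} gives $((\mathcal{L}^\perp)^\perp)_X=((\mathcal{L}^\perp)_X)^\perp=((\mathcal{L}_X)^\perp)^\perp=\mathcal{L}_X=(\mathcal{L})_X$, and injectivity of $\mathrm{ev}$ on $K\Delta_\prec(I)$ — applicable since both $(\mathcal{L}^\perp)^\perp$ and $\mathcal{L}$ lie in $K\Delta_\prec(I)$ — yields $(\mathcal{L}^\perp)^\perp=\mathcal{L}$. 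The hardest part is really just keeping straight which objects live in $S$ versus $K^m$ and invoking Lemma~\ref{apr26-20} at the right moments; no serious obstacle is anticipated.
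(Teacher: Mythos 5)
Your proposal is correct and takes essentially the same approach as the paper's proof: both rest on Theorem~\ref{formula-dual}, the injectivity of the evaluation map on $K\Delta_\prec(I)$ (Lemma~\ref{apr26-20}), the equality $\dim_K K\Delta_\prec(I)=|X|$ (Lemma~\ref{sep11-20}), and the standard dimension/biduality facts for linear codes. The only differences are cosmetic repackagings: in (b) you prove $(\mathrm{b}_1)\Leftrightarrow(\mathrm{b}_2)$ directly via the image-of-intersection identity and get $(\mathrm{b}_2)\Leftrightarrow(\mathrm{b}_3)$ from one dimension count, where the paper runs the cycle $(\mathrm{b}_1)\Rightarrow(\mathrm{b}_2)\Rightarrow(\mathrm{b}_3)\Rightarrow(\mathrm{b}_1)$, and in (d) you apply Theorem~\ref{formula-dual} a second time to the code $(\mathcal{L}^\perp)_X$ instead of the paper's element-wise verification of the inclusion $\mathcal{L}\subset(\mathcal{L}^\perp)^\perp$.
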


\begin{proof} (a): By \cite[Theorem~1.2.1]{Huffman-Pless},
Theorem~\ref{formula-dual} and Lemma~\ref{apr26-20}, we get 
\begin{align*}
|X|&=\dim_K(\mathcal{L}_X)+\dim_K(\mathcal{L}_X)^\perp=
\dim_K(\mathcal{L}_X)+\dim_K(\mathcal{L}^\perp)_X\\
&=\dim_K(\mathcal{L})+\dim_K(\mathcal{L}^\perp).
\end{align*}
\quad (b): $(\mathrm{b}_1)\Rightarrow(\mathrm{b}_2)$ Assume
$\mathcal{L}_X\bigcap(\mathcal{L}_X)^\perp=(0)$ and take
$g\in\mathcal{L}\bigcap\mathcal{L}^\perp$. Then, ${\rm
ev}(g)\in\mathcal{L}_X\bigcap(\mathcal{L}^\perp)_X$ and, because of
Theorem~\ref{formula-dual}, we get that ${\rm ev}(g)$ is in
$\mathcal{L}_X\bigcap(\mathcal{L}_X)^\perp=(0)$ and ${\rm ev}(g)=0$.
Hence, $g\in I$, and consequently $g=0$ 
because $g\in K\Delta_\prec(I)$.

$(\mathrm{b}_2)\Rightarrow(\mathrm{b}_3)$ Assume
$\mathcal{L}\bigcap\mathcal{L}^\perp=(0)$. By part (a) one has
\begin{equation*}
|X|=\dim_K(\mathcal{L})+\dim_K(\mathcal{L}^\perp)=\dim_K(\mathcal{L}
+\mathcal{L}^\perp)+\dim_K(\mathcal{L}\textstyle\bigcap\mathcal{L}^\perp).
\end{equation*}
\quad Hence, $|X|=\dim_K(\mathcal{L}+\mathcal{L}^\perp)$. From the
inclusion $\mathcal{L}+\mathcal{L}^\perp\subset K\Delta_\prec(I)$ and
noticing that these linear spaces have dimension $|X|$ (Lemma~\ref{sep11-20}), we get
$\mathcal{L}+\mathcal{L}^\perp=K\Delta_\prec(I)$.

$(\mathrm{b}_3)\Rightarrow(\mathrm{b}_1)$ Assume
$\mathcal{L}+\mathcal{L}^\perp=K\Delta_\prec(I)$. The evaluation map
``${\rm ev}$'' induces an isomorphism between $K\Delta_\prec(I)$ and
$K^{|X|}$. Then, by Theorem~\ref{formula-dual}, we get
$$\mathcal{L}_X+(\mathcal{L}^\perp)_X=\mathcal{L}_X+(\mathcal{L}_X)^\perp=K^{|X|}$$
and the dimension of $\mathcal{L}_X+(\mathcal{L}_X)^\perp$ is $|X|$.
Therefore, from the equality
\begin{equation*}
|X|=\dim_K(\mathcal{L}_X)+\dim_K(\mathcal{L}_X)^\perp=\dim_K(\mathcal{L}_X
+(\mathcal{L}_X)^\perp)+\dim_K(\mathcal{L}_X\textstyle\bigcap(\mathcal{L}_X)^\perp),
\end{equation*}
we obtain $\mathcal{L}_X\bigcap(\mathcal{L}_X)^\perp=(0)$.

(c): $\Rightarrow$) Assume $\mathcal{L}_X=(\mathcal{L}_X)^\perp$. Let
$P_1,\ldots,P_m$ be the points of $X$. First we show the inclusion
$\mathcal{L}\subset\mathcal{L}^\perp$. Take $f\in\mathcal{L}$. Then,  
${\rm ev}(f)=(f(P_1),\ldots,f(P_m))$ is in $\mathcal{L}_X$. By 
Theorem~\ref{formula-dual}, $(\mathcal{L}_X)^\perp$ is equal to
$(\mathcal{L}^\perp)_X$. Thus, there is $g\in\mathcal{L}^\perp$ such
that ${\rm ev}(f)={\rm ev}(g)=(g(P_1),\ldots,g(P_m))$. Then $f-g\in
I$, and $f=g$ because $f,g$ are in $K\Delta_\prec(I)$. Thus,
$f\in\mathcal{L}^\perp$. Now we show the inclusion $\mathcal{L}^\perp\subset\mathcal{L}$. Take
$f\in\mathcal{L}^\perp$. Then, ${\rm ev}(f)=(f(P_1),\ldots,f(P_m))$
is in $(\mathcal{L}^\perp)_X$. By 
Theorem~\ref{formula-dual}, $(\mathcal{L}^\perp)_X$ is equal to
$(\mathcal{L}_X)^\perp=\mathcal{L}_X$. Thus, there is
$g\in\mathcal{L}$ such
that ${\rm ev}(f)={\rm ev}(g)=(g(P_1),\ldots,g(P_m))$. Then $f-g\in
I$, and $f=g$ because $f,g$ are standard polynomials. Thus,
$f\in\mathcal{L}$. 

$\Leftarrow$) Assume $\mathcal{L}=\mathcal{L}^\perp$. Then, by
Theorem~\ref{formula-dual}, $\mathcal{L}_X=(\mathcal{L}^\perp)_X=(\mathcal{L}_X)^\perp$.

(d): To show the inclusion
$(\mathcal{L}^\perp)^\perp\subset\mathcal{L}$ take
$g\in(\mathcal{L}^\perp)^\perp$. Then $gf\in\ker(\varphi)$ for all
$f\in\mathcal{L}^\perp$. Hence $\langle{\rm ev}(g),{\rm ev}(f)
\rangle=0$ for all $f\in\mathcal{L}^\perp$, that is, ${\rm
ev}(g)\in((\mathcal{L}^\perp)_X)^\perp$. By
Theorem~\ref{formula-dual} and the fact that the dual of
$(\mathcal{L}_X)^\perp$ is equal to $\mathcal{L}_X$ \cite[p.~26]{MacWilliams-Sloane}, one has
$((\mathcal{L}^\perp)_X)^\perp=((\mathcal{L}_X)^\perp)^\perp=\mathcal{L}_X$.
Thus, ${\rm ev}(g)\in\mathcal{L}_X$ and there is $h\in\mathcal{L}$
such that ${\rm ev}(g)={\rm ev}(h)$. From this equality we get that
$g-h$ is in $I$. As $g,h$ are in $K\Delta_\prec(I)$, it follows that
$g=h$ and $g\in\mathcal{L}$. To show the inclusion $\mathcal{L}\subset(\mathcal{L}^\perp)^\perp$
take $f\in\mathcal{L}$. Then ${\rm ev}(f)\in\mathcal{L}_X$. For any
$g\in\mathcal{L}^\perp$, one has $g\mathcal{L}\subset\ker(\varphi)$.
In particular, $gf\in\ker(\varphi)$ for any $g\in\mathcal{L}^\perp$,
and consequently $f$ is in $(\ker(\varphi)\colon\mathcal{L}^\perp)\bigcap
K\Delta_\prec(I)=(\mathcal{L}^\perp)^\perp$.
\end{proof}

\begin{proposition}\label{binomial-standard} Let $\mathcal{L}_X$ be an evaluation code 
on $X$ and let $\prec$ be a monomial order on $S$. 
If $I(X)$ is a binomial ideal and $\mathcal{L}$ is a monomial space,
then $\mathcal{L}_X$ is a standard monomial code. 
\end{proposition}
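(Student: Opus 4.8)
The plan is to reduce the statement to the claim that the standard function space $\widetilde{\mathcal{L}}$ of $\mathcal{L}_X$ is again a monomial space, since by definition $\mathcal{L}_X$ is a standard monomial code precisely when $\widetilde{\mathcal{L}}$ is a monomial space of $S$. Write $\mathcal{L}=K\{t^{a_1},\ldots,t^{a_k}\}$ for some monomials $t^{a_1},\ldots,t^{a_k}$. By Proposition~\ref{transforming-new}, $\widetilde{\mathcal{L}}=K\{r_1,\ldots,r_k\}$, where $r_i$ is the remainder of $t^{a_i}$ on division by a Gr\"obner basis $\mathcal{G}$ of $I=I(X)$. So it suffices to show that the normal form of a monomial modulo a binomial ideal is (up to a scalar) a monomial.

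First I would recall that a binomial ideal has a Gr\"obner basis consisting of binomials: this follows because Buchberger's algorithm, starting from binomial generators $t^a-t^b$, only ever forms $S$-polynomials and remainders of binomials, and the $S$-polynomial of two binomials is again a binomial, as is its remainder upon division by binomials (see \cite[Ch.~2, \S9]{CLO} or Eisenbud--Sturmfels on binomial ideals). Fix such a binomial Gr\"obner basis $\mathcal{G}=\{t^{u_1}-c_1 t^{v_1},\ldots,t^{u_n}-c_n t^{v_n}\}$ with $c_j\in K^*$ and $t^{u_j}={\rm in}_\prec(g_j)$. Now run the division algorithm on a monomial $t^a$: at each reduction step the current remainder-in-progress is a single scalar multiple of a monomial, say $\lambda\, t^{c}$; if $t^{c}$ is divisible by some $t^{u_j}$, the reduction replaces $\lambda\, t^{c}$ by $\lambda c_j\, t^{c-u_j+v_j}$, again a scalar times a monomial; if no leading term divides $t^c$, the term $\lambda\, t^{c}$ is moved to the remainder and the process terminates. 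Hence the remainder $r_i$ of $t^{a_i}$ is of the form $\lambda_i\, t^{c_i}$ for some $\lambda_i\in K^*$ and $t^{c_i}\in\Delta_\prec(I)$ (note $\lambda_i\neq 0$ and $r_i\neq 0$ because $t^{a_i}\notin I$, as the points are nonempty and a monomial never vanishes identically unless forced, but more simply: if $r_i=0$ then $t^{a_i}\in I$, contradicting $\mathcal{L}\cap I=(0)$, which holds after replacing $\mathcal{L}$ by $\widetilde{\mathcal{L}}$ — or one simply observes each reduction step preserves the monomiality so the output is either $0$ or a scalar times a monomial; the former is ruled out exactly when $t^{a_i}\notin I$, but we don't even need nonvanishing, as $K\{0,r_2,\ldots\}$ is still a monomial space).

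Finally I would conclude: $\widetilde{\mathcal{L}}=K\{\lambda_1 t^{c_1},\ldots,\lambda_k t^{c_k}\}=K\{t^{c_1},\ldots,t^{c_k}\}$, which is by definition a monomial space of $S$ contained in $K\Delta_\prec(I)$. Therefore $\mathcal{L}_X$ is a standard monomial code. The one point requiring a little care — the main obstacle, though a standard one — is justifying that $I$ binomial forces a binomial reduced Gr\"obner basis; everything after that is a direct bookkeeping argument tracking that each division step sends a scalar-times-monomial to a scalar-times-monomial.
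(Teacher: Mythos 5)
Your proposal is correct and follows essentially the same route as the paper: pass to the standard function space via Proposition~\ref{transforming-new}, invoke the existence of a binomial Gr\"obner basis for $I(X)$ (the paper cites \cite[Lemma~8.2.17]{monalg-rev} where you sketch the Buchberger argument), and observe that the division algorithm sends a monomial to a scalar multiple of a monomial, so $\widetilde{\mathcal{L}}$ is a monomial space. Your extra care about scalar coefficients and possible zero remainders only refines the paper's argument and does not change its substance.
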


\begin{proof} There exists a Gr\"obner basis
$\mathcal{G}$ of $I(X)$ consisting of binomials \cite[Lemma
8.2.17]{monalg-rev}. The linear space $\mathcal{L}$ is generated by a
finite set $\{t^{a_1},\ldots,t^{a_k}\}$ of monomials. By the division
algorithm \cite[Theorem~3, p.~63]{CLO} it follows that the remainder
$r_i$ on division of $t^{a_i}$ by $\mathcal{G}$ is a monomial. Hence,
$\widetilde{\mathcal{L}}=K\{r_1,\ldots,r_k\}$ is a monomial space. 
\end{proof}

Let $A=\{x^{c_1},\ldots,x^{c_s}\}$ be a finite set of monomials in a
polynomial ring $K[x_1,\ldots,x_n]$. 
The {\it affine set parameterized\/} by $A$ is the set $X$ of all 
points $(x^{c_1}(\alpha),\ldots,x^{c_s}(\alpha))$ such that $\alpha\in
K^n$.
The next result gives a wide class of standard monomial codes 
that includes the family of parameterized affine codes \cite{affine-codes} and the subfamily
of $q$-ary Reed--Muller codes \cite{Pellikaan}. 

\begin{corollary}\label{parameterized-binomial} If $X$ is
parameterized by monomials and $\mathcal{L}$ is a monomial space, then $\mathcal{L}_X$ is a standard
monomial code. In
particular if $\mathcal{L}=S_{\leq d}$, then $\mathcal{L}_X$ is a
standard monomial code.  
\end{corollary}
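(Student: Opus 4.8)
The plan is to reduce the corollary to Proposition~\ref{binomial-standard}. That proposition asserts that whenever $I(X)$ is a binomial ideal and $\mathcal{L}$ is a monomial space, the code $\mathcal{L}_X$ is a standard monomial code; since $\mathcal{L}$ is already assumed to be a monomial space, the only thing left to verify is that the vanishing ideal of a set parameterized by monomials is binomial. So I would begin by recalling the set-up: writing $A=\{x^{c_1},\dots,x^{c_s}\}\subset K[x_1,\dots,x_n]$, the set $X$ is the image of the monomial map $\alpha\mapsto(x^{c_1}(\alpha),\dots,x^{c_s}(\alpha))$, $\alpha\in K^n$, and I would record the associated $K$-algebra homomorphism $\psi\colon S\to K[x_1,\dots,x_n]$ given by $t_i\mapsto x^{c_i}$.

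Next I would establish that $I(X)$ is a binomial ideal of $S$. By \cite{affine-codes}, the vanishing ideal of a set parameterized by monomials over a finite field is binomial; one can also see this directly from the preimage description $I(X)=\psi^{-1}(I(K^n))$, which holds because $f\in I(X)$ if and only if $\psi(f)$ vanishes at every point of $K^n$, i.e. $\psi(f)\in I(K^n)$. Here $I(K^n)=(x_1^q-x_1,\dots,x_n^q-x_n)$ is binomial, each generator being a difference of two monomials, and $\psi$ carries every variable to a monomial, so the preimage stays within the class of binomial ideals. Either way one obtains that $I(X)$ is binomial.

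Now Proposition~\ref{binomial-standard} applies verbatim: $I(X)$ is binomial and $\mathcal{L}$ is a monomial space, hence $\mathcal{L}_X$ is a standard monomial code, which is the first assertion. For the ``in particular'' clause, I would simply observe that $S_{\leq d}=K\{t^a\mid\deg(t^a)\leq d\}$ is spanned by monomials, hence is a monomial space of $S$, so the first assertion applies with $\mathcal{L}=S_{\leq d}$; this also covers the $q$-ary Reed--Muller codes, which arise from the trivial parameterization $t_i\mapsto x_i$ of $X=K^s$.

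The one delicate point is the binomiality of $I(X)$: because the parameter $\alpha$ ranges over all of $K^n$, the set $X$ need not lie in the torus $(K^*)^s$, so $I(X)$ is in general only a binomial ideal and not a lattice ideal. I expect this to be the sole obstacle, and it is resolved either by the preimage description $I(X)=\psi^{-1}(I(K^n))$ together with the stability of binomial ideals under such preimages, or by citing \cite{affine-codes}. Everything else in the argument is formal.
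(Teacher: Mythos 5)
Your proposal is correct and is essentially the paper's own proof: the paper also deduces the corollary from Proposition~\ref{binomial-standard} after noting that $I(X)$ is a binomial ideal, citing \cite[Theorem~4]{vanishing-ideals} for that fact, and the ``in particular'' clause is just the observation that $S_{\leq d}$ is spanned by monomials. Two small caveats on your justification of binomiality: the reference \cite{affine-codes} is not the right source (there the parameters run over the torus $(K^*)^n$, whereas here $\alpha$ ranges over all of $K^n$; the relevant result is \cite[Theorem~4]{vanishing-ideals}), and your ``direct'' argument via $I(X)=\psi^{-1}(I(K^n))$ silently uses that the contraction of a binomial ideal along a monomial homomorphism is again binomial --- this is true, but it is not immediate from ``$\psi$ sends variables to monomials''; one needs that $\psi^{-1}(J)$ is the elimination ideal of the binomial ideal $J+(t_1-x^{c_1},\ldots,t_s-x^{c_s})$ together with the fact that binomial ideals have binomial Gr\"obner bases (Eisenbud--Sturmfels), so state or cite that step explicitly.
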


\begin{proof} By \cite[Theorem~4, p. 435]{vanishing-ideals}, $I(X)$ is a binomial
ideal. Hence, by Proposition~\ref{binomial-standard}, $\mathcal{L}_X$
is a standard monomial code.
\end{proof}

Using the equality $(\mathcal{L}_X)^\perp=(\mathcal{L}^\perp)_X$
(Theorem~\ref{formula-dual}) and the next result we obtain an
effective 
criterion to verify whether or not 
the dual of an evaluation code is a standard monomial code. 

\begin{proposition}\label{monomial-criterion} Let $\mathcal{L}_X$ be
an evaluation code on $X$, let $I$ be the vanishing ideal of
$X$, and let $\prec$ be a monomial order. Then, 
$(\mathcal{L}^\perp)_X$ is a standard monomial code on $X$ if and
only if 
$$|({\rm ker}(\varphi)\colon
\mathcal{L})\textstyle\bigcap\Delta_\prec(I)|=|X|-\dim_K(\mathcal{L}_X).
$$
\end{proposition}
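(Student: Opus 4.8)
The plan is to use the duality formula $(\mathcal{L}_X)^\perp=(\mathcal{L}^\perp)_X$ from Theorem~\ref{formula-dual} to reduce the statement to a dimension count on the algebraic side, together with the observation (Lemma~\ref{dual-standard}) that $\mathcal{L}^\perp$ depends only on the evaluation code, so we may as well assume $\mathcal{L}$ is already a standard function space, i.e.\ $\mathcal{L}\subset K\Delta_\prec(I)$. In that setting $\mathcal{L}^\perp=({\rm ker}(\varphi)\colon\mathcal{L})\bigcap K\Delta_\prec(I)$ is a linear subspace of $K\Delta_\prec(I)$, and by Proposition~\ref{dual-properties}(a) we have $\dim_K(\mathcal{L}^\perp)=|X|-\dim_K(\mathcal{L})=|X|-\dim_K(\mathcal{L}_X)$, where the last equality uses Lemma~\ref{apr26-20}.

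First I would prove the ``only if'' direction. Suppose $(\mathcal{L}^\perp)_X$ is a standard monomial code. Since the standard function space of $(\mathcal{L}^\perp)_X$ is $\mathcal{L}^\perp$ itself (it is already a subspace of $K\Delta_\prec(I)$, so it equals its own standard function space by Corollary~\ref{unique-standard}), being a standard monomial code means precisely that $\mathcal{L}^\perp$ is a monomial space, i.e.\ $\mathcal{L}^\perp=K\mathcal{M}$ for some set $\mathcal{M}$ of standard monomials. Then every monomial of $\mathcal{M}$ lies in $({\rm ker}(\varphi)\colon\mathcal{L})\bigcap\Delta_\prec(I)$, so $|\mathcal{M}|\leq|({\rm ker}(\varphi)\colon\mathcal{L})\bigcap\Delta_\prec(I)|$; conversely any standard monomial in $({\rm ker}(\varphi)\colon\mathcal{L})\bigcap\Delta_\prec(I)$ lies in $\mathcal{L}^\perp=K\mathcal{M}$, and since distinct standard monomials are linearly independent it must belong to $\mathcal{M}$. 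Hence $|({\rm ker}(\varphi)\colon\mathcal{L})\bigcap\Delta_\prec(I)|=|\mathcal{M}|=\dim_K(\mathcal{L}^\perp)=|X|-\dim_K(\mathcal{L}_X)$.

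For the ``if'' direction, set $N:=({\rm ker}(\varphi)\colon\mathcal{L})\bigcap\Delta_\prec(I)$ and assume $|N|=|X|-\dim_K(\mathcal{L}_X)=\dim_K(\mathcal{L}^\perp)$. Each element of $N$ is a standard monomial lying in $({\rm ker}(\varphi)\colon\mathcal{L})\bigcap K\Delta_\prec(I)=\mathcal{L}^\perp$, so $KN\subset\mathcal{L}^\perp$; since the elements of $N$ are distinct standard monomials they are $K$-linearly independent, whence $\dim_K(KN)=|N|=\dim_K(\mathcal{L}^\perp)$, forcing $\mathcal{L}^\perp=KN$, a monomial space. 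Therefore $(\mathcal{L}^\perp)_X$ is a standard monomial code by definition. The only point requiring any care is the bookkeeping identification ``$\mathcal{L}^\perp$ is a monomial space $\iff$ $(\mathcal{L}^\perp)_X$ is a standard monomial code,'' which follows because $\mathcal{L}^\perp\subset K\Delta_\prec(I)$ is its own standard function space; once that is in place the argument is a routine linear-independence-of-standard-monomials count and there is no real obstacle.
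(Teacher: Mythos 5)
Your proposal is correct and follows essentially the same route as the paper: establish $\dim_K(\mathcal{L}^\perp)=|X|-\dim_K(\mathcal{L}_X)$ (you via Lemma~\ref{dual-standard} and Proposition~\ref{dual-properties}(a), the paper directly via Corollary~\ref{unique-standard}, Lemma~\ref{apr26-20} and Theorem~\ref{formula-dual}, which amounts to the same ingredients), then compare this dimension with the number of standard monomials in $(\ker(\varphi)\colon\mathcal{L})$ using linear independence of distinct standard monomials. Your write-up is in fact slightly more explicit than the paper's in verifying both inclusions of $(\ker(\varphi)\colon\mathcal{L})\bigcap\Delta_\prec(I)=\mathcal{M}$ in the forward direction, which the paper simply asserts.
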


\begin{proof} By Corollary~\ref{unique-standard}, the standard function space of
$(\mathcal{L}^\perp)_X$ is equal to $\mathcal{L}^\perp$ because
$\mathcal{L}^\perp$ is 
generated by standard polynomials of $S/I$. Then, as
$(\mathcal{L}^\perp)_X$ is a standard evaluation code, one has
$\mathcal{L}^\perp\simeq(\mathcal{L}^\perp)_X$ 
(Lemma~\ref{apr26-20}). Hence, by Theorem~\ref{formula-dual}, we get 
\begin{equation}\label{jul3-20}
\dim_K(\mathcal{L}^\perp)=\dim_K(\mathcal{L}^\perp)_X=
\dim_K(\mathcal{L}_X)^\perp=|X|-\dim_K(\mathcal{L}_X).
\end{equation}
\quad $\Rightarrow$) Let ${B}$ be a finite monomial $K$-basis for
$\mathcal{L}^\perp$. By Eq.~\eqref{jul3-20}, $|{B}|$ is equal
to $|X|-\dim_K(\mathcal{L}_X)$. Hence, the desired equality follows by noticing 
that $({\rm ker}(\varphi)\colon\mathcal{L})\bigcap\Delta_\prec(I)={B}$. 

$\Leftarrow$) There are monomials
$t^{a_1},\ldots,t^{a_n}$ in $({\rm ker}(\varphi)\colon
\mathcal{L})\bigcap\Delta_\prec(I)$ with
$n=|X|-\dim_K(\mathcal{L}_X)$. By Eq.~(\ref{jul3-20}), one has
$\dim(\mathcal{L}^\perp)=n$. Hence, as $t^{a_i}\in\mathcal{L}^\perp$
for all $i$, we get $\mathcal{L}^\perp=K\{t^{a_1},\ldots,t^{a_n}\}$. 
\end{proof}

\subsection{Computing a basis} In this subsection we show an
effective algorithm to compute the dimension and a $K$-basis for a linear subspace of
$S$ of finite dimension. Let $(S^*)^{<\omega}$ be the set of finite
subsets of $S^*=S\setminus\{0\}$, let $\prec$ be the graded reverse
lexicographical order (GRevLex order) on
$S$, and let $\sigma$ and $\phi$ be the
functions
\begin{align*}
&\sigma,\, \phi\colon (S^*)^{<\omega}\rightarrow(S^*)^{<\omega},\quad
\sigma(A)=\{g\in A \mid {\rm in}_\prec(g)={\rm  
in}_\prec(\max(A))\},\\
&\phi(A)= \left(\{\max(A)-({\rm lc}(\max(A))/{\rm lc}(g))g \mid
g\in\sigma(A)\}\setminus\{0\}\right)\textstyle\bigcup(A\setminus\sigma(A)),
\end{align*}
where ${\rm lc}(g)$ denotes the leading coefficient of $g$ and 
$\max(A)$ is any polynomial in $A$ whose initial monomial is
$\max\{{\rm in}_\prec(g) : g\in A\}$. Note that $\sigma(A)$ is the set
of all polynomials in $A$ with largest initial monomial relative to
$\prec$ and, hence, is independent of the choice of $\max(A)$. 
The following result is based on Gaussian elimination.

\begin{theorem}{\rm(Basis algorithm)}\label{dim-algo}
\rm Let $\mathcal{L}=KA$ be a subspace of $S$ generated by a finite
subset $A$ of $S^*$. Then one can construct 
a $K$-basis for $\mathcal{L}$ using the following 
algorithm:
\begin{tabbing}
\ \ Input: $A$ \\ 
\ \ Output: a $K$-basis $B$ for $\mathcal{L}$\\ 
\ \ Initialization: $B:=A$ \\
\ \ while $B\neq\emptyset$ list $\max(B)$ do $B:=\phi(B)$.
\end{tabbing}
\end{theorem}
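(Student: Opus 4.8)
The plan is to read a single application of $\phi$ as one round of Gaussian elimination and to carry two invariants along the loop. Write $B_0:=A$ and $B_j:=\phi(B_{j-1})$, and let $p_j:=\max(B_{j-1})$ denote the polynomial listed on the $j$-th pass (for a fixed choice of representatives). I would prove three things in turn: (i) the loop halts, because $|B_j|$ is strictly decreasing; (ii) for every $j$ the span $K(B_j\cup\{p_1,\dots,p_j\})$ equals $\mathcal{L}$; and (iii) the listed polynomials $p_1,p_2,\dots$ have strictly $\prec$-decreasing initial monomials, hence are $K$-linearly independent. Putting (ii) at the last step together with (iii) then yields that the output is a $K$-basis of $\mathcal{L}$, and along the way $\dim_K\mathcal{L}$ is recorded as the number of passes.

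Step (i) is immediate: the summand of $\phi(B_{j-1})$ indexed by $g=\max(B_{j-1})$ is $\max(B_{j-1})-({\rm lc}(\max(B_{j-1}))/{\rm lc}(\max(B_{j-1})))\max(B_{j-1})=0$, so it is discarded; hence the reduction set $\{\max(B_{j-1})-({\rm lc}(\max(B_{j-1}))/{\rm lc}(g))g\mid g\in\sigma(B_{j-1})\}\setminus\{0\}$ has at most $|\sigma(B_{j-1})|-1$ elements, and since $B_j$ is its union with $B_{j-1}\setminus\sigma(B_{j-1})$ we get $|B_j|\le|B_{j-1}|-1$; thus the loop stops after at most $|A|$ passes, producing $p_1,\dots,p_N$ with $B_N=\emptyset$. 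Step (iii) is also short: by definition of $\sigma$ every $g\in B_{j-1}\setminus\sigma(B_{j-1})$ already satisfies ${\rm in}_\prec(g)\prec{\rm in}_\prec(p_j)$, and for $g\in\sigma(B_{j-1})$ with $g\ne p_j$ the leading terms of $p_j$ and of $({\rm lc}(p_j)/{\rm lc}(g))g$ coincide and cancel, so ${\rm in}_\prec(p_j-({\rm lc}(p_j)/{\rm lc}(g))g)\prec{\rm in}_\prec(p_j)$; hence every element of $B_j$ has initial monomial strictly below ${\rm in}_\prec(p_j)$, giving ${\rm in}_\prec(p_{j+1})\prec{\rm in}_\prec(p_j)$, so the $p_j$ have pairwise distinct initial monomials and are therefore $K$-linearly independent.

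Step (ii) is where the real bookkeeping sits, and I expect it to be the main obstacle, since a single pass reduces all of $\sigma(B_{j-1})$ simultaneously and throws away the members that become $0$. The key is the one-pass identity $KB_{j-1}=K(B_j\cup\{p_j\})$: every element of $B_j$ either already lies in $B_{j-1}$ or has the form $p_j-cg$ with $c\in K^*$ and $g\in\sigma(B_{j-1})\subset B_{j-1}$, so it lies in $KB_{j-1}$; conversely each $g\in B_{j-1}$ either survives into $B_j$, or is $p_j$, or lies in $\sigma(B_{j-1})$ with $g\ne p_j$, in which case $g$ is a $K$-combination of $p_j$ and the element $p_j-cg$ (which belongs to $B_j$, or vanishes and then $g$ is a scalar multiple of $p_j$). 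Adding $\{p_1,\dots,p_{j-1}\}$ to both sides promotes this to $K(B_{j-1}\cup\{p_1,\dots,p_{j-1}\})=K(B_j\cup\{p_1,\dots,p_j\})$, and iterating from $j=1$ to $N$ with $B_N=\emptyset$ gives $\mathcal{L}=KA=KB_0=K\{p_1,\dots,p_N\}$. Combined with the independence from step (iii), this shows $\{p_1,\dots,p_N\}$ is a $K$-basis of $\mathcal{L}$, as claimed.
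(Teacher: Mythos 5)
Your proof is correct and follows essentially the same route as the paper: the one-pass span identity $KB_{j-1}=K(B_j\cup\{p_j\})$ is exactly the paper's observation that $KB=K\max(B)+K\phi(B)$, and independence of the listed polynomials comes, as in the paper, from the strictly decreasing initial monomials ${\rm in}_\prec(p_1)\succ{\rm in}_\prec(p_2)\succ\cdots$. The only (harmless) variation is your termination argument via $|B_j|\le|B_{j-1}|-1$, which gives an explicit bound of $|A|$ passes, whereas the paper terminates via the strict decrease of $\max(B)$ and the well-ordering of $\prec$; both are valid.
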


\begin{proof} As $\sigma(B)\subset B$ we can write 
$B=\{g_1,\ldots,g_n\}$, where $\sigma(B)=\{g_1,\ldots,g_r\}$,
$r\leq n$, and ${\rm in}_\prec(g_1)$ is equal to ${\rm in}_\prec(g_i)$ for
$i=1,\ldots,r$. Any $g_i\in\sigma(B)$ can be chosen to be $\max(B)$.
Setting $g_1:=\max(B)$ and 
$h_i:=g_1-({\rm lc}(g_1)/{\rm lc}(g_i))g_i$ for $i=1,\ldots,r$, one
has
\begin{equation*}
\phi(B)=
\left(\{h_i\}_{i=1}^r\setminus\{0\}\right)\textstyle\bigcup\{g_i\}_{i=r+1}^n.
\end{equation*}
\quad Note that ${\rm in}_\prec(g_1)\succ{\rm in}_\prec(g_i)$ for $i>r$
and ${\rm in}_\prec(g_1)\succ{\rm in}_\prec(h_i)$ for $i=2,\ldots,r$.
Thus, $\max(B)\succ\max(\phi(B))$ and the algorithm terminates after a
finite number of steps. If the algorithm terminates at $B$, that is,
$B\neq\emptyset$ and $\phi(B)=\emptyset$, then $B=\sigma(B)$, 
$g_1=({\rm lc}(g_1)/{\rm lc}(g_i))g_i$ for $i=1,\ldots,r$, and
$KB=Kg_1=K\max(B)$. That the output is a generating set for
$\mathcal{L}=KA$ follows by noticing that $KB=K\max(B)+K\phi(B)$.
Finally, we show that the output is linearly independent over $K$. The
output is the list 
$$
{B}=\{\max(A),\max(\phi(A)),\max(\phi(\phi(A))),\ldots,\max(\phi^{k-1}(A))\},
$$
where $\phi^{k-1}(A)\neq\emptyset$ and $\phi^k(A)=\emptyset$. Since
$\max(\phi^{i-1}(A))\succ\max(\phi^{i}(A))$ for $i=1,\ldots k-1$ it 
is not hard to see that ${B}$ is linearly independent.  
\end{proof}

\section{Indicator functions and v-numbers of vanishing ideals}\label{v-number-section}

Recall that  $K$ is a finite field, $X=\{P_1,\ldots,P_m\}$ is 
a set of points in $K^s$, $|X|\geq 2$, and $I=I(X)$ is its vanishing ideal.
Fix a graded monomial order $\prec$ on $S$. In this section we introduce
and study the v-number of $I$ and the indicator functions of $X$.

We begin with the notion of an indicator function
of a point in $X$ \cite{sorensen}. For a projective point an indicator
function is called a separator \cite[Definition~2.1]{geramita-cayley-bacharach}.  

\begin{definition} Let $X=\{P_1,\ldots,P_m\}$ be a subset of
$K^s$. A polynomial $f\in S$ is called an
\textit{indicator function} for $P_i$ if $f(P_i)\neq 0$ and $f(P_j)=0$ for all
$j\neq i$.
\end{definition}

An indicator function $f$ for $P_i$ can be normalized to have value 
$1$ at $P_i$ by considering $f/f(P_i)$. The following lemma lists basic properties of indicator functions.

\begin{lemma}\label{if} {\rm (a)} If $f,g$ are indicator functions for
$P_i$ in $K\Delta_\prec(I)$, then $g(P_i)f=f(P_i)g$.

{\rm (b)} The set of indicator functions for $P_i$ is
$(I\colon\mathfrak{p}_i)\setminus I$, where $\mathfrak{p}_i$ is the
vanishing ideal of $P_i$.

{\rm (c)} There exists a unique, up to multiplication by a scalar from
$K^*$, indicator function $f$ for $P_i$ in
$K\Delta_\prec(I)$, and $f$ is unique if $f(P_i)=1$.

{\rm (d)} If $\mathfrak{p}_i$ is the vanishing ideal of
$P_i$, then $\dim_K((I\colon\mathfrak{p}_i)/I)=1$ and
$(I\colon\mathfrak{p}_i)/I=K\overline{f}$ for any indicator function
$f$ for $P_i$, where $\overline{f}=f+I$.
\end{lemma}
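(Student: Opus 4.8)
The plan is to prove the four parts in the order (b), (a), (c), (d), since each leans on the previous. For part (b), recall that $\mathfrak{p}_i = I_{P_i}$ is the maximal ideal $(t_1 - p_{i,1},\ldots,t_s-p_{i,s})$ by Lemma~\ref{primdec-ixx}, and $I = \bigcap_{j=1}^m \mathfrak{p}_j$. I would first observe that $f \in (I\colon\mathfrak{p}_i)$ means $f\,\mathfrak{p}_i \subset I \subset \mathfrak{p}_j$ for every $j$; since $\mathfrak{p}_j$ is prime and $\mathfrak{p}_i \not\subset \mathfrak{p}_j$ for $j\neq i$ (distinct maximal ideals), we get $f \in \mathfrak{p}_j$, i.e. $f(P_j) = 0$ for all $j \neq i$. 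Conversely, if $f$ vanishes at all $P_j$, $j\neq i$, then $f \in \bigcap_{j\neq i}\mathfrak{p}_j$, and multiplying by any element of $\mathfrak{p}_i$ lands in $\bigcap_{j=1}^m \mathfrak{p}_j = I$, so $f \in (I\colon\mathfrak{p}_i)$. Excluding $I$ from this set is exactly the condition $f(P_i)\neq 0$. This gives (b). Note this argument does not use that $f$ is standard, so part (b) holds for all of $S$, not just $K\Delta_\prec(I)$.

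For part (a), given two indicator functions $f,g$ for $P_i$ lying in $K\Delta_\prec(I)$, consider $h := g(P_i)f - f(P_i)g$. Both $f$ and $g$ vanish at every $P_j$ with $j\neq i$, and at $P_i$ we have $h(P_i) = g(P_i)f(P_i) - f(P_i)g(P_i) = 0$; hence $h$ vanishes at all points of $X$, so $h \in I$. But $h$ is a $K$-linear combination of standard monomials, so $h \in K\Delta_\prec(I)\cap I = (0)$ by Lemma~\ref{apr26-20} (applied to $\mathcal{L} = K\Delta_\prec(I)$, which is a standard evaluation code since it lies in $K\Delta_\prec(I)$). Thus $g(P_i)f = f(P_i)g$. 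For (c), existence of a standard indicator function follows by taking any indicator function (these exist: e.g. a product of linear forms separating $P_i$ from the other points, or more cleanly, reduce such a polynomial modulo a Gröbner basis of $I$ — the remainder is standard, still lies in $(I\colon\mathfrak p_i)$ by part (b) since it has the same evaluation vector, and is nonzero at $P_i$). Uniqueness up to $K^*$-scalar, and outright uniqueness once we normalize $f(P_i)=1$, is an immediate restatement of (a).

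For part (d), by (b) the set $(I\colon\mathfrak{p}_i)\setminus I$ consists precisely of the indicator functions for $P_i$ (in all of $S$), so $(I\colon\mathfrak p_i)/I$ is spanned by the classes of such functions together with $\overline 0$. I would show any two indicator functions $f, g$ (now arbitrary in $S$, not necessarily standard) satisfy $g(P_i)\overline f = f(P_i)\overline g$ in $S/I$: indeed $g(P_i)f - f(P_i)g$ vanishes on all of $X$ by the same computation as in (a), hence lies in $I$. Since $f(P_i), g(P_i) \in K^*$, this shows $\overline f$ and $\overline g$ are nonzero scalar multiples of each other, so $(I\colon\mathfrak p_i)/I = K\overline f$ is one-dimensional (it is nonzero because $f\notin I$). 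The main subtlety to get right — and the only place where care is genuinely needed — is the interplay between "standard" and "arbitrary" polynomials: parts (a) and (c) are statements about $K\Delta_\prec(I)$ and rely on Lemma~\ref{apr26-20} to kill elements of $I$, whereas (b) and (d) are intrinsic to $S/I$ and only need the primary decomposition of Lemma~\ref{primdec-ixx}. Everything else is routine linear algebra and prime avoidance.
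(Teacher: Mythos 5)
Your proposal is correct and follows essentially the same route as the paper: part (b) via the primary decomposition $I=\bigcap_j\mathfrak{p}_j$ and the identity $(I\colon\mathfrak{p}_i)=\bigcap_{j\neq i}\mathfrak{p}_j$, part (a) by noting the combination $g(P_i)f-f(P_i)g$ lies in $I\cap K\Delta_\prec(I)=(0)$, part (c) by reducing an indicator function modulo a Gr\"obner basis and invoking (a), and part (d) from (b) plus proportionality of indicator functions modulo $I$. The only (harmless) variation is in (d), where you argue directly in $S/I$ with arbitrary indicator functions instead of first passing to standard representatives as the paper does; both arguments are valid.
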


\begin{proof} (a): The polynomial $h=g(P_i)f-f(P_i)g$
 vanishes at all points of $X$, that is, $h\in I$. If $h\neq 0$, then
the initial monomial of $h$ is in the initial ideal of $I$, a contradiction
since all monomials of $h$ are standard. Thus, $h=0$ and
$g(P_i)f=f(P_i)g$. 

(b): Note the equalities
$I=\bigcap_{j=1}^m\mathfrak{p}_j$ (Lemma~\ref{primdec-ixx}) and
$(I\colon\mathfrak{p}_i)=\bigcap_{j\neq i}\mathfrak{p}_j$. Let $f$ be
an indicator function for $P_i$, then $f\not\in\mathfrak{p}_i$ and $f\in\mathfrak{p}_j$ 
for $j\neq i$. Thus,
$f\in(I\colon\mathfrak{p}_i)$ and $f\notin I$. Conversely, take $f$ in
 $(I\colon\mathfrak{p}_i)\setminus I$. Then, $f\in \bigcap_{j\neq
i}\mathfrak{p}_j$ and $f\notin\mathfrak{p}_i$. Thus, $f$ is an
indicator function for $P_i$.

(c): The existence of $f$ follows from the division algorithm \cite[Theorem~3,
p.~63]{CLO} and part (b) because $(I\colon\mathfrak{p}_i)\setminus
I\neq\emptyset$. The uniqueness of $f$ follows
from part (a).

(d): Let $f$ be an indicator function for $P_i$. By part (b), $f$ 
is in $(I\colon\mathfrak{p}_i)\setminus I$. Therefore,
one has $(I\colon\mathfrak{p}_i)/I\supset K\overline{f}$.  
To show the other inclusion take
$\overline{0}\neq\overline{g}\in(I\colon\mathfrak{p}_i)/I$ and note
that $g$ is an indicator function for $P_i$ by part (b). 
By the division algorithm \cite[Theorem~3, p.~63]{CLO}, one has 
\begin{equation*}
(I\colon\mathfrak{p}_i)/I=\{\overline{h}\mid h\in
(I\colon\mathfrak{p}_i)\textstyle\bigcap K\Delta_\prec(I)\}.
\end{equation*}
\quad Hence, using part (b), we may assume that $f$ is an indicator
function for $P_i$ in $K\Delta_\prec(I)$, and we can
write $\overline{g}=\overline{h}$ for some $h$ in 
$(I\colon\mathfrak{p}_i)\bigcap K\Delta_\prec(I)$. By part (b), $h$ is
an indicator function for $P_i$. Hence, by part (a), we get 
$\overline{g}=\overline{h}=\lambda\overline{f}$,
$\lambda=h(P_i)/f(P_i)$. Thus, $\overline{g}\in K\overline{f}$.
\end{proof}

The following numerical invariant will
be used to determine the regularity index of the minimum distance 
of a Reed--Muller-type code (Proposition
\ref{tuesday-afternoon}). 
\begin{definition}
\label{def:v-number}
The v-\textit{number} of $I=I(X)$, denoted ${\rm v}(I)$,
is given by
$$
{\rm v}(I):=\min\{d\geq 0 \mid \text{there is $0\neq f \in S$, $\deg(f)=d$, and 
$\mathfrak{p} \in {\rm Ass}(I)$ with $(I\colon f) =\mathfrak{p}$}\},$$
where ${\rm Ass}(I)$ is the set of associated primes of $S/I$. 
\end{definition}
The v-number is finite by the definition of associated primes and
${\rm v}(I)\geq 1$ because $|X|\geq 2$. Let
$\mathfrak{p}_1,\ldots,\mathfrak{p}_m$ be the associated primes of
$I$, that is, $\mathfrak{p}_i$ is the vanishing 
ideal $I_{P_i}$ of $P_i$. One
can define the v-number of $I$ at each $\mathfrak{p}_i$ by
$$
{\rm v}_{\mathfrak{p}_i}(I):=\mbox{min}\{d\geq 0 \mid
\exists\, 0\neq f\in S,\ \deg(f)=d,\mbox{ with }(I\colon f)=\mathfrak{p}_i\}.
$$ 
\begin{lemma}\label{if-new}
The least degree of an indicator function for $P_i$ is equal to ${\rm
v}_{\mathfrak{p}_i}(I)$. 
\end{lemma}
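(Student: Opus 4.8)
The plan is to identify the set of polynomials over which ${\rm v}_{\mathfrak{p}_i}(I)$ is minimized with the set of indicator functions for $P_i$, after which the equality of the two invariants is immediate. Concretely, I will show that for a nonzero $f\in S$ one has $(I\colon f)=\mathfrak{p}_i$ if and only if $f$ is an indicator function for $P_i$.

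First I would record the two ingredients already available: the primary decomposition $I=I(X)=\bigcap_{j=1}^m\mathfrak{p}_j$ from Lemma~\ref{primdec-ixx}, and the description of indicator functions from Lemma~\ref{if}(b), namely that the indicator functions for $P_i$ are exactly the elements of $(I\colon\mathfrak{p}_i)\setminus I$. For the forward implication, if $0\neq f$ satisfies $(I\colon f)=\mathfrak{p}_i$, then $f\mathfrak{p}_i\subseteq I$ gives $f\in(I\colon\mathfrak{p}_i)$, and $f\notin I$ since $f\in I$ would force $(I\colon f)=S\neq\mathfrak{p}_i$; hence $f\in(I\colon\mathfrak{p}_i)\setminus I$. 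For the converse, take $f\in(I\colon\mathfrak{p}_i)\setminus I$. By Lemma~\ref{if}(b) this $f$ is an indicator function for $P_i$, so $f\notin\mathfrak{p}_i$ while $f\in\mathfrak{p}_j$ for every $j\neq i$. Then for any $g\in S$, the membership $gf\in I=\bigcap_{j}\mathfrak{p}_j$ holds automatically in every $\mathfrak{p}_j$ with $j\neq i$, while in $\mathfrak{p}_i$, which is prime and does not contain $f$, it holds if and only if $g\in\mathfrak{p}_i$; therefore $(I\colon f)=\mathfrak{p}_i$.

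With the equivalence in hand, I would conclude: the set $\{\,0\neq f\in S\mid (I\colon f)=\mathfrak{p}_i\,\}$ appearing in the definition of ${\rm v}_{\mathfrak{p}_i}(I)$ coincides with the set of indicator functions for $P_i$, so taking minima of degrees over these two (equal, and nonempty by Lemma~\ref{if}(c)) sets gives ${\rm v}_{\mathfrak{p}_i}(I)=\min\{\deg(f)\mid f\text{ an indicator function for }P_i\}$, as asserted. I do not expect a genuine obstacle here; the only care needed is the routine bookkeeping with the primary decomposition and the translation $f\in\mathfrak{p}_j\Leftrightarrow f(P_j)=0$, both of which already appear in the proof of Lemma~\ref{if}.
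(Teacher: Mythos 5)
Your proposal is correct and follows essentially the same route as the paper: both establish that a nonzero $f$ satisfies $(I\colon f)=\mathfrak{p}_i$ if and only if $f$ is an indicator function for $P_i$, using the primary decomposition $I=\bigcap_{j=1}^m\mathfrak{p}_j$ and the computation of $(I\colon f)$ as the intersection of those $\mathfrak{p}_j$ not containing $f$. You spell out the two implications (routing one through Lemma~\ref{if}(b)) where the paper compresses them into one line, but the underlying argument is identical.
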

\begin{proof}
A polynomial $f\in S$ is an indicator function 
for $P_i$ if and only if $(I\colon f)=\mathfrak{p}_i$. 
This follows using that the primary decomposition of $I$ is given by 
$I=\bigcap_{j=1}^m\mathfrak{p}_j$ (Lemma~\ref{primdec-ixx}) and 
noticing that $(I\colon f)=\bigcap_{f\notin \mathfrak{p}_j}\mathfrak{p}_j$
Hence, ${\rm v}_{\mathfrak{p}_i}(I)$ is the minimum degree of an
indicator function for $P_i$. 
\end{proof}

Note that the {\rm v}-number of $I$ is equal
to 
${\rm min}\{{\rm v}_{\mathfrak{p}_i}(I)\}_{i=1}^m$. 
To compute the v-number using {\it Macaulay\/}$2$ \cite{mac2} 
(Example~\ref{Hiram-example}), we give a description
for the v-number of $I$ using initial degrees of 
certain ideals of the quotient ring $S/I$. 

For an ideal $M\neq 0$ of $S/I$, we define 
$\alpha(M):=\min\{\deg(f)\mid\overline{f}\in M, f\notin I\}$.  
The next result was shown in
\cite[Proposition~4.2]{min-dis-generalized} for unmixed graded
ideals. For vanishing ideals we prove that the graded assumption is not needed.  

\begin{proposition}\label{lem:vnumber}
Let $I\subset S$ be the vanishing ideal of $X$. Then 
$$
{\rm v}(I)=\min\{\alpha\left((I\colon\mathfrak{p})/{I}\right) \mid
\mathfrak{p}\in{\rm Ass}(I)\},
$$
and $\alpha\left((I\colon\mathfrak{p})/{I}\right)={\rm
v}_{\mathfrak{p}}(I)$ for
$\mathfrak{p}\in{\rm Ass}(I)$.
\end{proposition}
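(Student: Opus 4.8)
The plan is to prove the pointwise identity $\alpha\!\left((I\colon\mathfrak{p}_i)/I\right)={\rm v}_{\mathfrak{p}_i}(I)$ for each $i$ and then take minima over ${\rm Ass}(I)$. For the reduction, recall that ${\rm Ass}(I)=\{\mathfrak{p}_1,\dots,\mathfrak{p}_m\}$ with $\mathfrak{p}_i=I_{P_i}$ (Lemma~\ref{primdec-ixx} and Theorem~\ref{zero-divisors}), and that the v-number of $I$ equals $\min\{{\rm v}_{\mathfrak{p}_i}(I)\}_{i=1}^m$ as already noted. Hence, once the pointwise identity is established, $\min\{\alpha((I\colon\mathfrak{p})/I)\mid\mathfrak{p}\in{\rm Ass}(I)\}=\min_i{\rm v}_{\mathfrak{p}_i}(I)={\rm v}(I)$, which is the first formula.

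To prove the pointwise identity, I would first unwind $\alpha$: since $I\subseteq(I\colon\mathfrak{p}_i)$, a polynomial $f\in S$ satisfies $\overline{f}\in(I\colon\mathfrak{p}_i)/I$ precisely when $f\in(I\colon\mathfrak{p}_i)$, so
$$\alpha\!\left((I\colon\mathfrak{p}_i)/I\right)=\min\{\deg(f)\mid f\in(I\colon\mathfrak{p}_i)\setminus I\}.$$
By Lemma~\ref{if}(b) the set $(I\colon\mathfrak{p}_i)\setminus I$ is exactly the set of indicator functions for $P_i$, and by Lemma~\ref{if-new} the minimum degree among these is ${\rm v}_{\mathfrak{p}_i}(I)$; chaining these three equalities gives the identity. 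If one prefers a two-inequality argument: for ``$\leq$'', the definition of the v-number furnishes $f$ with $(I\colon f)=\mathfrak{p}_i$ and $\deg(f)={\rm v}_{\mathfrak{p}_i}(I)$, which is an indicator function for $P_i$, so $0\neq\overline{f}\in(I\colon\mathfrak{p}_i)/I$ and $\alpha\leq\deg(f)$; for ``$\geq$'', take $f\notin I$ with $\overline{f}\in(I\colon\mathfrak{p}_i)/I$ realizing $\alpha$, note $f\in(I\colon\mathfrak{p}_i)\setminus I$ is an indicator function for $P_i$ (Lemma~\ref{if}(b)), hence $(I\colon f)=\mathfrak{p}_i$ and $\deg(f)\geq{\rm v}_{\mathfrak{p}_i}(I)$.

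The argument is short because Lemmas~\ref{if} and~\ref{if-new} carry the weight. The only genuinely delicate point---and the reason the graded hypothesis of \cite[Proposition~4.2]{min-dis-generalized} can be dropped for vanishing ideals---is that $\alpha$ is defined by minimizing degree over arbitrary polynomial lifts of cosets of $I$ rather than over homogeneous elements of a graded module; I expect the main (minor) obstacle to be confirming that the preimage of $(I\colon\mathfrak{p}_i)/I$ in $S$ is exactly $(I\colon\mathfrak{p}_i)$ and that no lower-degree lift can escape the family of indicator functions, which is precisely what Lemma~\ref{if}(b)--(d) guarantee, since $(I\colon\mathfrak{p}_i)/I$ is one-dimensional and all its nonzero elements lift to indicator functions for $P_i$.
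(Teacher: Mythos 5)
Your proof is correct and takes essentially the same route as the paper: the reduction to the pointwise identity $\alpha\!\left((I\colon\mathfrak{p}_i)/I\right)={\rm v}_{\mathfrak{p}_i}(I)$ followed by the two inequalities is exactly the paper's argument, with the only difference being organizational—you route the key step through Lemmas~\ref{if}(b) and~\ref{if-new} (whose proofs already contain the computation $(I\colon g)=\bigcap_{g\notin\mathfrak{p}_j}\mathfrak{p}_j=\mathfrak{p}_i$), whereas the paper redoes that colon-ideal manipulation inline. Your observation that the preimage of $(I\colon\mathfrak{p}_i)/I$ in $S$ is $(I\colon\mathfrak{p}_i)$, together with Lemma~\ref{if}(d) guaranteeing this quotient is nonzero, correctly disposes of the only delicate point, so there is nothing to fix.
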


\begin{proof} Let $\mathfrak{p}_1,\ldots,\mathfrak{p}_m$ be the
associated primes of $I$. If $I$ is the
vanishing ideal of a point, then $(I\colon 1)=I$,
$(I\colon I)=S$, and ${\rm v}(I)=\alpha(S/I)=0$. Thus, we may assume
that $X$ has at least two points. Since 
${\rm v}(I)=\min\{{\rm v}_{\mathfrak{p}_i}(I)\}_{i=1}^m$, we need only
show that $\alpha\left((I\colon\mathfrak{p})/{I}\right)$ is equal to 
${\rm v}_{\mathfrak{p}}(I)$ for $\mathfrak{p}\in{\rm Ass}(I)$. Fix $1\leq k\leq m$.
There is $f\in S$ such that $(I\colon f)=\mathfrak{p}_k$ and ${\rm
v}_{\mathfrak{p}_k}(I)=\deg(f)$. Then, $f\in(I\colon\mathfrak{p}_k)\setminus I$ and 
$$
{\rm
v}_{\mathfrak{p}_k}(I)=\deg(f)\geq\alpha((I\colon\mathfrak{p}_i)/I).
$$
\quad Since $I=\bigcap_{i=1}^m\mathfrak{p}_i\subsetneq\bigcap_{i\neq
k}\mathfrak{p}_i=(I\colon\mathfrak{p}_k)$, we can
pick a polynomial $g$ in
$(I\colon\mathfrak{p}_k)\setminus I$ such that
$\alpha((I\colon\mathfrak{p}_k)/I)=\deg(g)$. Note that
$g\not\in\mathfrak{p}_k$ since $g\notin I$. Therefore, from the inclusions
$$  
\mathfrak{p}_k\subset(I\colon
g)=\bigcap_{i=1}^m(\mathfrak{p}_i\colon
g)=\bigcap_{g\not\in\mathfrak{p}_i}\mathfrak{p}_i\subset\mathfrak{p}_k, 
$$
we get $(I\colon g)=\mathfrak{p}_k$, and consequently ${\rm
v}_{\mathfrak{p}_k}(I)\leq\deg(g)\leq\alpha((I\colon\mathfrak{p}_k)/I)$.
\end{proof}

By the next result, for each $P_i$ in $X$ there is a unique
indicator function $f_i$ for $P_i$ in $K\Delta_\prec(I)$ of degree
${\rm v}_{\mathfrak{p}_i}(I)$ satisfying $f_i(P_i)=1$. We call 
$F=\{f_1,\ldots,f_m\}$ the set of \textit{standard indicator functions} for $X$
(Example~\ref{Hiram-example}).  

\begin{proposition}\label{indicator-function-prop} Let $X=\{P_1,\ldots,P_m\}$ be a subset of
$K^s$, let $I=I(X)$ be its vanishing ideal, and let $\prec$ be a graded
monomial order on $S$. The following hold.
\begin{enumerate}
\item[(a)] For each $1\leq i\leq m$ there is a unique $f_i$ in 
$K\Delta_\prec(I)$ such that $f_i(P_i) =1$ and $f_i(P_j)=0$ if  
$j\neq i$. The degree of $f_i$ is ${\rm v}_{\mathfrak{p}_i}(I)$ and
the set  $F=\{f_1,\ldots,f_m\}$ is a $K$-basis for 
$K\Delta_\prec(I)$. 
\item[(b)] 
${\rm ker}(\varphi)=K\{f_i-f_m\}_{i=1}^{m-1}+I$.
\item[(c)] $K\{f_i-f_m\}_{i=1}^{m-1}={\rm
ker}(\varphi)\bigcap K\Delta_\prec(I)=K^\perp$.
\item[(d)] If $r_0={\rm reg}(H_I^a)$, then $\deg(f_i)\leq r_0$ for all $i$ and $\deg(f_j)=r_0$ for
some $j$.
\end{enumerate}
\end{proposition}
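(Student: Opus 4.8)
The plan is to handle the four parts in order, with (b)--(d) all resting on (a). For (a), the existence and uniqueness of $f_i\in K\Delta_\prec(I)$ with $f_i(P_i)=1$ and $f_i(P_j)=0$ for $j\neq i$ is Lemma~\ref{if}(c), after normalizing the indicator function to take the value $1$ at $P_i$. To identify $\deg(f_i)$ with ${\rm v}_{\mathfrak{p}_i}(I)$, I would take an indicator function $g$ for $P_i$ of minimal degree ${\rm v}_{\mathfrak{p}_i}(I)$ (which exists by Lemma~\ref{if-new}) and divide it by a Gr\"obner basis of $I$, writing $g=h+r$ with $h\in I$ and $r\in K\Delta_\prec(I)$; since $\prec$ is graded, $\deg(r)\leq\deg(g)$, and since $r\equiv g\pmod{I}$ the polynomial $r$ is again an indicator function for $P_i$, so Lemma~\ref{if}(a) forces $r=g(P_i)f_i$ and hence $\deg(f_i)=\deg(r)\leq{\rm v}_{\mathfrak{p}_i}(I)$; the reverse inequality is immediate from Lemma~\ref{if-new} because $f_i$ is itself an indicator function for $P_i$. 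Finally $F$ is linearly independent, since evaluating $\sum_i\lambda_i f_i=0$ at $P_j$ yields $\lambda_j=0$; as $|F|=m=|\Delta_\prec(I)|=\dim_K K\Delta_\prec(I)$ by Lemma~\ref{sep11-20}, $F$ is a $K$-basis of $K\Delta_\prec(I)$.

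For (b), the inclusion $K\{f_i-f_m\}_{i=1}^{m-1}+I\subseteq{\rm ker}(\varphi)$ is routine: $I\subseteq{\rm ker}(\varphi)$ because $\varphi$ annihilates every polynomial vanishing on $X$, and $\varphi(f_i-f_m)=f_i(P_i)-f_m(P_m)=0$ since $\varphi(f_j)=f_j(P_j)=1$ for every $j$. For the reverse inclusion, given $f\in{\rm ker}(\varphi)$ I would divide by a Gr\"obner basis of $I$ to get $f=h+r$ with $h\in I$ and $r\in K\Delta_\prec(I)$; then $\varphi(r)=\varphi(f)=0$, and by (a) we may write $r=\sum_{i=1}^m\lambda_i f_i$ with $\lambda_i=r(P_i)$. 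Applying $\varphi$ gives $0=\varphi(r)=\sum_{i=1}^m\lambda_i$, so $\lambda_m=-\sum_{i=1}^{m-1}\lambda_i$ and $r=\sum_{i=1}^{m-1}\lambda_i(f_i-f_m)$; therefore $f=h+r$ lies in $K\{f_i-f_m\}_{i=1}^{m-1}+I$.

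Part (c) is the same computation confined to $K\Delta_\prec(I)$. First note $({\rm ker}(\varphi)\colon K)={\rm ker}(\varphi)$, because $K=K\cdot 1$ and $\varphi$ is $K$-linear, so $K^\perp={\rm ker}(\varphi)\cap K\Delta_\prec(I)$. Each $f_i-f_m$ lies in $K\Delta_\prec(I)$ and in ${\rm ker}(\varphi)$ (by the computation in (b)), giving $K\{f_i-f_m\}_{i=1}^{m-1}\subseteq{\rm ker}(\varphi)\cap K\Delta_\prec(I)$; conversely, any $r\in{\rm ker}(\varphi)\cap K\Delta_\prec(I)$ is already standard, so the expansion $r=\sum_i\lambda_i f_i$ and the relation $\sum_i\lambda_i=0$ from (b) apply verbatim and put $r$ in $K\{f_i-f_m\}_{i=1}^{m-1}$. (That these $m-1$ polynomials are linearly independent follows again by evaluating at $P_1,\dots,P_{m-1}$.)

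Finally, for (d): each $f_i$ lies in $K\Delta_\prec(I)$ and $\Delta_\prec(I)\subseteq S_{\leq r_0}$ by Lemma~\ref{sep11-20}, so $\deg(f_i)\leq r_0$ for all $i$; and if every $f_i$ had degree $\leq r_0-1$, then $K\Delta_\prec(I)=KF\subseteq S_{\leq r_0-1}$, which would force $\Delta_\prec(I)\subseteq S_{\leq r_0-1}$, contradicting the assertion $\Delta_\prec(I)\not\subseteq S_{\leq r_0-1}$ of Lemma~\ref{sep11-20}; hence $\deg(f_j)=r_0$ for some $j$. I do not anticipate a genuine obstacle; the only points demanding care are the degree bookkeeping in (a) --- that reduction modulo a Gr\"obner basis with a graded order neither raises degree nor destroys the indicator-function property --- and the minor observation in (c) that taking the colon against the ring of constants changes nothing.
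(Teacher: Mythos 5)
Your proof is correct and follows essentially the same route as the paper's: existence/uniqueness of $f_i$ from Lemma~\ref{if}, the degree identification via division by a Gr\"obner basis of a minimal-degree indicator function (you cite Lemma~\ref{if-new} where the paper invokes Proposition~\ref{lem:vnumber}, but these say the same thing), linear independence of $F$ by evaluation together with $|\Delta_\prec(I)|=|X|$ from Lemma~\ref{sep11-20}, and the same expansion $r=\sum_i\lambda_i f_i$ with $\sum_i\lambda_i=0$ for parts (b) and (c). Part (d) is the paper's argument in contrapositive form; no gaps.
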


\begin{proof} 
(a): The existence and uniqueness of $f_i$ follows from
Lemma~\ref{if}. By Proposition~\ref{lem:vnumber}, we can pick 
$g\in(I\colon\mathfrak{p}_i)\setminus I$ such
that ${\rm
v}_{\mathfrak{p}_i}(I)=\alpha\left((I\colon\mathfrak{p}_i)/{I}\right)=\deg(g)$. As $\prec$ is graded, by
the division algorithm \cite[Theorem~3, 
p.~63]{CLO}, we can write $g=h+r_g$ for some $h\in I$ and some $r_g\in
K\Delta_\prec(I)$ with $\deg(r_g)\leq\deg(g)$. Noticing that $r_g\in(I\colon\mathfrak{p}_i)\setminus
I$, we get $\deg(r_g)=\deg(g)$. Since $r_g$ is an indicator function
for $P_i$ in $K\Delta_\prec(I)$, by Lemma~\ref{if}(a), we get $r_g=\lambda f_i$ for some
$\lambda\in K^*$. Thus $\deg(f_i)={\rm v}_{\mathfrak{p}_i}(I)$. To
show that $F$ is linearly independent assume that 
$\sum_{i=1}^m\lambda_if_i=0$ for some $\lambda_1,\ldots,\lambda_m$ in $K$. Hence,
evaluating both sides of 
this equality at each $P_j$ gives $\lambda_j=0$. Now, the dimension
of the linear space $K\Delta_\prec(I)$ is $m=|X|$ because $|\Delta_\prec(I)|=|X|$
(Lemma~\ref{sep11-20}). Thus, $F$ is a $K$-basis for
$K\Delta_\prec(I)$.

(b): First we show the inclusion ``$\supset$''. Clearly
$\ker(\varphi)\supset I$. Thus, this inclusion follows by noticing
that $f_i-f_m$ is in the kernel
of $\varphi$ since $\varphi(f_i-f_m)=f_i(P_i)-f_m(P_m)=0$. To show the
inclusion ``$\subset$'' take $f\in\ker(\varphi)$. By the division
algorithm, we can write $f=h+r_f$ for some $h\in I$ and
$r_f\in K\Delta_\prec(I)$. By part (a) we can write
$r_f=\sum_{i=1}^m\lambda_if_i$ for some $\lambda_i$'s in $K$. Then, by noticing
that $r_f\in\ker(\varphi)$, we get
$\varphi(r_f)=\sum_{i=1}^m\lambda_i=0$, and consequently
\begin{align*}
&r_f=\left(\sum_{i=1}^{m-1}\lambda_if_i\right)+\lambda_mf_m=\sum_{i=1}^{m-1}\lambda_i(f_i-f_m).
\end{align*}
\quad Thus, $f=h+r_f\in I+K\{f_i-f_m\}_{i=1}^{m-1}$.

(c): This follows from the proof of part (b).

(d): By Lemma~\ref{sep11-20} one has the inclusion $K\Delta_\prec(I)\subset S_{\leq
r_0}$. Thus, $\deg(f_i)\leq r_0$ for all $i$. Note that
$H_I^a(r_0-1)<H_I^a(r_0)$ by definition of $r_0={\rm reg}(H_I^a)$.
Then, by Lemmas~\ref{lemma-referee1} and \ref{sep11-20}, we obtain
$\Delta_\prec(I)\not\subset S_{\leq r_0-1}$. 
Hence, we can pick $t^a\in
\Delta_\prec(I)$ of degree $r_0$. Then, by part (a), $t^a$ is in 
$K\{f_1,\ldots,f_m\}$, and consequently $\deg(f_j)=r_0$ for some $j$.
\end{proof}

\begin{remark}\label{nov23-20} 
Using Lemma~\ref{if} and the ideal
$(I\colon\mathfrak{p}_i)/{I}$ of
Proposition~\ref{lem:vnumber}, we obtain an algebraic method to
compute the standard indicator functions for $X$ (Example~\ref{Hiram-example},
Procedure~\ref{8points-in-A3-procedure}). 
\end{remark}

\begin{remark}\label{r:matrix-indicator-function}
It is convenient to have a
matrix interpretation of the standard 
indicator functions for $X$. Recall that the
evaluation map defines an isomorphism of vector spaces 
$${\rm ev}\colon K\Delta_\prec(I)\to K^{m}.$$
\quad The standard monomials
$\Delta_\prec(I)$ ordered using $\prec$ form a basis for
$K\Delta_\prec(I)$. Also an order of the points in $X=\{P_1,\dots,
P_m\}$ defines the standard basis for  $K^m$. We let $M_{\rm ev}$ be the matrix
of the evaluation map in these bases, i.e. the $i$-th column of $M_{\rm ev}$
consists of the values of the $i$-th standard monomial at
$P_1,\dots,P_m$. Then the $i$-th column of the inverse $M_{\rm ev}^{-1}$
consists of the coefficients, relative to $\Delta_\prec(I)$, 
of the standard indicator function $f_i$ for $P_i$.
\end{remark}

\section{Duality of standard monomial codes}\label{S:duality-standard-monomial-codes}

We continue with our original setup where $K$ is a finite field, $X$ a subset of $K^s$ of size at least two, and  $I=I(X)$  the
vanishing ideal of $X$. Fix a monomial order $\prec$ and let $\Delta_\prec(I)$ 
be the corresponding set of standard monomials. In this section we consider {\it standard monomial codes},
that is evaluation codes defined by subspaces which have a basis of standard monomials.

\begin{definition}\label{standard-monomial-codes-def}
Given a subset $\Gamma\subset\Delta_\prec(I)$, let $\cL(\Gamma)$ be the $K$-span of the set
of all monomials $u\in\Gamma$. Then $\cL(\Gamma)_X$
is called the {\it standard monomial code} corresponding to $\Gamma$.
\end{definition}

Consider two standard monomial codes $\cL(\Gamma_1)_X$ and $\cL(\Gamma_2)_X$ for
some $\Gamma_1,\Gamma_2\subset\Delta_\prec(I)$. The main result of this
section (Theorem~\ref{T:combinatorial-condition} below) is a 
 combinatorial condition for  $\cL(\Gamma_1)_X$ to be monomially
 equivalent  to the dual ${\cL(\Gamma_2)_X}^\perp$. In what follows $\Gamma_1\Gamma_2$ denotes 
 the pair-wise product of the subsets, i.e.
 $$\Gamma_1\Gamma_2=\{u_1u_2\in S \mid u_1\in\Gamma_1, u_2\in\Gamma_2\}.$$
 Also, we write $\overline{u}\in K\Delta_\prec(I)$ for the representative of $u\in S$ modulo the ideal $I$.
 
First, recall the definition of monomial equivalence of codes and its properties.

\begin{definition}\label{equivalent-codes-def}
We say that two linear codes $C_1,C_2$ in $K^m$
are \textit{monomially equivalent} if there is $\beta=(\beta_1,\ldots,\beta_m)$
in $K^m$ such that $\beta_i\neq 0$ for all $i$ and 
$C_2 = \beta\cdot C_1=\{\beta\cdot c\mid c\in C_1\}$, 
where $\beta\cdot c$ is the vector given by
$(\beta_1c_1,\ldots,\beta_mc_m)$ for 
$c=(c_1,\ldots,c_m)\in C_1$. 
\end{definition}

\begin{remark}\label{dual-equation}
Monomial equivalence of codes is an equivalence relation. 
If $C_2=\beta\cdot C_1$, then
$C_1=\beta^{-1}\cdot C_2$, $(\beta\cdot
C_2)^\perp=\beta^{-1}\cdot C_2^\perp$, and if $C_1^\perp=\beta\cdot C_2$, then
$C_2^\perp=\beta\cdot C_1$. 
\end{remark}

To state the main result we will need the following definition.  We say a standard monomial
$t^e\in\Delta_\prec(I)$ is {\it essential} if it appears in each
standard indicator function of $X$ (cf. Proposition~\ref{indicator-function-prop}).  

\begin{theorem}\label{T:combinatorial-condition} 
Let $X$ be a subset of $K^s$ of size $m=|X|\geq 2$, and
$I=I(X)$ be the vanishing ideal of $X$. Fix a monomial order $\prec$ 
and let $t^e\in\Delta_{\prec}(I)$ be 
essential. 
Then for any $\Gamma_1,\Gamma_2\subset\Delta_\prec(I)$
satisfying  
\begin{enumerate}
\item $|\Gamma_1|+|\Gamma_2|=|X|$,
\item $t^e$ does not appear in $\overline{u}$ for any $u\in\Gamma_1\Gamma_2$,
\end{enumerate}
we have 
$\beta\cdot\cL(\Gamma_1)_X={\cL(\Gamma_2)_X}^\perp,$
for some $\beta=(\beta_1,\dots, \beta_m)\in(K^*)^m$. Moreover, $\beta_i$ is the coefficient of $t^e$ in the $i$-th standard indicator
function~$f_i$, for $i=1,\dots,m$. 
\end{theorem}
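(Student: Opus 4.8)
The plan is to produce an explicit linear isomorphism $\phi\colon \cL(\Gamma_1)_X \to {\cL(\Gamma_2)_X}^\perp$ of the form $c\mapsto\beta\cdot c$, where $\beta=(\beta_1,\dots,\beta_m)$ has $\beta_i$ equal to the coefficient of $t^e$ in $f_i$. First I would record the consequence of Theorem~\ref{formula-dual}: ${\cL(\Gamma_2)_X}^\perp = (\cL(\Gamma_2)^\perp)_X$, where $\cL(\Gamma_2)^\perp = ({\rm ker}(\varphi)\colon \cL(\Gamma_2))\cap K\Delta_\prec(I)$. So the target is the evaluation of an explicit algebraic dual, and the whole argument can be moved inside $K\Delta_\prec(I)$, where by Lemma~\ref{apr26-20} evaluation is an isomorphism. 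Since $t^e$ is essential, each $f_i = \sum_{a\in\Lambda}\lambda_{i,a}t^a$ has $\lambda_{i,e}=\beta_i\neq 0$, so indeed $\beta\in(K^*)^m$; and by Remark~\ref{r:matrix-indicator-function} the $\beta_i$ are exactly the entries of the last row (the $t^e$-row) of $M_{\rm ev}^{-1}$.

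Next I would set up the pairing that makes $\beta$ appear. For $a\in\Gamma_1$ consider $\beta\cdot{\rm ev}(t^a) = (\beta_1 P_1^a,\dots,\beta_m P_m^a)$; the point is to show this vector lies in $(\cL(\Gamma_2)^\perp)_X$, and by a dimension count (item (1): $|\Gamma_1|+|\Gamma_2|=|X|$, together with Proposition~\ref{dual-properties}(a) giving $\dim_K\cL(\Gamma_2)^\perp = |X|-|\Gamma_2| = |\Gamma_1|$) it suffices to produce, for each $a\in\Gamma_1$, a standard polynomial $h_a\in\cL(\Gamma_2)^\perp$ with ${\rm ev}(h_a)=\beta\cdot{\rm ev}(t^a)$, provided also that $\beta\cdot\cL(\Gamma_1)_X$ has dimension $|\Gamma_1|$ (which holds because each $\beta_i\neq 0$). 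The natural candidate is $h_a := \sum_{i=1}^m \beta_i P_i^a\, f_i$, which by construction satisfies $h_a(P_j) = \beta_j P_j^a$, i.e. ${\rm ev}(h_a)=\beta\cdot{\rm ev}(t^a)$; it is a standard polynomial since each $f_i$ is. The crux is then to verify $h_a\in ({\rm ker}(\varphi)\colon\cL(\Gamma_2))$, that is, for every $b\in\Gamma_2$,
$$
\varphi(t^b h_a) = \sum_{j=1}^m P_j^{b} h_a(P_j) = \sum_{j=1}^m \beta_j\, P_j^{a+b} = 0.
$$

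Here is where hypothesis (2), $e\notin\Gamma_1+\Gamma_2$, enters, and this verification is the main obstacle. The sum $\sum_{j=1}^m\beta_j P_j^{a+b}$ is $\varphi(t^{a+b})$ expressed against the weights $\beta_j$; I would interpret $(\beta_1,\dots,\beta_m)$ as the coefficient functional "extract the coefficient of $t^e$" acting on $K\Delta_\prec(I)\cong K^m$ via $M_{\rm ev}^{-1}$. Concretely: for any polynomial $g$, $\sum_j \beta_j g(P_j)$ equals the coefficient of $t^e$ in the unique standard polynomial congruent to $g$ modulo $I$ — this is precisely the statement that the $t^e$-row of $M_{\rm ev}^{-1}$ reads off that coefficient (Remark~\ref{r:matrix-indicator-function}, applied with $g=\sum_j g(P_j)f_j$). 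Applying this with $g=t^{a+b}$: $\sum_j\beta_j P_j^{a+b}$ is the coefficient of $t^e$ in the normal form of $t^{a+b}$ modulo $I$. Since $\prec$ is graded and $t^e$ is the $\prec$-largest standard monomial, I would argue this coefficient is nonzero only if $t^e$ appears in the normal form of $t^{a+b}$, which (because reduction by a graded Gröbner basis cannot introduce the maximal standard monomial $t^e$ unless $t^{a+b}$ already reduces to involve it, and in particular forces $a+b=e$ when $t^{a+b}$ is itself standard) requires $a+b=e$; but $a+b\in\Gamma_1+\Gamma_2$ and $e\notin\Gamma_1+\Gamma_2$, contradiction. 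Hence the coefficient is $0$, giving $h_a\in\cL(\Gamma_2)^\perp$. The delicate point to nail down carefully is exactly this claim that the $t^e$-coefficient of the normal form of $t^{a+b}$ vanishes unless $a+b=e$: I would handle it by noting that $t^e$ maximal and graded order imply that for any standard $t^c\neq t^e$ of degree $\deg(t^c)\le\deg(t^e)$ the normal form of $t^c$ is $t^c$ itself (standard monomials are their own normal forms), while if $\deg(t^{a+b})>\deg(t^e)$ its normal form has degree $\le\deg(t^e)$... no — more simply, reduction strictly decreases the leading monomial, so a monomial reducing to something containing $t^e$ must have had leading monomial $\succeq t^e$, i.e. equal to $t^e$ since $t^e$ is the top standard monomial and any larger monomial lies in ${\rm in}_\prec(I)$ and reduces; tracking this shows the only standard monomial whose normal form can have a nonzero $t^e$-coefficient and which can equal $a+b$ is $e$ itself.

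Finally, having shown $\beta\cdot{\rm ev}(t^a)\in{\cL(\Gamma_2)_X}^\perp$ for all $a\in\Gamma_1$, I conclude $\beta\cdot\cL(\Gamma_1)_X \subseteq {\cL(\Gamma_2)_X}^\perp$; since $\dim_K\big(\beta\cdot\cL(\Gamma_1)_X\big)=|\Gamma_1|$ and $\dim_K {\cL(\Gamma_2)_X}^\perp = |X|-|\Gamma_2| = |\Gamma_1|$ by hypothesis (1) and Proposition~\ref{dual-properties}(a) (or \cite[Theorem~1.2.1]{Huffman-Pless}), the inclusion is an equality, and $\cL(\Gamma_1)_X$ is monomially equivalent to ${\cL(\Gamma_2)_X}^\perp$ via $\beta$, with the stated description of $\beta_i$. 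I expect the routine parts (dimension counts, the identity $h_a(P_j)=\beta_j P_j^a$, invoking Theorem~\ref{formula-dual}) to be quick; the real work is the normal-form argument in the previous paragraph.
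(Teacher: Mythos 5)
Your setup is fine and in fact mirrors the paper's own argument: identifying $\beta$ with the $t^e$-row of $M_{\rm ev}^{-1}$, observing $\sum_j\beta_j t^a(P_j)=0$ for every standard $t^a\neq t^e$, reading $\sum_j\beta_j g(P_j)$ as the $t^e$-coefficient of the normal form of $g$, and finishing with the dimension count are all correct. The genuine gap is exactly the step you flagged as delicate: the claim that the $t^e$-coefficient of the normal form of $t^{a+b}$ vanishes whenever $a+b\neq e$. Your justification (``reduction strictly decreases the leading monomial, so a monomial reducing to something containing $t^e$ must have had leading monomial equal to $t^e$'') does not work: $t^{a+b}$ is in general \emph{not} standard, it can be strictly larger than $t^e$, and dividing it by a Gr\"obner basis of $I$ can perfectly well introduce $t^e$ into the remainder. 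Concretely, take $s=1$, $K=\mathbb{F}_5$, $X=\{1,2\}$, so $I=(t_1^2-3t_1+2)$, $\Delta_\prec(I)=\{1,t_1\}$, $t^e=t_1$, $f_1=2-t_1$, $f_2=t_1-1$, $\beta=(-1,1)$, and $t^e$ is essential. With $\Gamma_1=\Gamma_2=\{1\}$ conditions (1) and (2) hold, yet the normal form of $t_1^{1+1}=t_1^2$ is $3t_1-2$, whose $t_1$-coefficient is $3\neq0$; equivalently $\sum_j\beta_jP_j^2=3\neq0$, so $\beta\cdot{\rm ev}(t_1)$ is not orthogonal to ${\rm ev}(t_1)$ and $\beta\cdot\cL(\Gamma_1)_X\neq{\cL(\Gamma_2)_X}^\perp$. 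Since this example satisfies all the stated hypotheses, the missing step is not a technicality that a sharper normal-form analysis could supply: conditions (1)--(2) alone do not force the orthogonality $\sum_j\beta_jP_j^{a+b}=0$.

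For comparison, the paper's proof takes the same route and passes over the same point with a one-line assertion (that $t^e$ cannot appear in the remainder $f_2$ of $f=h_1h_2$ ``since $t^e$ is the largest standard monomial''), which is likewise unjustified at this level of generality. What actually makes the argument work in the paper's application (Theorem~\ref{duality-criterion}, where $\Gamma_1,\Gamma_2$ are the exponents of standard monomials of degrees at most $d$ and $r_0-d-1$) is that $\prec$ is graded and every $t^{a_1+a_2}$ has degree strictly less than $\deg t^e=r_0$, so its remainder on division by a Gr\"obner basis also has degree less than $r_0$ and cannot involve $t^e$. If you add such a degree hypothesis, or more generally assume that the $t^e$-coefficient of the normal form of $t^{a_1+a_2}$ vanishes for all $a_i\in\Gamma_i$, then your construction of $h_a=\sum_i\beta_iP_i^a f_i$ and the dimension count complete the proof exactly as intended; without some such strengthening, the crucial claim in your fourth step is false and the argument cannot be closed.
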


\begin{proof}
Let  $\beta\in(K^*)^m$ be as in the statement of the theorem.
Then, by Remark~\ref{r:matrix-indicator-function},  
$\beta$ is the last row of the matrix $M_{\rm ev}^{-1}$. Clearly $\beta$   
is  orthogonal to all but the last column of $M_{\rm ev}$. Since ${\rm
ev}$ is an isomorphism, there is a unique polynomial $g\in K\Delta_\prec(I)$ such that  
$g(P_i)=\beta_i$ for $P_i\in X$, $i=1,\dots,m$. Therefore, the orthogonality
property is equivalent to $\varphi(gt^a)=0$ for any $t^a\in
\Delta_\prec(I)$, $t^a\neq t^e$. By linearity, this implies  
\begin{equation}\label{e:orthogonal}
\varphi(gf)=0\ \text{ for any }\ f\in
K\left(\Delta_\prec(I)\setminus\{t^e\}\right).
\end{equation}

 Now pick any $h_i\in\cL(\Gamma_i)$, for $i=1,2$. 
 Note that
 the monomials appearing in $h_1h_2$ belong to $\Gamma_1\Gamma_2$. Let
 $f=\overline{h_1h_2}\in K\Delta_\prec(I)$ be the representative of $h_1h_2$ modulo $I$. Then,
 according to condition (2) above, the essential monomial $t^e$ does not appear in $f$.
Therefore, we have    
$$\varphi(gh_1h_2)=\varphi(gf)=0,$$
by Eq.~\eqref{e:orthogonal}. This shows that $g\cL(\Gamma_1)\subset 
(\ker(\varphi)\colon\cL(\Gamma_2))$. Applying the evaluation map to both sides and using  Theorem~\ref{formula-dual}, we obtain
$$\beta\cdot\cL(\Gamma_1)_X\subset{\cL(\Gamma_2)_X}^\perp.$$

 Finally, condition (1) ensures that the above inclusion is equality, as 
$$\dim_K\left(
\beta\cdot\cL(\Gamma_1)_X\right)=\dim_K(\cL(\Gamma_1)_X)=|\Gamma_1|=|X|-|\Gamma_2|=\dim_K
({\cL(\Gamma_2)_X}^\perp).$$
Note that the first equality holds since $\beta_i\neq 0$ for all $i$, as $t^e$ is essential.
\end{proof}

\begin{remark}\label{R:combinatorial-condition}  Note that in the case when $\Gamma_1\Gamma_2$ is contained in $\Delta_\prec(I)$, condition (2)
in the statement of Theorem~\ref{T:combinatorial-condition} can be relaxed to 
$\Gamma_1\Gamma_2\subset \Delta_\prec(I)\setminus\{t^e\}$.
\end{remark}

Duality formulas for certain toric complete intersection codes are
given in  \cite[Theorem~3.3]{Celebi-Soprunov}. These codes are a
generalization of projective evaluation codes on complete intersections. 

\section{A duality criterion for Reed--Muller-type
codes}\label{duality-criterion-rm}
 
In this section we concentrate on the class of evaluation codes, 
called Reed--Muller-type codes, defined by evaluating 
the subspace of polynomial of total degree up to $d$ at a set of
points $X\subset K^s$. As before we assume $X=\{P_1,\dots, P_m\}$ where $m\geq 2$.
The main result of this section is a duality criterion for Reed--Muller-type codes. It is then applied to 
the case when the vanishing ideal $I=I(X)$ is Gorenstein.

\begin{definition}\label{Reed-Muller-type-codes-def}\cite{duursma-renteria-tapia,GRT} 
Fix a degree $d\geq 1$ and let $S_{\leq 
d}=\bigoplus_{i=0}^dS_i$ be the
$K$-linear subspace of $S$ of all polynomials of degree at most $d$. If
$\mathcal{L}=S_{\leq d}$ then the resulting evaluation
code $\mathcal{L}_X$ is called a \textit{Reed--Muller-type code} of degree $d$ on $X$
and is denoted by $C_X(d)$. 
\end{definition}

The minimum distance of $C_X(d)$ is simply
denoted by $\delta_X(d)$. As is seen below 
the v-number of $I=I(X)$ is related to the asymptotic behavior of
$\delta_X(d)$ for $d\gg 0$.  There is $n\in\mathbb{N}$ such that 
$$
|X|=\delta_X(0)>\delta_X(1)>\cdots>\delta_X(n-1)>\delta_X(n)=\delta_X(d)=1\
\mbox{ for }\ d\geq n,
$$
see Proposition~\ref{behavior-hilbert-function}. The number $n$, denoted ${\rm reg}(\delta_X)$, is called the 
\textit{regularity index} of $\delta_X$. 

\begin{proposition}\label{tuesday-afternoon}
If $I=I(X)$, then ${\rm v}(I)={\rm reg}(\delta_X)\leq{\rm reg}(H_I^a)$.
\end{proposition}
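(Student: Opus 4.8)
The plan is to prove the chain ${\rm v}(I)={\rm reg}(\delta_X)\leq{\rm reg}(H_I^a)$ in two parts: first the equality, then the inequality. The inequality ${\rm reg}(\delta_X)\leq{\rm reg}(H_I^a)$ should follow quickly from the Singleton bound together with the behavior of the affine Hilbert function: for $d\geq r_0={\rm reg}(H_I^a)$ one has $\dim_K C_X(d)=H_I^a(d)=|X|$, so the code has length equal to its dimension and hence minimum distance $1$; thus ${\rm reg}(\delta_X)\leq r_0$. (This is also noted in the introduction.) The real content is the equality ${\rm v}(I)={\rm reg}(\delta_X)$.

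For the equality, I would argue by two inequalities. To show ${\rm reg}(\delta_X)\leq{\rm v}(I)$: let $d={\rm v}(I)$. By Lemma~\ref{if-new} and the definition of the v-number, there is an index $i$ and an indicator function $f$ for $P_i$ with $\deg(f)=d$; replacing $f$ by its normalization, we may take $f=f_i$ to be the $i$-th standard indicator function, which has degree ${\rm v}_{\mathfrak p_i}(I)={\rm v}(I)=d$ by Proposition~\ref{indicator-function-prop}(a). Since $\deg(f_i)\le d$, the function $f_i\in S_{\le d}$, and $f_i\notin I$ because $f_i(P_i)=1\ne 0$. Its evaluation ${\rm ev}(f_i)$ is a codeword of $C_X(d)$ that is zero at all points except $P_i$, hence has weight $1$. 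Therefore $\delta_X(d)=1$, which forces ${\rm reg}(\delta_X)\le d={\rm v}(I)$.

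For the reverse inequality ${\rm v}(I)\le{\rm reg}(\delta_X)$: set $n={\rm reg}(\delta_X)$, so $\delta_X(n)=1$. Then there exists $f\in S_{\le n}\setminus I$ whose evaluation ${\rm ev}(f)=(f(P_1),\dots,f(P_m))$ has exactly one nonzero coordinate, say the $i$-th. That means $f(P_i)\ne 0$ and $f(P_j)=0$ for $j\ne i$, i.e.\ $f$ is an indicator function for $P_i$. By Lemma~\ref{if}(b) we have $(I\colon f)=\mathfrak p_i\in{\rm Ass}(I)$ (or invoke the computation $(I\colon f)=\bigcap_{f\notin\mathfrak p_j}\mathfrak p_j=\mathfrak p_i$ from the proof of Lemma~\ref{if-new}), and $\deg(f)\le n$. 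By the definition of ${\rm v}(I)$ this gives ${\rm v}(I)\le\deg(f)\le n={\rm reg}(\delta_X)$.

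The main obstacle is just keeping the translation between the combinatorial ("weight one codeword of $C_X(d)$") and algebraic ("indicator function of degree $\le d$ for some $P_i$") pictures clean, and making sure the degree bookkeeping matches: a weight-one codeword of $C_X(d)$ comes from a polynomial of degree at most $d$, while the v-number is defined via the \emph{exact} degree of such a polynomial—this is harmless since we can always pass to a polynomial of strictly smaller degree if one exists, and the definition of ${\rm v}(I)$ as a minimum absorbs this. One should also double-check the edge behavior at $d=0$ versus $d\ge 1$, but since $|X|\ge 2$ forces ${\rm v}(I)\ge 1$ and $\delta_X(0)=|X|\ge 2$, the regularity indices are both at least $1$ and the argument above applies uniformly.
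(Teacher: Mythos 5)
Your proposal is correct and follows essentially the same route as the paper: the inequality via the Singleton bound (full-dimensional code has minimum distance $1$ for $d\geq r_0$), and the equality via Lemma~\ref{if-new}, translating weight-one codewords into indicator functions in both directions (the paper phrases your second inequality as a contradiction, which is the same argument). The minor detour through the standard indicator function $f_i$ is unnecessary but harmless.
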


\begin{proof} By of the Singleton bound for linear codes 
\cite[p.~71]{Huffman-Pless}, $\delta_X(d)=1$
for $d\geq{\rm reg}(H_I^a)$. Thus, ${\rm reg}(\delta_X)\leq{\rm
reg}(H_I^a)$. By Lemma~\ref{if-new}, ${\rm v}(I)$ is the minimum
degree of the indicator functions of the points of the set $X$. Then,
there is a point $P_i$ and an indicator function $f$ for $P_i$ 
such that $n_0={\rm v}(I)=\deg(f)$. Thus, $\delta_X(n_0)=1$ and 
${\rm reg}(\delta_X)\leq {\rm v}(I)$. If $n={\rm reg}(\delta_X)<{\rm
v}(I)$, then $\delta_X(n)=1$, and consequently there is $g\in S_{\leq n}$
and there is a point $P_j$ such that $g$ is an indicator function for $P_j$, a
contradiction because $n_0={\rm v}(I)\leq\deg(g)\leq n$.
\end{proof}

%

\begin{lemma}\label{aug1-20} Let $I$ be the vanishing ideal of $X$, 
let $r_0$ be the regularity index of
$H_I^a$, and let 
$\mathcal{C}$ be the set $S_{\leq r_0-1}\bigcap K\Delta_\prec(I)$. The following are equivalent.
\begin{enumerate} 
\item[(a)] $\mathcal{C}^\perp:=(\ker(\varphi)\colon\mathcal{C})\bigcap
K\Delta_\prec(I)=Kg$ for some $0\neq g\in S$. 
\item[(b)] $C_X(r_0-1)^\perp=K(g(P_1),\ldots,g(P_m))$ for some $0\neq
g\in K\Delta_\prec(I)$.
\item[(c)] $H_I^a(r_0-1)+1=|X|$.
\end{enumerate} 
\end{lemma}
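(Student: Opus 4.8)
The plan is to prove the three equivalences by exploiting the fact that $\mathcal{C}=S_{\leq r_0-1}\cap K\Delta_\prec(I)$ is the standard function space of the Reed--Muller-type code $C_X(r_0-1)$, together with the dimension and duality results already established. First I would record that $\mathcal{C}$ is a standard evaluation code with $\mathcal{C}_X=C_X(r_0-1)$ and, by Lemma~\ref{apr26-20}, $\dim_K(\mathcal{C})=\dim_K(C_X(r_0-1))=H_I^a(r_0-1)$. By Theorem~\ref{formula-dual} we have $C_X(r_0-1)^\perp=(\mathcal{C}^\perp)_X$, and by Proposition~\ref{dual-properties}(a), $\dim_K(\mathcal{C}^\perp)=|X|-H_I^a(r_0-1)$. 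Since $\mathcal{C}^\perp\subseteq K\Delta_\prec(I)$ and the evaluation map restricts to an isomorphism on $K\Delta_\prec(I)$ (Remark~\ref{r:matrix-indicator-function}), we get $\dim_K(C_X(r_0-1)^\perp)=\dim_K(\mathcal{C}^\perp)=|X|-H_I^a(r_0-1)$ as well.

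With this dictionary in place the equivalences become essentially statements about dimension. For (a)$\Leftrightarrow$(c): $\mathcal{C}^\perp=Kg$ for some $0\neq g$ means exactly $\dim_K(\mathcal{C}^\perp)=1$; by the dimension formula this is $|X|-H_I^a(r_0-1)=1$, i.e. (c). One should note that if such a $g$ exists it lies in $K\Delta_\prec(I)$ after replacing it by its normal form modulo a Gr\"obner basis (this does not change $\mathcal{C}^\perp$ since $\mathcal{C}^\perp\subseteq K\Delta_\prec(I)$ and $g\in\mathcal{C}^\perp$ forces $g$ to already be standard), so without loss of generality $g\in K\Delta_\prec(I)$. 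For (a)$\Leftrightarrow$(b): applying the evaluation isomorphism, $\mathcal{C}^\perp=Kg$ with $g\in K\Delta_\prec(I)$ is equivalent to $(\mathcal{C}^\perp)_X=K(g(P_1),\dots,g(P_m))$, and by Theorem~\ref{formula-dual} the left side is $C_X(r_0-1)^\perp$; conversely, given $g\in K\Delta_\prec(I)$ with $C_X(r_0-1)^\perp=K\,{\rm ev}(g)$, the vector ${\rm ev}(g)$ is nonzero so $g\neq 0$, and ${\rm ev}^{-1}$ applied to the one-dimensional space $C_X(r_0-1)^\perp=(\mathcal{C}^\perp)_X$ gives $\mathcal{C}^\perp=Kg$.

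The only genuine subtlety I anticipate is making sure the passage between ``$g\in S$ arbitrary'' in (a)/(b) and ``$g\in K\Delta_\prec(I)$'' is handled cleanly — but this is immediate because $\mathcal{C}^\perp$ is by definition a subspace of $K\Delta_\prec(I)$, so any $g$ spanning it is automatically a standard polynomial, and in (b) the condition $C_X(r_0-1)^\perp=K\,{\rm ev}(g)$ for some $g\in K\Delta_\prec(I)$ is exactly the image under ${\rm ev}$ of the condition in (a). A secondary point worth stating explicitly is that $H_I^a(r_0-1)+1=|X|$ is the ``generic'' case corresponding to $r_0$ itself being an upper bound one step above the previous Hilbert-function value; here it suffices to quote Proposition~\ref{behavior-hilbert-function} to know $H_I^a(r_0-1)<H_I^a(r_0)=|X|$, so $|X|-H_I^a(r_0-1)\geq 1$ always, and (c) is the assertion that this gap is exactly $1$. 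Thus the proof is a short chain: translate (a) and (b) into dimension statements about $\mathcal{C}^\perp$ via Theorem~\ref{formula-dual} and the evaluation isomorphism, then observe both equal the numerical condition (c) via Proposition~\ref{dual-properties}(a).
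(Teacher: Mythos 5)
Your argument is correct and follows essentially the same route as the paper: identify $\mathcal{C}$ with the standard function space of $C_X(r_0-1)$ (so $\dim_K\mathcal{C}=H_I^a(r_0-1)$, which the paper gets directly by counting standard monomials via Lemma~\ref{lemma-referee1}), use Theorem~\ref{formula-dual} and the evaluation isomorphism on $K\Delta_\prec(I)$ to pass between (a) and (b), and use Proposition~\ref{dual-properties}(a) to reduce everything to the single dimension condition (c). Your observation that any $g$ spanning $\mathcal{C}^\perp$ is automatically in $K\Delta_\prec(I)$ is the right way to handle the only notational subtlety, and matches the paper's treatment.
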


\begin{proof} (a) $\Rightarrow$ (b): Under the evaluation map ``${\rm
ev}$'' the left (resp. right) hand side of the equality 
$\mathcal{C}^\perp=Kg$ map onto $C_X(r_0-1)^\perp$ (resp.
$K(g(P_1),\ldots,g(P_m))$).

(b) $\Rightarrow$ (c):
$1=\dim_K(C_X(r_0-1)^\perp)=|X|-\dim_K(C_X(r_0-1))=|X|-H_I^a(r_0-1)$.

(c) $\Rightarrow$ (a): Using Proposition~\ref{dual-properties}
together with the equality $\mathcal{C}=K(S_{\leq 
r_0-1}\bigcap\Delta_\prec(I))$ and Lemma~\ref{lemma-referee1}, we get
$\dim_K(\mathcal{C}^\perp)=|X|-\dim_K(\mathcal{C})=|X|-H_I^a(r_0-1)=1$. Hence,
$\dim_K(\mathcal{C}^\perp)=1$, and $\mathcal{C}^\perp=Kg$ for some
$0\neq g\in S$.
\end{proof}

\begin{proposition}\label{aug2-20} Let $I$ be the vanishing ideal of
$X$ and let $r_0$ be the regularity index of
$H_I^a$. Then, there exists $g\in
K\Delta_\prec(I)$ such that 
$$C_X(r_0-1)^\perp=K(g(P_1),\ldots,g(P_m))$$
and $g(P_i)\neq 0$ for all $i$ if and only if
$H_I^a(r_0-1)+1=|X|$ and ${\rm v}_{\mathfrak{p}_i}(I)=r_0$ for
all $i$. 
\end{proposition}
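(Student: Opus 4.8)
The plan is to isolate the genuinely new content of the proposition by leaning on Lemma~\ref{aug1-20}. Write $\mathcal{C}=K(\Delta_\prec(I)\cap S_{\leq r_0-1})$, so that $\mathcal{C}^\perp=(\ker\varphi\colon\mathcal{C})\cap K\Delta_\prec(I)$ and, by Theorem~\ref{formula-dual}, $(\mathcal{C}^\perp)_X=C_X(r_0-1)^\perp$. In the forward direction, if $g$ is as in the statement then $(g(P_1),\dots,g(P_m))\neq 0$, so $C_X(r_0-1)^\perp$ is one-dimensional and Lemma~\ref{aug1-20} gives $H_I^a(r_0-1)+1=|X|$; moreover $g\in\mathcal{C}^\perp$ because ${\rm ev}$ is injective on $K\Delta_\prec(I)$ (Lemma~\ref{apr26-20}), and then $\mathcal{C}^\perp=Kg$ since $\dim_K\mathcal{C}^\perp=1$. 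In the reverse direction, starting from $H_I^a(r_0-1)+1=|X|$, Lemma~\ref{aug1-20} produces $g\in K\Delta_\prec(I)$ with $\mathcal{C}^\perp=Kg$ and $C_X(r_0-1)^\perp=K(g(P_1),\dots,g(P_m))$. So in either case it remains to prove the core assertion: assuming $H_I^a(r_0-1)+1=|X|$ and $\mathcal{C}^\perp=Kg$, one has $g(P_i)\neq 0$ for all $i$ if and only if ${\rm v}_{\mathfrak{p}_i}(I)=r_0$ for all $i$.

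Next I would pin down the unique ``top'' standard monomial. Since $H_I^a(r_0)-H_I^a(r_0-1)=1$, Lemma~\ref{lemma-referee1} shows there is exactly one standard monomial $t^e$ of degree $r_0$, and by Lemma~\ref{sep11-20} it is the largest standard monomial with respect to $\prec$, with $\Delta_\prec(I)=(\Delta_\prec(I)\cap S_{\leq r_0-1})\sqcup\{t^e\}$; in particular the standard monomials of degree $\leq r_0-1$ form a basis of $\mathcal{C}$. Let $\mu_i$ be the coefficient of $t^e$ in the $i$-th standard indicator function $f_i$ (Proposition~\ref{indicator-function-prop}) and put $\tilde{g}=\sum_{i=1}^m\mu_if_i$, so that $\tilde{g}(P_j)=\mu_j$ for all $j$. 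The key claim is $\mathcal{C}^\perp=K\tilde{g}$. For orthogonality, by Remark~\ref{r:matrix-indicator-function} the vector $(\mu_1,\dots,\mu_m)$ is the last row of $M_{\rm ev}^{-1}$, hence orthogonal to every column of $M_{\rm ev}$ except the last; that is, $\sum_j\mu_j\,t^a(P_j)=0$ for every standard monomial $t^a\neq t^e$, so $\varphi(\tilde{g}\,t^a)=\sum_j\tilde{g}(P_j)t^a(P_j)=0$ on the monomial basis of $\mathcal{C}$, giving $\tilde{g}\mathcal{C}\subset\ker\varphi$ and $\tilde{g}\in\mathcal{C}^\perp$. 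Also $\tilde{g}\neq 0$: by Proposition~\ref{indicator-function-prop}(d) some $f_j$ has degree $r_0$, and since $t^e$ is the only standard monomial of degree $r_0$ it occurs in $f_j$, so $\mu_j\neq 0$; as the $f_i$ are linearly independent, $\tilde{g}\neq 0$. Since $\dim_K\mathcal{C}^\perp=|X|-\dim_K\mathcal{C}=1$ (Proposition~\ref{dual-properties}(a), Lemma~\ref{lemma-referee1}), we get $\mathcal{C}^\perp=K\tilde{g}$, whence $g=c\tilde{g}$ for some $c\in K^*$ and $g(P_i)=c\mu_i$ for all $i$.

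Finally I would finish by degree bookkeeping. As $c\neq 0$, the condition $g(P_i)\neq 0$ for all $i$ is equivalent to $\mu_i\neq 0$ for all $i$, i.e. to $t^e$ occurring in every $f_i$ (to $t^e$ being essential in the sense of Section~\ref{S:duality-standard-monomial-codes}). If every $f_i$ contains $t^e$, then $\deg(f_i)\geq r_0$; combined with $\deg(f_i)\leq r_0$ and $\deg(f_i)={\rm v}_{\mathfrak{p}_i}(I)$ (Proposition~\ref{indicator-function-prop}(a),(d)) this forces ${\rm v}_{\mathfrak{p}_i}(I)=r_0$ for all $i$. Conversely, if ${\rm v}_{\mathfrak{p}_i}(I)=r_0$ then $\deg(f_i)=r_0$, so $f_i$ has a monomial of degree $r_0$, necessarily $t^e$, and $\mu_i\neq 0$. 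I expect the one substantive step to be the claim $\mathcal{C}^\perp=K\tilde{g}$ --- specifically, reading off $(\mu_1,\dots,\mu_m)$ as the last row of $M_{\rm ev}^{-1}$ (exactly as in the proof of Theorem~\ref{T:combinatorial-condition}) and deducing the orthogonality relations $\sum_j\mu_j\,t^a(P_j)=0$; everything after that is routine manipulation of the standard indicator functions and their degrees.
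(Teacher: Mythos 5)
Your proposal is correct and follows essentially the same route as the paper: reduce to the one‑dimensionality statement via Lemma~\ref{aug1-20}, identify the spanning vector of $C_X(r_0-1)^\perp$ with the vector of coefficients of the unique degree‑$r_0$ standard monomial in the standard indicator functions via $M_{\rm ev}^{-1}$ (Remark~\ref{r:matrix-indicator-function}), and conclude from $\deg(f_i)={\rm v}_{\mathfrak{p}_i}(I)\leq r_0$. Your write‑up is merely more explicit than the paper's about the proportionality $g=c\tilde g$ and the handling of the two directions, but the key steps coincide.
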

\begin{proof}
By Lemma~\ref{aug1-20}, $C_X(r_0-1)^\perp=K(g(P_1),\ldots,g(P_m))$
if and only if $H_I^a(r_0-1)+1=|X|.$ Note that the latter is equivalent to
having exactly one standard monomial of degree $r_0$ in $\Delta_\prec(I)$, by Lemma~\ref{lemma-referee1}.
Thus we only need to prove that
$g(P_i)\neq 0$ for all $i$ if and only if ${\rm v}_{\mathfrak{p}_i}(I)=r_0$ for all $i$.
Recall that ${\rm v}_{\mathfrak{p}_i}(I)$ equals the degree of the $i$-th standard indicator function $f_i$, see Proposition~\ref{indicator-function-prop}. 
Let $M_{\rm ev}$ be the matrix defined in Remark~\ref{r:matrix-indicator-function}.
Since $M_{\rm ev}^{-1}M_{\rm ev}=I_m,$ the $m$-th row $(c_1,\ldots,c_m)$ of
$M_{\rm ev}^{-1}$ is orthogonal to the first $m-1$ columns of $M_{\rm ev}$, i.e. 
$(c_1,\ldots,c_m)$ is orthogonal to $C_X(r_0-1)$.
By Remark~\ref{r:matrix-indicator-function}, 
$c_i$ is the coefficient in $f_i$ of the monomial of degree $r_0.$
We obtain $g(P_i)\neq 0$ for all $i$ if and only if
$c_i \neq 0$ for all $i$, which happens if and only 
${\rm v}_{\mathfrak{p}_i}(I)=\deg(f_i)=r_0$ for all $i$.
%
%
%
%
%
%
\end{proof}
We come to one of our main results.

\begin{theorem}{\rm(Duality criterion)}\label{duality-criterion} 
Let $X$ be a subset of $K^s$, $|X|\geq 2$, let $I=I(X)$ be its
vanishing ideal, let $r_0$ be the regularity index of $H_I^a$, and let $\prec$ be a graded
monomial order. The following conditions are equivalent.
\begin{enumerate} 
\item[(a)] $C_X(d)$ is monomially equivalent to $C_X(r_0-d-1)^\perp$  for 
$-1\leq d\leq r_0$.  
\item[(b)] $H_I^a(d)+H_I^a(r_0-d-1)=|X|$ for $-1\leq d\leq r_0$ 
and $r_0={\rm v}_{\mathfrak{p}}(I)$ for $\mathfrak{p}\in{\rm Ass}(I)$.
\item[(c)] There is 
$g\in K\Delta_\prec(I)$ such that $g(P_i)\neq 0$ for all $i$ and
$$
C_X(r_0-d-1)^\perp=(g(P_1),\ldots,g(P_m))\cdot C_X(d)\ \text{ for
\ $-1\leq d\leq r_0$}.
$$
Moreover, one can choose $g=\sum_{i=1}^m{\rm lc}(f_i)f_i$, where $f_i$ is
the $i$-th standard indicator function and ${\rm lc}(f_i)$ is its leading coefficient.
\end{enumerate} 
\end{theorem}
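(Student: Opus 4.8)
The plan is to prove the cycle $(\mathrm{c})\Rightarrow(\mathrm{a})\Rightarrow(\mathrm{b})\Rightarrow(\mathrm{c})$, with the ``Moreover'' clause established along the way. The implication $(\mathrm{c})\Rightarrow(\mathrm{a})$ is immediate: a vector $(g(P_1),\dots,g(P_m))$ with no zero entry is exactly what witnesses the monomial equivalence of $C_X(d)$ and $C_X(r_0-d-1)^\perp$ for each $d$.

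For $(\mathrm{a})\Rightarrow(\mathrm{b})$ I would first count dimensions. Since $\dim_K C_X(d)=H_I^a(d)$ and $\dim_K C_X(r_0-d-1)^\perp=|X|-H_I^a(r_0-d-1)$, and monomial equivalence preserves dimension, condition (a) forces $H_I^a(d)+H_I^a(r_0-d-1)=|X|$ for all $-1\le d\le r_0$; at $d=0$, where $H_I^a(0)=1$, this already gives $H_I^a(r_0-1)+1=|X|$. The $d=0$ instance of (a) also says $C_X(r_0-1)^\perp$ is monomially equivalent to $C_X(0)=K(1,\dots,1)$, hence $C_X(r_0-1)^\perp=K(g(P_1),\dots,g(P_m))$ for the unique $g\in K\Delta_\prec(I)$ with ${\rm ev}(g)$ equal to the equivalence vector, and $g(P_i)\neq 0$ for all $i$. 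Proposition~\ref{aug2-20} then yields ${\rm v}_{\mathfrak{p}_i}(I)=r_0$ for every $i$, which is the remaining half of (b).

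The heart of the matter is $(\mathrm{b})\Rightarrow(\mathrm{c})$, where I would apply Theorem~\ref{T:combinatorial-condition}. For $-1\le d\le r_0$ set $\Gamma_1=\{a\in\Lambda\colon\deg t^a\le d\}$ and $\Gamma_2=\{a\in\Lambda\colon\deg t^a\le r_0-d-1\}$. Because $\prec$ is graded, reduction modulo a Gr\"obner basis of $I$ does not raise degree, so the standard function space of $C_X(d)$ is $K(S_{\le d}\cap\Delta_\prec(I))=\cL(\Gamma_1)$ (the dimensions agreeing by Lemma~\ref{lemma-referee1}); hence $C_X(d)=\cL(\Gamma_1)_X$, and likewise $C_X(r_0-d-1)=\cL(\Gamma_2)_X$. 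Next I would verify the hypotheses of the theorem: the Hilbert-function half of (b) together with Proposition~\ref{duality-hilbert-function}(d) (at $d=0$) shows there is exactly one standard monomial of degree $r_0$, necessarily the $\prec$-largest one $t^e$; the ${\rm v}$-number half of (b) with Proposition~\ref{indicator-function-prop} shows $\deg(f_i)=r_0$ for every $i$, so the degree-$r_0$ part of $f_i$ is a nonzero scalar times $t^e$ --- thus $t^e$ is essential and $\mathrm{in}_\prec(f_i)=t^e$ with ${\rm lc}(f_i)$ the coefficient of $t^e$ in $f_i$. Condition (1) of the theorem is $|\Gamma_1|+|\Gamma_2|=H_I^a(d)+H_I^a(r_0-d-1)=|X|$, the Hilbert-function half of (b); condition (2) holds since every element of $\Gamma_1+\Gamma_2$ has degree at most $r_0-1<\deg t^e$. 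Theorem~\ref{T:combinatorial-condition} then gives $\beta\cdot C_X(d)=C_X(r_0-d-1)^\perp$ with $\beta_i$ the coefficient of $t^e$ in $f_i$, i.e.\ $\beta_i={\rm lc}(f_i)$. Finally, for $g=\sum_{i=1}^m{\rm lc}(f_i)f_i\in K\Delta_\prec(I)$ one computes $g(P_j)=\sum_i{\rm lc}(f_i)f_i(P_j)={\rm lc}(f_j)=\beta_j$, so $(g(P_1),\dots,g(P_m))=\beta$, and (c) together with the ``Moreover'' clause follows; the degenerate cases $d=-1$ and $d=r_0$, where one side is $(0)$ and the other $K^m$, are subsumed in the same application (or checked directly).

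The step I expect to be the main obstacle is $(\mathrm{b})\Rightarrow(\mathrm{c})$, specifically the bookkeeping showing that the two conditions in (b) are precisely strong enough to make the largest standard monomial $t^e$ essential and to force $\mathrm{in}_\prec(f_i)=t^e$ for all $i$; once that is in hand, Theorem~\ref{T:combinatorial-condition} supplies the equivalence and identifying the equivalence vector with $(g(P_1),\dots,g(P_m))$ is a one-line evaluation. A secondary point needing care is the routine but order-dependent fact that, since $\prec$ is graded, the standard function space of a Reed--Muller-type code $C_X(d)$ is spanned by the standard monomials of degree at most $d$.
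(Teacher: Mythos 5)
Your proposal is correct and follows essentially the same route as the paper: the same cycle of implications, with (a)$\Rightarrow$(b) obtained by dimension counting plus the $d=0$ case and Proposition~\ref{aug2-20}, and (b)$\Rightarrow$(c) obtained by applying Theorem~\ref{T:combinatorial-condition} to $\Gamma_1,\Gamma_2$ given by the standard monomials of degrees at most $d$ and $r_0-d-1$, using Proposition~\ref{duality-hilbert-function}(d) and Proposition~\ref{indicator-function-prop}(a) to see that the unique degree-$r_0$ standard monomial $t^e$ is essential and that $\beta_i={\rm lc}(f_i)=g(P_i)$ for $g=\sum_i{\rm lc}(f_i)f_i$. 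Nothing essential differs from the paper's argument.
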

\begin{proof} (a) $\Rightarrow$ (b): Since
$\dim_K(C_X(r_0-d-1)^\perp)=|X|-H_I^a(r_0-d-1)$ and
$\dim_K(C_X(d))=H_I^a(d)$, one has $H_I^a(r_0-d-1)+H_I^a(d)=|X|$
because equivalent codes have the same dimension. As $C_X(0)$ is
equivalent to $C_X(r_0-1)^\perp$, there is
$\beta=(\beta_1,\ldots,\beta_m)$ in $K^m$ such that $\beta_i\neq 0$ for all $i$
and $C_X(r_0-1)^\perp=\beta\cdot C_X(0)=K\beta$. The image of
$S_{\leq r_0}$ under the evaluation map ``${\rm ev}$'' is 
equal to $K^{|X|}$. This follows from the equalities $H_I^a(r_0)=\dim_K(S_{\leq r_0}/I_{\leq
r_0})=|X|$. Hence, by the division algorithm, it follows that there is $g\in K\Delta_\prec(I)$ such
that $\beta=(g(P_1),\ldots,g(P_m))$. Then, by
Proposition~\ref{aug2-20}, we get ${\rm v}_{\mathfrak{p}}(I)=r_0$ for
$\mathfrak{p}\in{\rm Ass}(I)$.

(b) $\Rightarrow$ (c): This implication follows from
Theorem~\ref{T:combinatorial-condition} if we let $\Gamma_1$ and $\Gamma_2$ be the set of
monomials in $\Delta_\prec(I)$ of degree up to $d$ and up to $r_0-d-1$, respectively.
Then we have
$\cL(\Gamma_1)=S_{\leq d}\bigcap K\Delta_\prec(I)$ and  $\cL(\Gamma_2)=S_{\leq
r_0-d-1}\bigcap K\Delta_\prec(I)$ and, consequently,
$$
\mathcal{L}(\Gamma_1)_X=C_X(d)\ \text{ and }\ 
\mathcal{L}(\Gamma_2)_X=C_X(r_0-d-1). 
$$
\quad The largest standard monomial $t^e$ with respect to $\prec$
has total degree $r_0$ (Lemma~\ref{sep11-20}). By
Proposition~\ref{duality-hilbert-function}(d), there exists a unique
standard monomial of degree $r_0$ that is equal to $t^e$. Now, the
condition $r_0={\rm v}_{\mathfrak{p}}(I)$ for all $\mathfrak{p}\in{\rm
Ass}(I)$ means that every standard indicator function $f_i$ has total
degree $r_0$ (Proposition~\ref{indicator-function-prop}(a)) and, 
hence, the leading monomial of every $f_i$ is equal to $t^e$. Thus, $t^e$ is
an essential monomial. 
Furthermore, condition (1) of Theorem~\ref{T:combinatorial-condition}
translates to
$H_I^a(d)+H_I^a(r_0-d-1)=|X|$ (Lemma~\ref{lemma-referee1}). Also,
since $d+(r_0-d+1)<r_0$ we see that $\Gamma_1\Gamma_2\subset \Delta_\prec(I)\setminus\{t^e\}$
and, hence, condition (2) of Theorem~\ref{T:combinatorial-condition} is also satisfied 
(see Remark~\ref{R:combinatorial-condition}).
Therefore, by Theorem~\ref{T:combinatorial-condition}, we get 
$$\beta\cdot\cL(\Gamma_1)_X={\cL(\Gamma_2)_X}^\perp,$$
where $\beta_i$ is the coefficient of $t^e$ in the $i$-th standard indicator
function~$f_i$ for $i=1,\dots,m$. Setting $g=\sum_{i=1}^m{\rm
lc}(f_i)f_i$, one has $g(P_i)={\rm lc}(f_i)=\beta_i$ for all $i$.

(c) $\Rightarrow$ (a): This follows from the definition of equivalent
codes.
\end{proof}


The next result is known for a complete intersection homogeneous
vanishing ideal $I$ \cite[Lemma~3]{sarabia7}. As an
application we show this result under weaker conditions. 

\begin{corollary}\label{geramita-gorenstein} Let $X$ be a subset of
$K^s$, let $I$ be its vanishing ideal, and let $r_0$ be the regularity
index of $H_I^a$. 
If $H_I^a(r_0-1)+1=|X|$ and $r_0={\rm v}_{\mathfrak{p}}(I)$ for
$\mathfrak{p}\in{\rm Ass}(I)$, 
then $\delta(C_X(r_0-1))=2$.
\end{corollary}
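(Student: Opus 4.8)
The plan is to combine the duality criterion (Theorem~\ref{duality-criterion}) with the Singleton bound for the dual code. First I would observe that the two hypotheses $H_I^a(r_0-1)+1=|X|$ and $r_0={\rm v}_{\mathfrak{p}}(I)$ for all $\mathfrak{p}\in{\rm Ass}(I)$ are exactly the input needed by Proposition~\ref{aug2-20}: they guarantee the existence of $g\in K\Delta_\prec(I)$ with $g(P_i)\neq 0$ for all $i$ such that $C_X(r_0-1)^\perp=K(g(P_1),\ldots,g(P_m))$. In particular $C_X(r_0-1)^\perp$ is the one-dimensional code spanned by a vector all of whose coordinates are nonzero, so it has length $|X|$, dimension $1$, and minimum distance $|X|$ (the all-nonzero vector has full Hamming weight, and so does any nonzero scalar multiple). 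Equivalently, $C_X(r_0-1)^\perp$ is monomially equivalent to the repetition code of length $|X|$.

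The key step is then to pass from the minimum distance of $C_X(r_0-1)^\perp$ back to that of $C_X(r_0-1)$. Since $C_X(r_0-1)^\perp$ is a maximum distance separable code (it meets the Singleton bound: $\delta = |X| = |X| - 1 + 1$), its dual is also MDS by the standard fact that the dual of an MDS code is MDS \cite[Theorem~2.4.3]{Huffman-Pless}. The dimension of $C_X(r_0-1)$ is $H_I^a(r_0-1) = |X|-1$, so being MDS forces
$$\delta(C_X(r_0-1)) = |X| - \dim_K(C_X(r_0-1)) + 1 = |X| - (|X|-1) + 1 = 2.$$
Alternatively, without invoking MDS-duality, one can argue directly: by Theorem~\ref{duality-criterion} with $d = r_0-2$ one gets that $C_X(r_0-1) = C_X(r_0-(r_0-2)-1)$ is monomially equivalent to $C_X(r_0-(r_0-1)-1)^\perp = C_X(0)^\perp$, and $C_X(0)$ is the repetition code, whose dual is the $[\,|X|, |X|-1\,]$ parity-check code with minimum distance exactly $2$; monomial equivalence preserves minimum distance, giving $\delta(C_X(r_0-1)) = 2$.

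I expect the main (minor) obstacle to be the bookkeeping needed to invoke Theorem~\ref{duality-criterion} cleanly: one must check that the hypotheses of the corollary, together with $H_I^a(r_0-1)+1=|X|$, imply the full symmetry condition $H_I^a(d)+H_I^a(r_0-d-1)=|X|$ for all $-1\le d\le r_0$ that Theorem~\ref{duality-criterion}(b) demands. This is not automatic from the single value $d=r_0-1$, so the safest route is to avoid Theorem~\ref{duality-criterion} entirely and argue via Proposition~\ref{aug2-20} directly, as in the first two paragraphs: that proposition needs only the two stated hypotheses, and the MDS-duality argument then finishes the proof in one line. The only thing to be careful about is the degenerate case $|X|=2$, $r_0=1$, where $C_X(r_0-1)=C_X(0)$ is itself the repetition code; there $\delta=2$ holds by inspection, so no separate treatment is really needed.
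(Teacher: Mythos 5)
Your proof is correct, and its core is the same as the paper's: show that $C_X(r_0-1)^\perp$ is monomially equivalent to the repetition code $C_X(0)$ and then dualize back to conclude $\delta(C_X(r_0-1))=2$. The differences are worth noting. The paper's own proof cites Theorem~\ref{duality-criterion} for the equivalence $C_X(r_0-1)^\perp\sim C_X(0)$, even though condition (b) of that theorem formally asks for the symmetry $H_I^a(d)+H_I^a(r_0-d-1)=|X|$ for \emph{all} $-1\le d\le r_0$, which the corollary's hypotheses do not supply; what is really being used is Proposition~\ref{aug2-20}, whose hypotheses match the corollary exactly. You identified precisely this issue and routed the argument through Proposition~\ref{aug2-20}, which is the cleaner (and strictly correct) citation. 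Your final step, phrased as MDS-duality applied to the $[\,|X|,1,|X|\,]$ code $C_X(r_0-1)^\perp$ together with $\dim_K C_X(r_0-1)=H_I^a(r_0-1)=|X|-1$, is equivalent to the paper's more pedestrian observation that the dual of a repetition code (the parity-check code, up to monomial equivalence, which preserves minimum distance via Remark~\ref{dual-equation}) has minimum distance $2$; either packaging is fine, given $|X|\ge 2$. One small slip, harmless because it occurs only in the alternative route you then discard: to get $C_X(r_0-1)\sim C_X(0)^\perp$ from Theorem~\ref{duality-criterion} you should take $d=r_0-1$, not $d=r_0-2$ (with $d=r_0-2$ the theorem relates $C_X(r_0-2)$ and $C_X(1)^\perp$).
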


\begin{proof} By Theorem~\ref{duality-criterion}, $C_X(r_0-1)^\perp$ is equivalent
of $K(1,\ldots,1)=C_X(0)$. Hence, $C_X(0)^\perp$ is equivalent to
$C_X(r_0-1)$. Since $C_X(0)$ is a repetition code, it is easy to see that $C_X(0)^\perp$ has minimum distance 2.
Therefore,
$\delta(C_X(r_0-1))=2$.
\end{proof}

\begin{definition}\cite[p.~171]{geramita-cayley-bacharach}
Let $Y$ be a finite subset of a projective space $\mathbb{P}^{s}$ over
a field $K$ and let $I(Y)$ be its homogeneous vanishing ideal in $R=K[t_0,\ldots,t_{s}]$.
We say that $Y$ is a \textit{Cayley--Bacharach scheme}
(CB-scheme) if every hypersurface of degree less than ${\rm
reg}(R/I(Y))$ that contains all but one
point of $Y$ must contain all the points of $Y$.
\end{definition}

To show the following proposition we need a result of 
Geramita, Kreuzer and Robbiano
\cite[Corollary~3.7]{geramita-cayley-bacharach} about CB-schemes. 
In loc. cit. the field $K$ is assumed to be infinite
\cite[p.~165]{geramita-cayley-bacharach} but the result that we need
it is seen to be valid for finite fields. 

\begin{proposition}\label{inequality-dim-dual} Let $X$ be a subset of
$K^s$, let 
$I$ be its vanishing ideal, and let $r_0$ be the regularity index of $H_I^a$. If 
$r_0={\rm v}_\mathfrak{p}(I)$ for all $\mathfrak{p}\in{\rm Ass}(I)$,
then 
\begin{enumerate}
\item[(a)] $H_I^a(d)+H_I^a(r_0-d-1)\leq |X|$ for $0\leq d\leq r_0$, and
\item[(b)] $\dim_K(C_X(d)^\perp)\geq \dim_K(C_X(r_0-d-1))$ for $0\leq d\leq r_0$. 
\end{enumerate}
\end{proposition}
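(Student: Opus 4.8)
The plan is to translate the hypothesis into the statement that the projective point set $Y:=\{[P,1]\mid P\in X\}\subset\mathbb{P}^s$ is a Cayley--Bacharach scheme, read (a) off from the Hilbert-function inequality of Geramita, Kreuzer and Robbiano, and then deduce (b) from (a) by a dimension count. First I would set up the dictionary. By Lemma~\ref{oct3-20}, the homogenization $I^h=I(X)^h$ is the homogeneous vanishing ideal $I(Y)$ of $Y$ in $R:=K[t_1,\dots,t_s,u]$, and $|Y|=|X|$. By Eq.~\eqref{nov13-20}, $H_I^a(d)=H_{I^h}(d)=\dim_K(R/I(Y))_d$ for $d\geq -1$, and $r_0={\rm reg}(H_I^a)={\rm reg}(H_{I^h})$ equals the regularity of $R/I(Y)$, so ``$\deg$ less than ${\rm reg}(R/I(Y))$'' in the definition of a CB-scheme means ``degree $<r_0$''.

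Next I would verify that $Y$ is a CB-scheme. Suppose $g\in R$ is a non-zero form with $\deg(g)<r_0$ whose zero locus in $\mathbb{P}^s$ contains every point of $Y$ except, say, $[P_i,1]$. Since the $u$-coordinate of every point of $Y$ equals $1$, the dehomogenization $\overline{g}:=g|_{u=1}\in S$ satisfies $\overline{g}(P_j)=g([P_j,1])=0$ for $j\neq i$ and $\overline{g}(P_i)=g([P_i,1])\neq 0$; in particular $\overline{g}\neq 0$ and $\deg(\overline{g})\leq\deg(g)<r_0$. Hence $\overline{g}$ is an indicator function for $P_i$ of degree $<r_0$, so by Lemma~\ref{if-new} we get ${\rm v}_{\mathfrak{p}_i}(I)<r_0$, contradicting ${\rm v}_{\mathfrak{p}_i}(I)=r_0$. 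Therefore no such hypersurface exists, i.e. every hypersurface of degree $<r_0$ through all but one point of $Y$ passes through all of $Y$, so $Y$ is a CB-scheme. Now I would invoke \cite[Corollary~3.7]{geramita-cayley-bacharach} (valid over finite $K$, as noted above): for a CB-scheme $Y$ with regularity index $r_0$ one has $H_{R/I(Y)}(d)+H_{R/I(Y)}(r_0-1-d)\leq\deg(Y)$ for $0\leq d\leq r_0$. (For $d=r_0$ this is just $|X|+0=|X|$, and by the symmetry of the left-hand side the case $d=0$ coincides with $d=r_0-1$, so the range matches any indexing convention.) Rewriting through the dictionary gives $H_I^a(d)+H_I^a(r_0-d-1)\leq|X|$ for $0\leq d\leq r_0$, which is (a).

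For (b), note that for $0\leq d\leq r_0$ the evaluation map restricted to $S_{\leq d}$ has kernel $I_{\leq d}$, so $\dim_K(C_X(d))=H_I^a(d)$, and likewise $\dim_K(C_X(r_0-d-1))=H_I^a(r_0-d-1)$ (with $S_{\leq -1}=\{0\}$ when $d=r_0$). Hence $\dim_K(C_X(d)^\perp)=|X|-H_I^a(d)\geq H_I^a(r_0-d-1)=\dim_K(C_X(r_0-d-1))$ by (a). The only non-routine points are making the passage between homogeneous forms on $Y$ and affine indicator functions for $X$ fully precise, and confirming that Corollary~3.7 of \cite{geramita-cayley-bacharach}, stated there over an infinite field, indeed carries over to $\mathbb{F}_q$ — which is exactly the remark already placed before the proposition; everything else is bookkeeping.
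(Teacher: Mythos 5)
Your proposal is correct and follows essentially the same route as the paper: you dehomogenize a form of degree less than $r_0$ vanishing at all but one point of $Y=\{[P,1]\mid P\in X\}$ to obtain an indicator function of degree less than $r_0$, contradicting $r_0={\rm v}_{\mathfrak{p}}(I)$, so $Y$ is a CB-scheme; then \cite[Corollary~3.7]{geramita-cayley-bacharach} together with $H_I^a(d)=H_{I^h}(d)$ gives (a), and (b) follows from $H_I^a(d)+\dim_K(C_X(d)^\perp)=|X|$. This matches the paper's argument, with your extra bookkeeping (nonvanishing of the dehomogenization, the boundary cases $d=0,r_0$) being harmless elaboration.
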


\begin{proof} (a): The ideal $I^h$ is the homogeneous
vanishing ideal $I(Y)$ of the set $Y=\{[P,1] \mid \, P\in X\}$ of
points in the projective 
space  $\mathbb{P}^s$ (Lemma~\ref{oct3-20}). One has
the equality $H_I^a(d)=H_{I^h}(d)$ for $d\geq 0$
\cite[Lemma~8.5.4]{monalg-rev}. 
Hence, by \cite[Corollary~3.7]{geramita-cayley-bacharach}, it
suffices to show that $Y$ is a CB-scheme. Let $f$ be a homogeneous
polynomial of $S[u]$ of degree less than $r_0$ and let $Q$ be a point in $X$ such that
$f(P,1)=0$ for $P\in X\setminus\{Q\}$. We need only show that
$f(Q,1)=0$. Assume that $f(Q,1)\neq 0$. Consider the polynomial
$g=f(t_1,\ldots,t_s,1)$. Then, $g$ is an indicator function for $Q$ of
degree less than $r_0$, and consequently ${\rm
v}_\mathfrak{q}(I)<r_0$, where $\mathfrak{q}=I_Q$, a contradiction.

(b): This follows from part (a) and the equality
$H_I^a(d)+\dim_K(C_X(d)^\perp)=|X|$.
\end{proof}

In the next theorem (Theorem~\ref{gorenstein-vnumber} below) we show that when the vanishing ideal $I(X)$ is Gorenstein then
all standard indicator functions $f_i$ have  the same degree which equals the regularity of $I(X)$. The proof is based on the following lemma 
which relates the Castelnuovo--Mumford regularity and the socle of Artinian rings. Recall that the {\it socle} of an Artinian positively graded algebra $N=\bigoplus_{d\geq 0} N_d$ is
$$\mathrm{Soc}(N)=(0: N_{+}), \quad\text{where }\ N_{+}=\bigoplus_{d> 0} N_d.$$

\begin{lemma}\cite[Lemma 5.3.3]{monalg-rev}\label{reg-socle} Let  $J$ be a homogeneous ideal in a polynomial ring $R$. If $R/J$ is Artinian then
the Castelnuovo--Mumford regularity  ${\rm reg}(R/J)$ equals the maximal degree of generators of ${\rm Soc}(R/J)$.
\end{lemma}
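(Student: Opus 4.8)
The plan is to show that both quantities in the statement equal the top nonzero degree $s:=\max\{\,j\mid (R/J)_j\neq 0\,\}$ of the Artinian algebra $N:=R/J$, which is well defined since $N$ has finite length.

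First I would pin down the regularity. Because $N$ is Artinian, $\dim N=0$, so $H^i_{\mathfrak m}(N)=0$ for all $i\geq 1$ by Grothendieck vanishing, where $\mathfrak m$ denotes the graded maximal ideal of $R$; and since a power of $\mathfrak m$ annihilates $N$, the zeroth local cohomology is $H^0_{\mathfrak m}(N)=N$ itself. Feeding this into the local-cohomology description of Castelnuovo--Mumford regularity gives
\[
{\rm reg}(N)=\max\{\,i+j\mid H^i_{\mathfrak m}(N)_j\neq 0\,\}=\max\{\,j\mid N_j\neq 0\,\}=s.
\]
If instead one works with the minimal free resolution $\mathbf F_\bullet$ of $N$, the same conclusion follows from the facts that for a Cohen--Macaulay module the largest twist occurs at the last step $F_n$, and that $\operatorname{Tor}^R_n(N,K)$ is, up to a degree shift by $n=\dim R$, the Matlis dual of $\mathrm{Soc}(N)$ — which in fact computes both sides of the statement simultaneously.

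Next I would identify the top degree of the socle with $s$. On the one hand $\mathrm{Soc}(N)_j\subseteq N_j=0$ whenever $j>s$. On the other hand, for $x\in N_s$ and any variable $x_i$ one has $x_i x\in N_{s+1}=0$, so $N_s\subseteq\mathrm{Soc}(N)$ and hence $\mathrm{Soc}(N)_s=N_s\neq 0$. Since $\mathrm{Soc}(N)$ is annihilated by $\mathfrak m$, it is a direct sum of shifted copies of $K$, so a minimal homogeneous generating set of $\mathrm{Soc}(N)$ is just a homogeneous $K$-basis, whose maximal degree is $\max\{\,j\mid \mathrm{Soc}(N)_j\neq 0\,\}=s$. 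Combining the two paragraphs, ${\rm reg}(R/J)=s=$ the maximal degree of a generator of $\mathrm{Soc}(R/J)$, as claimed.

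The only substantive point is the first step: that the regularity of an Artinian standard graded algebra is simply its top nonzero degree. With the Eisenbud--Goto local-cohomology definition of regularity this is essentially immediate from Grothendieck vanishing; with the resolution-based definition it requires the standard comparison between the two notions of regularity, or equivalently the local-duality identification of the last free module in the minimal resolution with the Matlis dual of the socle. That duality is the genuine content here, and everything else is bookkeeping with degrees.
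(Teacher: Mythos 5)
Your argument is correct, and there is in fact no in-paper proof to compare it with: the paper quotes this statement from \cite[Lemma~5.3.3]{monalg-rev} (see also \cite{Fro3}) without proof, so what you wrote serves as a legitimate self-contained justification. Your main route is clean and complete: for $N=R/J$ Artinian, Grothendieck vanishing gives $H^i_{\mathfrak m}(N)=0$ for $i\geq 1$ while $H^0_{\mathfrak m}(N)=N$, so the local-cohomology description of regularity collapses to the top nonzero degree $s$ of $N$; and since $\mathfrak m N_s\subseteq N_{s+1}=0$ one has $\mathrm{Soc}(N)_s=N_s\neq 0$ and $\mathrm{Soc}(N)_j\subseteq N_j=0$ for $j>s$, while the socle, being annihilated by $\mathfrak m$, has a homogeneous basis as its minimal generating set, so the maximal generator degree is exactly $s$. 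One small imprecision sits in your parenthetical alternative via the minimal free resolution: computing $\mathrm{Tor}$ with the Koszul complex gives $\mathrm{Tor}^R_n(N,K)_j\cong \mathrm{Soc}(N)_{j-n}$, i.e.\ the last Betti numbers record the socle itself shifted by $n=\dim R$, not its Matlis dual (under the usual grading convention the graded dual would negate degrees); together with the standard fact that for a Cohen--Macaulay module the regularity is attained at the last step of the resolution, this gives the same conclusion, but since you only offer that route as an aside, the slip does not affect the validity of your proof.
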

The statement can also be found in \cite{Fro3}. We will also need the following lemma.

\begin{lemma}\label{jul26-20} Let $X=\{P_1,\ldots,P_m\}$ be a subset of
$K^s$ and let $I=I(X)$ be its vanishing ideal. If $f\in S$ is an indicator function for $P_i$ of minimum
degree $d$, then its homogenization $f^h$, with respect to the variable
$u$, is not in the ideal
$(I^h,u)$ and $\deg(f)={\rm v}_{\mathfrak{p}_i}(I)$.  
\end{lemma}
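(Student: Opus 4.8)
The second assertion, $\deg(f)={\rm v}_{\mathfrak{p}_i}(I)$, is immediate: by hypothesis $f$ is an indicator function for $P_i$ of least possible degree, and Lemma~\ref{if-new} identifies that least degree with ${\rm v}_{\mathfrak{p}_i}(I)$. So the real content of the lemma is the claim $f^h\notin(I^h,u)$, which I plan to establish by contradiction using dehomogenization.

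Set $d=\deg(f)$, so that $f^h$ is homogeneous of degree $d$. Suppose $f^h\in(I^h,u)$. Since $(I^h,u)$ is a homogeneous ideal and $f^h$ is homogeneous, comparing components of degree $d$ lets me write $f^h=g+uh$ with $g\in I^h$ homogeneous of degree $d$ and $h\in S[u]$ homogeneous of degree $d-1$. Now specialize $u=1$. On the left, $f^h|_{u=1}=f$ because $\deg(f)=d$. On the right, by Lemma~\ref{oct3-20} we have $I^h=I(Y)$ with $Y=\{[P_1,1],\dots,[P_m,1]\}$, so $g$ vanishes at every $[P_j,1]$ and hence $g|_{u=1}$ vanishes at every $P_j$, i.e. $g|_{u=1}\in I$. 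Writing $h':=h|_{u=1}\in S$, we obtain $f-h'=g|_{u=1}\in I$, and $\deg(h')\le d-1$ since $h$ is homogeneous of degree $d-1$.

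Because $f-h'\in I$, the polynomial $h'$ takes the same values as $f$ on $X$: $h'(P_i)=f(P_i)\neq 0$ and $h'(P_j)=f(P_j)=0$ for $j\neq i$. Thus $h'$ is a nonzero indicator function for $P_i$ of degree $\le d-1<d$, contradicting the minimality of $d$. Hence $f^h\notin(I^h,u)$, as claimed. The only point requiring care is the bookkeeping with homogeneous components together with the verification that $g|_{u=1}\in I$; the rest is routine. One could alternatively route the argument through the regularity of $u$ on $S[u]/I^h$ (Lemma~\ref{u-is-regular}), but the direct dehomogenization above seems to be the cleanest path.
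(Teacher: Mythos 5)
Your proof is correct and follows essentially the same route as the paper: both assume $f^h=g+uh$ with $g\in I^h$ homogeneous of degree $d$ and $h$ of degree $d-1$, then set $u=1$ (the paper evaluates directly at the projective points $[P_j,1]$) to produce an indicator function for $P_i$ of degree less than $d$, contradicting minimality, and both obtain $\deg(f)={\rm v}_{\mathfrak{p}_i}(I)$ from Lemma~\ref{if-new}. Your extra care in justifying the homogeneous decomposition is a harmless refinement of the same argument.
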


\begin{proof} If $f^h=g+uh$ for some $g\in S[u]_d\bigcap I^h$, $h\in S[u]_{d-1}$. The
ideal $I^h$ is the homogeneous vanishing ideal of the set
$Y=\{[P_i,1]\}_{i=1}^m$ of projective points in $\mathbb{P}^s$
(Lemma~\ref{oct3-20}). Hence, setting 
$H=h(t_1,\ldots,t_s,1)$, we get $f(P_i)=f^h(P_i,1)=H(P_i)\neq 0$ and
$f(P_j)=f^h(P_j,1)=H(P_j)=0$ for $j\neq i$, a contradiction because
$\deg(H)<\deg(f)$. By Lemma~\ref{if-new}, $\deg(f)={\rm
v}_{\mathfrak{p}_i}(I)$.
\end{proof}

\begin{theorem}\label{gorenstein-vnumber}
Let $X=\{P_1,\ldots,P_m\}$ be a subset of
$K^s$ and let $f_i$ be the $i$-th standard indicator function for
 $P_i$. If $I=I(X)$ is a Gorenstein ideal and $\mathfrak{p}_i$ is the
vanishing ideal of $P_i$, then ${\rm
v}_{\mathfrak{p}_i}(I)=\deg(f_i)={\rm reg}(H_I^a)$.
\end{theorem}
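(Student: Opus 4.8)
The equality $\mathrm{v}_{\mathfrak{p}_i}(I)=\deg(f_i)$ is already in hand (Proposition~\ref{indicator-function-prop}(a), or Lemma~\ref{jul26-20}), and $\deg(f_i)\le r_0:=\mathrm{reg}(H_I^a)$ holds unconditionally (Proposition~\ref{indicator-function-prop}(d)). So the entire content is the reverse bound: every standard indicator function $f_i$ has degree exactly $r_0$. The plan is to pass to the Artinian reduction $A:=S[u]/(I^h,u)$ and exhibit the image $\overline{f_i^h}$ of the homogenization $f_i^h$ as a nonzero element of the socle $\mathrm{Soc}(A)=(0:A_{+})$, which, because $I$ is Gorenstein, is one-dimensional and concentrated in the top degree $r_0$.

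First I would analyze $A$. Since $u$ is regular on $S[u]/I^h$ (Lemma~\ref{u-is-regular}) and $S[u]/I^h$ is Gorenstein of Krull dimension $1$ (this is what ``$I=I(X)$ is Gorenstein'' means), the quotient $A$ is an Artinian Gorenstein graded $K$-algebra. From the exact sequence $0\to(S[u]/I^h)[-1]\xrightarrow{\,u\,}S[u]/I^h\to A\to 0$ together with Eq.~\eqref{nov13-20} and Lemma~\ref{lemma-referee1}, one gets $H_A(d)=H_I^a(d)-H_I^a(d-1)=|\Delta_\prec(I)\cap S_d|$, which by Lemma~\ref{sep11-20} vanishes for $d>r_0$ and is positive for $d=r_0$. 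Thus $A_{r_0}\neq 0$ is the top graded piece, so $A_{r_0}\cdot A_{+}\subseteq A_{>r_0}=0$ gives $A_{r_0}\subseteq\mathrm{Soc}(A)$; since $A$ is Gorenstein, $\dim_K\mathrm{Soc}(A)=1$, hence $\mathrm{Soc}(A)=A_{r_0}$ is one-dimensional and concentrated in degree $r_0$ (equivalently, invoke Lemma~\ref{reg-socle}).

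Next I would verify $\overline{f_i^h}\in\mathrm{Soc}(A)$. Write $P_i=(p_{i,1},\dots,p_{i,s})$. By Lemma~\ref{oct3-20} (cf. the proof of Lemma~\ref{u-is-regular}), $I^h=\bigcap_{k=1}^m\mathfrak{P}_k$ where $\mathfrak{P}_k:=I_{[P_k,1]}=(t_1-p_{k,1}u,\dots,t_s-p_{k,s}u)$ is the homogeneous vanishing ideal of the projective point $[P_k,1]$. Fix $j\in\{1,\dots,s\}$. Then $t_j-p_{i,j}u\in\mathfrak{P}_i$, so $f_i^h(t_j-p_{i,j}u)\in\mathfrak{P}_i$; and for $k\neq i$ we have $f_i(P_k)=0$, so $f_i^h$ vanishes at $[P_k,1]$, hence $f_i^h\in\mathfrak{P}_k$ and $f_i^h(t_j-p_{i,j}u)\in\mathfrak{P}_k$. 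Therefore $f_i^h(t_j-p_{i,j}u)\in I^h$, and so $f_i^h t_j=f_i^h(t_j-p_{i,j}u)+p_{i,j}\,u\,f_i^h\in(I^h,u)$. As $A_{+}$ is generated as an ideal by $\overline{t_1},\dots,\overline{t_s}$ (recall $\overline{u}=0$ in $A$), this shows $\overline{f_i^h}\,A_{+}=0$, i.e. $\overline{f_i^h}\in\mathrm{Soc}(A)$; and $\overline{f_i^h}\neq 0$ by Lemma~\ref{jul26-20}, since the standard indicator function $f_i$ is an indicator function for $P_i$ of minimum degree.

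Then $\overline{f_i^h}$ is a nonzero homogeneous element of $\mathrm{Soc}(A)$ of degree $\deg(f_i^h)=\deg(f_i)$, and $\mathrm{Soc}(A)$ lives in degree $r_0$; hence $\deg(f_i)=r_0$. Together with $\mathrm{v}_{\mathfrak{p}_i}(I)=\deg(f_i)$ this yields $\mathrm{v}_{\mathfrak{p}_i}(I)=\deg(f_i)=r_0=\mathrm{reg}(H_I^a)$. As a byproduct, the $\overline{f_i^h}$ all span the same one-dimensional socle $A_{r_0}$; since there is a unique standard monomial $t^e$ of degree $r_0$ by Corollary~\ref{duality-hilbert-gorenstein}(b), the degree-$r_0$ component of each $f_i$ is a nonzero scalar multiple of $t^e$, so $\mathrm{in}_\prec(f_i)=t^e=\mathrm{in}_\prec(f_m)$ for all $i$, which is the formulation stated in the introduction. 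The step I expect to be the main obstacle is pinning $\mathrm{Soc}(A)$ to the single degree $r_0$: this is exactly where the Gorenstein hypothesis enters, through one-dimensionality of the socle combined with the Hilbert-function computation identifying $r_0$ as the top degree of $A$; the identity $f_i^h t_j\in(I^h,u)$, which rests on the primary decomposition of $I^h$, is the other point requiring care.
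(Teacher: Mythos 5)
Your proof is correct and follows essentially the same route as the paper: pass to the Artinian reduction $S[u]/(I^h,u)$, use the primary decomposition of $I^h$ to show that the homogenization $f_i^h$ of the standard indicator function gives a socle element, nonzero by Lemma~\ref{jul26-20}, and exploit the one-dimensionality of the socle coming from the Gorenstein hypothesis. The only (harmless) divergence is the final degree-pinning: where the paper invokes Lemma~\ref{reg-socle} together with ${\rm reg}\bigl(S[u]/(I^h,u)\bigr)={\rm reg}(S[u]/I^h)={\rm reg}(H_I^a)$, you compute the Hilbert function of the Artinian reduction directly as $H_I^a(d)-H_I^a(d-1)$ and observe its top nonzero degree is $r_0$ --- an equally valid and slightly more self-contained finish, which you yourself note is interchangeable with the regularity--socle lemma.
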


\begin{proof} Let $M=S[u]/I^h$ and consider the Artinian ring $M/uM$. Then
$$\mathrm{Soc}(M/uM)=((I^h,u)\colon\mathfrak{m})/(I^h,u),$$
where $\mathfrak{m}$ is the maximal ideal $(t_1,\ldots,t_s,u)$.
We set $g_i=f_i^h$. By Lemma~\ref{jul26-20}, 
$g_i\notin(I^h,u)$.  We claim that $\overline{g_i}$, the class of $g_i$ modulo $(I^h,u)$, is in
${\rm Soc}(M/uM)$. Indeed, let $P_{ij}$ be the $j$-th coordinate of $P_i$. Then,
for any $1\leq j\leq s$, the polynomial $(t_j-P_{ij}u)g_i$ vanishes on the set of projective points
$Y=\{[P_i,1]\}_{i=1}^m$ and, hence, $t_jg_i\in(I^h,u)$ (see Lemma~\ref{oct3-20}).
Also, $ug_i\in (I^h,u)$, trivially. Thus, $\mathfrak{m}g_i\in(I^h,u)$ which shows the claim.

Now if $S[u]/I^h$ is Gorenstein, 
then the socle of $M/uM$ is a $K$-vector space of dimension $1$
\cite[Corollary~5.3.5]{monalg-rev} spanned by
$\overline{g_i}$.  By Lemma~\ref{reg-socle}, the Castelnuovo--Mumford regularity of $M/uM$ equals
the degree of $g_i$. It remains to note that since $u$ is regular in
$M$ (see Lemma~\ref{u-is-regular}) and $M$ is Cohen--Macaulay,
the Castelnuovo--Mumford regularity of $M/uM$ and $M$ are equal (cf. \cite[p. 175]{BH}). Therefore,
$${\rm v}_{\mathfrak{p}_i}(I)=\deg(f_i)=\deg(g_i)={\rm
reg}(M/uM)={\rm reg}(M)={\rm reg}(H_I^a),
$$ 
where the first equality follows from
Proposition~\ref{indicator-function-prop} and the last equality was
discussed before Lemma~\ref{oct3-20}.  
\end{proof}

\begin{remark}
Note that the above proof implies that $\mathrm{in}_\prec(f_i)=\mathrm{in}_\prec(f_m)$ and the class of
${\rm in}_\prec(f_i)$ modulo the ideal $(I^h,u)$ is in ${\rm Soc}(S[u]/(I^h,u))$ for all $i$. Indeed,
by Corollary~\ref{duality-hilbert-gorenstein}(b), there is only one
standard monomial of degree $r_0={\rm reg}(H_I^a)$. Hence ${\rm
in}_\prec(f_i)={\rm in}_\prec(f_m)$ because 
$f_i\in K\Delta_\prec(I)$. We may assume that $g_i$ is monic. Then, 
$\overline{g_i}=\overline{\mathrm{in}_\prec(f_i)}$ because
$f_i^h-\mathrm{in}_\prec(f_i)$ is equal to $uh_i$ for some $h_i\in S$, and 
$\overline{\mathrm{in}_\prec(f_i)}$ is in the socle of
$S[u]/(I^h,u)$.
\end{remark}

\begin{corollary}\label{sept1-18} Let $X$ be a 
set of points in $K^s$ and let $I=I(X)$ be its
vanishing ideal. If $I$ is Gorenstein, then
$\delta_X(d)\geq {\rm reg}(H_I^a)-d+1$ for $1\leq d<{\rm reg}(H_I^a)$.
\end{corollary}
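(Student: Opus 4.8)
The plan is to combine the Gorenstein duality of Reed--Muller-type codes with an elementary interpolation bound on the minimum distance of the dual of such a code.

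First I would fix a graded monomial order $\prec$ on $S$ and write $r_0={\rm reg}(H_I^a)$. Since $I$ is Gorenstein, Corollary~\ref{duality-hilbert-gorenstein}(a) gives $H_I^a(d)+H_I^a(r_0-d-1)=|X|$ for $-1\le d\le r_0$, and Theorem~\ref{gorenstein-vnumber} gives $r_0={\rm v}_{\mathfrak{p}}(I)$ for every $\mathfrak{p}\in{\rm Ass}(I)$; that is, condition (b) of Theorem~\ref{duality-criterion} holds. Hence, by that duality criterion, $C_X(d)$ is monomially equivalent to $C_X(r_0-d-1)^\perp$ for $-1\le d\le r_0$. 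A monomial equivalence is coordinatewise multiplication by a vector with no zero entry, so it preserves the support, and hence the weight, of every codeword; therefore
$$\delta_X(d)=\delta\big(C_X(r_0-d-1)^\perp\big)\qquad\text{for}\qquad -1\le d\le r_0.$$

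Next I would prove the combinatorial estimate $\delta\big(C_X(e)^\perp\big)\ge e+2$ for every integer $e\ge 0$. Let $w=(w_1,\dots,w_m)$ be a nonzero codeword of $C_X(e)^\perp$ with support $T=\{i\mid w_i\neq 0\}$ (if there is none, the estimate is vacuous); by definition $\sum_{i\in T}w_ih(P_i)=0$ for all $h\in S_{\le e}$. Suppose, for contradiction, that $|T|\le e+1$, and fix $i_0\in T$. For each $i\in T\setminus\{i_0\}$, since $P_i\neq P_{i_0}$ there is a linear polynomial $\ell_i\in S_{\le 1}$ with $\ell_i(P_i)=0$ and $\ell_i(P_{i_0})\neq 0$ (a coordinate function minus a suitable constant). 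Then $h:=\prod_{i\in T\setminus\{i_0\}}\ell_i$ lies in $S_{\le e}$, vanishes at $P_i$ for every $i\in T\setminus\{i_0\}$, and satisfies $h(P_{i_0})\neq 0$; substituting $h$ into the relation gives $w_{i_0}h(P_{i_0})=0$, so $w_{i_0}=0$, contradicting $i_0\in T$. Hence $|T|\ge e+2$, which proves the estimate.

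Finally, for $1\le d<r_0$ put $e:=r_0-d-1$; then $0\le e\le r_0-2$, and the estimate gives $\delta\big(C_X(e)^\perp\big)\ge e+2=r_0-d+1$, so $\delta_X(d)\ge r_0-d+1$ by the displayed equality above. The only routine points are the reduction to condition (b) of Theorem~\ref{duality-criterion} and the fact that monomial equivalence preserves minimum distance; the one substantive ingredient is the interpolation estimate, whose proof is the short argument above (a product of at most $e$ affine linear forms separating one point of $X$ from any $e$ others), and I do not expect a genuine obstacle.
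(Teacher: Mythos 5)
Your proposal is correct. The Gorenstein hypotheses are converted into condition (b) of Theorem~\ref{duality-criterion} exactly as in the paper's chain of results (Corollary~\ref{duality-hilbert-gorenstein}(a) plus Theorem~\ref{gorenstein-vnumber}), monomial equivalence does preserve weights, and your interpolation estimate $\delta\bigl(C_X(e)^\perp\bigr)\ge e+2$ is a valid general fact: the product of at most $e$ affine linear forms separating $P_{i_0}$ from the other points of the support lies in $S_{\le e}$ and forces $w_{i_0}=0$, and the dual code is nonzero here because it is equivalent to $C_X(d)\neq 0$.

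The route is genuinely different from the paper's, though both rest on the same Gorenstein duality input. The paper uses duality only at the single degree $d=r_0-1$: by Corollary~\ref{geramita-gorenstein}, $C_X(r_0-1)^\perp$ is equivalent to the repetition code $C_X(0)$, whence $\delta_X(r_0-1)=2$, and then the strict decrease of $\delta_X(d)$ in $d$ (Proposition~\ref{behavior-hilbert-function}) propagates this to $\delta_X(d)\ge (r_0-1-d)+2$ for all smaller $d$. You instead apply the duality at every degree, identifying $\delta_X(d)$ with $\delta\bigl(C_X(r_0-d-1)^\perp\bigr)$, and then invoke your degree-by-degree dual-distance bound, which you prove from scratch and which holds for an arbitrary point set over any field. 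The paper's argument is shorter given that the monotonicity of $\delta_X$ is already available; yours bypasses that monotonicity entirely and makes the mechanism behind the bound (any $e+1$ points impose independent conditions on polynomials of degree at most $e$) explicit, which is arguably more self-contained and slightly more informative, since it shows the estimate at each $d$ is not merely inherited from the top degree.
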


\begin{proof} Let $r_0$ be the regularity of $H_I^a$. If $r_0=1$,
there is nothing to prove. Assume $r_0\geq 2$. By
Corollary~\ref{geramita-gorenstein}, $\delta_X(r_0-1)=2$. Hence,
by Proposition~\ref{behavior-hilbert-function}, we get
$\delta_X(d)\geq(r_0-1-d)+\delta_X(r_0-1)$. Thus, $\delta_X(d)\geq
r_0-d+1$. 
\end{proof}

\begin{corollary}\label{gorenstein-prop}
Let $X$ be a subset of $K^s$, let $I$ be its vanishing ideal, and
let $r_0$ be the regularity index of $H_I^a$.  
If $\mathcal{C}=S_{\leq r_0-1}\bigcap K\Delta_\prec(I)$ and $I$ is
Gorenstein,  
then there is $g\in S$ such that $\mathcal{C}^\perp=Kg$ and
$g(P_i)\neq 0$ for all $i$. 
\end{corollary}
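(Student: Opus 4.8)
The plan is to reduce the statement to Proposition~\ref{aug2-20}, which characterizes exactly when a polynomial $g$ of the desired kind exists, and then to check that its two hypotheses both follow from the Gorenstein assumption. First I would record that $\mathcal{C}=S_{\leq r_0-1}\bigcap K\Delta_\prec(I)=K(S_{\leq r_0-1}\bigcap\Delta_\prec(I))$ is precisely the standard function space of the Reed--Muller-type code $C_X(r_0-1)$, so that $\mathcal{C}_X=C_X(r_0-1)$ and, by Theorem~\ref{formula-dual}, $(\mathcal{C}^\perp)_X=(\mathcal{C}_X)^\perp=C_X(r_0-1)^\perp$. Since $\mathcal{C}^\perp\subset K\Delta_\prec(I)$ and the evaluation map is an isomorphism on $K\Delta_\prec(I)$ (Lemma~\ref{apr26-20}), producing $g$ as required is equivalent to producing a standard polynomial $g\in K\Delta_\prec(I)$ with $C_X(r_0-1)^\perp=K(g(P_1),\dots,g(P_m))$ and $g(P_i)\neq 0$ for all $i$: the preimage of the generator of $C_X(r_0-1)^\perp$ under the evaluation isomorphism is the desired $g$, and $\mathcal{C}^\perp=Kg$ because $\dim_K(\mathcal{C}^\perp)=|X|-\dim_K(\mathcal{C})=|X|-H_I^a(r_0-1)=1$ by Proposition~\ref{dual-properties}(a) together with the first hypothesis below (this is also exactly the content of Lemma~\ref{aug1-20}(a)$\Leftrightarrow$(c)).

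Next I would verify the two hypotheses of Proposition~\ref{aug2-20}. For $H_I^a(r_0-1)+1=|X|$: since $I$ is Gorenstein, Corollary~\ref{duality-hilbert-gorenstein}(b) gives that there is exactly one standard monomial of degree $r_0$, so by Lemma~\ref{lemma-referee1} the difference $H_I^a(r_0)-H_I^a(r_0-1)=|\Delta_\prec(I)\bigcap S_{r_0}|$ equals $1$; combined with $H_I^a(r_0)=|X|$ (Lemma~\ref{sep11-20}) this yields $H_I^a(r_0-1)+1=|X|$. For ${\rm v}_{\mathfrak{p}_i}(I)=r_0$ for all $i$: this is exactly Theorem~\ref{gorenstein-vnumber}, which says that when $I$ is Gorenstein every standard indicator function $f_i$ satisfies ${\rm v}_{\mathfrak{p}_i}(I)=\deg(f_i)={\rm reg}(H_I^a)=r_0$.

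With both hypotheses in hand, Proposition~\ref{aug2-20} supplies $g\in K\Delta_\prec(I)$ with $C_X(r_0-1)^\perp=K(g(P_1),\dots,g(P_m))$ and $g(P_i)\neq 0$ for all $i$, and the reduction of the first paragraph then gives $\mathcal{C}^\perp=Kg$, completing the proof. The argument is essentially an assembly of earlier results; the only places where genuine work is hidden are the two inputs from the Gorenstein hypothesis, namely Corollary~\ref{duality-hilbert-gorenstein}(b) and, more substantially, Theorem~\ref{gorenstein-vnumber}, so I expect the real obstacle to have already been dispatched there rather than in this corollary. The one subtlety I would be careful to spell out is the passage between $C_X(r_0-1)^\perp$, in which Proposition~\ref{aug2-20} is phrased, and $\mathcal{C}^\perp$, in which the corollary is phrased: $g$ must be exhibited as an honest standard polynomial in $S$ (the unique element of $K\Delta_\prec(I)$ evaluating to the chosen generator of $C_X(r_0-1)^\perp$), not merely as a codeword, and this is what makes $g(P_i)\neq 0$ for all $i$ meaningful.
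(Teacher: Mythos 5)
Your proof is correct and follows the paper's strategy: the same two Gorenstein inputs (Corollary~\ref{duality-hilbert-gorenstein} and Theorem~\ref{gorenstein-vnumber}) are used to verify the hypotheses of the prepared duality machinery, exactly as in the paper's argument. The only difference is minor: the paper invokes the full duality criterion (Theorem~\ref{duality-criterion}), whereas you apply its $d=0$ ingredient, Proposition~\ref{aug2-20}, directly, and your explicit passage from $C_X(r_0-1)^\perp$ back to $\mathcal{C}^\perp=Kg$ via the dimension count and the evaluation isomorphism (Lemma~\ref{aug1-20}) spells out a translation the paper leaves implicit.
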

\begin{proof} 
As $I$ is Gorenstein, by Corollary~\ref{duality-hilbert-gorenstein}, one has 
$1=H_I^a(0)=|X|-H_I^a(r_0-1)$ and, by
Theorem~\ref{gorenstein-vnumber}, one 
has $r_0={\rm reg}(H_I^a)={\rm v}_{\mathfrak{p}}(I)$ for all
$\mathfrak{p}\in{\rm Ass}(I)$.  Hence, the result follows from
Theorem~\ref{duality-criterion}.
\end{proof}

The following result can be applied to any Reed--Muller-type code
$C_X(d)$ when the vanishing ideal $I(X)$ is a complete intersection
generated by a Gr\"obner basis with $s=\dim(S)$ elements 
(Corollary~\ref{duality-hilbert-gorenstein}(c)). In
particular, since the vanishing 
ideal of a Cartesian
set is a complete intersection generated by a Gr\"obner basis with
$s$ elements \cite[Lemma~2.3]{cartesian-codes}, we recover the duality theorems for  
affine Cartesian codes given in \cite[Theorem~5.7]{GHWCartesian} and 
\cite[Theorem~2.3]{Lopez-Manganiello-Matthews}.

\begin{corollary}\label{gorenstein-codes} 
Let $X=\{P_1,\ldots,P_m\}$ be a subset of $K^s$,
$|X|\geq 2$, let $I=I(X)$ be its vanishing ideal, 
let $r_0$ be the regularity index of $H_I^a$, and let $\prec$ be a graded
monomial order on $S$. If $I$ is Gorenstein, then there is 
$g\in K\Delta_\prec(I)$ such that $g(P_i)\neq 0$ for all $i$ and
$$
(g(P_1),\ldots,g(P_m))\cdot C_X(r_0-d-1)=C_X(d)^\perp\ \text{ for
\ $-1\leq d\leq r_0$}.
$$
Moreover, one can choose $g=\sum_{i=1}^m{\rm lc}(f_i)f_i$, where $f_i$ is
the $i$-th standard indicator function and ${\rm lc}(f_i)$ is its leading coefficient.
\end{corollary}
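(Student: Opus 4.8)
The plan is to derive Corollary~\ref{gorenstein-codes} directly from the Duality criterion (Theorem~\ref{duality-criterion}) by checking that a Gorenstein vanishing ideal satisfies hypothesis (b) of that theorem. First I would note that, since $I$ is Gorenstein, Corollary~\ref{duality-hilbert-gorenstein}(a) yields the Hilbert-function symmetry $H_I^a(d)+H_I^a(r_0-d-1)=\deg(S/I)=|X|$ for $-1\leq d\leq r_0$. Next, Theorem~\ref{gorenstein-vnumber}, applied at each point $P_i$, gives ${\rm v}_{\mathfrak{p}_i}(I)=\deg(f_i)={\rm reg}(H_I^a)=r_0$, so $r_0={\rm v}_{\mathfrak{p}}(I)$ for every $\mathfrak{p}\in{\rm Ass}(I)$. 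Together these two facts are precisely condition (b) of Theorem~\ref{duality-criterion}.

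Invoking the equivalence (b)$\Leftrightarrow$(c) of Theorem~\ref{duality-criterion} then produces $g\in K\Delta_\prec(I)$ with $g(P_i)\neq 0$ for all $i$ --- and one may take $g=\sum_{i=1}^m{\rm lc}(f_i)f_i$, with $g(P_i)={\rm lc}(f_i)$ --- such that
$$
C_X(r_0-d-1)^\perp=(g(P_1),\ldots,g(P_m))\cdot C_X(d)\quad\text{ for }\ -1\leq d\leq r_0.
$$
To put this into the form stated in the corollary I would substitute $d\mapsto r_0-d-1$: this map is an involution of $\{-1,0,1,\ldots,r_0\}$, so the range is preserved, and we obtain $C_X(d)^\perp=(g(P_1),\ldots,g(P_m))\cdot C_X(r_0-d-1)$ for $-1\leq d\leq r_0$, as desired. (Equivalently, one can use the elementary fact recorded in Remark~\ref{dual-equation} that $C_1^\perp=\beta\cdot C_2$ forces $C_2^\perp=\beta\cdot C_1$.) The stated choice $g=\sum_{i=1}^m{\rm lc}(f_i)f_i$ and the identity $g(P_i)={\rm lc}(f_i)$ are carried over verbatim from Theorem~\ref{duality-criterion}(c).

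I do not expect a genuine obstacle here: all the substance has already been packaged into Theorem~\ref{gorenstein-vnumber} (the socle / Castelnuovo--Mumford-regularity argument) and into the Duality criterion. The only point that needs a moment's care is that Theorem~\ref{gorenstein-vnumber} must be invoked at \emph{every} associated prime of $I$, so that $r_0={\rm v}_{\mathfrak p}(I)$ holds for all $\mathfrak p\in{\rm Ass}(I)$ and not just for a single one; but this is exactly the content of that theorem under the standing definition of ``Gorenstein'' (namely that $S[u]/I^h$ is a Gorenstein graded ring). Finally, it is worth remarking that, by Corollary~\ref{duality-hilbert-gorenstein}(c), this in particular applies to complete intersections generated by a Gr\"obner basis with $s$ elements, hence recovers the known duality theorems for affine Cartesian codes.
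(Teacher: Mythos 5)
Your proposal is correct and follows essentially the same route as the paper: the paper's proof likewise verifies condition (b) of Theorem~\ref{duality-criterion} via Corollary~\ref{duality-hilbert-gorenstein}(a) and Theorem~\ref{gorenstein-vnumber}, and then cites the duality criterion. Your extra care about the reindexing $d\mapsto r_0-d-1$ (equivalently Remark~\ref{dual-equation}) is a detail the paper leaves implicit, and it is handled correctly.
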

\begin{proof} By Corollary~\ref{duality-hilbert-gorenstein}(a) and
Theorem~\ref{gorenstein-vnumber}, the two conditions of
Theorem~\ref{duality-criterion}(b) hold, and the result follows from
Theorem~\ref{duality-criterion}.   
\end{proof}

When $r_0$ is odd and ${\rm char}(K)=2$ this construction produces
self-dual Reed--Muller-type codes as the following Corollary shows. 
Recall that a linear code $C$ is {\it self-dual} if $C=C^\perp$.

\begin{corollary}\label{self-dual-gorenstein-codes} 
Let $X=\{P_1,\ldots,P_m\}$ be a subset of $K^s$,
$|X|\geq 2$, let $I=I(X)$ be its vanishing ideal, 
let $r_0$ be the regularity index of $H_I^a$, and let $\prec$ be a graded
monomial order. Assume ${\rm char}(K)=2$, $r_0$ is odd, and $I$ is Gorenstein.
Define $\alpha=(\alpha_1,\dots,\alpha_m)\in (K^*)^m$ by 
$\alpha_i^2={\rm lc}(f_i)$, where ${\rm lc}(f_i)$ is the leading
coefficient of the $i$-th 
standard indicator function $f_i$ for $P_i$. 
Then the linear code $\alpha\cdot C_X((r_0-1)/2)$ is self-dual.
\end{corollary}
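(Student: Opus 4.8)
The plan is to apply Corollary~\ref{gorenstein-codes} at the self-conjugate degree $d=(r_0-1)/2$, and then convert the resulting monomial equivalence into a self-duality by taking square roots of the scaling vector, which is possible because ${\rm char}(K)=2$. First I would note that since $r_0$ is odd the number $d:=(r_0-1)/2$ is an integer with $-1\le d\le r_0$, and it satisfies $r_0-d-1=d$. Applying Corollary~\ref{gorenstein-codes} (valid since $I$ is Gorenstein and $|X|\ge 2$) with this $d$, there is $g=\sum_{i=1}^m{\rm lc}(f_i)f_i$ in $K\Delta_\prec(I)$ with $g(P_i)={\rm lc}(f_i)\ne 0$ for all $i$ and
$$
(g(P_1),\dots,g(P_m))\cdot C_X(d)=C_X(d)^\perp.
$$
Write $\gamma=(\gamma_1,\dots,\gamma_m):=(g(P_1),\dots,g(P_m))$, so $\gamma_i={\rm lc}(f_i)$.

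Next I would justify that the vector $\alpha$ in the statement is well defined. Since $K$ is a finite field of characteristic $2$, the Frobenius map $x\mapsto x^2$ is an automorphism of $K$, so each $\gamma_i$ has a (unique) square root $\alpha_i$ in $K$; moreover $\alpha_i\ne 0$ because $\gamma_i\ne 0$. Hence $\alpha=(\alpha_1,\dots,\alpha_m)\in(K^*)^m$ with $\alpha_i^2={\rm lc}(f_i)$.

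The last step is a direct computation with the dual of a scaled code. By Remark~\ref{dual-equation} one has $(\beta\cdot C)^\perp=\beta^{-1}\cdot C^\perp$ for every $\beta\in(K^*)^m$, where $\beta^{-1}=(\beta_1^{-1},\dots,\beta_m^{-1})$. Taking $\beta=\alpha$ and $C=C_X(d)$ and using the displayed equality above,
$$
\bigl(\alpha\cdot C_X(d)\bigr)^\perp=\alpha^{-1}\cdot C_X(d)^\perp=\alpha^{-1}\cdot\bigl(\gamma\cdot C_X(d)\bigr)=(\alpha^{-1}\cdot\gamma)\cdot C_X(d),
$$
and since $(\alpha^{-1}\cdot\gamma)_i=\gamma_i/\alpha_i=\alpha_i^2/\alpha_i=\alpha_i$ we conclude $\bigl(\alpha\cdot C_X(d)\bigr)^\perp=\alpha\cdot C_X(d)$, i.e. $\alpha\cdot C_X((r_0-1)/2)$ is self-dual. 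There is no genuine obstacle beyond these bookkeeping identities: the substantive input is Corollary~\ref{gorenstein-codes} (itself built on Theorem~\ref{duality-criterion} and Theorem~\ref{gorenstein-vnumber}), while the hypothesis ``$r_0$ odd'' is used only to make $d=(r_0-1)/2$ a self-conjugate degree and ``${\rm char}(K)=2$'' only to extract the square roots $\alpha_i$.
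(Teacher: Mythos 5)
Your proof is correct and takes essentially the same approach as the paper: both apply Corollary~\ref{gorenstein-codes} at the self-conjugate degree $d=(r_0-1)/2$ (using that $r_0$ is odd) and use ${\rm char}(K)=2$ to extract square roots $\alpha_i$ of ${\rm lc}(f_i)$. The only cosmetic difference is the final step, where the paper checks $\langle\alpha\cdot u,\alpha\cdot v\rangle=\langle\beta\cdot u,v\rangle=0$ and then compares dimensions, whereas you invoke the identity $(\alpha\cdot C)^\perp=\alpha^{-1}\cdot C^\perp$ from Remark~\ref{dual-equation}; both are immediate bookkeeping.
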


\begin{proof} Setting $d=(r_0-1)/2$ in
Corollary~\ref{gorenstein-codes}, we have 
\begin{equation}\label{e:self}
\beta\cdot C_X((r_0-1)/2)=C_X((r_0-1)/2)^\perp,
\end{equation}
for some $\beta=(g(P_1),\ldots,g(P_m))\in (K^*)^m$. Recall that
we can choose $g$ such that
$\beta_i=g(P_i)={\rm lc}(f_i)$. As ${\rm char}(K)=2$, there exists
$\alpha_i\in K^*$ such that $\alpha_i^2=\beta_i$. Then Eq.~\eqref{e:self}
implies that for any $u,v\in C_X((r_0-1)/2)$,    
$$\langle \alpha\cdot u,\alpha\cdot v\rangle=\langle \beta\cdot u, v\rangle=0.$$
\quad Therefore, $\alpha\cdot C_X((r_0-1)/2)$ is contained in its
dual and, by Eq.~\eqref{e:self}, 
has the same dimension as its dual, i.e. is self-dual.  
\end{proof}

\section{The algebraic dual of monomial evaluation
codes}\label{dual-monomial-section}

In this section we study the dual and the algebraic dual of two families of
evaluation codes. If an evaluation code is monomial and the set of evaluation points is a 
degenerate torus (resp. degenerate affine space), we show that its 
algebraic dual is a monomial space (resp. we classify when
its algebraic dual is a monomial space). 

\subsection{Monomial evaluation codes on a degenerate torus} Let
$A_1,\ldots,A_s$ be  
subgroups of the multiplicative group $K^*$ of the finite
field $K=\mathbb{F}_q$, let 
$$T:=A_1\times\cdots\times A_s=\{P_1,\ldots,P_m\}$$
be the Cartesian product of $A_1,\ldots,A_s$, and let $\mathcal{L}_T$
be a monomial code on $T$, that is, $\mathcal{L}$
is generated by a finite set of monomials of $S$. The set $T$ is called 
a \textit{degenerate torus} \cite{cartesian-codes}. In this
subsection we determine the algebraic dual 
$\mathcal{L}^\perp$ and the dual $(\mathcal{L}_T)^\perp$ of $\mathcal{L}_T$ in terms of the generators of
$\mathcal{L}$ and show that $(\mathcal{L}_T)^\perp$ is a standard
monomial code on $T$. 

The order of the cyclic group $A_i$ is denoted by $d_i$ for
$i=1,\ldots,s$. Let $\prec$ be a graded monomial order on $S$. 
The vanishing ideal $I=I(T)$ is generated by the Gr\"obner 
basis $\mathcal{G}=\{t_i^{d_i}-1\}_{i=1}^{s}$
\cite[Lemma~2.3]{cartesian-codes}, and consequently $\Delta_\prec(I)$
is the set of all 
monomials $t^c$,
$c=(c_1,\ldots,c_s)$, such that $0\leq c_i\leq d_i-1$ for
$i=1,\ldots,s$. By Proposition~\ref{binomial-standard} and
Lemma~\ref{dual-standard}, the standard function space
$\widetilde{\mathcal{L}}$ of $\mathcal{L}_T$ is a monomial space of
$S$ and $\mathcal{L}^\perp=\widetilde{\mathcal{L}}^\perp$. Thus, we may
assume that $\mathcal{L}=\widetilde{\mathcal{L}}$.
Let
$\mathcal{A}=\{t^{a_1},\ldots,t^{a_k}\}\subset\Delta_\prec(I)$ be the unique 
monomial $K$-basis of $\mathcal{L}$ where 
$$
t^{a_i}=t_1^{a_{i,1}}\cdots t_s^{a_{i,s}},\
a_i=(a_{i,1},\ldots,a_{i,s})\in\mathbb{N}^s,\ \mbox{ for }\ i=1,\ldots,k.  
$$
\quad The {\it support} of $t^{a_i}$, denoted ${\rm supp}(t^{a_i})$, is the set of
all $t_j$ such that $a_{i,j}>0$. To construct a monomial basis for 
$\mathcal{L}^\perp$ we will need the following set of monomials. 
For each $1\leq i\leq k$, we set $t^{b_i}:=1$ if
$t^{a_i}=1$ and 
\begin{equation}\label{jul6-1-20}
t^{b_i}=t_1^{b_{i,1}}\cdots t_s^{b_{i,s}}:=\!\!\prod_{t_j\in{\rm
supp}(t^{a_i})}\!\!t_j^{d_j-a_{i,j}}\ \mbox{ if }\ t^{a_i}\neq 1.
\end{equation}
\quad The set $\mathcal{B}:=\{t^{b_1},\ldots,t^{b_k}\}$ has cardinality
$k$, $\mathcal{B}\subset\Delta_\prec(I)$, and the support of $t^{b_i}$ is
the support of $t^{a_i}$ for $i=1,\ldots,k$. The regularity index of
$H_I^a$ is $r_0=\sum_{i=1}^s(d_i-1)$ \cite[Proposition~2.5]{cartesian-codes} and 
$H_I^a(r_0)=|\Delta_\prec(I)|=|T|=d_1\cdots d_s$ (Lemma~\ref{sep11-20}). 

\begin{lemma}\label{jul6-20}
Let $t^c$, $c=(c_1,\ldots,c_s)$, be a monomial of $S$. 
If $c_i\equiv 0\ {\rm mod} (d_i)$ for 
all $i$, then $t^c\notin{\rm ker}(\varphi)$. If 
$c_i\not\equiv 0\ {\rm mod}(d_i)$ for some $i$, then 
$t^c\in{\rm ker}(\varphi)$. 
\end{lemma}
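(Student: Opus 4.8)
The plan is to reduce the statement to a computation of the sum $\sum_{P\in T} t^c(P)$, which factors as a product over the coordinates because $T=A_1\times\cdots\times A_s$ is a Cartesian product. First I would note that for $P=(\zeta_1,\dots,\zeta_s)\in T$ with $\zeta_j\in A_j$, we have $t^c(P)=\zeta_1^{c_1}\cdots\zeta_s^{c_s}$, and hence
\begin{equation*}
\varphi(t^c)=\sum_{P\in T}t^c(P)=\sum_{\zeta_1\in A_1}\cdots\sum_{\zeta_s\in A_s}\zeta_1^{c_1}\cdots\zeta_s^{c_s}=\prod_{j=1}^s\Bigl(\sum_{\zeta\in A_j}\zeta^{c_j}\Bigr).
\end{equation*}
So the whole statement comes down to the classical fact about character sums over a finite cyclic group: for a cyclic group $A_j$ of order $d_j$, the sum $\sum_{\zeta\in A_j}\zeta^{c_j}$ equals $d_j$ (viewed in $K$) if $d_j\mid c_j$ and equals $0$ otherwise.

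The key step is proving that character-sum fact. If $d_j\mid c_j$, then $\zeta^{c_j}=1$ for every $\zeta\in A_j$ (since $\zeta^{d_j}=1$), so the sum is $\underbrace{1+\cdots+1}_{d_j}=d_j\cdot 1_K$; here I would remark that, in the relevant applications $d_j$ is the order of a subgroup of $K^*$, hence coprime to the characteristic, so $d_j\cdot 1_K\neq 0$, but for the bare statement of the lemma all that matters is that a sum of $1$'s cannot land us in $\ker(\varphi)$ unless the product of the other factors already vanishes — so I should handle this carefully. If $d_j\nmid c_j$, pick a generator $\omega$ of the cyclic group $A_j$; then $\omega^{c_j}\neq 1$ and multiplying the sum $\Sigma:=\sum_{\zeta\in A_j}\zeta^{c_j}$ by $\omega^{c_j}$ just permutes the terms, giving $\omega^{c_j}\Sigma=\Sigma$, hence $(\omega^{c_j}-1)\Sigma=0$ in the field $K$, so $\Sigma=0$.

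Now I would assemble the two cases. If $c_i\not\equiv 0 \pmod{d_i}$ for some $i$, then the $i$-th factor in the product above is $0$, so $\varphi(t^c)=0$, i.e.\ $t^c\in\ker(\varphi)$; this proves the second assertion. For the first assertion, suppose $c_i\equiv 0\pmod{d_i}$ for all $i$; then every factor equals $d_i\cdot 1_K$, and since each $d_i$ is invertible in $K$ (as $A_i\subset K^*$ forces $\gcd(d_i,\operatorname{char}K)=1$ by Lagrange's theorem applied to $K^*$), the product $\prod_i d_i$ is nonzero in $K$, so $\varphi(t^c)\neq 0$ and $t^c\notin\ker(\varphi)$.

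The only subtlety — and the one place I would be careful rather than cavalier — is the claim that $d_i$ is nonzero (indeed a unit) in $K$: this is precisely where one uses that $A_i$ is a subgroup of the \emph{multiplicative} group $K^*$, so $|A_i|=d_i$ divides $|K^*|=q-1$, which is coprime to $p=\operatorname{char}(K)$. (This is in contrast to the later degenerate-affine-space setting, where the monoid orders $e_i=|B_i|$ can be divisible by $p$, which is exactly why Theorem~\ref{dual-affine-degenerate} needs an extra hypothesis.) Apart from that, everything is a short, standard character-sum argument, so I do not expect any real obstacle.
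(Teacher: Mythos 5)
Your proof is correct and takes essentially the same route as the paper: your ``multiply $\Sigma$ by $\omega^{c_j}$, which permutes the terms'' step is exactly the paper's geometric-series identity $\bigl(\sum_{i=0}^{d_1-1}(\beta_1^{c_1})^i\bigr)(\beta_1^{c_1}-1)=(\beta_1^{c_1})^{d_1}-1=0$, and the non-vanishing case in both arguments rests on $\gcd(d_i,p)=1$ since $d_i\mid q-1$. The only cosmetic difference is that you factor $\varphi(t^c)$ completely as $\prod_{j=1}^s\bigl(\sum_{\zeta\in A_j}\zeta^{c_j}\bigr)$, whereas the paper peels off just the one coordinate with $c_i\not\equiv 0\pmod{d_i}$ and writes $\varphi(t^c)=m\cdot 1$ directly in the other case.
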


\begin{proof} Let $P_1,\ldots,P_m$ be the points of $T$ and let
$\beta_i$ be a  generator of the multiplicative  
cyclic group $A_i$ for $i=1,\ldots,s$. Recall that $q=p^v$ for some
prime number $p>0$ and $v\in\mathbb{N}_+$. Note that each $d_i$ is relatively prime 
to $p$ because $d_i=|A_i|$ divides $q-1=|K^*|=|(\mathbb{F}_q)^*|$. Thus,
$\gcd(m,p)=1$ because $m=|T|=d_1\cdots
d_s$. Assume that $c_i\equiv 0\ {\rm mod} (d_i)$ for 
all $i$. Then, $\varphi(t^c)=\sum_{i=1}^mt^c(P_i)= m\cdot 1$ and $m\cdot 1\neq 0$ because
$\gcd(m,p)=1$. Thus, $t^c\notin{\rm ker}(\varphi)$. Assume that $c_i\not\equiv 0\ {\rm mod}(d_i)$ for some
$i$. Then, $c_i\geq 1$. For simplicity of
notation assume that $i=1$. The cartesian set $T$ can be partitioned as
$$
T=\{P_1,\ldots,P_m\}=\bigcup_{i=1}^{d_1}\{(\beta_1^i,Q)\mid Q\in
A_2\times\cdots\times A_s\}. 
$$
\quad Hence, setting $T_1=A_2\times\cdots\times A_s$, we obtain  
$$
\varphi(t^c)=
\sum_{i=1}^mt^c(P_i)=\left(1+\beta_1^{c_1}+(\beta_1^2)^{c_1}+\cdots+
(\beta_1^{d_1-1})^{c_1}\right)\left(\sum_{Q\in T_1}t_2^{c_2}\cdots
t_s^{c_s}(Q)\right).
$$ 
\quad Hence, using the equality
$(\sum_{i=0}^{d_1-1}(\beta_1^{c_1})^i)(\beta_1^{c_1}-1)=(\beta_1^{c_1})^{d_1}-1=0$
and, noticing that $\beta_1^{c_1}-1=0$ if and only if $c_1\equiv 0\ {\rm
mod} (d_1)$, we get $\sum_{i=0}^{d_1-1}(\beta_1^{c_1})^i=0$. Thus,
$\varphi(t^c)=0$ and $t^c\in{\rm ker}(\varphi)$. 
\end{proof}

The main result in connection with monomial evaluation codes over $T$ is the following. 

\begin{proposition}{\rm(Monomial basis)}\label{dual-toric-degenerate} Let $\mathcal{L}$ be a subspace
with a basis of standard monomials $\mathcal{A}=\{t^{a_1},\dots,t^{a_k}\}$. Then
$\mathcal{L}^\perp=K(\Delta_\prec(I)\setminus\mathcal{B})$,
where $\mathcal{B}=\{t^{b_1},\dots,t^{b_k}\}$ is defined in Eq. (\ref{jul6-1-20}).
\end{proposition}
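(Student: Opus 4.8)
The plan is to prove the equality $\mathcal{L}^\perp=K(\Delta_\prec(I)\setminus\mathcal{B})$ by matching dimensions and then establishing a single inclusion. First I would record the dimension count. Since $\mathcal{A}=\{t^{a_1},\dots,t^{a_k}\}\subset\Delta_\prec(I)$, the code $\mathcal{L}_T$ is a standard evaluation code and the distinct monomials $t^{a_i}$ are $K$-linearly independent, so $\dim_K\mathcal{L}=k$. By Proposition~\ref{dual-properties}(a) and Lemma~\ref{sep11-20} we get $\dim_K\mathcal{L}^\perp=|T|-k=|\Delta_\prec(I)|-k$. As $|\mathcal{B}|=k$ and $\mathcal{B}\subset\Delta_\prec(I)$ (both recorded right after Eq.~\eqref{jul6-1-20}), the monomial space $K(\Delta_\prec(I)\setminus\mathcal{B})$ has dimension $|\Delta_\prec(I)|-k=\dim_K\mathcal{L}^\perp$. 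Hence it suffices to prove the inclusion $\Delta_\prec(I)\setminus\mathcal{B}\subseteq\mathcal{L}^\perp$.

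For the inclusion, fix $t^c\in\Delta_\prec(I)$ with $t^c\notin\mathcal{B}$, so $0\le c_j\le d_j-1$ for all $j$. Since $t^c\in K\Delta_\prec(I)$ and $\mathcal{A}$ spans $\mathcal{L}$, to conclude $t^c\in\mathcal{L}^\perp=(\ker(\varphi)\colon\mathcal{L})\cap K\Delta_\prec(I)$ it is enough to show $t^ct^{a_i}=t^{c+a_i}\in\ker(\varphi)$ for every $i$. By Lemma~\ref{jul6-20} this reduces to exhibiting, for each $i$, an index $j$ with $c_j+a_{i,j}\not\equiv 0\pmod{d_j}$.

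The combinatorial core of the argument, which I expect to be the main (and essentially the only) obstacle, is the following case analysis, carried out by contradiction. Suppose that for some $i$ one has $c_j+a_{i,j}\equiv 0\pmod{d_j}$ for all $j$. Because $0\le c_j\le d_j-1$ and $0\le a_{i,j}\le d_j-1$, the sum satisfies $0\le c_j+a_{i,j}\le 2d_j-2$, so necessarily $c_j+a_{i,j}\in\{0,d_j\}$ for every $j$. If $c_j+a_{i,j}=0$, then $c_j=a_{i,j}=0$, hence $t_j\notin\mathrm{supp}(t^{a_i})$ and, by the definition in Eq.~\eqref{jul6-1-20}, $b_{i,j}=0=c_j$. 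If $c_j+a_{i,j}=d_j$, then $c_j=d_j-a_{i,j}$ and $a_{i,j}\ge 1$ (otherwise $c_j=d_j$, impossible), so $t_j\in\mathrm{supp}(t^{a_i})$ and $b_{i,j}=d_j-a_{i,j}=c_j$. In either case $c_j=b_{i,j}$ for all $j$, so $t^c=t^{b_i}\in\mathcal{B}$, contradicting the choice of $t^c$.

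This contradiction gives $t^{c+a_i}\in\ker(\varphi)$ for all $i$, hence $t^c\in\mathcal{L}^\perp$, proving $\Delta_\prec(I)\setminus\mathcal{B}\subseteq\mathcal{L}^\perp$. Combined with the dimension count, $K(\Delta_\prec(I)\setminus\mathcal{B})\subseteq\mathcal{L}^\perp$ is an inclusion of $K$-vector spaces of equal (finite) dimension, so the two spaces coincide. All other ingredients are routine: the dimension formula is Proposition~\ref{dual-properties}(a), the equality $|T|=|\Delta_\prec(I)|$ is Lemma~\ref{sep11-20}, and the membership test for $\ker(\varphi)$ on monomials is exactly Lemma~\ref{jul6-20}.
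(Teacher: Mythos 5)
Your proof is correct and follows essentially the same route as the paper: match dimensions via Proposition~\ref{dual-properties}(a) and Lemma~\ref{sep11-20}, then show $\Delta_\prec(I)\setminus\mathcal{B}\subset\mathcal{L}^\perp$ by reducing to Lemma~\ref{jul6-20}. Your residue-based case analysis ($c_j+a_{i,j}\in\{0,d_j\}$ forcing $t^c=t^{b_i}$) is a slightly cleaner way of organizing the same combinatorial step that the paper handles by comparing supports, but it is not a different argument.
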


\begin{proof} As $|\Delta_\prec(I)|=|T|$ and $\mathcal{L}_X$ is
a standard evaluation code, by Proposition~\ref{dual-properties}, one has 
$$
\dim_K(\mathcal{L}^\perp)=|T|-\dim_K(\mathcal{L})=|T|-k=|\Delta_\prec(I)\setminus\mathcal{B}|.
$$
\quad Thus, to show the equality
$\mathcal{L}^\perp=K(\Delta_\prec(I)\setminus\mathcal{B})$, we need only show that
$\Delta_\prec(I)\setminus\mathcal{B}\subset\mathcal{L}^\perp$. Take
$t^\alpha\in\Delta_\prec(I)\setminus\mathcal{B}$,
$\alpha=(\alpha_1,\ldots,\alpha_s)$. Since $\mathcal{L}$ is
generated by the set $\{t^{a_1},\ldots,t^{a_k}\}$ it suffices to show that
$t^\alpha t^{a_i}$ is in ${\rm ker}(\varphi)$ for $i=1,\ldots,k$.
Assume $t^{a_i}=1$. Then, $t^{b_i}=1$ and $t^\alpha\neq 1$. Hence, by
Lemma~\ref{jul6-20}, $1\cdot t^\alpha\in{\rm ker}(\varphi)$ since
$t^\alpha$ is a standard monomial of $S/I$ and $\alpha\neq 0$. Assume
that $t^{a_i}\neq 1$. If ${\rm supp}(t^\alpha)\not\subset{\rm
supp}(t^{a_i})$, then there is $t_j\in {\rm supp}(t^\alpha)$ and
$t_j\notin{\rm supp}(t^{a_i})$. Thus, $t_j^{\alpha_j}$ divides
$t^{a_i}t^\alpha$, $t_j^{\ell}$ does not divide 
$t^{a_i}t^{\alpha}$ for $\ell>\alpha_j$, and $\alpha_j\not\equiv 0\
{\rm mod}(d_j)$ because $\alpha_j\leq d_j-1$. Hence, by
Lemma~\ref{jul6-20}, we get
$t^{a_i}t^\alpha\in{\rm ker}(\varphi)$. The case ${\rm
supp}(t^\alpha)\not\supset{\rm
supp}(t^{a_i})$ can be treated similarly. Thus, we may assume
${\rm supp}(t^\alpha)={\rm supp}(t^{a_i})$. By definition of $t^{b_i}$
one also has ${\rm supp}(t^{b_i})={\rm supp}(t^{a_i})$. As
$t^{a_i}\neq 1$, by
Eq.~(\ref{jul6-1-20}), we obtain
$$
t^{a_i}t^\alpha=t_1^{a_{i,1}+\alpha_1}\cdots t_s^{a_{i,s}+\alpha_s}=\!\!\prod_{t_j\in{\rm
supp}(h_i)}\!\!t_j^{d_j-b_{i,j}+\alpha_j}.
$$ 
\quad If $c_j=d_j-b_{i,j}+\alpha_j\not\equiv 0\ {\rm mod} (d_j)$ for some
$j$, then $t^{a_i}t^\alpha$ is in ${\rm ker}(\varphi)$ by
Lemma~\ref{jul6-20}. 
If $c_j=d_j-b_{i,j}+\alpha_j\equiv 0\ {\rm mod} (d_j)$ for all $j$, then
it follows readily that $b_{i,j}=\alpha_j$ for all $j$, that is,
$t^{b_i}=t^\alpha$, a contradiction since $t^\alpha$ is not in
$\mathcal{B}$. 
\end{proof}

\begin{corollary}\label{dual-toric-degenerate-coro}
Let $\mathcal{L}_T$ be a monomial code on $T$ and let
$\mathcal{L}^\perp$ be its algebraic dual. Then, 
$(\mathcal{L}_T)^\perp=(\mathcal{L}^\perp)_T$  and $(\mathcal{L}_T)^\perp$ is a
standard monomial code on $T$.
\end{corollary}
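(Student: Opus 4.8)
The plan is to combine Theorem~\ref{formula-dual} with the monomial-basis description of $\mathcal{L}^\perp$ obtained in Proposition~\ref{dual-toric-degenerate}. The equality $(\mathcal{L}_T)^\perp=(\mathcal{L}^\perp)_T$ is immediate: Theorem~\ref{formula-dual} identifies the dual of an arbitrary evaluation code with the evaluation code of its algebraic dual. Hence the only real content of the corollary is that $(\mathcal{L}^\perp)_T$ is a \emph{standard monomial code} on $T$, and I would prove this by exhibiting a monomial $K$-basis for its standard function space.

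First I would reduce to the case in which $\mathcal{L}$ already has a basis of standard monomials. Since $I=I(T)$ is a binomial ideal (it is generated by the Gr\"obner basis $\{t_i^{d_i}-1\}_{i=1}^s$) and, by hypothesis, $\mathcal{L}$ is a monomial space, Proposition~\ref{binomial-standard} shows that $\mathcal{L}_T$ is a standard monomial code; thus its standard function space $\widetilde{\mathcal{L}}$ is a monomial space, say with monomial $K$-basis $\mathcal{A}=\{t^{a_1},\dots,t^{a_k}\}\subset\Delta_\prec(I)$. By Lemma~\ref{dual-standard} one has $\mathcal{L}^\perp=\widetilde{\mathcal{L}}^\perp$, so replacing $\mathcal{L}$ by $\widetilde{\mathcal{L}}$ changes neither $\mathcal{L}_T$ nor $\mathcal{L}^\perp$.

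Next, Proposition~\ref{dual-toric-degenerate} applies verbatim and yields $\mathcal{L}^\perp=K(\Delta_\prec(I)\setminus\mathcal{B})$, where $\mathcal{B}=\{t^{b_1},\dots,t^{b_k}\}$ is the set defined in Eq.~\eqref{jul6-1-20}; in particular $\mathcal{L}^\perp$ is a monomial subspace of $K\Delta_\prec(I)$. Finally I would invoke Corollary~\ref{unique-standard}: because $\mathcal{L}^\perp$ is a linear subspace of $K\Delta_\prec(I)$, it is the unique standard function space of the evaluation code $(\mathcal{L}^\perp)_T=(\mathcal{L}_T)^\perp$. Since this standard function space is a monomial space, $(\mathcal{L}_T)^\perp$ is by definition a standard monomial code on $T$, completing the argument.

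I do not anticipate a genuine obstacle: essentially all the work has already been done in Proposition~\ref{dual-toric-degenerate}, and what remains is bookkeeping with the definitions of ``standard function space'' and ``standard monomial code'' together with Theorem~\ref{formula-dual}. The one point that needs a little care is the reduction step, i.e.\ checking that passing from the monomial space $\mathcal{L}$ to its standard function space $\widetilde{\mathcal{L}}$ preserves both $\mathcal{L}_T$ and the algebraic dual $\mathcal{L}^\perp$; but this is exactly what Proposition~\ref{binomial-standard} and Lemma~\ref{dual-standard} provide.
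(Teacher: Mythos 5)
Your proposal is correct and follows essentially the same route as the paper: Theorem~\ref{formula-dual} gives the equality, Proposition~\ref{dual-toric-degenerate} gives the monomial basis $\Delta_\prec(I)\setminus\mathcal{B}$ of $\mathcal{L}^\perp$, and since $\mathcal{L}^\perp\subset K\Delta_\prec(I)$ it is the standard function space of the dual code, which is therefore a standard monomial code. The reduction to $\mathcal{L}=\widetilde{\mathcal{L}}$ via Proposition~\ref{binomial-standard} and Lemma~\ref{dual-standard} that you spell out is exactly what the paper performs in the setup of that subsection.
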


\begin{proof}
The linear code $(\mathcal{L}_T)^\perp$ is the
evaluation code $(\mathcal{L}^\perp)_T$ on $T$ 
(Theorem~\ref{formula-dual}) and 
$\mathcal{L}^\perp$ is generated by the set of monomials
$\Delta_\prec(I)\setminus\mathcal{B}$
(Proposition~\ref{dual-toric-degenerate}). 
Thus, $(\mathcal{L}_T)^\perp$ is a standard monomial code on $T$
because its standard function space is $\mathcal{L}^\perp$.
\end{proof}

\begin{corollary}\cite{Bras-Amoros-O'Sullivan,Ruano}\label{dual-toric-coro}
Let $\mathcal{L}_T$ be a generalized toric code on $T=(K^*)^s$,
$K=\mathbb{F}_q$, and let
$\mathcal{L}^\perp$ be its algebraic dual. Then, 
$(\mathcal{L}_T)^\perp=(\mathcal{L}^\perp)_T$  and $(\mathcal{L}_T)^\perp$ is a
generalized toric code.
\end{corollary}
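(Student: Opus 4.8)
The plan is to obtain this corollary as the special case of Corollary~\ref{dual-toric-degenerate-coro} in which every subgroup $A_i$ is the full multiplicative group $K^*=(\mathbb{F}_q)^*$. First I would observe that taking $A_1=\cdots=A_s=K^*$ gives $d_i=|A_i|=q-1$ for all $i$ and that the degenerate torus $T=A_1\times\cdots\times A_s$ is then exactly $(K^*)^s$. A generalized toric code on $(K^*)^s$ is, by definition, an evaluation code $\mathcal{L}_T$ where $\mathcal{L}$ is a monomial space of $S$; hence it is precisely a monomial code on the degenerate torus $T=(K^*)^s$. Since the vanishing ideal $I=I(T)=(t_1^{q-1}-1,\dots,t_s^{q-1}-1)$ is binomial, Proposition~\ref{binomial-standard} applies, so $\mathcal{L}_T$ is a standard monomial code and we may take $\mathcal{L}=\widetilde{\mathcal{L}}$ with a monomial basis $\mathcal{A}=\{t^{a_1},\dots,t^{a_k}\}\subset\Delta_\prec(I)$.

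Next I would invoke Corollary~\ref{dual-toric-degenerate-coro} directly: it yields the equality $(\mathcal{L}_T)^\perp=(\mathcal{L}^\perp)_T$ and asserts that $(\mathcal{L}_T)^\perp$ is a standard monomial code on $T$, with $\mathcal{L}^\perp=K(\Delta_\prec(I)\setminus\mathcal{B})$ by Proposition~\ref{dual-toric-degenerate}, where $\mathcal{B}=\{t^{b_1},\dots,t^{b_k}\}$ is built from $\mathcal{A}$ via Eq.~(\ref{jul6-1-20}).

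Finally I would check that over the full torus ``standard monomial code on $T$'' and ``generalized toric code'' coincide. Here $\Delta_\prec(I)$ is exactly the set of monomials $t^c$ with $0\le c_i\le q-2$ for all $i$, so $\mathcal{L}^\perp=K(\Delta_\prec(I)\setminus\mathcal{B})$ is a monomial space of $S$, and therefore $(\mathcal{L}_T)^\perp=(\mathcal{L}^\perp)_T$ is the image under the evaluation map of a monomial space, i.e.\ a generalized toric code on $(K^*)^s$. There is no substantive obstacle: the only point requiring (routine) care is matching the definition of generalized toric code from \cite{Bras-Amoros-O'Sullivan,Ruano} with a monomial evaluation code on $(K^*)^s$ and noting that the exponent set complementary to $\mathcal{B}$ produced by Eq.~(\ref{jul6-1-20}) is the one appearing in their duality formulas.
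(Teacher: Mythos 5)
Your proposal is correct and follows the paper's own argument: the paper also deduces this corollary directly from Corollary~\ref{dual-toric-degenerate-coro} by specializing $A_i=K^*$ for $i=1,\ldots,s$. Your additional remarks (invoking Proposition~\ref{binomial-standard}, identifying $\Delta_\prec(I)$, and matching the definitions) are just routine elaborations of that same specialization.
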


\begin{proof} It follows at once from
Corollary~\ref{dual-toric-degenerate-coro} by making
$A_i=K^*$ for $i=1,\ldots,s$.
\end{proof}

\subsection{Monomial evaluation codes on a degenerate affine space} Let
$K=\mathbb{F}_q$ be a finite field of characteristic $p$, let $A_1,\ldots,A_s$ be  
subgroups of the multiplicative group $K^*$ of the
field $K$, let $B_i$ be the set $A_i\cup\{0\}$ for $i=1,\ldots,s$, 
let  
$$\mathcal{X}:=B_1\times\cdots\times B_s=\{P_1,\ldots,P_m\}$$
be the Cartesian product of $B_1,\ldots,B_s$, and let
$\mathcal{L}_\mathcal{X}$
be a monomial code on $\mathcal{X}$, that is, $\mathcal{L}$ is generated
by a finite set of monomials of $S$. The set $\mathcal{X}$ is called  
a \textit{degenerate affine space}. In this
subsection we classify when the algebraic dual
$\mathcal{L}^\perp$ is generated by monomials. We also classify when
the dual $(\mathcal{L}_\mathcal{X})^\perp$ of 
$\mathcal{L}_\mathcal{X}$ is a standard monomial code, and show that in certain
interesting cases $(\mathcal{L}_\mathcal{X})^\perp$ is a standard
monomial code.

The order of the multiplicative monoid $B_i$ is denoted by $e_i$ and
the order of $A_i$ is denoted by $d_i$ for $i=1,\ldots,s$.  Let
$\prec$ be a graded monomial order on $S$.  
By \cite[Lemma 2.3]{cartesian-codes}, the vanishing ideal $I=I(\mathcal{X})$ of
$\mathcal{X}$ is generated by the Gr\"obner 
basis $\mathcal{G}=\{t_i^{e_i}-t_i\}_{i=1}^{s}$, and consequently the set of standard
monomials $\Delta_\prec(I)$ of $S/I$ is the set of all $t^c$,
$c=(c_1,\ldots,c_s)$, such that $0\leq c_i\leq d_i$ for
$i=1,\ldots,s$. By
Proposition~\ref{binomial-standard} and Lemma~\ref{dual-standard}, the standard function space
$\widetilde{\mathcal{L}}$ of $\mathcal{L}_X$ is a monomial space of
$S$ and
$\mathcal{L}^\perp=\widetilde{\mathcal{L}}^\perp$. Thus, we may
assume that $\mathcal{L}=\widetilde{\mathcal{L}}$. Note that $\mathcal{L}$ has a unique basis $\mathcal{A}$ 
consisting of standard monomials of $S/I$. We will classify when
$\mathcal{L}^\perp$ is a monomial space of $S$ and also when $(\mathcal{L}_\mathcal{X})^\perp$ 
is a standard monomial code in terms of this basis.
Let 
$\mathcal{A}=\{t^{a_1},\ldots,t^{a_k}\}\subset\Delta_\prec(I)$ be the unique
monomial $K$-basis of $\mathcal{L}$ where, as before, 
$$
t^{a_i}=t_1^{a_{i,1}}\cdots t_s^{a_{i,s}},\
a_i=(a_{i,1},\ldots,a_{i,s})\in\mathbb{N}^s,\ \mbox{ for }\ i=1,\ldots,k.  
$$
To construct a candidate for a
basis of $\mathcal{L}^\perp$, for each $1\leq i\leq k$, we set 
\begin{equation}\label{e:set-b}
t^{b_i}=t_1^{b_{i,1}}\cdots t_s^{b_{i,s}}:=\prod_{j=1}^st_j^{d_j-a_{i,j}}.
\end{equation}
\quad The set $\mathcal{B}:=\{t^{b_1},\ldots,t^{b_k}\}$ has cardinality
$k$ and $\mathcal{B}\subset\Delta_\prec(I)$. The index of regularity of
$H_I^a$ is $r_0=\sum_{i=1}^sd_i$ \cite[Proposition~2.5]{cartesian-codes} and 
$H_I^a(r_0)=|\Delta_\prec(I)|=|\mathcal{X}|=m=e_1\cdots e_s$ (Lemma~\ref{sep11-20}). 

\begin{lemma}\label{jul6-20-affine}
Let $t^c$, $c=(c_1,\ldots,c_s)$, be a monomial of $S$. The following
hold.
\begin{enumerate}
\item[(a)] If $c_i\not\equiv 0\ {\rm mod}(d_i)$ for some $i$, then 
$t^c\in{\rm ker}(\varphi)$. 
\item[(b)] If $p={\rm char}(K)$ and $\gcd(e_i,p)=p$ 
for some $i$, then $1\in{\rm ker}(\varphi)$. 
\item[(c)] If $\gcd(e_i,p)=p$ and $c_i\equiv 0\ {\rm mod}(d_i)$ for
all $i$, and $|{\rm supp}(t^c)|<s$, then $t^c\in{\rm ker}(\varphi)$.
\item[(d)] If $t^c=t_1^{\lambda_1d_1}\cdots t_s^{\lambda_sd_s}$ and
the $\lambda_i$'s are positive integers, then $t^c\notin{\rm ker}(\varphi)$.
\item[(e)] If $\gcd(e_i,p)=p$ for all $i$, then $\Delta_\prec(I)\bigcap{\rm
ker}(\varphi)=\Delta_\prec(I)\setminus\{t_1^{d_1}\cdots t_s^{d_s}\}$.
\end{enumerate}
\end{lemma}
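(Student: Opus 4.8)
The plan is to reduce all five claims to a single elementary computation of $\varphi$ on monomials. Since the set of evaluation points is a product, $\mathcal{X}=B_1\times\cdots\times B_s$, for any monomial $t^c=t_1^{c_1}\cdots t_s^{c_s}$ the value $\varphi(t^c)=\sum_{P\in\mathcal{X}}t^c(P)$ factors as
$$
\varphi(t^c)=\prod_{j=1}^s\sigma_j(c_j),\qquad \sigma_j(c):=\sum_{x\in B_j}x^c\in K,
$$
with the convention $0^0=1$. First I would compute $\sigma_j(c)$: separating the point $0\in B_j$ and writing $A_j=\langle\beta_j\rangle$ as a cyclic group of order $d_j$, the same geometric-sum argument used in the proof of Lemma~\ref{jul6-20} gives $\sum_{x\in A_j}x^c=0$ whenever $c>0$ and $d_j\nmid c$, and $\sum_{x\in A_j}x^c=d_j\cdot 1$ whenever $d_j\mid c$; adding the contribution $0^c$ of the origin yields
$$
\sigma_j(0)=e_j\cdot 1,\qquad \sigma_j(c)=0\ (c>0,\ d_j\nmid c),\qquad \sigma_j(c)=d_j\cdot 1\ (c>0,\ d_j\mid c).
$$
I would also record here, as in the torus case, that $d_j=|A_j|$ divides $q-1$, so $\gcd(d_j,p)=1$ for every $j$ and hence $(d_1\cdots d_s)\cdot 1\neq 0$ in $K$.

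Granting this, parts (a)--(d) follow immediately by locating the relevant factor of $\varphi(t^c)=\prod_j\sigma_j(c_j)$. For (a) the condition $c_i\not\equiv 0\pmod{d_i}$ forces $c_i\geq 1$ and $d_i\nmid c_i$, so $\sigma_i(c_i)=0$. For (b), $\gcd(e_i,p)=p$ means $p\mid e_i$, so $\sigma_i(0)=e_i\cdot 1=0$ and $\varphi(1)=\prod_j\sigma_j(0)=0$. For (c) the hypothesis $|{\rm supp}(t^c)|<s$ produces an index $j$ with $c_j=0$, and then $\sigma_j(0)=e_j\cdot 1=0$ since $p\mid e_j$. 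For (d) each exponent $c_i=\lambda_i d_i$ is a positive multiple of $d_i$, so $\sigma_i(c_i)=d_i\cdot 1$ and $\varphi(t^c)=(d_1\cdots d_s)\cdot 1\neq 0$.

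For (e) I would combine (a), (c) and (d). The monomial $t_1^{d_1}\cdots t_s^{d_s}$ is standard and lies outside $\ker(\varphi)$ by (d), which gives $\Delta_\prec(I)\cap\ker(\varphi)\subseteq\Delta_\prec(I)\setminus\{t_1^{d_1}\cdots t_s^{d_s}\}$. Conversely, take $t^c\in\Delta_\prec(I)$ with $t^c\neq t_1^{d_1}\cdots t_s^{d_s}$, so $0\leq c_i\leq d_i$ for all $i$ and not all the $c_i$ equal $d_i$. If some $c_i\not\equiv 0\pmod{d_i}$ then $t^c\in\ker(\varphi)$ by (a); otherwise each $c_i$ is a multiple of $d_i$ in $[0,d_i]$, hence $c_i\in\{0,d_i\}$, and since not all of them equal $d_i$ there is an index with $c_i=0$, so $|{\rm supp}(t^c)|<s$ and $t^c\in\ker(\varphi)$ by (c). This gives the reverse inclusion and hence equality.

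I do not anticipate a genuine obstacle here; the only points needing care are the $0^0=1$ convention in the product formula for $\varphi(t^c)$ and the observation that ``$c_i\not\equiv 0\pmod{d_i}$'' forces $c_i\geq 1$, so that the inner sum is a nontrivial geometric series rather than $\sum_{x\in A_j}1$.
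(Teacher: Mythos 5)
Your proof is correct, and it is organized a bit differently from the paper's. You first establish the full factorization $\varphi(t^c)=\prod_{j=1}^s\sigma_j(c_j)$ with $\sigma_j(c)=\sum_{x\in B_j}x^c$, compute $\sigma_j$ once (the same geometric-series evaluation as in Lemma~\ref{jul6-20}, plus the contribution of $0\in B_j$), and then read off (a)--(d) by inspecting a single factor; the paper instead argues each part with a separate partition of $\mathcal{X}$: slices along one coordinate for (a), and the subset $A=A_1\times\cdots\times A_\ell\times B_{\ell+1}\times\cdots\times B_s$ on which $t^c\equiv 1$ for (c) and (d), which yields $\varphi(t^c)=(d_1\cdots d_\ell)(e_{\ell+1}\cdots e_s)\cdot 1$ directly. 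The two computations are equivalent; your version buys uniformity (one formula covering all cases, with the $0^0=1$ convention made explicit and correctly matched to evaluation of the monomial), while the paper's case-by-case partitions keep each part self-contained and mirror the proof already given in the torus case. Your deduction of (e) from (a), (c), (d) — using $0\leq c_i\leq d_i$, hence $c_i\in\{0,d_i\}$ when $d_i\mid c_i$, and the coprimality $\gcd(d_i,p)=1$ from $d_i\mid q-1$ for (d) — is exactly the paper's argument, so no gap there.
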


\begin{proof} Let $P_1,\ldots,P_m$ be the points of $\mathcal{X}$,
$m=|\mathcal{X}|=e_1\cdots e_s$, and let
$\beta_i$ be a  generator of the multiplicative  
cyclic group $A_i$ for $i=1,\ldots,s$. 

(a): Assume that $c_i\not\equiv 0\ {\rm mod}(d_i)$ for some
$i$. Then, $c_i\geq 1$. For simplicity of
notation assume that $i=1$. The cartesian set $\mathcal{X}$ can be partitioned as
$$
\mathcal{X}=\left(\bigcup_{i=1}^{d_1}\{(\beta_1^i,Q) \mid Q\in
B_2\times\cdots\times B_s\}\right)\bigcup\{(0,Q) \mid  Q\in B_2\times\cdots\times B_s\}. 
$$
\quad Hence, setting $\mathcal{X}_1=B_2\times\cdots\times B_s$, we obtain  
$$
\varphi(t^c)=
\sum_{i=1}^mt^c(P_i)=\left(1+\beta_1^{c_1}+(\beta_1^2)^{c_1}+\cdots+
(\beta_1^{d_1-1})^{c_1}\right)\left(\sum_{Q\in \mathcal{X}_1}t_2^{c_2}\cdots
t_s^{c_s}(Q)\right).
$$ 
\quad Hence, using the equality
$(\sum_{i=0}^{d_1-1}(\beta_1^{c_1})^i)(\beta_1^{c_1}-1)=(\beta_1^{c_1})^{d_1}-1=0$
and, noticing that $\beta_1^{c_1}-1=0$ if and only if $c_1\equiv 0\ {\rm
mod} (d_1)$, we get $\sum_{i=0}^{d_1-1}(\beta_1^{c_1})^i=0$. Thus,
$\varphi(t^c)=0$ and $t^c\in{\rm ker}(\varphi)$. 

(b): Assume $\gcd(e_i,p)=p$ for some $i$. Then, 
$\varphi(1)=(e_1\cdots e_s)\cdot 1=0$. Thus, $1\in{\rm ker}(\varphi)$. 

(c): By part (b), we may assume ${\rm supp}(t^c)\neq\emptyset$, that
is, $t^c\neq 1$. For
simplicity of notation assume that 
${\rm supp}(t^c)=\{t_1,\ldots,t_\ell\}$, where $1\leq\ell<s$. For each
$1\leq i\leq \ell$, there is $\lambda_i\in\mathbb{N}_+$ such that
$c_i=\lambda_id_i$. We set $\lambda_i=0$ for $\ell<i\leq s$. The set $\mathcal{X}=\{P_1,\ldots,P_m\}$ 
can be partitioned as
$$
\mathcal{X}=A\textstyle\bigcup(\mathcal{X}\setminus A),\quad A=A_1\times\cdots\times A_\ell\times
B_{\ell+1}\times\cdots\times B_s.
$$
\quad Note that $t^c(P_i)=1$ if $P_i\in A$ and $t^c(P_i)=0$ if $P_i
\in \mathcal{X}\setminus A$. Hence
\begin{equation}\label{jul18-20}
\varphi(t^c)=
\sum_{i=1}^mt^c(P_i)=\sum_{i=1}^mt_1^{\lambda_1d_1}\cdots
t_s^{\lambda_sd_s}(P_i)=((d_1\cdots d_\ell)(e_{\ell+1}\cdots
e_s))\cdot 1.
\end{equation}
\quad Since $\ell<s$ and $\gcd(e_i,p)=p$ for all $i$, we get 
$(e_{\ell+1}\cdots e_s)\cdot 1=0$. Thus, $t^c\in{\rm ker}(\varphi)$.  

(d): As $\gcd(d_i,p)=1$ for all $i$, from Eq.~(\ref{jul18-20}), we get
$\varphi(t^c)=(d_1\cdots d_s)\cdot 1\neq 0$. 

(e): The inclusion ``$\subset$'' follows from part (d). To show the inclusion
``$\supset$'' take a monomial 
$t^c=t_1^{c_1}\cdots t_s^{c_s}$ in
$\Delta_\prec(I)\setminus\{t_1^{d_1}\cdots t_s^{d_s}\}$. Then, 
$c_i\leq d_i$ for all $i$ and $c_j<d_j$ for some $j$. We need only
show $t^c\in\ker(\varphi)$. By part (a), we may assume $c_i\equiv 0\
{\rm mod}(d_i)$ for all $i$. Hence $c_j=0$, and consequently $|{\rm
supp}(t^c)|<s$. Therefore, by part (c), we get $t^c\in\ker(\varphi)$.
\end{proof}

If $(\mathcal{L}^\perp)_\mathcal{X}$ is a standard monomial code, then
$\mathcal{L}^\perp$ has a 
unique basis of standard monomials of $S/I$ because
$\mathcal{L}^\perp$ is the standard function space of
$(\mathcal{L}^\perp)_\mathcal{X}$.
The next result
identifies this basis and classifies when $(\mathcal{L}^\perp)_\mathcal{X}$ is a standard monomial code.

\begin{proposition}\label{monomial-classification-dual} Let $\mathcal{L}$ be a subspace
with a basis of standard monomials $\mathcal{A}=\{t^{a_1},\dots,t^{a_k}\}$. Then
$(\mathcal{L}^\perp)_\mathcal{X}$ is a standard monomial code on $\mathcal{X}$ if and only if
$\mathcal{L}^\perp=K(\Delta_\prec(I)\setminus\mathcal{B}),$
where $\mathcal{B}=\{t^{b_1},\dots,t^{b_k}\}$ is defined in Eq. (\ref{e:set-b}).
\end{proposition}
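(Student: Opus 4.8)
The plan is to reduce the statement to the single equivalence ``$\mathcal{L}^\perp$ is a monomial space of $S$'' $\iff$ ``$\mathcal{L}^\perp=K(\Delta_\prec(I)\setminus\mathcal{B})$'', and then to settle that equivalence by a dimension count. First I would observe that, since $\mathcal{L}^\perp\subset K\Delta_\prec(I)$ by definition, Corollary~\ref{unique-standard} identifies $\mathcal{L}^\perp$ with the standard function space of $(\mathcal{L}^\perp)_\mathcal{X}$; hence $(\mathcal{L}^\perp)_\mathcal{X}$ is a standard monomial code on $\mathcal{X}$ precisely when $\mathcal{L}^\perp$ is spanned by monomials. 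I would also record at the outset the dimension bookkeeping $\dim_K(\mathcal{L}^\perp)=|\mathcal{X}|-\dim_K(\mathcal{L})=|\mathcal{X}|-k$ coming from Proposition~\ref{dual-properties}(a), together with $|\Delta_\prec(I)|=|\mathcal{X}|$ from Lemma~\ref{sep11-20} and the facts $\mathcal{B}\subset\Delta_\prec(I)$ and $|\mathcal{B}|=k$; these combine to $\dim_K(\mathcal{L}^\perp)=|\Delta_\prec(I)\setminus\mathcal{B}|$.

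The ``if'' direction is then immediate, since $K(\Delta_\prec(I)\setminus\mathcal{B})$ is manifestly a monomial space. For the ``only if'' direction I would assume $\mathcal{L}^\perp$ is a monomial space; because it sits inside $K\Delta_\prec(I)$ and the standard monomials of $S/I$ are $K$-linearly independent, it admits a basis $\{t^{c_1},\dots,t^{c_n}\}$ consisting of standard monomials, with the property that a monomial lies in $\mathcal{L}^\perp$ if and only if it is one of the $t^{c_j}$. The key step is to show that none of the $t^{b_i}$ lies in $\mathcal{L}^\perp$. Here I would use that $t^{b_i}\in\mathcal{L}^\perp=(\ker(\varphi)\colon\mathcal{L})\cap K\Delta_\prec(I)$ would force $t^{b_i}t^{a_j}\in\ker(\varphi)$ for every $j$; specializing to $j=i$ and invoking the definition of $t^{b_i}$ in Eq.~\eqref{e:set-b}, the product simplifies to $t^{b_i}t^{a_i}=t_1^{d_1}\cdots t_s^{d_s}$, which is \emph{not} in $\ker(\varphi)$ by Lemma~\ref{jul6-20-affine}(d) (applied with all exponents $\lambda_i=1$). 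This contradiction yields $\{t^{c_1},\dots,t^{c_n}\}\subseteq\Delta_\prec(I)\setminus\mathcal{B}$, and since both sets have cardinality $n=\dim_K(\mathcal{L}^\perp)=|\Delta_\prec(I)\setminus\mathcal{B}|$ they coincide, giving $\mathcal{L}^\perp=K(\Delta_\prec(I)\setminus\mathcal{B})$.

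The only point I would treat carefully — and the step most prone to a slip — is the opening reduction, i.e.\ verifying that ``$(\mathcal{L}^\perp)_\mathcal{X}$ is a standard monomial code'' is genuinely the same as ``$\mathcal{L}^\perp$ is a monomial space'', which rests entirely on the uniqueness of the standard function space (Corollary~\ref{unique-standard}) and on $\mathcal{L}^\perp$ already consisting of standard polynomials. The rest is a short telescoping identity plus the cardinality count; the one genuinely nontrivial input, namely $t_1^{d_1}\cdots t_s^{d_s}\notin\ker(\varphi)$, is exactly what Lemma~\ref{jul6-20-affine}(d) provides, and — worth noting — that lemma needs only $\gcd(p,d_i)=1$, not the hypothesis $\gcd(p,e_i)=p$, so no extra assumption on $\mathcal{X}$ is used here.
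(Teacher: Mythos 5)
Your proposal is correct and follows essentially the same route as the paper: the reduction via uniqueness of the standard function space (Corollary~\ref{unique-standard}), the exclusion of each $t^{b_i}$ from $\mathcal{L}^\perp$ via $t^{b_i}t^{a_i}=t_1^{d_1}\cdots t_s^{d_s}\notin\ker(\varphi)$ (Lemma~\ref{jul6-20-affine}(d)), and the cardinality count using $\dim_K(\mathcal{L}^\perp)=|\mathcal{X}|-k=|\Delta_\prec(I)\setminus\mathcal{B}|$ are exactly the paper's steps. No gaps.
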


\begin{proof} $\Rightarrow$) By Proposition~\ref{dual-properties},
$\dim_K(\mathcal{L}^\perp)=|X|-k$. Assume that
$\mathcal{L}^\perp=K\{t^{\gamma_1},\ldots,t^{\gamma_{m-k}}\}$,
$m=|\mathcal{X}|$, $k=\dim_K(\mathcal{L})$. Take $1\leq\ell\leq m-k$. Then,
$t^{\gamma_\ell}$ is in $(\ker(\varphi)\colon\mathcal{L})\bigcap
K\Delta_\prec(I)$. If $t^{\gamma_\ell}$ is in
$\mathcal{B}=\{t^{b_1},\ldots,t^{b_k}\}$, then $t^{\gamma_\ell}=t^{b_i}$
for some $i$, and consequently $t^{b_i}\mathcal{L}\subset{\rm
ker}(\varphi)$. Thus,
$t^{\gamma_\ell}t^{a_i}=t^{b_i}t^{a_i}=\prod_{j=1}^st_j^{d_j}$ and, by
Lemma~\ref{jul6-20-affine}(d), $\prod_{j=1}^st_j^{d_j}$ is not in 
${\rm ker}(\varphi)$, a contradiction. Hence,
$t^{\gamma_\ell}\in\Delta_\prec(I)\setminus\mathcal{B}$, and we obtain the
equality $\{t^{\gamma_1},\ldots,t^{\gamma_{m-k}}\}=\Delta_\prec(I)\setminus\mathcal{B}$ 
because one has the inclusion ``$\subset$'' and these two sets have the same cardinality. 

$\Leftarrow$) This part is clear since
$\Delta_\prec(I)\setminus\mathcal{B}$ consists of standard monomials.
\end{proof}

Following
\cite[p.~16]{Bras-Amoros-O'Sullivan}, we say that a set of monomials
$\mathcal{A}$ 
of $S$ is \textit{divisor-closed} if $t^a\in\mathcal{A}$ whenever $t^a$ divides
a monomial in $\mathcal{A}$.
Families of linear codes generated by monomials that are divisor-closed are also studied
in~\cite{Camps1}. Applications of these sort of codes to polar codes are given in~\cite{Camps2}.
To classify when the algebraic dual of
$\mathcal{L}_\mathcal{X}$ is generated by monomials, we now introduce a weaker notion than
divisor-closed (cf. \cite[Remark~2.5]{Bras-Amoros-O'Sullivan}).

\begin{definition}\label{weakly-divisor-closed-def} A set
$\mathcal{A}=\{t^{a_1},\ldots,t^{a_k}\}\subset\Delta_\prec(I)$,
$t^{a_i}=\prod_{j=1}^st_j^{a_{i,j}}$, of standard monomials
of $S/I$ is \textit{weakly divisor-closed} if 
$$
\frac{t^{a_i}}{\displaystyle\prod_{t_j\in D}t_j^{a_{i,j}}}
$$
is in $\mathcal{A}$ for all monomials $t^{a_i}$ in $\mathcal{A}$ and 
all subsets $D$ of $D_i:=\{t_j\mid a_{i,j}=d_j\}$. 
If $D=\emptyset$, the product $\prod_{t_j\in D}t_j^{a_{i,j}}$ is equal to $1$ by
convention. 
\end{definition}

We come to one of the main results of this section.

\begin{theorem}\label{dual-affine-degenerate}
Let $K$ be a field of characteristic $p$ and let $I$ be the vanishing
ideal of $\mathcal{X}$. Assume that $\gcd(p,e_i)=p$, $e_i=|B_i|$, 
for $i=1,\ldots,s$. Let $\mathcal{L}$ have a monomial basis $\mathcal{A}=\{t^{a_1},\ldots,t^{a_k}\}\subset\Delta_{\prec}(I)$. 
The following are equivalent.
\begin{enumerate}
\item[(a)] $\mathcal{A}$ is weakly divisor-closed. 
\item[(b)] $\mathcal{L}^\perp=K(\Delta_\prec(I)\setminus\mathcal{B})$, where 
$\mathcal{B}=\{t^{b_1},\ldots,t^{b_k}\}$ is defined in Eq. (\ref{e:set-b}).
\item[(c)] $(\mathcal{L}_\mathcal{X})^\perp$ is
a standard monomial code on $\mathcal{X}$. 
\end{enumerate}
\end{theorem}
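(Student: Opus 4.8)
The plan is to get the equivalence (b) $\Leftrightarrow$ (c) essentially for free from Proposition~\ref{monomial-classification-dual} together with the identity $(\mathcal{L}_\mathcal{X})^\perp=(\mathcal{L}^\perp)_\mathcal{X}$ of Theorem~\ref{formula-dual}: that proposition says $(\mathcal{L}^\perp)_\mathcal{X}$ is a standard monomial code if and only if $\mathcal{L}^\perp=K(\Delta_\prec(I)\setminus\mathcal{B})$, so the real content is the equivalence (a) $\Leftrightarrow$ (b). Throughout I would write $D_i=\{t_j\mid a_{i,j}=d_j\}$ and use $b_{i,j}=d_j-a_{i,j}$, the fact that $\mathcal{B}\subset\Delta_\prec(I)$ has cardinality $k$, and that $|\Delta_\prec(I)|=|\mathcal{X}|$ (Lemma~\ref{sep11-20}). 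By Proposition~\ref{dual-properties}(a), $\dim_K\mathcal{L}^\perp=|\mathcal{X}|-k=|\Delta_\prec(I)\setminus\mathcal{B}|$, so for (a) $\Rightarrow$ (b) it suffices to prove $\Delta_\prec(I)\setminus\mathcal{B}\subset\mathcal{L}^\perp$; since $\mathcal{L}=K\mathcal{A}$ and $\varphi$ is linear, this reduces to showing $\varphi(t^\alpha t^{a_i})=0$ for every $t^\alpha\in\Delta_\prec(I)\setminus\mathcal{B}$ and every $i$.

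To do this I would fix such $t^\alpha$ and $i$ and set $c_j=\alpha_j+a_{i,j}$, so $0\le c_j\le 2d_j$. If some $c_j\not\equiv 0\pmod{d_j}$, then $t^\alpha t^{a_i}\in\ker(\varphi)$ by Lemma~\ref{jul6-20-affine}(a). Otherwise every $c_j$ is a multiple of $d_j$; if in addition some $c_j=0$, then $|\mathrm{supp}(t^{\alpha+a_i})|<s$ and we finish by Lemma~\ref{jul6-20-affine}(c), which is exactly where the hypothesis $\gcd(p,e_j)=p$ enters. The remaining case is that every $c_j$ is a positive multiple of $d_j$, so $c_j\in\{d_j,2d_j\}$; here I claim $t^\alpha\in\mathcal{B}$, contradicting its choice. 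For $t_j\notin D_i$ one has $a_{i,j}<d_j$, forcing $c_j=d_j$ and $\alpha_j=b_{i,j}$; for $t_j\in D_i$ one has $c_j=\alpha_j+d_j$, so $\alpha_j\in\{0,d_j\}$. Setting $D=\{t_j\in D_i\mid \alpha_j=d_j\}\subseteq D_i$, a direct exponent comparison gives $t^\alpha=t^{b_i}\prod_{t_j\in D}t_j^{d_j}$. Now weak divisor-closedness applied to $t^{a_i}$ and $D$ produces $t^{a_l}:=t^{a_i}/\prod_{t_j\in D}t_j^{d_j}\in\mathcal{A}$, and computing $b_{l,j}=d_j-a_{l,j}$ shows $b_{l,j}=\alpha_j$ for all $j$; hence $t^\alpha=t^{b_l}\in\mathcal{B}$, the desired contradiction. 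This gives (a) $\Rightarrow$ (b).

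For (b) $\Rightarrow$ (a) the plan is to probe weak divisor-closedness with a single test monomial. Given $t^{a_i}\in\mathcal{A}$ and $D\subseteq D_i$, set $t^\gamma:=t^{b_i}\prod_{t_j\in D}t_j^{d_j}$; since $b_{i,j}=0$ for $t_j\in D$ and $b_{i,j}\le d_j$ otherwise, $t^\gamma\in\Delta_\prec(I)$. On the other hand $t^\gamma t^{a_i}=\prod_{t_j\notin D}t_j^{d_j}\prod_{t_j\in D}t_j^{2d_j}$ has every exponent a positive multiple of the corresponding $d_j$, so $t^\gamma t^{a_i}\notin\ker(\varphi)$ by Lemma~\ref{jul6-20-affine}(d); hence $t^\gamma\notin(\ker(\varphi)\colon\mathcal{L})$, so $t^\gamma\notin\mathcal{L}^\perp=K(\Delta_\prec(I)\setminus\mathcal{B})$. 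Since $t^\gamma$ is a standard monomial it must lie in $\mathcal{B}$, say $t^\gamma=t^{b_l}$; reading off $a_{l,j}=d_j-\gamma_j$ gives $a_{l,j}=a_{i,j}$ for $t_j\notin D$ and $a_{l,j}=0$ for $t_j\in D$, i.e. $t^{a_l}=t^{a_i}/\prod_{t_j\in D}t_j^{a_{i,j}}\in\mathcal{A}$. As $t^{a_i}$ and $D$ were arbitrary, $\mathcal{A}$ is weakly divisor-closed, closing the loop.

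I expect the bookkeeping in (a) $\Rightarrow$ (b) to be the main obstacle: the key observation is that an exponent $c_j=\alpha_j+a_{i,j}$ equal to $2d_j$ forces $\alpha_j=a_{i,j}=d_j$ (using $\alpha_j,a_{i,j}\le d_j$), so $t^\alpha$ differs from $t^{b_i}$ only by a factor $\prod_{t_j\in D}t_j^{d_j}$ with $D\subseteq D_i$ — precisely the kind of factor that weak divisor-closedness allows one to cancel. Everything else is the dimension count from Proposition~\ref{dual-properties} combined with the $\ker(\varphi)$-membership criteria of Lemma~\ref{jul6-20-affine}.
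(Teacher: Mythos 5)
Your proposal is correct and follows essentially the same route as the paper: the dimension count from Proposition~\ref{dual-properties} plus the $\ker(\varphi)$-membership criteria of Lemma~\ref{jul6-20-affine} for (a)$\Rightarrow$(b), the same test monomial $t^{b_i}\prod_{t_j\in D}t_j^{d_j}$ for (b)$\Rightarrow$(a) (the paper phrases this as a proof by contradiction, you run it directly), and Theorem~\ref{formula-dual} with Proposition~\ref{monomial-classification-dual} for (b)$\Leftrightarrow$(c). Your bookkeeping in the case where every $\alpha_j+a_{i,j}$ is a positive multiple of $d_j$ matches the paper's, and in fact states the final contradiction ($t^\alpha\in\mathcal{B}$) more cleanly than the paper's wording.
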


\begin{proof} (a) $\Rightarrow$ (b): By Lemma~\ref{sep11-20}, one has
the equality $|\Delta_\prec(I)|=|\mathcal{X}|$. Then, by
Proposition~\ref{dual-properties}, one obtains the equalities 
$$
\dim_K(\mathcal{L}^\perp)=|\mathcal{X}|-\dim_K(\mathcal{L})=
|\mathcal{X}|-k=|\Delta_\prec(I)\setminus\mathcal{B}|.
$$
Thus, to show the equality
$\mathcal{L}^\perp=K(\Delta_\prec(I)\setminus\mathcal{B})$, we  only need to show that
$\Delta_\prec(I)\setminus\mathcal{B}\subset\mathcal{L}^\perp$. Take
$t^\alpha\in\Delta_\prec(I)\setminus\mathcal{B}$,
$\alpha=(\alpha_1,\ldots,\alpha_s)$. Since $\mathcal{L}$ is
generated by the set $\{t^{a_1},\ldots,t^{a_k}\}$ it suffices to show that
$t^\alpha t^{a_i}$ is in ${\rm ker}(\varphi)$ for $i=1,\ldots,k$. Fix
$1\leq i\leq k$. 
If
$\alpha_j+a_{i,j}\not\equiv 0\ {\rm mod}(d_j)$ for some $j$, by 
Lemma~\ref{jul6-20-affine}(a), one has $t^\alpha t^{a_i}\in{\rm
ker}(\varphi)$. Thus, we may assume $\alpha_j+a_{i,j}\equiv 0\
{\rm mod}(d_j)$ for $j=1,\ldots,s$. There are
$\lambda_1,\ldots,\lambda_s$ in $\mathbb{N}$ such that
$\alpha_j+a_{i,j}=\lambda_jd_j$ for $j=1,\ldots,s$. By
Lemma~\ref{jul6-20-affine}(c), we may also assume that 
${\rm supp}(t^\alpha t^{a_i})=\{t_1,\ldots,t_s\}$ 
and $\lambda_j\geq 1$ for all $j$. If $\lambda_j=1$ for all $j$, we
obtain that $t^\alpha=t^{b_i}$, a contradiction. If $\lambda_j\geq 2$
for some $j$, since $\alpha_j+a_{i,j}=\lambda_jd_j\leq
2d_j$, we obtain that $\lambda_j=2$ and $\alpha_j=a_{i,j}=d_j$.
Therefore, for each $1\leq j\leq s$ either  
$\alpha_j+a_{i,j}=d_j$ or $\alpha_j=a_{i,j}=d_j$. Next we show that
this cannot occur. We set 
$$
t^\delta:=\frac{t^{a_i}}{\displaystyle\prod_{t_j\in D}t_j^{a_{i,j}}},
$$
where $D:=\{t_j\mid\alpha_j=a_{i,j}=d_j\}$ is a subset of
$D_i=\{t_j\mid a_{i,j}=d_j\}$. Since the set $\mathcal{A}$ is 
weakly divisor-closed, we get $t^\delta\in\mathcal{A}$, and $t^\delta=t^{a_r}$
for some $1\leq r\leq k$. From the equalities $t^\alpha
t^{a_r}=t^\alpha t^\delta=\prod_{j=1}^st_j^{d_j}$, we obtain 
$t^{\alpha}\in\mathcal{B}$, a contradiction.

(b) $\Rightarrow$ (a): Assume that
$\mathcal{L}^\perp=K(\Delta_\prec(I)\setminus\mathcal{B})$. Take $t^{a_i}$
in $\mathcal{A}$ and let $D$ be a subset of $D_i$. If $D=\emptyset$,
there is nothing to prove. For simplicity
of notation we may assume that $D=\{t_1,\ldots,t_\ell\}$ for some
$1\leq\ell\leq s$. 
Then, we can write $t^{a_i}=t_1^{d_1}\cdots
t_\ell^{d_\ell}t^\gamma$, where 
$t^\gamma=t_{\ell+1}^{a_{i,\ell+1}}\cdots t_{s}^{a_{i,s}}$ and $1\leq
a_{i,j}\leq d_j$ for $j>\ell$. If $\ell=s$, 
by convention $t^\gamma=1$. To show that $\mathcal{A}$ is weakly
divisor-closed we need only show $t^\gamma\in\mathcal{A}$. We
proceed by contradiction assuming $t^\gamma\notin\mathcal{A}$.
From the equality 
$$
(t_{1}^{d_{1}}\cdots
t_{\ell}^{d_{\ell}}t_{\ell+1}^{d_{\ell+1}-a_{i,\ell+1}}\cdots
t_{s}^{d_s-a_{i,s}})t^\gamma=t_1^{d_1}\cdots t_s^{d_s},
$$
we obtain that the monomial $t^u:=t_{1}^{d_{1}}\cdots
t_{\ell}^{d_{\ell}}t_{\ell+1}^{d_{\ell+1}-a_{i,\ell+1}}\cdots
t_{s}^{d_s-a_{i,s}}$ is not in $\mathcal{B}$ since $t^\gamma$ is not in 
$\mathcal{A}$. Thus,
$t^u\in\Delta_\prec(I)\setminus\Gamma\subset\mathcal{L}^\perp$. Then,
$t^u\mathcal{L}\subset{\rm ker}(\varphi)$. As $t^{a_i}$ is in
$\mathcal{L}$, one has
\begin{equation}\label{jul20-20}
t_1^{2d_1}\cdots
t_{\ell}^{2d_\ell}t_{\ell+1}^{d_{\ell+1}}\cdots t_s^{d_s}=t^ut^{a_i}\in{\rm
ker}(\varphi),
\end{equation}
a contradiction because by Lemma~\ref{jul6-20-affine}(d), the left hand side of
Eq.~(\ref{jul20-20}) is not in ${\rm ker}(\varphi)$. 

(b) $\Leftrightarrow$ (c): The linear code $(\mathcal{L}_\mathcal{X})^\perp$ is the
evaluation code $(\mathcal{L}^\perp)_\mathcal{X}$ on $\mathcal{X}$ by
Theorem~\ref{formula-dual}. Thus, that (b) and (c) are equivalent 
follows from Proposition~\ref{monomial-classification-dual}.  
\end{proof}

\begin{corollary}\label{direct-application-clas} If
$\mathcal{X}=K$,
$\mathcal{L}=K\{t_1^{a_1},\ldots,t_1^{a_k}\}$, and $0\leq a_1<\cdots<a_k\leq
q-1$, then $(\mathcal{L}_\mathcal{X})^\perp$ is 
a standard monomial code on $\mathcal{X}$ if and only if either $a_k<q-1$ or $a_k=q-1$ and $1\in\mathcal{L}$.
\end{corollary}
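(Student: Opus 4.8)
The plan is to recognize this as the one-variable case ($s = 1$, $B_1 = K$) of Theorem~\ref{dual-affine-degenerate} and then reduce the conclusion to a direct inspection of the weakly divisor-closed condition of Definition~\ref{weakly-divisor-closed-def}.

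First I would record the parameters. With $s = 1$ and $B_1 = K = \mathbb{F}_q$ one has $A_1 = K^*$, hence $e_1 = |B_1| = q = p^v$ and $d_1 = |A_1| = q - 1$; in particular $\gcd(p, e_1) = p$, so the hypothesis of Theorem~\ref{dual-affine-degenerate} holds. By \cite[Lemma~2.3]{cartesian-codes} the standard monomials are $\Delta_\prec(I) = \{1, t_1, \dots, t_1^{q-1}\}$, and $\mathcal{A} = \{t_1^{a_1}, \dots, t_1^{a_k}\} \subset \Delta_\prec(I)$ is the monomial $K$-basis of $\mathcal{L}$. Then Theorem~\ref{dual-affine-degenerate} gives that $(\mathcal{L}_\mathcal{X})^\perp$ is a standard monomial code on $\mathcal{X}$ if and only if $\mathcal{A}$ is weakly divisor-closed.

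Next I would unwind Definition~\ref{weakly-divisor-closed-def} in this situation. For each $i$ the relevant set is $D_i = \{t_1 \mid a_i = d_1 = q - 1\}$, so $D_i = \{t_1\}$ when $a_i = q - 1$ and $D_i = \emptyset$ otherwise; since the exponents are strictly increasing, at most one of them, namely $a_k$, can equal $q - 1$. If $a_k < q - 1$ then every $D_i$ is empty, the condition is vacuous, and $\mathcal{A}$ is automatically weakly divisor-closed. If $a_k = q - 1$, the only nontrivial instance of the condition comes from $i = k$ and $D = \{t_1\}$, which requires $t_1^{q-1}/t_1^{q-1} = 1 \in \mathcal{A}$, that is, $a_1 = 0$, equivalently $1 \in \mathcal{L}$. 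Combining the two cases produces exactly the stated dichotomy.

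I do not expect a real obstacle here; the points that need care are that $q = p^v$ forces $\gcd(p, e_1) = p$ so that Theorem~\ref{dual-affine-degenerate} genuinely applies, and that the strict monotonicity of the exponents $a_1 < \cdots < a_k$ is what makes the weakly divisor-closed condition collapse to the single requirement $1 \in \mathcal{L}$ in the case $a_k = q - 1$.
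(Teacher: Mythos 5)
Your proposal is correct and follows the paper's own argument: the paper likewise sets $A_1=K^*$, $B_1=K$, $d_1=q-1$, $e_1=q$, notes $\gcd(q,p)=p$, and invokes Theorem~\ref{dual-affine-degenerate}, reducing everything to the observation that $\mathcal{A}$ is weakly divisor-closed exactly when $a_k<q-1$, or $a_k=q-1$ and $1\in\mathcal{L}$. Your explicit unwinding of Definition~\ref{weakly-divisor-closed-def} (the sets $D_i$ and the single nontrivial instance $i=k$, $D=\{t_1\}$) is just a more detailed writing of the same step the paper leaves to the reader.
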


\begin{proof} We set $A_1=K^*$, $B_1=K$,
$d_1=q-1$, and $e_1=q$. Note that $\gcd(q,p)=p$, where $p={\rm
char}(K)$. By Theorem~\ref{dual-affine-degenerate}, it suffices to note that
$\mathcal{A}=\{t_1^{a_1},\ldots,t_1^{a_k}\}$ is weakly divisor-closed if and
only if either $a_k<q-1=d_1$ or $a_k=q-1=d_1$ and $1\in\mathcal{L}$.
\end{proof}
 
\begin{corollary}\cite[Proposition~2.4]{Bras-Amoros-O'Sullivan}\label{dual-affine-coro} 
Let $\mathcal{L}_\mathcal{X}$ be a standard monomial code on
$\mathcal{X}=K^s$, let 
$\mathcal{A}$ be the monomial basis of
$\mathcal{L}$, and let $I$ be the vanishing ideal of
$\mathcal{X}$. If
$\mathcal{A}$ is divisor-closed, then  
$$\mathcal{L}^\perp=\Delta_\prec(I)\setminus\{t_1^{q-1-c_1}\cdots
t_s^{q-1-c_s} :\, t_1^{c_1}\cdots t_s^{c_s}\in\mathcal{A}\}\
\mbox{ and }\ (\mathcal{L}_\mathcal{X})^\perp=(\mathcal{L}^\perp)_\mathcal{X}. 
$$ 
\end{corollary}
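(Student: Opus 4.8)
The plan is to obtain this corollary as the special case $A_i=K^*$, $i=1,\dots,s$, of Theorem~\ref{dual-affine-degenerate}. First I would record the relevant specializations: since $\mathcal{X}=K^s$ we have $B_i=K$ and $A_i=K^*$, hence $d_i=|A_i|=q-1$ and $e_i=|B_i|=q$ for all $i$; writing $q=p^v$ gives $\gcd(p,e_i)=\gcd(p,q)=p$, so the standing hypothesis of Theorem~\ref{dual-affine-degenerate} holds. The vanishing ideal $I=I(\mathcal{X})$ is generated by the Gr\"obner basis $\{t_i^{q}-t_i\}_{i=1}^s$, so $\Delta_\prec(I)=\{t^c : 0\le c_i\le q-1\}$, and for $t^{a_i}=t_1^{c_1}\cdots t_s^{c_s}\in\mathcal{A}$ the monomial of Eq.~\eqref{e:set-b} is $t^{b_i}=\prod_{j=1}^s t_j^{d_j-c_j}=t_1^{q-1-c_1}\cdots t_s^{q-1-c_s}$. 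Thus the set $\mathcal{B}$ of Theorem~\ref{dual-affine-degenerate} coincides with $\{t_1^{q-1-c_1}\cdots t_s^{q-1-c_s}: t_1^{c_1}\cdots t_s^{c_s}\in\mathcal{A}\}$, the set appearing in the statement.

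Next I would verify that a divisor-closed set of monomials is automatically weakly divisor-closed in the sense of Definition~\ref{weakly-divisor-closed-def}: for any $t^{a_i}\in\mathcal{A}$ and any subset $D\subset D_i=\{t_j\mid a_{i,j}=d_j\}$, the monomial $t^{a_i}/\prod_{t_j\in D}t_j^{a_{i,j}}$ divides $t^{a_i}\in\mathcal{A}$, hence lies in $\mathcal{A}$ by the divisor-closed hypothesis, which is exactly the defining condition for weak divisor-closedness. With this in hand, Theorem~\ref{dual-affine-degenerate} applies directly: the equivalence $\mathrm{(a)}\Leftrightarrow\mathrm{(b)}$ yields $\mathcal{L}^\perp=K(\Delta_\prec(I)\setminus\mathcal{B})$, which is the first asserted equality (up to the customary suppression of the leading $K$), and $\mathrm{(a)}\Leftrightarrow\mathrm{(c)}$ --- or, equivalently, Theorem~\ref{formula-dual} --- yields $(\mathcal{L}_\mathcal{X})^\perp=(\mathcal{L}^\perp)_\mathcal{X}$.

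I do not expect any genuine obstacle here: the statement is a direct corollary, and the only substantive point is the elementary implication ``divisor-closed $\Rightarrow$ weakly divisor-closed,'' together with the routine bookkeeping that identifies $\mathcal{B}$ with the displayed set and observes that the hypothesis $\gcd(p,e_i)=p$ is automatic when $B_i=K$.
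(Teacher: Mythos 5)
Your proposal is correct and follows the paper's own route: the paper likewise deduces the corollary from Theorems~\ref{formula-dual} and \ref{dual-affine-degenerate} by setting $B_i=K$ (so $d_i=q-1$, $e_i=q$, and $\gcd(p,e_i)=p$ automatically). The only detail you spell out beyond the paper's one-line proof is the easy implication that divisor-closed implies weakly divisor-closed, which is indeed the point making Theorem~\ref{dual-affine-degenerate} applicable.
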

\begin{proof} It follows from Theorems~\ref{formula-dual} and 
\ref{dual-affine-degenerate} by making
$B_i=K$ for $i=1,\ldots,s$.
\end{proof}

We now determine the algebraic dual of $K(S_{\leq
d}\bigcap \Delta_\prec(I(\mathcal{X}))$.

\begin{theorem}\label{algebraic-dual-reed-muller} 
Let $K$ be a field of characteristic $p$ and let $I$ be the
vanishing ideal of $\mathcal{X}$. Assume that $\gcd(e_i,p)=p$,
$e_i=|B_i|$, for all $i$. 
If $1\leq
d<r_0=\sum_{i=1}^s(e_i-1)$, $\mathcal{A}:=S_{\leq
d}\bigcap\Delta_\prec(I)=\{t^{a_1},\ldots,t^{a_k}\}$, 
and $\mathcal{L}=K\mathcal{A}$, then 
$$\mathcal{L}^\perp=K(\Delta_\prec(I)\setminus\mathcal{B})=K(S_{\leq
r_0-d-1}\textstyle\bigcap\Delta_\prec(I)),
$$
where $\mathcal{B}=\{t^{b_1},\ldots,t^{b_k}\}$ is defined in Eq. (\ref{e:set-b}).
\end{theorem}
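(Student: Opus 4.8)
The plan is to reduce the first equality to Theorem~\ref{dual-affine-degenerate} and then obtain the second equality by a short combinatorial argument using the complementation involution on exponent vectors.

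First I would check that $\mathcal{A}=S_{\leq d}\bigcap\Delta_\prec(I)$ is divisor-closed, hence in particular weakly divisor-closed in the sense of Definition~\ref{weakly-divisor-closed-def}: if $t^{a_i}\in\mathcal{A}$ and $t^c\mid t^{a_i}$, then $c_j\leq a_{i,j}\leq d_j$ for all $j$, so $t^c\in\Delta_\prec(I)$ (recall $\Delta_\prec(I)=\{t^c : 0\leq c_j\leq d_j\}$), and $\deg(t^c)\leq\deg(t^{a_i})\leq d$, so $t^c\in\mathcal{A}$. Since $\gcd(p,e_i)=p$ for all $i$ by hypothesis, Theorem~\ref{dual-affine-degenerate} applies and yields $\mathcal{L}^\perp=K(\Delta_\prec(I)\setminus\mathcal{B})$ with $\mathcal{B}=\{t^{b_1},\dots,t^{b_k}\}$, $t^{b_i}=\prod_{j=1}^st_j^{d_j-a_{i,j}}$ as in Eq.~\eqref{e:set-b}. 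This gives the first equality in the statement.

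Next I would identify $\Delta_\prec(I)\setminus\mathcal{B}$ explicitly. Consider the involution $\iota$ of $\Lambda$ defined by $\iota(c)=(d_1-c_1,\dots,d_s-c_s)$; it is a well-defined bijection on $\Lambda$ because $0\leq c_j\leq d_j$ iff $0\leq d_j-c_j\leq d_j$, and $\deg(t^{\iota(c)})=\sum_{j=1}^s(d_j-c_j)=r_0-\deg(t^c)$ since $r_0=\sum_{j=1}^s(e_j-1)=\sum_{j=1}^sd_j$. By Eq.~\eqref{e:set-b} we have $t^{b_i}=t^{\iota(a_i)}$, so $\mathcal{B}=\{t^{\iota(a)} : t^a\in\mathcal{A}\}$; in particular $|\mathcal{B}|=|\mathcal{A}|=k$. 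Applying $\iota$ to the description of $\mathcal{A}$ as the set of standard monomials of degree at most $d$, we see that $\mathcal{B}$ consists exactly of the standard monomials $t^b$ with $\deg(t^b)=r_0-\deg(t^{\iota(b)})\geq r_0-d$. Hence $\Delta_\prec(I)\setminus\mathcal{B}$ is exactly the set of standard monomials of degree at most $r_0-d-1$, i.e. $\Delta_\prec(I)\setminus\mathcal{B}=S_{\leq r_0-d-1}\bigcap\Delta_\prec(I)$, and passing to $K$-spans gives the second equality.

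There is essentially no obstacle in this argument. The only point that needs a (routine) verification is that the full degree-$\leq d$ slice of the footprint is automatically divisor-closed, which is precisely what licenses the appeal to Theorem~\ref{dual-affine-degenerate}; the rest is bookkeeping with the degree-reversing involution $c\mapsto(d_j-c_j)_j$ on $\Lambda$. Alternatively, one could bypass Theorem~\ref{dual-affine-degenerate} and argue the inclusion $S_{\leq r_0-d-1}\bigcap\Delta_\prec(I)\subset\mathcal{L}^\perp$ directly from Lemma~\ref{jul6-20-affine}(a),(c) (to show the relevant products lie in $\ker(\varphi)$), using Lemma~\ref{jul6-20-affine}(d) to rule out $t_1^{d_1}\cdots t_s^{d_s}$, and then conclude by the dimension count $\dim_K(\mathcal{L}^\perp)=|\mathcal{X}|-k=|S_{\leq r_0-d-1}\bigcap\Delta_\prec(I)|$ coming from Proposition~\ref{dual-properties} together with Proposition~\ref{duality-hilbert-function}; but the reduction above is cleaner.
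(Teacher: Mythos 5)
Your proposal is correct. It follows the same overall strategy as the paper's proof: both obtain the first equality $\mathcal{L}^\perp=K(\Delta_\prec(I)\setminus\mathcal{B})$ from Theorem~\ref{dual-affine-degenerate} via weak divisor-closedness of $\mathcal{A}$ (you actually verify the stronger divisor-closedness, which the paper leaves as an unproved remark), and both reduce the second equality to the set identity $\Delta_\prec(I)\setminus\mathcal{B}=S_{\leq r_0-d-1}\bigcap\Delta_\prec(I)$. Where you differ is in how that identity is established: the paper proves it by a cardinality argument, invoking the complete intersection symmetry $H_I^a(d)+H_I^a(r_0-d-1)=|\mathcal{X}|$ (Corollary~\ref{duality-hilbert-gorenstein}) together with Lemmas~\ref{lemma-referee1} and \ref{sep11-20} to match sizes, and then checks a single inclusion by contradiction using the complement $a=(d_1-c_1,\ldots,d_s-c_s)$; you instead observe that $c\mapsto(d_1-c_1,\ldots,d_s-c_s)$ is a degree-reversing involution of the box $\Lambda$ carrying $\mathcal{A}$ onto $\mathcal{B}$, so $\mathcal{B}$ is exactly the set of standard monomials of degree $\geq r_0-d$ and the identity follows at once. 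Your route is more elementary and self-contained for this step (no Hilbert-function symmetry needed, which is available here only because the footprint is an explicit box), while the paper's counting argument is the one that generalizes when the footprint is not given so explicitly. Your sketched alternative via Lemma~\ref{jul6-20-affine} and a dimension count is also viable, but as you say, the reduction to Theorem~\ref{dual-affine-degenerate} is cleaner.
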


\begin{proof} We set $\mathcal{A}^\perp:=S_{\leq
r_0-d-1}\bigcap\Delta_\prec(I)$,   
and $k=\dim_K(K\mathcal{A})=H_I^a(d)$. 
Note that $|\mathcal{A}|=|\mathcal{B}|$. We claim that
$\mathcal{A}^\perp=\Delta_\prec(I)\setminus\mathcal{B}$. As $I=I(\mathcal{X})$ is a complete
intersection, by Corollary~\ref{duality-hilbert-gorenstein}, 
one has $H_I^a(d)+H_I^a(r_0-d-1)=|\mathcal{X}|$.
Then, using Lemmas~\ref{lemma-referee1} and \ref{sep11-20}, one has
$$
H_I^a(r_0-d-1)=|\Delta_\prec(I)\textstyle\bigcap
S_{\leq r_0-d-1}|=|\mathcal{A}^\perp|=|\mathcal{X}|-H_I^a(d)=|\Delta_\prec(I)\setminus\mathcal{B}|.
$$
\quad Hence, to show that $\mathcal{A}^\perp=\Delta_\prec(I)\setminus\mathcal{B}$, we need only show
the inclusion ``$\supset$''. Given $c\in\mathbb{N}^n$,
$c=(c_1,\ldots,c_s)$, we set $|c|:=\sum_{j=1}^sc_j$. Take
$t^c\in\Delta_\prec(I)\setminus\mathcal{B}$, $c=(c_1,\ldots,c_s)$. We
proceed by contradiction. Assume that $t^c\notin\mathcal{A}^\perp$,
that is, $|c|>r_0-d-1$. Setting $a=(d_1-c_1,\ldots,d_s-c_s)$, we get 
$|a|=r_0-|c|<d+1$. Thus, $|a|\leq d$ and $t^a\in\mathcal{A}$. As
$t^ct^a=\prod_{j=1}^st_j^{d_j}$, we get $t^c\in\mathcal{B}$, a contradiction.
This proves the claim. Hence,
$K\mathcal{A}^\perp=K(\Delta_\prec(I)\setminus\mathcal{B})$. The equality 
$\mathcal{L}^\perp=K(\Delta_\prec(I)\setminus\mathcal{B})$
follows from Theorem~\ref{dual-affine-degenerate} because
$\mathcal{A}$ is weakly divisor-closed.
\end{proof}

If $\mathcal{L}=S_{\leq d}$, the evaluation code $\mathcal{L}_\mathcal{X}$ on
$\mathcal{X}$, denoted by $C_\mathcal{X}(d)$, is the Reed--Muller-type code
on $\mathcal{X}$ of  degree $d$. 
 The codes
$C_\mathcal{X}(d)^\perp$ and $C_\mathcal{X}(r_0-d-1)$ are equivalent
(Corollary~\ref{gorenstein-codes}, \cite[Theorem~5.7]{GHWCartesian},
\cite[Theorem~2.3]{Lopez-Manganiello-Matthews}), the next result
shows that they are equal when $\mathcal{X}$ is a degenerate affine space and
${\rm char}(K)$ divides $e_i$ for all $i$. 

\begin{proposition}\label{dual-affine-degenerate-reed-muller}
Let $K$ be a field of characteristic of $p$ such
that $\gcd(e_i,p)=p$, $e_i=|B_i|$, for all $i$. Then,
$C_\mathcal{X}(d)^\perp=C_\mathcal{X}(r_0-d-1)$ if $d<r_0$ and 
$C_\mathcal{X}(d)^\perp=(0)$ if $d=r_0$.
\end{proposition}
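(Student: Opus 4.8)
The plan is to read the statement off from Theorem~\ref{algebraic-dual-reed-muller} together with the identity $(\mathcal{L}_\mathcal{X})^\perp=(\mathcal{L}^\perp)_\mathcal{X}$ of Theorem~\ref{formula-dual}; no new ingredient is required. First I would dispose of the case $d=r_0$: since $H_I^a(r_0)=\dim_K(S_{\leq r_0}/I_{\leq r_0})=|\mathcal{X}|=m$, the evaluation map restricted to $S_{\leq r_0}$ is onto $K^m$, so $C_\mathcal{X}(r_0)=(S_{\leq r_0})_\mathcal{X}=K^m$ and hence $C_\mathcal{X}(r_0)^\perp=(0)$.

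Now fix $1\leq d<r_0$ and set $\mathcal{L}=S_{\leq d}$. Because $I=I(\mathcal{X})$ is generated by the Gr\"obner basis $\{t_i^{e_i}-t_i\}_{i=1}^{s}$, dividing a monomial by this basis only replaces factors $t_i^{e_i}$ by the lower-degree terms $t_i$; therefore the remainder of a monomial of degree at most $\ell$ is a standard monomial of degree at most $\ell$, and by Proposition~\ref{transforming-new} the standard function space of $C_\mathcal{X}(\ell)$ equals $K(S_{\leq\ell}\bigcap\Delta_\prec(I))$ for every $\ell\geq 0$. In particular $\widetilde{\mathcal{L}}=K\mathcal{A}$ with $\mathcal{A}=S_{\leq d}\bigcap\Delta_\prec(I)$, so $\mathcal{L}^\perp=(K\mathcal{A})^\perp$ by Lemma~\ref{dual-standard}, and Theorem~\ref{algebraic-dual-reed-muller} gives $\mathcal{L}^\perp=K(S_{\leq r_0-d-1}\bigcap\Delta_\prec(I))$. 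Applying Theorem~\ref{formula-dual} and evaluating this monomial space --- which, by the same remark, is the standard function space of $C_\mathcal{X}(r_0-d-1)$ --- yields
$$
C_\mathcal{X}(d)^\perp=(\mathcal{L}^\perp)_\mathcal{X}=\bigl(K(S_{\leq r_0-d-1}\textstyle\bigcap\Delta_\prec(I))\bigr)_\mathcal{X}=C_\mathcal{X}(r_0-d-1).
$$

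There is no real obstacle: the work is entirely contained in Theorem~\ref{algebraic-dual-reed-muller}, and the single point that deserves a line of checking is the identification, under the evaluation isomorphism on $K\Delta_\prec(I)$, of the monomial space $K(S_{\leq r_0-d-1}\bigcap\Delta_\prec(I))$ with the standard function space of $C_\mathcal{X}(r_0-d-1)$, which is immediate from the shape of the generators $t_i^{e_i}-t_i$. For completeness one may also note the boundary values $d=-1$ and $d=0$ (outside the hypotheses of Theorem~\ref{algebraic-dual-reed-muller}): here $C_\mathcal{X}(-1)=(0)$ has dual $K^m=C_\mathcal{X}(r_0)$, and for $d=0$ one has $\mathcal{L}=K\{1\}$, so $\mathcal{L}^\perp={\rm ker}(\varphi)\bigcap K\Delta_\prec(I)$ is $(m-1)$-dimensional and, by Lemma~\ref{jul6-20-affine}(b),(e), equals $K(\Delta_\prec(I)\setminus\{t_1^{d_1}\cdots t_s^{d_s}\})=K(S_{\leq r_0-1}\bigcap\Delta_\prec(I))$, whose evaluation is $C_\mathcal{X}(r_0-1)$.
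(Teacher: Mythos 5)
Your proof is correct and follows essentially the same route as the paper: identify the standard function space of $C_\mathcal{X}(d)$ with $K(S_{\leq d}\cap\Delta_\prec(I))$, compute its algebraic dual via Theorem~\ref{algebraic-dual-reed-muller}, and evaluate using Theorem~\ref{formula-dual}, with the case $d=r_0$ handled by surjectivity of the evaluation map on $S_{\leq r_0}$. Your explicit treatment of the boundary cases $d=0$ and $d=-1$ (which fall outside the range $1\leq d<r_0$ of Theorem~\ref{algebraic-dual-reed-muller}) is a small refinement that the paper's proof passes over silently, but otherwise the argument coincides with the paper's.
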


\begin{proof} Let $\prec$ be a graded monomial order. 
If $\mathcal{L}=S_{\leq d}$ and $I=I(\mathcal{X})$, then the standard function space
$\widetilde{\mathcal{L}}$ of $\mathcal{L}_X$ is $K(S_{\leq
d}\bigcap \Delta_\prec(I))$ and 
$\widetilde{\mathcal{L}}_\mathcal{X}=\mathcal{L}_\mathcal{X}=C_\mathcal{X}(d)$
(see Proposition~\ref{transforming-new}, Corollary~\ref{unique-standard}). If $d<r_0$, then by
Theorem~\ref{algebraic-dual-reed-muller},
we obtain the equality $(\widetilde{\mathcal{L}})^\perp=K(S_{\leq
r_0-d-1}\bigcap\Delta_\prec(I))$. Therefore, using
Theorem~\ref{formula-dual}, one has
\begin{eqnarray*}
C_\mathcal{X}(d)^\perp&=&(\widetilde{\mathcal{L}}_\mathcal{X})^\perp=
((\widetilde{\mathcal{L}})^\perp)_\mathcal{X}=(K(S_{\leq
r_0-d-1}\textstyle\bigcap\Delta_\prec(I)))_\mathcal{X}\\
&=&(S_{\leq r_0-d-1})_\mathcal{X}=C_\mathcal{X}(d-r_0-1).
\end{eqnarray*}
\quad If $d=r_0$, then $C_\mathcal{X}(d)=K^{m}$, $m=|\mathcal{X}|$, and $C_\mathcal{X}(d)^\perp=(0)$.
\end{proof}

\section{Examples}\label{examples-section}
This section includes examples illustrating some of our
results. In Appendix~\ref{Appendix} we give the implementations 
in \textit{Macaulay}$2$ \cite{mac2} that are used in some of the
examples. The monomial order $\prec$ that we use in the following
examples is the graded reverse lexicographical order (GRevLex order)
\cite[p.~343]{monalg-rev}. This is the default order 
in \textit{Macaulay}$2$.

\begin{example}\label{8points-in-A3}
Let $K$ be the finite field $\mathbb{F}_3$, let $S=K[t_1,t_2,t_3]$ be
a polynomial ring, 
let $\prec$ be the GRevLex
order on $S$, let $I=I(X)$ be the vanishing ideal of the set of
evaluation points
$$
X=\{(1,1,1),\, (1,1,-1),\, (0,0,0),\, (0,0,1),\, (0,0,-1),\,
(0,1,0),\, (0,1,1),\, (0,1,-1)\},
$$
let $P_i$ be the point in $X$ in the $i$-th position from the left, and let $\mathfrak{p}_i$ be the
vanishing ideal of $P_i$. The ideal $I$ is generated by 
$$\mathcal{G}=\{t_2^2-t_2, t_1t_2-t_1,
t_1^2-t_1,t_3^3-t_3, t_1t_3^2-t_1\}
$$
and this set is a Gr\"obner basis
for $I$. The Reed--Muller code $C_X(2)$ is the standard evaluation
code $\mathcal{L}_X$ where $\mathcal{L}$ is the standard function
space of $C_X(2)$ spanned 
by the set of remainders 
$$\{1,\,t_3,\,t_2,\,t_1,\,t_3^2,\,t_2t_3,\,t_1t_3\}$$
of the monomial basis of $S_{\leq 2}$ on
division by $\mathcal{G}$ (Proposition~\ref{transforming-new}). 
The algebraic dual of $C_X(2)$ is $\mathcal{L}^\perp=K(t_1+t_2+1)$ and
the dual of $C_X(2)$ is $K(0,0,1,1,1,1,1,1)$.

The homogenization $I^h$ of the ideal $I$ is not Gorenstein, the rings $S/I^h$ and $S/{\rm
in}_\prec(I)$ have symmetric $h$-vector given by $(1,3,3,1)$, and
$r_0={\rm reg}(H_I^a)=3$. The Reed--Muller code $C_X(1)$ is the
standard evaluation code $\mathcal{L}_X$, where 
$\mathcal{L}=S_{\leq 1}$. The algebraic dual of 
$C_X(1)$ is  
$$
\mathcal{L}^\perp=K\{t_1t_3 + t_2t_3 - t_1 - t_2 - t_3 - 1,\,  t_1 + t_2
+ t_3 + 1,\,  t_2 + t_3 - 1,\,  t_3\}. 
$$
\quad If $d=1$, the linear code $C_X(d)$ is not monomially equivalent to
$C_X(r_0-d-1)^\perp$ because their minimum distances are $\delta(C_X(1))=2$ and
$\delta(C_X(1)^\perp)=3$, respectively. Hence, the duality
criterion of Theorem~\ref{duality-criterion} fails if we replace
condition (b) by $H_I^a(d)+H_I^a(r_0-d-1)=|X|$ for $-1\leq d\leq r_0$.
Condition (b) of Theorem~\ref{duality-criterion} is not satisfied
because the unique list, up to multiplication by scalars from
$K^*$, of standard indicator function for $X$ is: 
\begin{align*}
&f_1=t_1t_3+t_1 ,\,  f_2=t_1t_3-t_1 ,\,  f_3=t_2t_3^2-t_3^2-t_2+1 ,\,
f_4=t_2t_3^2+t_2t_3-t_3^2-t_3,\\
&f_5=t_2t_3^2-t_2t_3-t_3^2+t_3,\,
f_6=t_2t_3^2-t_2 ,\,  f_7=t_2t_3^2-t_1t_3+t_2t_3-t_1,\\
&f_8=t_2t_3^2+t_1t_3-t_2t_3-t_1,
\end{align*}
and ${\rm v}_{\mathfrak{p}_i}(I)=\deg(f_i)$ for all $i$
(Proposition~\ref{indicator-function-prop}(a)).  
In particular one has ${\rm v}_{\mathfrak{p}_1}(I)=2$, ${\rm
v}_{\mathfrak{p}_3}(I)=3$, ${\rm v}(I)=2$ and, by
Proposition~\ref{tuesday-afternoon}, the index of regularity ${\rm
reg}(\delta_X)$ of $\delta_X$ is $2$. This example corresponds to
Procedure~\ref{8points-in-A3-procedure}. 
\end{example}

\begin{example}\label{Ivan-Hiram}
Let $K$ be the finite field $\mathbb{F}_3$, let $S=K[t_1,t_2,t_3]$ be
a polynomial ring, 
let $\prec$ be the GRevLex
order on $S$, let $I=I(X)$ be the vanishing ideal of the set of
evaluation points
$$
X=\{(1,0,0),\, (0,1,0),\, (0,0,1),\, (0,0,0),\, (2,2,2)\},
$$
let $P_i$ be the point in $X$ in the $i$-th position from the left, and let $\mathfrak{p}_i$ be the
vanishing ideal of $P_i$. Adapting
Procedure~\ref{8points-in-A3-procedure}, we obtain the following data.
The ideal $I$ is generated by 
\begin{align*}
\mathcal{G}=&\{t_2t_3+t_3^2-t_3,\, t_1t_3+t_3^2-t_3,\,
t_2^2-t_3^2-t_2+t_3,\, \\ 
&\ t_1t_2+t_3^2-t_3,\, t_1^2-t_3^2-t_1+t_3,\, 
t_3^3-t_3\},
\end{align*}
and this set is a Gr\"obner basis for $I$. 
Then, $\mathcal{G}^h=\{g^h\mid g\in \mathcal{G}\}$ is a Gr\"obner basis
for $I^h$, the homogenization of $I$ with respect to $u$
\cite[Proposition~3.4.2]{monalg-rev}. The ideal $I^h$ is Gorenstein
because  $I^h$ is a Cohen--Macaulay ideal of height $3$ and 
the minimal resolution of $S[u]/I^h$ by free $R$-modules, 
$R=S[u]$, is
given by 
$$
0\longrightarrow R(-5)\longrightarrow R(-3)^5\longrightarrow
R(-2)^5\longrightarrow R\longrightarrow R/I^h\longrightarrow 0, 
$$
see \cite[Corollary~5.3.5]{monalg-rev}. It is seen that $I$ is not a
complete intersection, that is, $I$ cannot be generated by $3$
elements. The graded rings $S/I^h$ and $S/{\rm in}_\prec(I)$ have
symmetric $h$-vector given by 
$(1,3,1)$ and $r_0={\rm reg}(H_I^a)=2$. The sorted list of standard
monomials of $S/I$ is
$$
\Delta_\prec(I)=\{1,t_3,t_2,t_1,t_3^2\},
$$
and the unique set $F=\{f_i\}_{i=1}^5$ of standard indicator functions
for $X$ with $f_i(P_i)=1$ for all $i$ is
\begin{align*}
&f_1=t_3+t_1-t_3^2,\, f_2=t_3+t_2-t_3^2,\, f_3=-t_3-t_3^2,\\ 
&f_4=1+t_3-t_2-t_1+t_3^2,\, f_5=t_3-t_3^2.
\end{align*}
\quad Setting $g=-f_1-f_2-f_3+f_4-f_5$, one has $g(P_i)=-1$ for $i\neq
4$ and $g(P_4)=1$. By Theorem~\ref{duality-criterion}  we obtain 
$$
C_X(1)^\perp=(g(P_1),\ldots,g(P_5))\cdot C_X(0)=K(-1,-1,-1,1,-1).
$$
\end{example}

\begin{example}\label{7points-in-A3} 
Let $K$ be the finite field $\mathbb{F}_3$, let $S=K[t_1,t_2,t_3]$ be
a polynomial ring,
let $\prec$ be the GRevLex
order on $S$, let $I=I(X)$ be the vanishing ideal of the set of
evaluation points
$$
X=\{(1,1,-1),\, (0,0,0),\, (0,0,1),\, (0,0,-1),\,
(0,1,0),\, (0,1,1),\, (0,1,-1)\},
$$
let $P_i$ be the point in $X$ in the $i$-th position from the left, 
and let $\mathfrak{p}_i$ be the
vanishing ideal of $P_i$. Adapting 
Procedure~\ref{8points-in-A3-procedure}, we obtain the following 
data. The Reed--Muller code $C_X(1)$ is the
standard evaluation code $\mathcal{L}_X$, where 
$\mathcal{L}=S_{\leq 1}$. The algebraic dual of 
$C_X(1)$ is  
$$
\mathcal{L}^\perp=K\{t_2t_3 - t_1 - t_2 + t_3 + 1,\,  t_1 + t_2 +
1,\,  t_2 - 1\}, 
$$
the minimum distances of $C_X(1)$ and its dual $C_X(1)^\perp$ are $\delta(C_X(1))=1$
and $\delta(C_X(1)^\perp)=3$, and $r_0={\rm
reg}(H_I^a)=3$. 
If $d=1$, then 
$$H_I^a(d)+H_I^a(r_0-d-1)=H_I^a(1)+H_I^a(1)=8>7=|X|.$$ 
\quad Hence, the inequality of Proposition~\ref{inequality-dim-dual}(a) does not hold
in general. The local v-numbers are ${\rm v}_{\mathfrak{p}_1}(I)=1$
and ${\rm v}_{\mathfrak{p}_i}(I)=3$
for $i\geq 2$. 
In particular ${\rm v}(I)=1$ and, by
Proposition~\ref{tuesday-afternoon}, the index of regularity ${\rm
reg}(\delta_X)$ of $\delta_X$ is $1$. If $d=2$, then 
$$H_I^a(d)+H_I^a(r_0-d-1)=H_I^a(2)+H_I^a(0)=6+1=|X|,$$ 
the algebraic dual of $C_X(2)$ is $K\{t_1+t_2+1\}$, and
$\delta(C_X(2)^\perp)=6$. The algebraic dual of $C_X(0)$ is 
equal to $\ker(\varphi)\bigcap K\Delta_\prec(I)$ and is given by 
\begin{align*}                                                                         
&K\{t_2t_3^2  - t_2t_3 - t_3^2  - t_1 - t_2 - t_3 - 1,\, t_2t_3
+t_3^2  + t_1
+ t_2 + t_3 + 1,\\
& \ \ \ \  t_3^2  +t_1 + t_2 + t_3,\,  t_1 + t_2 + t_3 - 1,\, 
t_2 + t_3,\ t_3 +1\},
\end{align*}
$\delta(C_X(0)^\perp)=2$ and $\delta(C_X(2))=1$. For $d=0$,
$C_X(d)^\perp$ is not equivalent to $C_X(r_0-d-1)$. This example proves that 
in Theorem~\ref{duality-criterion}(b) and
Corollary~\ref{geramita-gorenstein} the assumption ``$r_0={\rm
v}_{\mathfrak{p}}(I)$ for  
$\mathfrak{p}\in{\rm Ass}(I)$'' is essential.
\end{example}

\begin{example}\label{one-variable} Let $X=\{P_1,\ldots,P_m\}$ be a subset of
$K=\mathbb{F}_q$, $m\geq 2$, let $S=K[t_1]$ be a
polynomial ring in one variable, and let $F=\{f_1,\ldots,f_m\}$ be the unique set of
standard indicator functions for
$X$ such that $f_i(P_i)=1$ for all $i$. Then
the vanishing ideal $I=I(X)$ is a principal ideal generated by
$\prod_{i=1}^m(t_1-P_i)$, $r_0={\rm reg}(H_I^a)=m-1$,   
$$f_i=\prod_{j\neq i}(t_1-P_j)\bigg/ \prod_{j\neq i}(P_i-P_j),$$
and ${\rm lc}(f_i)=\left[\prod_{j\neq i}(P_i-P_j)\right]^{-1}$ for
$i=1,\ldots,m$. By Theorem~\ref{duality-criterion}  we obtain 
$$
C_X(r_0-d-1)^\perp=(g(P_1),\ldots,g(P_m))\cdot C_X(d)\ \text{ for\ \
$-1\leq d\leq
r_0$},
$$
where $g={\rm lc}(f_1)f_1+\cdots+{\rm lc}(f_m)f_m$ and $g(P_i)={\rm lc}(f_i)$
for all $i$. If $X=K$, then 
$$
K^*=\{P_i-P_1,\ldots,P_i-P_{i-1},P_i-P_{i+1},\ldots,P_i-P_m\},
$$
${\rm lc}(f_i)={\rm lc}(f_m)$ for $i=1,\ldots,m$, and
$C_X(r_0-d-1)^\perp=C_X(d)$ for $-1\leq d\leq r_0$. 
\end{example}

\begin{example}\label{Hiram-example} 
Let $S=K[t_1]$ be a
polynomial ring in one variable over the field $K=\mathbb{F}_7$, let $\beta$ be a 
generator of $K^*$, and let
$X$ be the set of points $=\{\beta^6,\beta,\beta^4,\beta^5\}=\{1,3,4,5\}$.  The vanishing ideal
$I$ of $X$ is generated by
$(t_1-\beta^6)(t_1-\beta)(t_1-\beta^4)(t_1-\beta^5)$ and 
$r_0={\rm reg}(H_I^a)=3$. Let $\prec$ be the GRevLex order. 
The set of standard monomials of $S/I$ is 
$$
\Delta_\prec(I)=\{1,t_1,t_1^2,t_1^3\}.
$$
\quad If $\mathcal{L}=K\{1,t_1,t_1^2\}$, then $\mathcal{L}_X=C_X(2)$.
Adapting Procedure~\ref{8points-in-A3-procedure}, we obtain that the
algebraic dual $\mathcal{L}^\perp$ of $\mathcal{L}$ is $Kg$,  
where $g$ is the polynomial $t_1^3-t_1^2-2t_1$. Evaluating $g$ at each
point of $X$ gives the vector $(-2,-2,-2,-1)$ and 
$$
C_X(2)^\perp=K(2,2,2,1).
$$
\quad The unique set, up to multiplication by scalars from
$K^*$, of standard indicator functions for the points $\beta^6,\beta,\beta^4,\beta^5$ are 
\begin{align*}
&f_1=t_1^3+2t_1^2-2t_1+3,\,
f_2=t_1^3-3t_1^2+t_1+1,\,f_3=t_1^3-2t_1^2+2t_1-1,\,\\
&f_4=t_1^3-t_1^2-2t_1+2,
\end{align*}
respectively, and they generate $K\Delta_\prec(I)$
(Proposition~\ref{indicator-function-prop}(a)). 
The v-number of 
$I$ at each point of $X$ is $3$, ${\rm
v}(I)=3$, and
$C_X(r_0-d-1)^\perp$ is equivalent to $C_X(d)$ for $-1\leq d\leq r_0$
(Corollary~\ref{gorenstein-codes}). 
\end{example} 

\begin{example}\label{5points-in-A2}
Let $S=K[t_1,t_2]$ be a polynomial ring over the field
$K=\mathbb{F}_3$, let $X$ be the set  
$$X=\{(0,0),\,(1,0),\,(0,1),\,(1,1),\,(0,-1)\},$$ 
let $I=I(X)$ be the vanishing ideal of
$X$, and let $\prec$ be the GRevLex order on $S$. Adapting 
Procedure~\ref{8points-in-A3-procedure}, we obtain the following 
data. The ideal
$I$ is generated by 
$$\mathcal{G}=\{t_1^2-t_1,\, t_2^3-t_2,\,
t_1t_2^2-t_1t_2\},
$$
and this set is a Gr\"obner basis of $I$. Let
$\mathcal{L}$ be the monomial space $K\{1,t_1,t_2\}$ of $S$. Then one
has 
$\mathcal{L}\simeq C_X(1)$, $\mathcal{L}_X=C_X(1)$, 
$$
\mathcal{L}^\perp=K\{t_1t_2-t_1+t_2,\, t_1-1\},\ \
C_X(1)^\perp=K\{(1,0,1,0,1),(0,1,-1,-1,1)\},
$$
$\delta(C_X(1))=2$, $\delta(C_X(1)^\perp)=3$, ${\rm
v}_{\mathfrak{p}}(I)=2$ for $\mathfrak{p}\in{\rm Ass}(I)$,
$H_I^a(1)=\dim_K(C_X(1))=3$, 
$H_I^a(2)=\dim_K(C_X(2))=5$, and $r_0={\rm reg}(H^a_{I})=2$. The
unique set, up to multiplication by scalars from
$K^*$, of standard indicator functions for the points of $X$ of
Proposition~\ref{indicator-function-prop}(a) are given by 
$$
f_1=t_1t_2-t_2^2-t_1+1,\, f_2=t_1t_2-t_1,\, f_3=t_1t_2+t_2^2+t_2,\,
f_4=t_1t_2,\,
f_5=t_2^2-t_2,
$$
and $4=H_I^a(d)+H_I^a(r_0-d-1)<|X|=5$ for $d=1$.
\end{example}

\begin{example}\label{example1} 
Let $S=K[t_1,t_2]$ be a polynomial ring over the field
$K=\mathbb{F}_7$, let $\beta$ be a generator of the cyclic group
$K^*$, and let $A_i$, $i=1,2$, be the cyclic groups $A_1=(\beta^2)$,
$A_2=(\beta^3)$. The orders of $\beta^2$ and $\beta^3$ are $d_1=3$ and
$d_1=2$, respectively. Let $\mathcal{L}$ be the linear space 
generated by $\mathcal{B}=\{1,t_1,t_2,t_1t_2\}$ and let 
$\mathcal{L}_T$ be the monomial standard evaluation 
code on $T=A_1\times A_2$ relative to the GRevLex order $\prec$. The
vanishing ideal $I=I(T)$ of $T$ is generated by $t_1^3-1$ and
$t_2^2-1$, the index of regularity of $H_I^a$ is $3$, and the set of
standard monomials of $S/I$ is 
$$
\Delta_\prec(I)=\{1,t_1,t_2,t_1^2,t_1t_2,t_1^2t_2\}.
$$
\quad According to Proposition~\ref{dual-toric-degenerate} and
Corollary~\ref{dual-toric-degenerate-coro}, the
algebraic dual $\mathcal{L}^\perp$ is given by 
$$
\mathcal{L}^\perp=K(\Delta_\prec(I)\setminus\mathcal{B})=K\{t_1,t_1t_2\},
\quad  \text{where }\ \mathcal{B}=\{1,t_1^2,t_2,t_1^2t_2\},
$$
and $(\mathcal{L}_T)^\perp=(\mathcal{L}^\perp)_T$. The minimum
distance $\delta(\mathcal{L}_T)$ of $\mathcal{L}_T$
is $2$ and $\delta((\mathcal{L}_T)^\perp)=3$. 
\end{example}

\begin{example}\label{example2} 
Let $S=K[t_1,t_2]$ be a polynomial ring over the field
$K=\mathbb{F}_4$. We set $\mathcal{X}=K^2$, $d_1=d_2=3$, $e_1=e_2=4$, and  
$$ 
t^{a_1}=1,\, t^{a_2}=t_1,\, t^{a_3}=t_2,\, t^{a_4}=t_2^2,\,
t^{a_5}=t_2^3,\, t^{a_6}=t_1t_2^2.
$$
\quad Let $\mathcal{L}$ be the linear subspace 
of $S$ generated by $\mathcal{A}=\{t^{a_1},\ldots,t^{a_6}\}$ and let 
$\mathcal{L}_\mathcal{X}$ be the monomial standard evaluation 
code on $\mathcal{X}$ relative the GRevLex order $\prec$. The
vanishing ideal $I=I(\mathcal{X})$ of $\mathcal{X}$ is generated by $t_1^{e_1}-t_1$ and
$t_2^{e_2}-t_2$, the index of regularity of $H_I^a$ is $r_0=6$, the set of
standard monomials of $S/I$ is 
$$
\Delta_\prec(I)=\{1,\,t_1,\,t_2,\,t_2^2,\,t_1t_2,\,t_1^2,\,t_2^3,\,t_1t_2^2,\,t_1^2t_2,\,
t_1^3,\,t_1t_2^3,\,t_1^2t_2^2,\,t_1^3t_2,\,t_1^3t_2^3,\,t_1^2t_2^3,\,t_1^3t_2^2\},
$$
and $\mathcal{A}$ is weakly divisor-closed. Setting $t^{b_1}=t_1^3t_2^3$, $t^{b_2}=t_1^2t_2^3$,
$t^{b_3}=t_1^3t_2^2$, $t^{b_4}=t_1^3t_2$, $t^{b_5}=t_1^3$,
$t^{b_6}=t_1^2t_2$, according to Theorem~\ref{dual-affine-degenerate}, the
algebraic dual $\mathcal{L}^\perp$ is given by 
$$
\mathcal{L}^\perp=K(\Delta_\prec(I)\setminus\mathcal{B})=K\{1,\,t_1,\,t_2,\,t_2^2,\,t_1t_2,
\,t_1^2,\,t_2^3,\,t_1t_2^2,\,t_1t_2^3,\,t_1^2t_2^2\},
$$
where $\mathcal{B}=\{t^{b_1},\ldots,t^{b_6}\}$, and
$(\mathcal{L}_\mathcal{X})^\perp=(\mathcal{L}^\perp)_\mathcal{X}$. The minimum
distance $\delta(\mathcal{L}_\mathcal{X})$ of $\mathcal{L}_\mathcal{X}$
is $4$. Other examples of sets that are weakly divisor-closed are
$\mathcal{A}\cup\{t_1^3t_2^3,t_1^3\}$ and
$\mathcal{A}\cup\{t_1^3t_2^3,t_1^3, t_1^2t_2^3, t_1^2\}$.
\end{example}

\begin{appendix}

\section{Procedures for {\it Macaulay\/}$2$}\label{Appendix}

In this appendix we give a procedure for \textit{Macaulay}$2$
\cite{mac2} that is used in some of the examples presented in
Section~\ref{examples-section}. We use the package NAGtypes, written
by Anton Leykin, that defines types used by the
package NumericalAlgebraicGeometry as well as other numerical
algebraic geometry packages.  

\begin{procedure}\label{8points-in-A3-procedure}
Let $\mathcal{L}_X$ be an evaluation code and let $ \prec$ be the graded reverse
lexicographical order (GRevLex order) \cite[p.~343]{monalg-rev}, 
which is the default order in \textit{Macaulay}$2$ \cite{mac2}. This procedure computes the
standard function space $\widetilde{\mathcal{L}}$ and the minimum distance of $\mathcal{L}_X$. It
determines whether or not 
the algebraic dual $\mathcal{L}^\perp$ of $\mathcal{L}_X$ is
generated by monomials. If not, it computes a generating set for
$\mathcal{L}^\perp$ and then, using the algorithm of
Theorem~\ref{dim-algo}, it
computes a $K$-basis for $\mathcal{L}^\perp$. This procedure also
computes the vanishing ideal $I=I(X)$, the regularity index $r_0$ of the affine
Hilbert function $H_I^a$, the unique set, up to multiplication by scalars from
$K^*$, of the standard indicator functions for $X$ (Remark~\ref{nov23-20}), and the v-numbers
associated to $I$. This procedure can be used to check the
condition ``$H_I^a(r_0-d-1)+H_I^a(d)=|X|$ for $0\leq d\leq r_0$'' using the
$h$-vector of the homogenization of $I$
(Proposition~\ref{duality-hilbert-function}), and to determine whether or
not the homogenization $I^h$ of the ideal $I$ is Gorenstein.
This 
procedure corresponds to Example~\ref{8points-in-A3}.

\begin{verbatim}
load "NAGtypes.m2"
q=3, Fq=GF(q,Variable=>a), S=Fq[t1,t2,t3]
--Evaluation points of the code:
X={{1,1,1},{1,1,-1},{0,0,0},{0,0,1},{0,0,-1}, {0,1,0},
{0,1,1},{0,1,-1}}
--Vanishing ideals of the points:
I1=ideal(t1-1,t2-1,t3-1),I2=ideal(t1-1,t2-1,t3+1),
I3=ideal(t1,t2,t3),I4=ideal(t1,t2,t3-1),I5=ideal(t1,t2,t3+1),
I6=ideal(t1,t2-1,t3),I7=ideal(t1,t2-1,t3-1),I8=ideal(t1,t2-1,t3+1)
I=intersect(I1,I2,I3,I4,I5,I6,I7,I8)--Vanishing ideal
L={I1,I2,I3,I4,I5,I6,I7,I8}--List of ideals
G=gb I, M=coker gens G
r0=regularity I-1 --Regularity of H_I^a
--Computes the remainder of x on division by G:
div=(x)->x % G
--Monomials that define the evaluation code:
Basis=matrix{{1,t1,t2,t3}}
--The list of remainders of Basis after division by G
--gives the standard function space:
cL=toList set apply(flatten entries Basis,div)
(d,r)=(1,1)
--This is the set of all elements of the ground field Fq:
field=set(apply(1..q-1,n->a^n))+set{0}
--Var1 to Var6 are used to compute the minimum distance of L_X:
Var1=(field)^**(#cL)-(set{0})^**(#cL)
Var2=apply(toList (Var1)/deepSplice,toList)
Var3=apply(Var2,x->matrix{cL}*vector x)
Var4=set(apply(apply(Var3,entries),n->n#0))
Var5=subsets(toList set apply(toList Var4,
m->(leadCoefficient(m))^(-1)*m),r)
Var6=apply(apply(Var5,ideal), x-> if #(set flatten entries 
leadTerm gens x)==r then degree(I+x) else 0)
md=degree M-max Var6--Minimum distance
ps=(n)->polySystem(n*cL)
--We are redefining d to be r0 to compute the 
--algebraic dual
(d,r)=(r0, 1)
--These are the points of X in the right format:
B=apply(X,x->point{toList x})
--The number of elements of P1 is the length of the code
P1=apply(flatten entries basis(0,r0,M),div)
b1=apply(P1,ps)
funct1=(n)->apply(B,x->evaluate(b1#n,x))
MatA=matrix{apply(0..#P1-1,n->{a^(q-1)})}
MatB=matrix{apply(0..#cL-1,n->{a^(q-1)-1})}
funct2=(x)-> if (matrix{funct1(x)}*(MatA)==MatB) then P1#x else 0
--This is the list of standard monomials in the dual code
--If this list has |X|-dim_K(L_X) elements, then 
--the dual of L_X is monomial
dualevaluation1= set apply(0..#P1-1,funct2)-set{0}
--Now we compute the dual when the dual is not monomial 
Var7=(field)^**(#flatten entries basis(0,d,M))
Var8=(set{0})^**(#flatten entries basis(0,d,M))
Var9=apply(toList (Var7-Var8)/deepSplice,toList)
Var10=apply(apply(Var9,x->basis(0,d,M)*vector x),entries)
Var11=apply(toList set(apply(Var10,n->n#0)),
m->(leadCoefficient(m))^(-1)*m)
P=rsort(toList set Var11,MonomialOrder=>GRevLex)
b=apply(P,ps)
funct3=(n)->apply(B,x->evaluate(b#n,x))
MatC=matrix{apply(0..#B-1,n->{a^(q-1)})}
MatD=matrix{apply(0..#cL-1,n->{a^(q-1)-1})}
funct4=(x)-> if (matrix{funct3(x)}*(MatC)==MatD) then P#x else 0
--This is a list of generators of the algebraic dual:
dualevaluation= set apply(0..#P-1,funct4)-set{0}
--Next we compute a K-basis for the algebraic dual
--This computes the list of polynomials of a set 
--with maximum leading monomial
split=(a) -> set apply(0..#a-1, i-> if leadMonomial(a#i)==
leadMonomial(max(a)) then a#i else 0)-set{0}
--Iterating this function and taking max will give the K-basis
--for the algebraic dual
hhh=(a)->toList((set(apply(toList split(a),x->max(a)-
(leadCoefficient(max a)/leadCoefficient(x))*x))-set{0*t1})+
(set(a)-split(a)))
--Algorithm to compute a basis for a linear subspace 
--of K[t1,...,ts] of finite dimension. 
--This is a K-basis for the algebraic dual:
DDD=(A=toList dualevaluation; while #A>0 list 
max(A)/leadCoefficient(max(A)) do A=hhh(A))
--Next we compute the v-numbers and indicator functions for X
f=(n)->flatten flatten  degrees mingens(quotient(I,L#n)/I)
p=(n)->gens gb ideal(flatten mingens(quotient(I,L#n)/I))
minA=monomialIdeal(apply(0..#L-1,p))
vnumber0=min flatten degrees minA
g=(a)->toList(set a-set{0}) 
N=apply(apply(0..#L-1,f),g)
--This is the list of indicator functions for X:
toList apply(0..#L-1,n->p(n))
--Checking whether or not the homogenization I^h is Gorenstein
R=Fq[t1,t2,t3,u,MonomialOrder=>GRevLex]
J=sub(I,R)
L=ideal(homogenize(gens gb J,u))
HS=hilbertSeries(L)
--This gives the h-vector of the homogenization I^h of I
reduceHilbert(HS)
--This computes the minimal graded resolution of S/I^h
--that is used to determine whether or not 
--the homogenization I^h is Gorenstein
res(coker gens gb L)
\end{verbatim}
\end{procedure}

\end{appendix}


\section*{Acknowledgments} 
Computations with \textit{Macaulay}$2$ \cite{mac2} were important to 
give examples and to have a better understanding of the dual of evaluation
codes. 

\bibliographystyle{plain}

\end{document}